\documentclass[reqno]{amsart}

\usepackage[T1]{fontenc}

\usepackage{hyperref}
\usepackage{tabularx} 

\usepackage{wasysym}

\usepackage{url}

\usepackage{varwidth}
\usepackage{enumitem}
\usepackage{subcaption}  

\usepackage{esint}
\usepackage{amssymb,centernot}
\usepackage{mathtools} 
\usepackage{tikz}
\usetikzlibrary{arrows.meta}

\usepackage{pgfplots}
\usepackage{amsmath,amsthm}

\pgfplotsset{compat=1.18}

\usepackage{graphicx}
\usepackage{amsrefs,enumitem,pgfkeys}
\usepackage{soul}
\usepackage{dirtytalk}
\usepackage{mathrsfs}
\usepackage{MnSymbol,wasysym}
\usepackage{fancyhdr}
\usepackage{time}

\usepackage[margin=1in]{geometry}

\newcommand\restr[2]{{
  \left.\kern-\nulldelimiterspace 
  #1 
  \vphantom{\big|} 
  \right|_{#2} 
  }}

\usepackage{comment}
\newtheorem{theorem}{Theorem}
\newtheorem{lemma}[theorem]{Lemma}
\newtheorem{corollary}[theorem]{Corollary}
\newtheorem{proposition}[theorem]{Proposition}
\theoremstyle{definition}

\newtheorem{remark}[theorem]{Remark}

\renewcommand{\Subset}{\subset\joinrel\subset}

\makeatletter
\newcommand\avsuminner[2]{%
  {\sbox0{$\m@th#1\sum$}%
    \vphantom{\usebox0}%
    \ooalign{%
      \hidewidth
      \smash{\,\rule[.23em]{8.8pt}{1.1pt} \relax}%
      \hidewidth\cr
      $\m@th#1\sum$\cr
    }%
  }%
}
\makeatother

\makeatletter
\newcommand\avsuminnerr[2]{%
  {\sbox0{$\m@th#1\sum$}%
    \vphantom{\usebox0}%
    \ooalign{%
      \hidewidth
      \smash{\,\rule[.23em]{6pt}{0.7pt} \relax}%
      \hidewidth\cr
      $\m@th#1\sum$\cr
    }%
  }%
}
\makeatother

\newcommand{\cref}[1]{Corollary \ref{c.#1}}

\numberwithin{theorem}{section}
\numberwithin{equation}{section}

\newcommand{\diam}{\textup{diam} }

\newcommand{\N}{\mathbb{N}}

\newcommand{\Z}{\mathbb{Z}}
\newcommand{\R}{\mathbb{R}}
\newcommand{\Oa}{\mathcal{O}}

\newcommand{\T}{\mathbb{T}}

\usepackage{mathtools}

\newcommand{\pt}{\partial}

\newcommand{\bca}{\begin{cases}}
\newcommand{\eca}{\end{cases}}

\newcommand{\lb}{\left(}

\newcommand{\rb}{\right)}
\newcommand{\lmb}{\left[}
\newcommand{\rmb}{\right]}
\newcommand{\lma}{\left\{}
\newcommand{\rma}{\right\}}
\newcommand{\Ha}{{\mathcal{H}}}

\newcommand{\Ra}{{\mathcal{R}}}

\newcommand{\La}{{\mathcal{L}}}

\newcommand{\dd}{~\textup{d}}
\newcommand{\dV}{~\textup{d} V}
\newcommand{\dA}{~\textup{d} A}
\newcommand{\drho}{~\textup{d} \rho}
\newcommand{\dtrho}{~\textup{d} \Tilde{\rho}}

\newcommand{\gd}{\nabla}
\newcommand{\rta}{\rightarrow}
\newcommand{\lw}{\left|}
\newcommand{\rw}{\right|}
\newcommand{\be}{\begin{equation}}
\newcommand{\ee}{\end{equation}}

\newcommand{\bt}{\begin{thm}}
\newcommand{\et}{\end{thm}}
\newcommand{\bc}{\begin{cor}}
\newcommand{\ec}{\end{cor}}
\newcommand{\bl}{\begin{lem}}
\newcommand{\el}{\end{lem}}
\newcommand{\norm}[1]{\left\lVert#1\right\rVert}
\newcommand{\avg}[1]{\langle#1\rangle}
\newcommand{\normm}[1]{{\left\vert\kern-0.25ex\left\vert\kern-0.25ex\left\vert #1 
    \right\vert\kern-0.25ex\right\vert\kern-0.25ex\right\vert}}

\newcommand{\dist}{\operatorname{dist}}

\newcommand{\e}{\varepsilon}

\newcommand{\weakcv}{\rightharpoonup}

\def\XXint#1#2#3{{\setbox0=\hbox{$#1{#2#3}{\int}$ }
\vcenter{\hbox{$#2#3$ }}\kern-.6\wd0}}

\usetikzlibrary{decorations.pathmorphing, arrows.meta, patterns}

\usepackage{marvosym}

\def\XXint#1#2#3{{\setbox0=\hbox{$#1{#2#3}{\int}$ }
\vcenter{\hbox{$#2#3$ }}\kern-.6\wd0}}

\newcommand{\ep}{\varepsilon}
\newcommand{\osc}{\mathop{\textup{osc}}}

\begin{document}

\title[Second-order expansion of the Steklov spectrum]{An indefinite Coulomb interaction from the Steklov spectrum\\ of perforated manifolds}

\author[Zhonggan Huang]{Zhonggan Huang}
\thanks{Department of Mathematics, The University of Utah. (Current address: Westlake University)}
\email{zhonggan@math.utah.edu}
\author[Raghavendra Venkatraman]{Raghavendra Venkatraman}
\thanks{Department of Mathematics, The University of Utah.}
\email{raghav@math.utah.edu}
\keywords{Steklov and Laplace-Beltrami eigenvalues on manifolds, quantitative homogenization, Coulomb-type interaction energy, reduced Green function}
\subjclass{35P15, 58J50, 35B27, 35C20, 35J08}

\begin{abstract}
We establish optimal convergence rates for Steklov eigenvalues and
harmonically extended eigenfunctions toward their weighted
Laplace--Beltrami counterparts on a closed manifold perforated by
many small geodesic balls. The holes have radii that scale critically with respect to their spacing in the sense that the boundary area of each hole balances with the weighted volume of its Voronoi cell. We then derive a higher-order expansion
of the Steklov eigenvalues; in dimensions two and three, we identify
two correction scales. The expansion is governed by an indefinite
Coulomb-type energy of the discrepancy between the boundary and
bulk measures, mediated by the reduced Green function of the
limiting operator. The proof relies on sharp estimates for certain auxiliary functions that we introduce in order to quantify the discrepancy measure between the surface measure on the holes and their background density. Our paper serves to bridge an emerging literature in spectral geometry with one on systems of points interacting via Coulomb-type energies.
\end{abstract}

\maketitle

\section{Introduction}
\label{s.intro}

\smallskip \noindent This paper is motivated by a recent thread of research in spectral geometry originating from the seminal works of Fraser-Schoen~\cites{fraser_schoen_2011,fraser_schoen_2016}. These works show that a properly immersed surface~$\Sigma\subset\mathbb{B}^n$ is a \emph{free boundary minimal surface} if and only if its coordinate functions are Steklov eigenfunctions with eigenvalue~$1$. Furthermore, metrics maximizing the perimeter-normalized first Steklov eigenvalue~$\sigma_1(\Sigma,g)\,L_g(\pt\Sigma)$ are exactly those induced by such immersions via first eigenfunctions. 

The natural question that these works raise, about how large normalized Steklov eigenvalues can be when maximized over metrics in a certain class, was considered in the paper~\cites{girouard_continuity_2021}. As a fundamental tool, the authors of this paper consider a maximizing sequence of subdomains~$\Omega_\e$ of a manifold~$M$, whose boundary measures, suitably normalized, weakly converge to a diffuse measure on~$M$ with prescribed positive, smooth density~$\beta$ (a ``background charge''). With this construction at hand, they carry out what they call \emph{manifold homogenization}: perforate a closed manifold~$M$ with (near maximal) Riemannian metric~$g$ by many small, well-separated and uniformly distributed geodesic balls. Imposing a Steklov boundary condition on these holes, they show that the associated Steklov spectrum of the perforated domain converges to the spectrum of a weighted Laplacian on all of~$M$. This procedure generates, among other things, metrics with large normalized Steklov eigenvalues. A parallel analysis for Euclidean domains appears in~\cites{girouard_steklov_2021}.

These convergence results are largely qualitative. In the present paper we prove \textit{optimal estimates} for this convergence, and then turn to the question of the \textit{next-order expansion}. 

To be precise, we prove that Steklov eigenvalues converge to their Laplace--Beltrami counterparts at the optimal rate~$\omega_d(\e) := \e^{\frac{d}{d-1}}$ (with an extra~$|\log \e|$ factor in~$d=2$). In addition, we prove a rate of convergence for eigenfunctions at rate~$\sqrt{\omega_d(\ep)}$ in~$H^1(M)$ norm after they have been suitably extended to all of~$M$. Finally, to show that this rate is optimal,  we then compute the next-order term in the expansion of a Steklov eigenvalue~$\sigma_\ep$ about its Laplace--Beltrami counterpart~$\lambda$. The next-order term is an \emph{interaction energy of indefinite Coulomb type} between the holes, mediated by the Green function of the operator~$\Delta+\lambda\beta$. The perforated manifold behaves, to this order, like a system of (smeared out) charges on~$M$ interacting with a background charge~$\beta$ through an indefinite electrostatic kernel. 

Our second result on interaction energies serves to bridge a body of literature in spectral geometry to another actively emerging body of work in the Calculus of Variations -- Coulomb gases~\cites{Serfaty}. In terms of this latter  body of research, the discrepancy between a Steklov eigenvalue on the perforated domain and the corresponding Laplace--Beltrami eigenvalue is given by \textit{a modulated energy} mediated by a Coulomb-type kernel, evaluated on the discrepancy measures between the surface area on the perforation boundaries, and the given background charge~$\beta$ that these boundaries approximate (See~\eqref{eq.intro.mu} below). 

\subsection{Setup and main results}
Let~$(M,g)$ be a closed, connected, smooth Riemannian manifold of dimension~$d\ge 2$, and let~$\beta \in C^\infty(M;\R_+)$ be a weight satisfying~$S^{-1}\le \beta \le S$ and~$\norm{\beta}_{C^1(M)}\le S$. For~$\ep>0$ let~$S_\ep\subset M$ be a maximally~$\ep$-separated point set inducing a partition by Voronoi cells~$\{V_p^\ep\}_{p\in S_\ep}$. For each~$p \in S_\ep$ let~$r_{\ep,p}>0$ be the unique radius determined by the \emph{local balance of mass}
\be
\label{eq.intro.balance}
\Ha^{d-1}\big(\pt B_{r_{\ep,p}}(p)\big) = \int_{V_p^\ep}\beta\dV,
\ee
so that~$r_{\ep,p}\simeq \ep^{d/(d-1)}$ (See Remark \ref{r.generalizeDefofOmega} for how to modify the $\beta$ in \cites{girouard_large_2021} to fit in the condition \eqref{eq.intro.balance}). The perforated domain is
\be
\label{eq.intro.Omega}
\Omega=\Omega_\ep := M\setminus \bigcup_{p\in S_\ep} \overline{B}_{r_{\ep,p}}(p).
\ee
We consider the Steklov problem on~$\Omega_\ep$ and the weighted closed eigenvalue problem on~$M$,
\be
\label{eq.intro.problems}
\bca
-\Delta v_k^\ep = 0 & \textup{in }\Omega_\ep\\
\pt_\nu v_k^\ep = \sigma_k^\ep v_k^\ep & \textup{on }\pt\Omega_\ep,
\eca
\qquad \textup{ and }\qquad
-\Delta u_k = \lambda_k \beta u_k \ \textup{ in } M,
\ee
with spectra~$0=\sigma_0^\ep < \sigma_1^\ep \le \cdots$ and~$0=\lambda_0<\lambda_1\le\cdots$. (Since~$M$ is closed, \eqref{eq.intro.problems} is the natural analogue of a weighted Laplace--Beltrami problem, and we refer to~$\lambda_k$ as Laplace--Beltrami eigenvalues.) We write~$u_k^\ep$ for the harmonic extension of~$v_k^\ep$ to~$M$, and we encode the discrepancy between bulk and surface measures through
\be
\label{eq.intro.mu}
\dd\mu_\ep := \beta \dV - \restr{\dA}{\pt\Omega_\ep}.
\ee
The balance of mass~\eqref{eq.intro.balance} makes~$\mu_\ep(V_p^\ep)=0$ for every cell: each hole is \emph{neutral}, which suppresses the monopole contribution and is ultimately responsible for the sharp rates below. The natural small parameter of the problem is
\be
\label{eq.intro.omegad}
\omega_d(\ep) :=
\bca
\ep^{\frac{d}{d-1}} & \textup{if } d\ge 3,\\
\ep^2|\log\ep| & \textup{if } d=2.
\eca
\ee
Precise definitions and standing assumptions are collected in Section~\ref{section.preliminaries}.

The starting point is the qualitative convergence result of~\cites{girouard_large_2021,girouard_steklov_2021}.

\begin{theorem}[\cites{girouard_large_2021,girouard_steklov_2021}]
    \label{t.qualitativeconverge}
    Let $(v_k^\ep, \sigma_k^\ep)$ be any $k$-th eigenpair of the Steklov eigenvalue problem \eqref{eq.Steklov} and $u_k^\ep$ be the unique harmonic extension of $v_k^\ep$ to the whole manifold $(M,g)$, then for a $k$-th eigenpair $(u_k,\lambda_k)$ to \eqref{eq.Neumann} we have as $\ep\rta0$
    \be
\label{eq.converge.stektoNeum.value.GiLa}
\sigma_k^\ep \rta \lambda_k \textup{ and }u_k^\ep \weakcv u_k \textup{ in }H^1(M),
    \ee
    after passage to a subsequence in $u_k^\ep$. Moreover, if an eigenvalue $\lambda$ to \eqref{eq.Neumann} admits $m\ge 1$ eigenfunctions $u_{k},\dots,u_{k+m-1}$, which forms an orthonormal basis for the eigenspace, then there is a family of $m\times m$ orthogonal matrices $M(\ep)$ and eigenfunctions (harmonically extended) $u_k^\ep,\dots,u_{k+m-1}^\ep$ such that as $\ep\rta0$
    \be
\label{eq.converge.stektoNeum.function.GiLa}
M(\varepsilon) \begin{bmatrix} u_k^\varepsilon \\ \vdots \\ u_{k+m-1}^\varepsilon \end{bmatrix} \rightharpoonup \begin{bmatrix} u_k \\ \vdots \\ u_{k+m-1} \end{bmatrix} \quad \text{in } H^1(M).
    \ee
\end{theorem}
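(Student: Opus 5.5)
The plan follows the variational route of Girouard--Lagacé. It rests on three ingredients.

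\textbf{Step 1: uniform extension and trace bounds.} First I would establish two uniform (in $\ep$) estimates on $\Omega_\ep$.

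\emph{(a) Harmonic extension.} The harmonic extension $H_\ep$ from $\Omega_\ep$ into the holes satisfies $\|\gd H_\ep v\|_{L^2(M)}\le C\|\gd v\|_{L^2(\Omega_\ep)}$. To see this, rescale each annulus $B_{2r_{\ep,p}}(p)\setminus B_{r_{\ep,p}}(p)$ to unit size. Then use the standard Euclidean extension together with a Poincaré inequality on the annulus; since $r_{\ep,p}\ll\ep$, the metric is nearly Euclidean at that scale.

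\emph{(b) Boundary measure against bulk measure.} For all $f\in H^1(M)$,
\[
\Big|\int_{\pt\Omega_\ep} f^2\dA-\int_M\beta f^2\dV\Big|\le \eta(\ep)\,\|f\|_{H^1(M)}^2,\qquad \eta(\ep)\to0.
\]
This is the quantitative version of the weak convergence $\restr{\dA}{\pt\Omega_\ep}\weakcv\beta\dV$. I would prove it cell by cell. Neutrality $\mu_\ep(V^\ep_p)=0$ lets one subtract the cell average of $f^2$, after which a Poincaré-type inequality on $V_p^\ep$ bounds the remainder. The trace of $f^2$ on the small sphere is controlled via the scale-invariant trace inequality; this is where the critical scaling $r_{\ep,p}\simeq\ep^{d/(d-1)}$ enters. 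The outcome is a bound of order $\sqrt{\omega_d(\ep)}$ times $\|f\|_{L^2}\|\gd f\|_{L^2}+\|\gd f\|^2$, and it is uniform.

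\textbf{Step 2: two-sided eigenvalue comparison.} Use the min-max characterisations
\[
\sigma_k^\ep=\min_{E}\max_{v\in E}\frac{\int_{\Omega_\ep}|\gd v|^2}{\int_{\pt\Omega_\ep}v^2},\qquad
\lambda_k=\min_E\max_{u\in E}\frac{\int_M|\gd u|^2}{\int_M\beta u^2},
\]
with $E$ ranging over $(k+1)$-dimensional subspaces.

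\emph{Upper bound.} Restrict the span of $u_0,\dots,u_k$ to $\Omega_\ep$. The Dirichlet energy only decreases, and by Step 1(b) the denominators change by $o(1)$ relative to the numerators. Hence $\limsup\sigma_k^\ep\le\lambda_k$.

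\emph{Lower bound.} Take the harmonic extensions of the span of $v_0^\ep,\dots,v_k^\ep$. By Step 1(a) their energies satisfy $\int_M|\gd u|^2\le\int_{\Omega_\ep}|\gd v|^2+o(1)$, measured in normalised form. The $o(1)$ comes from the holes having vanishing capacity share; it is obtained by comparing against harmonic capacitary profiles, the energy inside the holes being $\lesssim r^d/r^2\cdot\#S_\ep\to0$ for bounded-gradient pieces. This is the delicate point. Combined with Step 1(b), this gives $\liminf\sigma_k^\ep\ge\lambda_k$.

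\textbf{Step 3: convergence of eigenfunctions.} Normalise $\int_{\pt\Omega_\ep}(v_k^\ep)^2=1$. Then $\|\gd u_k^\ep\|_{L^2(M)}^2\le C\sigma_k^\ep$ is bounded, and Step 1(b) bounds $\|u_k^\ep\|_{L^2}$. So a subsequence converges weakly in $H^1$ and strongly in $L^2$ to some $u$.

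To identify $u$, pass to the limit in the weak form
\[
\int_{\Omega_\ep}\gd v_k^\ep\cdot\gd\varphi=\sigma_k^\ep\int_{\pt\Omega_\ep}v_k^\ep\varphi,\qquad \varphi\in C^\infty(M).
\]
The left side converges because the integral over the holes tends to $0$. The right side converges by the polarised form of Step 1(b) combined with strong $L^2$ convergence. This gives $-\Delta u=\lambda_k\beta u$, with $\int_M\beta u^2=1$.

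For a multiple eigenvalue, the limits of $u_k^\ep,\dots,u_{k+m-1}^\ep$ are $\beta$-orthonormal and lie in the $\lambda$-eigenspace; the orthonormality also follows from polarised Step 1(b). They therefore differ from the given basis by an orthogonal matrix. Compactness of $O(m)$ then produces $M(\ep)$.

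\textbf{Main obstacle.} I expect the main obstacle to be Step 1(b): the uniform trace-measure comparison at the critical scaling, particularly in $d=2$, where logarithmic capacity gives the $|\log\ep|$ factor. There I would first try the cell-wise neutrality plus rescaled trace inequality argument described above.
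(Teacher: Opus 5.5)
The gap is the lower bound $\liminf\sigma_k^\ep\ge\lambda_k$ in Step 2, which rests on an unproved claim.

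Step 1(a) controls the energy of the harmonic extension only up to a constant $C>1$. Through min--max this gives just $\liminf\sigma_k^\ep\ge\lambda_k/C$. A $1+o(1)$ version is false for general $v\in H^1(\Omega_\ep)$. Take $v=r_{\ep,p}^{d}z_1/|z|^{d}$ near each hole (normal coordinates, cut off at scale $\ep$). Its harmonic extension into $B_p$ is $z_1$, and the energy in the holes is, to leading order, the fixed fraction $1/(d-1)$ of the energy in $\Omega_\ep$.

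So the claim could only hold for the eigenfunctions themselves. That requires a priori information you do not have at this stage, for instance a uniform gradient bound for $v_j^\ep$ near $\pt B_p$, or that its non-radial part on $\pt B_p$ is $O(r_{\ep,p})$. Your heuristic also has the wrong scaling: $r_{\ep,p}^{d-2}\,\#S_\ep\simeq\ep^{-d/(d-1)}$ diverges. The convergent count $r_{\ep,p}^{d}\,\#S_\ep\simeq\ep^{d/(d-1)}$ presupposes exactly the missing gradient bound.

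The remedy is to drop this comparison and get the lower bound from your Step 3, reorganised so that it does not assume $\sigma_k^\ep\to\lambda_k$:
\begin{itemize}
\item Using only the upper bound, extract a subsequence with $\sigma_j^\ep\to\bar\sigma_j$ and $u_j^\ep\weakcv u_j^*$ in $H^1(M)$ for all $j\le k$.
\item Polarised Step 1(b) plus Rellich gives $\int_M\beta u_i^*u_j^*\dV=\delta_{ij}$.
\item Your limit passage gives $-\Delta u_j^*=\bar\sigma_j\beta u_j^*$. It only uses $\norm{\gd u_j^\ep}_{L^2(M)}\norm{\gd\varphi}_{L^\infty}|M\setminus\Omega_\ep|^{1/2}\to0$, not vanishing energy in the holes.
\item Testing the min--max for $\lambda_k$ on $\mathrm{span}\{u_0^*,\dots,u_k^*\}$ gives $\lambda_k\le\bar\sigma_k$.
\end{itemize}
With this change the argument is the standard one. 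Note that the paper does not prove this theorem itself; it imports it from \cites{girouard_large_2021,girouard_steklov_2021}.

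Two smaller repairs are needed.

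First, in Step 1(b), subtracting the cell average of $f^2$ and bounding $\int_{\pt B_p}|f^2-\overline{f^2}|\dA$ by a ($W^{1,1}$) trace/Poincar\'e inequality cannot produce a small factor. For a bump $f$ of radius $\simeq r_{\ep,p}$ around a hole, $|\int_M f^2\dd\mu_\ep|\simeq\norm{f}_{L^2}\norm{\gd f}_{L^2}\simeq\norm{\gd(f^2)}_{L^1}$. So smallness is available only relative to $\norm{\gd f}_{L^2}^2$. Instead:
\begin{itemize}
\item subtract the $\beta$-weighted cell average $\bar f_p$ of $f$ itself, and remove $\bar f_p^2$ by neutrality;
\item expand $f^2-\bar f_p^2=2\bar f_p g+g^2$ with $g=f-\bar f_p$;
\item use $\int_{\pt B_p}g^2\dA\lesssim r_{\ep,p}\norm{\gd f}_{L^2(V_p^\ep)}^2$, from the trace on $B_{2r_{\ep,p}}(p)\setminus B_p$ plus Sobolev--Poincar\'e on $V_p^\ep$, with an extra $|\log\ep|$ if $d=2$.
\end{itemize}
This yields exactly your stated bound with $\sqrt{\omega_d(\ep)}$.

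Second, compactness of $O(m)$ only gives subsequential matrices. To define $M(\ep)$ for all small $\ep$, take the orthogonal polar factor of $\bigl(\int_M\beta u_iu_j^\ep\dV\bigr)_{ij}$ and argue by sub-subsequences.
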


Our first main result upgrades~\eqref{eq.converge.stektoNeum.value.GiLa} and~\eqref{eq.converge.stektoNeum.function.GiLa} to optimal rates: the eigenvalues converge at rate~$\omega_d(\ep)$ and the eigenfunctions at rate~$\sqrt{\omega_d(\ep)}$, uniformly across eigenvalue clusters of multiplicity~$m\ge 1$.

\begin{theorem}[Optimal convergence rate for the eigenvalues and eigenvectors]
    \label{t.quantitative.StekNeum}
   Under the assumptions of Theorem \ref{t.qualitativeconverge} with an additional local balance of mass condition \eqref{eq.localbalanceofmass}. Then we have the following optimal estimate for $\ep < \ep_0(M,d,g,S,\lambda_k)$
    \be
\label{eq.Optestimate.stektoNeum}
|\lambda_k - \sigma_k^\ep| \le C(M,d,g,S,\lambda_k)~ \omega_d(\ep).
    \ee
  Moreover, if an eigenvalue $\lambda$ to \eqref{eq.Neumann} admits $m\ge 1$ eigenfunctions $u_{k},\dots,u_{k+m-1}$ that satisfy
    \[
  \int_{M} \beta u_i u_j \dV = \delta_{ij} \textup{ for all }i,j=k,\dots, k+m-1,
  \]
then there is a family of $m\times m$ orthogonal matrices $\Oa_\ep$ such that $\Oa_\ep \rta \Oa_0$ as $\ep\rta0$, and Steklov eigenfunctions (harmonically extended) $u_k^\ep,\dots,u_{k+m-1}^\ep$ such that
  \[
  \int_{\pt\Omega} u_i^\ep u_j^\ep \dA = \delta_{ij} \textup{ for all }i,j=k,\dots, k+m-1
  \]
 and for $\ep < \ep_0(M,d,g,S,\lambda_k)$
    \be
\label{eq.Optestimate.function}
\max_{k\le j \le k+m-1}\norm{u_j - (\Oa_\ep)_{jl}u_l^\ep}_{L^2(\pt\Omega)} + \norm{u_j - (\Oa_\ep)_{jl} u_l^\ep}_{H^1(M)} \le C \sqrt{\omega_d(\ep)},
    \ee
    where we have used the Einstein summation convention. In particular, if we denote $\Tilde{u}_k$ as the $L_\beta^2(M)$-projection of $u_k^\ep$, then we have
     \be
\label{eq.Optestimate.function.proj}
\norm{u_k^\ep - \Tilde{u}_k}_{L^2(\pt\Omega)} + \norm{u_k^\ep - \Tilde{u}_k}_{H^1(M)} \le C \sqrt{\omega_d(\ep)}.
    \ee
    The constants $C$ above depend at most on $M,d,g,S$ and $\lambda$.
\end{theorem}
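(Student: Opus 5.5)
The whole argument rests on one estimate for the discrepancy measure $\mu_\ep$ of \eqref{eq.intro.mu}. Namely, for every $f\in H^1(M)$ with $f|_{B_{r_{\ep,p}}(p)}$ harmonic, and every smooth $\phi$,
\begin{equation*}
\Big|\int_M \phi f\,\dd\mu_\ep\Big| \le C\sqrt{\omega_d(\ep)}\,\norm{\phi}_{C^1(M)}\norm{f}_{H^1(M)},
\qquad
\Big|\int_M f^2\,\dd\mu_\ep\Big| \le C\sqrt{\omega_d(\ep)}\,\norm{f}_{H^1(M)}^2 .
\end{equation*}
Both bounds use the neutrality $\mu_\ep(V_p^\ep)=0$. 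To prove them, introduce in each Voronoi cell an auxiliary corrector $w_\ep$. It solves $-\Delta w_\ep=\beta$ in $V_p^\ep\setminus B_{r_{\ep,p}}(p)$, with $\pt_\nu w_\ep=1$ on $\pt B_{r_{\ep,p}}(p)$ and $\pt_\nu w_\ep=0$ on $\pt V_p^\ep$; this problem is solvable precisely because of \eqref{eq.intro.balance}. Then $\int \phi\, \dd\mu_\ep=\int \nabla w_\ep\cdot\nabla \phi$. A direct cell computation, comparing with the radial Green function $|x-p|^{2-d}$ (or $\log$ in $d=2$) near the hole, gives $\norm{\nabla w_\ep}_{L^2(M)}^2\lesssim \omega_d(\ep)$. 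Indeed, the singular part contributes $\sum_p \ep^{2d}\,r_{\ep,p}^{2-d}\cdot\ep^{-d}\sim \ep^{d/(d-1)}$, with $|\log \ep|$ appearing in $d=2$. The first inequality then follows from Cauchy--Schwarz. For the quadratic one, write $\int f^2\dd\mu_\ep=2\int f\nabla w_\ep\cdot\nabla f$, and control $\norm{f\nabla w_\ep}_{L^2}$ cellwise using the trace/Poincaré inequality on annuli, $\norm{f}^2_{L^2(\pt B_r)}\lesssim \ep^{-d}r^{d-1}\norm{f}^2_{L^2(V_p)}+(\dots)\norm{\nabla f}^2_{L^2(V_p)}$. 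This cellwise control is the main obstacle: $\nabla w_\ep$ is singular near the holes, so one needs a weighted $L^\infty$ bound on $w_\ep$ together with capacity estimates for the holes at the critical scale. The first thing to try is the splitting $w_\ep=$ an explicit radial profile plus a regular remainder, with the regular part handled by elliptic estimates in cells of size $\ep$.

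Given this lemma, the eigenvalues are handled by min--max. The Steklov Rayleigh quotient on $\Omega_\ep$, restricted to harmonically extended functions, is $\int_M|\nabla u|^2/\int_{\pt\Omega}u^2$ up to the energy inside the holes. That inner energy is itself $O(\omega_d)\norm{u}_{H^1}^2$ for smooth $u$, by capacity at radius $r\sim\ep^{d/(d-1)}$. Insert the span of $u_0,\dots,u_k$ (harmonically replaced inside the holes) as test space. By the quadratic estimate with $f$ smooth one gets $\int_{\pt\Omega}u^2=\int\beta u^2+O(\omega_d)$. Here the linear-in-$\phi$ version with $\phi=u^2$ smooth gives the full rate $\omega_d$, because $\norm{w_\ep}_{L^1}$-type pairings against smooth functions yield $\omega_d$ rather than its square root. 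Hence $\sigma_k^\ep\le\lambda_k+C\omega_d$. For the reverse inequality, use Theorem \ref{t.qualitativeconverge} for uniform $H^1$ bounds on $u_j^\ep$, test the weighted problem with the span of the $u_j^\ep$, and obtain $\lambda_k\le\sigma_k^\ep+C\omega_d$. In this direction the pairing $\int (u^\ep)^2\dd\mu_\ep$ involves only $H^1$ functions, so one must first show that the first-order term is in fact $O(\omega_d)$. I would do this by writing $u^\ep=\Tilde u+(u^\ep-\Tilde u)$, using the linear estimate on the smooth part and the quadratic estimate on the error. The resulting bound $O(\sqrt{\omega_d}\cdot\sqrt{\omega_d})$ is closed by a bootstrap once the eigenfunction rate below is known.

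For the eigenfunctions, take $\Tilde u_k$ to be the $L^2_\beta$ projection onto the $\lambda$-eigenspace, as in \eqref{eq.Optestimate.function.proj}. Write the weak Steklov equation as $\int\nabla u^\ep\cdot\nabla\phi=\sigma^\ep\int u^\ep\phi\,\beta\dV-\sigma^\ep\int u^\ep\phi\,\dd\mu_\ep$, up to the hole energy. Subtracting the equation for $\Tilde u$ shows that $e=u^\ep-\Tilde u$ satisfies $(-\Delta-\lambda\beta)e=$ a functional of norm $O(|\sigma^\ep-\lambda|+\sqrt{\omega_d})$ in $H^{-1}$. Since $e\perp_\beta$ the eigenspace, the spectral gap of $-\Delta-\lambda\beta$ on the orthogonal complement gives $\norm{e}_{H^1}\lesssim\sqrt{\omega_d}$. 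The $L^2(\pt\Omega)$ bound then follows from the quadratic estimate applied to $e$. Finally, the orthogonal matrices $\Oa_\ep$ come from Gram--Schmidt on the projected family, after normalizing in $L^2(\pt\Omega)$, which changes it by $O(\sqrt{\omega_d})$. Compactness of the orthogonal group gives $\Oa_\ep\to\Oa_0$ along the subsequence of Theorem \ref{t.qualitativeconverge}.
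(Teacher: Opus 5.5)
Your scheme differs from the paper's. You use min--max for both eigenvalue inequalities, the inverse of $-\Delta-\lambda\beta$ on the $\beta$-orthogonal complement of the eigenspace for the eigenfunctions, and then a bootstrap. The paper instead gets the eigenvalue rate from the identity \eqref{eq.atrickintegrate.2} before touching eigenfunctions. Your route has the nice feature that only squared hole energies enter, but two ingredients it relies on are missing.

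\textbf{First gap: smooth test functions.} You need $|\int_M\phi\,\dd\mu_\ep|\le C\omega_d(\ep)$ for smooth $\phi$: in the upper bound with $\phi=u^2$, and for the smooth term $\int\tilde u^2\,\dd\mu_\ep$ in the lower bound. Your cellwise corrector cannot give this. By neutrality, $\int_{V_p^\ep}\phi\,\dd\mu_\ep=\gd\phi(p)\cdot\int_{V_p^\ep}\exp_p^{-1}(x)\,\beta\dV+O(\ep^{d+2})$, since the geodesic sphere contributes no dipole to this order. The points need not be centroids (cf.\ \eqref{eq.centroid.voronoi}), so these Voronoi dipole moments are in general of order $\ep^{d+1}$. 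Hence any cell-by-cell bound sums to $O(\ep)$, which is much larger than $\omega_d(\ep)$ in every dimension. Equivalently, your $w_\ep$ jumps across cell faces, so integrating $\int\gd w_\ep\cdot\gd\phi$ by parts leaves interface terms $\int_{\pt V_p^\ep}w_\ep\,\pt_\nu\phi$ that also total $O(\ep)$; the ``$L^1$-pairing'' heuristic does not apply. What is needed is a cancellation across cells, for example $\sum_p\int_{V_p^\ep}\beta\,\gd\phi\cdot\exp_p^{-1}(x)\dV=\int_M\beta\,\gd\phi\cdot\gd\xi_\ep\dV=O(\ep^2)$, which uses that $\xi_\ep=\min_p\frac12\dist^2(\cdot,p)$ is continuous. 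This is exactly what the global cell function and Lemma \ref{l.sharperL2estimateforRaep} encode: $\norm{\Phi_\ep}_{L^1}\le C\ep^2$, even though $\norm{\Ra_\ep}_{L^\infty}\sim\ep$ is sharp by Section \ref{ss.counterex}. It is the main technical input behind \eqref{eq.Optestimate.stektoNeum}, and without it your eigenvalue rate stalls short of $\omega_d(\ep)$.

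\textbf{Second gap: energy of $u^\ep$ inside the holes.} In your $H^{-1}$ estimate for $e=u^\ep-\tilde u$, the right-hand side contains $\phi\mapsto\int_{M\setminus\Omega}\gd u^\ep\cdot\gd\phi$, whose dual norm is $\norm{\gd u^\ep}_{L^2(M\setminus\Omega)}$. The same quantity sits in the numerator of your lower-bound Rayleigh quotient. Your capacity remark covers only smooth functions. Writing $u^\ep=\tilde u+e$ brings back $\norm{\gd e}_{L^2(M\setminus\Omega)}$ multiplied by the inverse-gap constant, which cannot be absorbed in general. Testing with $e$ instead fails too, because $-\Delta-\lambda\beta$ is indefinite on the complement when $k\ge1$. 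So you need an independent a priori bound $\norm{\gd u^\ep}_{L^2(M\setminus\Omega)}^2\le C\ep^{\frac{d}{d-1}}$. One option is a decay estimate $\int_{B_p}|\gd u^\ep|^2\dV\le C(r_{\ep,p}/R_\ep)^d\int_{Q_p}|\gd u^\ep|^2\dV$ in the spirit of \eqref{eq.Dirichletdecay}, proved mode by mode in $Q_p\setminus B_p$ using that the Steklov condition with $\sigma_k^\ep r_{\ep,p}\ll1$ is nearly Neumann. Such a bound would give Lemma \ref{l.Optestimate.stektoNeum.J2} at once by Cauchy--Schwarz. The paper instead extracts this information with the auxiliary function \eqref{eq.define.w.xi.tilde}. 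For the eigenfunctions it avoids the issue by working on the Steklov side: it compares $u_i$ with its $L^2(\pt\Omega)$-projection $\hat U_i$ onto the Steklov eigenfunctions with $\lambda_j\le\lambda$. Then Steklov min--max makes $\int_\Omega|\gd\cdot|^2-\lambda\int_{\pt\Omega}(\cdot)^2$ coercive, and the hole part is controlled by the $\Omega$-energy through \eqref{eq.outsideOmegaGradientdifference}. Without this ingredient your argument delivers neither the lower eigenvalue bound nor the $\sqrt{\omega_d(\ep)}$ eigenfunction rate.
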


The rate~$\omega_d(\ep)$ is optimal, and we obtain the next-order term in the expansion of the Steklov eigenvalue~$\sigma_\e$ about the Laplace--Beltrami eigenvalue~$\lambda.$ For expository purposes we state this theorem for a simple eigenvalue, but in Section~\ref{s.expansion} we carry out the expansion about a general eigenvalue~$\lambda$ with multiplicity~$m\geq 1.$ Our expansion implies that the leading order correction when~$\sigma_\e$ is expanded about~$\lambda$ is a \textit{Coulomb-type interaction energy} between the centers~$S_\e$ of the perforated domain. However, as expected for~$\lambda>0,$ in our context the Coulomb interaction energy is an indefinite functional. Towards making this explicit, we introduce some notation. For a fixed base point~$y\in M,$ we let~$G_\lambda$ denote the Green function satisfying
\begin{equation} \label{e.greenfunctdef}
    (\Delta + \lambda \beta)G_\lambda(\cdot,y) = \delta_y - \beta(\cdot)U(\cdot)U(y)~ \mbox{ in } M,\,\mbox{ with } \int_M G_\lambda(x,y) U(x)\beta(x)\,\dV(x) = 0\,. 
\end{equation}
In Appendix~\ref{s.app} we describe the construction and some properties of this Green function~$G_\lambda$ that we use in the sequel. It is immediate that for nearby~$x\neq y \in M,$ the function~$G_\lambda$ has a logarithmic singularity in~$d=2$ and a Coulomb-type singularity~$|x-y|^{2-d}$ in~$d\geqslant 3.$ 

\smallskip \noindent 
Naturally, the second mean-zero condition in~\eqref{e.greenfunctdef} is a normalization in order to make~$G_\lambda$ unique, since the PDE only defines it up to an arbitrary multiple of~$U.$ Making an analogy to the terminology used in~\cites{Serfaty} the reader will observe that the interaction energy has the form of an \textit{indefinite modulated energy}. To see the indefiniteness explicitly, it suffices to recall that by the properties listed in Appendix~\ref{s.app}, for any sufficiently regular~$f$ we have
\begin{equation*}
\iint_{M\times M}
G_\lambda(x,y)f(x)f(y)
\beta(x)\beta(y)\,\dV_x\dV_y
=
\sum_{\lambda_j<\lambda}
\frac{
\left(\displaystyle\int_M f u_j\beta\,\dV\right)^2
}{
\lambda-\lambda_j
}
-
\sum_{\lambda_j>\lambda}
\frac{
\left(\displaystyle\int_M f u_j\beta\dV\right)^2
}{
\lambda_j-\lambda
}.
\end{equation*}
Let $\lambda=\lambda_k$ be a simple weighted Laplace--Beltrami
eigenvalue, let $U$ be an associated eigenfunction normalized in
$L^2(M,\beta\dV)$, and let $\sigma_\e=\sigma_k^\e$ be the
corresponding Steklov eigenvalue. Then the following expansions hold.
\begin{theorem}
    \label{t.interaction energy}
    If~$\lambda$ is the~$k$-th simple Laplace--Beltrami eigenvalue, and~$\sigma_\e$ the~$k$-th Steklov eigenvalue, then if~$d=2,$ then 
    \begin{equation}
        \label{e.d=2interaction}
        {\sigma_\e - \lambda}=  \underbrace{ \lambda^2\iint_{M\times M} G_\lambda(x,y) U(x)U(y)\dd \mu_\e(x) \dd \mu_\e(y)}_{\e^2|\log \e|} ~+~ \underbrace{\lambda \int_M U^2 \dd\mu_\ep}_{\ep^2} ~+~ o( \e^2)\,,
    \end{equation}
if $d = 3$, then
\begin{equation}
   {\sigma_\e - \lambda}= \underbrace{\lambda^2 \iint_{M\times M}G_\lambda(x,y) U(x)U(y)\dd \mu_\e(x) \dd \mu_\e(y) -\frac{1}{2} \int_{M\setminus \Omega} |\gd U|^2 \dV}_{\ep^{3/2}} ~+~ \underbrace{ \lambda\int_M U^2 \dd\mu_\ep}_{\ep^2} ~+~ o(\ep^2)\,,
\end{equation}
and if~$d \geqslant 4, $ then 
\begin{equation}
    {\sigma_\e - \lambda}= \underbrace{\lambda^2\iint_{M\times M} G_\lambda(x,y) U(x)U(y)\dd \mu_\e(x) \dd \mu_\e(y) -\frac{d-2}{d-1} \int_{M\setminus \Omega} |\gd U|^2 \dV}_{\ep^{d/(d-1)}} ~+~ O(\omega_d(\ep)^{3/2})\,. 
\end{equation}
\end{theorem}
\begin{remark}
    \label{r.multiple}
    As earlier mentioned, a suitable modification of this theorem to cover the case of a higher multiplicity~$\lambda$ is possible; and in Section~\ref{s.expansion} (Specifically Lemma \ref{l.expansion}) we provide details of how such an expansion can also be proven in that case. 
\end{remark}
\begin{remark}
    \label{r.dynamical}
    In~\cites{girouard_steklov_2021} the authors, motivated both by spectral geometric questions in Euclidean domains, prove the convergence of the Steklov eigenvalues in critically perforated Euclidean domains to the so-called \textit{dynamical} eigenvalue in the~$\e \to 0$ limit. We expect that the tools developed in our paper should adapt to that setting as well, to quantify the next-order term in the expansion, though we have not checked the details. 
\end{remark}

\smallskip \noindent The indefinite character of the interaction energy term in Theorem~\ref{t.interaction energy} is related to the fact that the operator~$(\Delta + \lambda \beta)$ that~$G_\lambda$ ``inverts'' (on the orthogonal complement of the kernel of the operator), is indefinite, since the associated quadratic form is~$\displaystyle((\Delta + \lambda \beta) u, u) = \int_M  (\lambda \beta u^2 - |\nabla u|^2 )\dd x.$ 

\subsection{Roadmap of the proof.}
\label{ss.ideas}

The analysis of the optimal convergence rate is based on the following formula obtained through a simple integration by parts (See \eqref{eq.atrickintegrate} and \eqref{eq.atrickintegrate.2} for details)
\be
\label{eq.atrickintegrate.intro}
    (\sigma_k^\ep - \lambda_k) \int_{\pt\Omega} u^\ep ~u~ \dA = \lambda_k\int_M u^\ep u \dd \mu_\ep -  \int_{M \setminus \Omega} \gd u^\ep \cdot \gd u \dV=: J_1 + J_2 .
 \ee
It is known in \cite{girouard_large_2021} that there are appropriately chosen $k$-th Steklov eigenfunctions $u^\ep$ and Laplace--Beltrami eigenfunctions $u$ so that $\int_{\pt\Omega} u^\ep ~u~ \dA \rta \norm{u}_{L_\beta^2(M)}^2$ as $\ep\rta 0$. Therefore, to understand the difference $\sigma_k^\ep -\lambda_k$, it suffices to estimate $J_1$ and $J_2$.

\subsubsection{Global cell function}
To bound the term $J_1$, the authors in \cites{girouard_large_2021,girouard_steklov_2021} introduced the following \emph{local} cell functions $\Psi_p^\ep$ on each Voronoi cell $V_p^\ep$
\be
\label{eq.local.cellfunction.intro}
\bca
    \Delta \Psi_p^\ep = c_{\ep,p} & \textup{ in } V_p^\ep \setminus \bar B_{r_{\ep,p}}(p)\\[5pt]
    \pt_\nu \Psi_p^\ep = 1 & \textup{ on }\pt B_{r_{\ep,p}}(p)\\[5pt]
    \pt_\nu \Psi_p^\ep = 0 &\textup{ on }\pt V_p^\ep,
\eca
\ee
where $c_{\ep,p}\approx \beta(p)$ as $\ep\rta 0$. They proved that these cell functions satisfy $\norm{\gd \Psi_p^\ep}_{L^2(V_p^\ep)} \le C \ep^{\frac{d+1}{2}}$ (See \cite{girouard_large_2021}*{Equation (36)}), which leads to the sub-optimal bound
\be
\label{eq.acrude.J1bound.intro}
|J_1| \le C \lb \sum_{p\in S_\ep} \norm{\gd \Psi_p^\ep}_{L^2(V_p^\ep)}^2 \rb^{1/2} \le C \ep^{1/2}.
\ee

Our strategy is different: the use of a cell formula that is based on a single Voronoi cell at a time, is reminiscent of periodic homogenization. However, when the points are merely~$\e$-maximally separated but otherwise disordered, it is more natural to make an analogy with stochastic homogenization, where the cell problem is defined globally, rather than on a single cell. Specifically we introduce the following \emph{global} cell function $\Phi_\ep$ that satisfies
\be
\label{eq.cellfunction.PDEform.intro}
\bca
\Delta \Phi_\ep = \beta & \textup{ in }M\setminus \pt\Omega_\ep\\
(\pt_\nu^+ - \pt_\nu^-)\Phi_\ep = 1 & \textup{ on }\pt\Omega_\ep,
\eca
\ee
where $\pt_\nu^{\pm}$ are outer normal derivatives from in-/outside $\Omega_\ep$. We emphasize that this definition of a cell function, aside from being globally defined,  is independent of the Voronoi cells generated by the point set~$S_\e.$ We also note that the problem \eqref{eq.cellfunction.PDEform.intro} is naturally well-posed by the assumption \eqref{eq.intro.balance}.

On the other hand, Voronoi cells do enter the picture in a crucial way to construct estimates for our global cell functions $\Phi_\ep$. We refer to Theorem \ref{t.summaryofPhi} for the fine estimates of $\Phi_\ep$ that allow one to bound $|J_1|$ (optimally) by $C\omega_d(\ep)$ in Lemma \ref{l.Optestimate.stektoNeum.J1}. We also mention that there is a simple example on the 2D torus showing the optimality of the $L^\infty$ bound of $\Phi_\ep$ in Theorem \ref{t.summaryofPhi}.

\subsubsection{Estimation of the Dirichlet-to-Neumann term}

The second main obstacle is a fine estimation for $J_2$ in \eqref{eq.atrickintegrate.intro}. By a naive Cauchy-Schwarz inequality one obtains the following sub-optimal bound
\[
|J_2| \le C \norm{\gd u^\ep}_{L^2(M\setminus \Omega)} \norm{u}_{C^1(M)} |M\setminus \Omega|^{1/2} \le C \sqrt{\omega_d(\ep)}.
\]
To make an optimal estimate, we flatten the manifold locally near each point $p\in S_\ep$, and write (allowing a small error)
\[
J_2 \approx - \sum_{p\in S_\ep} \gd_z u(p) \cdot \int_{B_{r_{\ep,p}}(0)} \gd_z u^\ep \dd z = -\sum_{p\in S_\ep} \int_{\pt B_{r_{\ep,p}}(0)} \lb\gd_z u(p) \cdot \frac{z}{|z|} \rb u^\ep \dd \Ha^{d-1}(z),
\]
where for each fixed $p\in S_\ep$,  $z\in \R^d \cong T_pM$ is a geodesic coordinate system. One can bound $J_2$ by $C\ep^{d/(d-1)}$ for $d\ge 2$ by integrating by parts in the annulus $B_{R_\ep}(0)\setminus B_{r_{\ep,p}}(0)$ (with $R_\ep = O(\ep)$) in the right hand side of the above display with the following auxiliary function
\[
w(z):= \gd_z u(p)\cdot\lb\frac{z}{|z|}\rb\,\frac{r_{\ep,p}^{d-1}\bigl(R_\ep^{d} - |z|^{d}\bigr)}{|z|^{d-1}\bigl(R_\ep^{d} - r_{\ep,p}^{d}\bigr)}.
\]
We refer to Lemma \ref{l.Optestimate.stektoNeum.J2} for details.

\subsubsection{Corrector problem and the second-order expansion}
For the convenience of presentation, we assume that the $k$-th Laplace--Beltrami eigenvalue is simple and $\sigma_\ep$ is the $k$-th Steklov eigenvalue. We refer to Section \ref{s.expansion} for the case when $\lambda$ has higher multiplicities.

The proof of the expansion of the Steklov eigenvalues $\sigma_\ep$  in Theorem \ref{t.interaction energy} is based on the following ``corrector'' equation
\begin{equation}
\label{eq.CorrectorZep.intro}
\bca
(\Delta+\lambda\beta)Z_\ep
=
\lambda U\dd\mu_\ep
-
\delta_\ep\beta U\dV &
\\[5pt]
\int_M Z_\ep U\beta\dV&
\eca
\textup{ and }\quad
\delta_\ep
:=
\lambda\int_M U^2\dd\mu_\ep.
\end{equation}
Here $U$ is an eigenfunction associated with $\lambda$, unique up to
sign, normalized by
\[
\|U\|_{L^2_\beta(M)}=1.
\]
Formally one expands the $k$-th Steklov eigenfunction $U_\ep$ as
\[
U_\ep = U + Z_\ep + E_\ep \textup{ and } \sigma_\ep = \lambda + \delta_\ep + \rho_\ep.
\]
By Lemma \ref{l.expansion} we can show that 
\[
\delta_\ep \le C \ep^2 \textup{ and }\rho_\ep = \lambda \int_M Z_\ep U \dd\mu_\ep - \frac{d-2}{d-1} \int_{M\setminus \Omega} |\gd U|^2 \dV + O(\omega_d(\ep)^{3/2}).
\]
Note that the leading term above that involves $Z_\ep$ can be rewritten by recalling the $\lambda$-Green functions defined in \eqref{e.greenfunctdef}
\be\label{eq.Zrepresentation.intro}
\int_M Z_\ep U\dd\mu_\ep
=
\lambda
\iint_{M\times M}
G_\lambda(x,y)U(x)U(y)\dd\mu_\ep(x)\dd\mu_\ep(y)\,.
\ee
We refer the reader to Corollary \ref{c.Zepsilon} for details.

\subsection{Literature review} \label{ss.lit} The study of partial differential equations in perforated domains has a long history, and a survey of prior work in this generality is a formidable task. Here, we describe work that is closely related to our paper. 

Rather than proceed chronologically, due to our motivation from the spectral geometry~\cites{girouard_large_2021,girouard_steklov_2021}, we first describe the connection to that literature a bit further. The recent preprint~\cites{chu_stern_2025} offers a variational viewpoint on minimal surface doublings by harnessing a Coulomb-type interaction energy. Roughly speaking, in~\cites{chu_stern_2025}, a multiplicity-2 minimal surface~$\Sigma\subset (N^3,g)$ is constructed by a variational gluing procedure starting from a nondegenerate critical point of an interaction energy related to Green function of the Jacobi operator of~$\Sigma$. The equilibrium measures of the energy~$\displaystyle E_\e(\mu)=\iint G(x,y)\dd\mu(x)\dd\mu(y)$ govern the distribution of catenoidal necks in the large-genus limit.

Our Theorem~\ref{t.interaction energy} exhibits the same mechanism at leading order in a purely spectral setting: the correction~$\sigma_\ep - \lambda$ is a discrete Coulomb energy for the (indefinite) Green function of~$\Delta+\lambda\beta$, evaluated on the signed measure~$\mu_\ep$ carried by the hole centers. In~\cites{chu_stern_2025} the interaction energy is the input of the gluing method used in that paper, since nondegenerate critical points yield minimal surfaces. On the other hand, in our setting, a Coulomb-type interaction energy arises from a higher order asymptotic expansion. We believe that using our high order expansion result in Theorem~\ref{t.interaction energy}, that we have carried out for a fixed metric~$g$ on~$M$, paves the way for variational tools such as quantitative~$\Gamma-$convergence, for applications to optimization problems with respect to~$g$ (see~\cites{chu_stern_2025,Serfaty}). Exploring such opportunities is an interesting open direction.

In the last decade, thanks to the aforementioned connection to spectral geometry, the Steklov problem has received a lot of attention, and we refer the reader to surveys~\cites{girouard_polterovich_2017,colbois_girouard_gordon_sher_2024}. A significant part of this literature concerns questions involving \textit{optimization} of a certain geometric scale-invariant quantity with respect to a class of metrics. Broadly speaking, a natural viewpoint on such problems is that of \textit{shape optimization}, and there is a fairly mature suite of tools that are based on the theory of \textit{homogenization} that are used to address such questions in the unrelated applied field of shape optimization~\cites{allaire1997shape}. What is novel in our paper is that  we carry out a \textit{quantitative second-order homogenization,  on a closed manifold}, on which there is no natural ``grid''.  Let us also mention that recently, quantitative \textit{stochastic} homogenization has been used to obtain optimal spectral convergence rates in the significantly more singular setting of a random geometric graph built from independent and identically (i.i.d) sampled points on a closed manifold~\cites{AV3,GTLV}. 

Making elegant use of this connection, the authors of~\cites{girouard_large_2021,girouard_steklov_2021} anticipate that in certain metric optimization problems, maximizing sequences develop microstructure with boundaries concentrating on many small components can approximate a bulk measure. It is this observation underlies the geometric set-up of those papers, and hence,  also our paper.  

Outside of spectral geometry, there is
 a vast literature on elliptic and even time-dependent problems posed on  perforated domains to which we believe the tools we build in this paper are of independent interest, and we do not attempt  to survey this literature exhaustively. 
The homogenized effect of many small holes was first identified by~\cites{marchenko_khruslov_2006,PV}, and independently popularized by the authors of~\cites{cioranescu_murat_1997}, who coined the term \emph{``le terme \'etrange venu d'ailleurs''} for the capacitary potential appearing in the limit of the Dirichlet problem at the critical scaling~$r_\ep \sim \ep^{d/(d-2)},$ where~$\e$ is the size of the periodic spacing of the inclusions and~$r_\e$ their radius. We refer the reader to~\cites{rauch_taylor_1975} for the probabilistic ``crushed ice'' problem. Spectral problems in perforated Euclidean domains have also received attention with a wide range of motivations, from fluids to wave propagation in highly heterogeneous/composite media, see~\cites{vanni,chiado_piat_nazarov_piatnitski_2012,cherednichenko_dondl_rosler_2018,borisov_2024,cherednichenko2025effective}. Finally, the question of variational problems on heterogeneous domains also arises in problems from materials science, see~\cites{calderer_desimone_golovaty_panchenko_2014,canevari_zarnescu_2020,canevari_cherednichenko_zarnescu_2025}.

Interaction energies of Coulomb-type arise in numerous contexts in mathematical physics~\cites{lewin2022coulomb}. Our viewpoint in this paper is closest to the study of Ginzburg-Landau vortices modeling superconductivity, see ~\cites{sandier2012ginzburg,serfatyICM2018}, where a similar Coulomb interaction energy arises after a renormalization procedure. More broadly, such interaction energies also arise in measure quantization problems and quadrature problems on closed manifolds, see~\cites{betermin-sandier,Serfaty} for a broad discussion and further references. 

 \subsection{Organization of the paper.} In Section~\ref{section.preliminaries}
we collect all the notation that is used in the manuscript. In Section~\ref{section.estimatesforglobalcellfunction} and particularly Theorem \ref{t.summaryofPhi} we prove optimal estimates for the global cell function introduced in \eqref{eq.cellfunction.PDEform.intro}. We use these estimates to prove Theorem~\ref{t.quantitative.StekNeum} in Section~\ref{th1.2}. We prove Theorem~\ref{t.interaction energy} in Section~\ref{s.expansion}. Finally, in Appendix~\ref{s.app} we collect a few auxiliary results about the $\lambda$-Green functions $G_\lambda$ that arises in the interaction energy. 

\section*{Acknowledgements}
The research of R.V. was supported, in part, by an NSF grant DMS-2407592.  Z. H. was supported partially by an NSF grant No. DMS-2407235 and DMS-2407592. The authors acknowledge helpful discussions with William Feldman. Finally, both authors are happy to acknowledge an NSF RTG grant (NSF DMS-2136198) to the University of Utah Department of Mathematics. 

\section{Preliminaries}
\label{section.preliminaries}

\subsection{Notation and assumptions}

\begin{enumerate}

    \item Let $d\ge 2$ and $(M,g)$ denote a $d$-dimensional smooth connected compact Riemannian manifold without boundary.

    \item For any $q\in M$, we denote
    \be
    \label{eq.define.exponential}
    \exp_q: T_qM \rta M
    \ee
    as the exponential map based at $q$. We denote $\textup{inj}(M,g)>0$ as the largest radius $r$ such that the map $\exp_q$ is injective on $B_r(0)$ for all $q\in M$.
    \item We denote$\dV$ as the Riemannian volume measure and$\dA=\dd\Ha^{d-1}$ as the ($d-1$)-dimensional Hausdorff measure on the manifold $M$. 
    \item For every $\ep>0$ we denote $S_\ep$ a point set that is maximally $\ep$-separated, and for each $p\in S_\ep$ we define the following Voronoi cell
\be
\label{eq.Voronoicell}
V_p^\ep:=\lma x\in M \;;\; \dist_g(x,p) < \dist_g(x,q) \textup{ for all }q\in S_\ep \rma.
\ee
\item Note that for $\ep< \ep_0(M,d,g)$ and each $p\in S_\ep$, the Voronoi cells $V_p^\ep$ are geodesically convex with piecewise smooth boundary, and for some $C(M,g)>0$ the volume of each cell satisfies
\be
\label{eq.Voronoicell.volumeestimate}
C^{-1}\ep^d \le |V_p^\ep| \le C \ep^d.
\ee
\item  By maximality of $S_\ep$ we also have $B_{\ep/4}(p)\subset V_p^\ep \subset B_{\ep}(p)$. We refer to \cites{girouard_large_2021} for the construction of such sets in detail.

\item For any domain $U\subset M$ that does not intersect $S_\ep$ and any function $f\in C(M)$ that is $C^1$ on $\overline{U}$ and $M\setminus U$ respectively, we define
\be
\label{eq.directionofNormalderivative.positive}
\pt_\nu^+ f(x) : = \lim_{U\ni y \rta x} \gd f(y) \cdot \nu(x),
\ee
where $\nu(x)$ is the unit outer normal direction at $x$ with respect to $U$. Similarly, we define 
\be
\label{eq.directionofNormalderivative.negative}
\pt_\nu^- f(x) : = \lim_{\overline{U}\not\ni y \rta x} \gd f(y) \cdot \nu(x).
\ee

\item Let $\beta\in C^\infty(M; \R_+)$ be a weight function. In general, we will assume that $\beta$ satisfies the following uniform bound
\be
\label{eq.uniformbound.weights}
S^{-1} \le \beta \le S \textup{ and }\norm{\beta}_{C^1(M)} \le S.
\ee
for some positive constant $S$.

\item (Perforated domains) We define for each $\ep>0$ a domain $\Omega=\Omega_\ep\subset M$ as
\be
\label{eq.defineOmega}
\Omega_\ep : = M \setminus \bigcup_{p\in S_\ep} \overline{B}_{r_{\ep,p}}(p),
\ee
where for $\ep<\ep_0(M,d,g)$ and each $p\in S_\ep$, the radius $r_{\ep,p}>0$ is the unique positive number such that
\be
\label{eq.localbalanceofmass}
A(\pt B_{r_{\ep,p}}(p)) = \int_{V_p^\ep} \beta \dV.
\ee
\begin{remark}
    \label{r.generalizeDefofOmega}
    The definition of the radii $r_{\ep,p}$ and the domains $\Omega$ is made after introducing the weight function $\beta$ in order to meet the balance of mass condition \eqref{eq.localbalanceofmass}. {In practice one can choose appropriate $\beta$ to satisfy this condition.} Specifically in the assumption {\cite{girouard_large_2021}*{Equation (12)}}, we have
    \[
    A(\pt B_{r_{\ep,p}}(p)) = \beta(p) |V_p^\ep| = \int_{V_p^\ep} \beta \dV + O(\ep^{d+1}),
    \]
    where the error can be compensated by replacing $\beta$ by $\beta_\ep := \beta + \phi_\ep$ for some $\phi_\ep \in C^\infty(M)$ such that $$\int_M \phi_\ep \dV = 0, \, \norm{\phi_\ep}_{L^\infty(M)}\le C\ep \textup{ and }\norm{\phi_\ep}_{C^1(M)}\le C $$
    for some $C=C(M,d,g)$. Such a $\phi_\ep$ can be obtained by  rescaling  standard cut-off functions, centering them at each $p\in S_\ep$ and then summing them up over $p$.
\end{remark}

\item We denote
\be\label{eq.defineAeps.and.mueps}
\dA_\ep : = \restr{\dA}{\pt \Omega _\ep} \textup{ and } \dd\mu_\ep : = \beta\dV - \dA_\ep.
\ee

\item We denote $R_\ep = \ep/8$ so that $\dist(B_ {R_\ep}(p), \pt V_p^\ep) \ge \ep/8 $, and also define
\be\label{eq.defineOmegastar}
\Omega^*:= M \setminus \bigcup_{p\in S_\ep} \overline{B}_{R_\ep}(p) \subset \Omega.
\ee

\item We often abuse the notation and denote
\be\label{eq.denote.Bp.Qp}
r_\ep=r_{\ep,p}, \ V_p=V_p^\ep,\ B_p:=B_{r_{\ep,p}}(p)\textup{ and }Q_p:= B_{R_\ep} (p).
\ee
Note that by \eqref{eq.localbalanceofmass}, there is a constant $C(M,d,g,S)>0$
\be
\label{eq.radii.estimate}
C ^{-1} \ep^{\frac{d}{d-1}}\le r_{\ep,p} \le C \ep^{\frac{d}{d-1}} \ \textup{ and }\ C^{-1} \ep^{\frac{1}{d-1}} \le \frac{r_{\ep,p}}{R_\ep} \le C \ep^{\frac{1}{d-1}}.
\ee

\item Throughout this paper the following small parameter will play an important role. We define
\begin{equation}
    \label{e.omegadef}
    \omega_d(\e) := \begin{dcases}
       \e^{\frac{d}{d-1}} \quad  \quad  \ \  \mbox{ if } d \geqslant 3, \\
       \e^2 |\log \e| \quad \mbox{ if } d = 2\,. 
    \end{dcases}
\end{equation}
Note that $\ep^{\frac{d}{d-1}}\le \omega_d(\ep)$ for $d\ge 2$.

\end{enumerate}

\subsection{Steklov and weighted Laplace--Beltrami eigenvalue problem}
\label{subsection.Steklov.Neumann.definition}

In this section we define respectively the Steklov and Laplace--Beltrami eigenvalue problems.
For the Steklov eigenvalues we solve for $\ep>0$ and $k \ge 0$
\be
\label{eq.Steklov}
\bca
-\Delta v_k^\ep = 0 & \textup{ in }\Omega_\ep\\
\pt_\nu v_k^\ep = \sigma_k^\ep v_k^\ep & \textup{ on }\pt \Omega_\ep.
\eca
\ee
Here $\Omega_\ep$ is defined in \eqref{eq.defineOmega}. For the Laplace--Beltrami eigenvalues we solve for $k\ge 0$
\be
\label{eq.Neumann}
-\Delta u_k = \lambda_k \beta u_k  \textup{ in }M.
\ee
Here we allow a weight function $\beta$ that satisfies \eqref{eq.uniformbound.weights}. Correspondingly we can obtain a sequence of Steklov eigenpairs
\be\label{eq.Steklov.eigenpairs}
0= \underbrace{\sigma_0^\ep}_{v_0^\ep} < \underbrace{\sigma_1^\ep}_{v_1^\ep} \le \cdots\le \underbrace{\sigma_k^\ep}_{v_k^\ep} \le \cdots
\ee
and Laplace--Beltrami eigenpairs
\be\label{eq.Neumann.eigenpairs}
0= \underbrace{\lambda_0}_{u_0} <\underbrace{\lambda_1}_{u_1}\le \cdots \le \underbrace{\lambda_k}_{u_k} \le \cdots
\ee
We denote for $k\ge0, \ep>0$ the function $u_k^\ep$ as the unique harmonic extension of $v_k^\ep$. For each $l,k\ge 0$ and $\ep>0$, we may without loss normalize 
\be
\label{eq.normalize.Steklov.and.Neumann}
\int_{\pt\Omega} v_l^\ep v_k^\ep \dA =\int_{\pt\Omega} u_l^\ep u_k^\ep \dA = \int_M \beta u_l u_k \dV = \delta_{lk}.
\ee

\section{Estimates of a global cell function}
\label{section.estimatesforglobalcellfunction}
In this section, we aim to study the unique cell function $\Phi_\ep\in H^1(M)$ that satisfies 
\be
\label{eq.cellfunction}
\int_{M} \gd \Phi_\ep \cdot \gd \eta  \dV+\int_M \eta \dd\mu_\ep =0 \  \textup{ and } \ \int_{M}\beta \Phi_\ep \dV = 0,
\ee
for every $\eta\in H^1(M)$, where
\[
\dd\mu_\ep = \beta\dV - \restr{\dA}{\pt\Omega}=:\beta\dV - \dA_\ep.
\]
In PDE form, the function $\Phi_\ep$ satisfies
\be
\label{eq.cellfunction.PDEform}
\bca
\Delta_g \Phi_\ep = \beta & \textup{ in }M\setminus \pt\Omega\\
(\pt_\nu^+ - \pt_\nu^-)\Phi_\ep = 1 & \textup{ on }\pt\Omega,
\eca
\ee
where $\pt_\nu^{\pm}$ are as defined in \eqref{eq.directionofNormalderivative.positive} and \eqref{eq.directionofNormalderivative.negative}.
To understand the behavior of $\Phi_\ep$ for small $\ep>0$, we consider the following decomposition
\be
\label{eq.decomposePhi}
\Phi_\ep = \bar{\Phi}_\ep + \Ra_\ep.
\ee
Here $\bar{\Phi}_\ep$ is of the following form
\be
\label{eq.majortermincellfunction}
\bar{\Phi}_\ep = \sum_{p\in S_\ep} \Psi_p + \kappa_\ep,
\ee
where $\kappa_\ep$ is a normalizing constant so that $\int_{M} \beta \bar{\Phi}_\ep \dV=0$ and each $\Psi_p$ satisfies the following PDE
\be
\label{eq.equationforPsip}
\bca
\Delta_g \Psi_p = \beta & \textup{ in }Q_p\setminus \pt B_p\\
(\pt_\nu^+ - \pt_\nu^-)\Psi_p = 1 & \textup{ on }\pt B_p\\
\Psi_p = 0 & \textup{ on }\partial Q_p\,,
\eca
\ee
and extended by~$\Psi_p \equiv 0$ on~$M \setminus\overline{Q}_p$, where $B_p:=B_{r_{\ep,p}}(p)$ and $Q_p : = B_ {R_\ep}(p)$ are defined in \eqref{eq.denote.Bp.Qp}. For the convenience of notation, we denote 
\[
\Omega^*:= M \setminus \bigcup_{p\in S_\ep} \overline{Q}_p \subset \Omega.
\]
By the definition of $\bar{\Phi}_\ep$, we know that the remainder $\Ra_\ep$ satisfies the following PDE
\be
\label{eq.equationforRaep}
\bca
\Delta_g \Ra_\ep  = \beta & \textup{ in } \Omega^*\\
\Delta_g \Ra_\ep = 0 & \textup{ in }M\setminus \overline{\Omega^*}\\
(\pt_\nu^+ - \pt_\nu^-) \Ra_\ep = \pt_\nu^-\bar{\Phi}_\ep=:\Tilde{\pi} & \textup{ on }\pt \Omega^*.
\eca
\ee

In the following subsections, we are going to make precise estimates on the functions $\bar{\Phi}_\ep$ and $\Ra_\ep$. As a summary, we prove the following theorem.

\begin{theorem}
    \label{t.summaryofPhi}
    Let $\Phi_\ep$ be as defined in \eqref{eq.cellfunction}, $\bar{\Phi}_\ep$ and $\Ra_\ep$ be as defined in \eqref{eq.equationforPsip} and \eqref{eq.equationforRaep} respectively. Then for all $d\ge 2$ and $\ep< \ep_0(M,d,g)$
    \be
\label{eq.Phibar.summary}
\norm{\bar{\Phi}_\ep}_{L^\infty(M)} \le C {\omega_d(\ep)}, \ \norm{\bar{\Phi}_\ep}_{H^1(M)}\le C \sqrt{\omega_d(\ep)}, \ \norm{\bar{\Phi}_\ep }_{L^1(M)}\le C\ep^2\textup{ and } \norm{\bar{\Phi}_\ep}_{L^2(M)} \le C\ep^{\frac{d}{d-1}}.
    \ee
Moreover, we have 
\be
\label{eq.Raep.summary}
\norm{\Ra_\ep}_{L^2(M)}\le C \ep^2 , \ \norm{\Ra_\ep}_{L^2(\pt\Omega)} \le C \ep^{\frac{d}{d-1}}, \ \norm{\Ra_\ep}_{L^\infty(M)} \le C \ep \textup{ and }\norm{\Ra_\ep}_{H^1(M)} \le C \ep.
\ee
In particular, 
\be
\label{eq.Phi.summary}
\norm{\Phi_\ep}_{L^2(M)} \le C \ep^{\frac{d}{d-1}} , \ \norm{\Phi_\ep}_{L^1(M)}\le C\ep^2, \ \norm{\Phi_\ep}_{L^\infty(M)}\le C\ep \textup{ and }\norm{\Phi_\ep}_{H^1(M)}\le C \sqrt{\omega_d(\ep)}.
\ee
If further the points $p\in S_\ep$ are also the mass centers of their corresponding Voronoi cells, that is if we have for each $p\in S_\ep$
\be
\label{eq.centroid.voronoi}
\int_{V_p^\ep} \exp_p^{-1}(x) \beta(x) \dV = 0 \in T_pM,
\ee
then we can improve the $L^\infty$ bound to
\be
\label{eq.Phi.Ra.improve.summary}
\norm{\Ra_\ep}_{L^\infty(M)}\le C \ep^2|\log\ep|\textup{ and }\norm{\Phi_\ep}_{L^\infty(M)}\le C \omega_d(\ep).
\ee
The above constants $C>0$ depend at most on $d, M , g$ and $S$.
\end{theorem}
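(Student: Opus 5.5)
We split the proof along the decomposition \eqref{eq.decomposePhi}: first the explicit local part $\bar\Phi_\ep$, then the remainder $\Ra_\ep$, and finally we add the two.

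\textbf{Step 1: the local pieces $\Psi_p$.} In geodesic normal coordinates at $p$, $\Psi_p$ is, up to $O(\ep)$ relative metric corrections, the radial solution on the annulus $Q_p\setminus B_p$. That radial solution is explicit. In $B_p$ it equals the constant $\Psi_p(r_\ep)$ plus an $O(r_\ep^2)$ term. On $Q_p\setminus B_p$ it is
\[
\Psi_p(r)=\tfrac{\beta(p)}{2d}(r^2-R_\ep^2)+a\,\Gamma(r)+b ,
\]
where $\Gamma$ is the fundamental solution and $b$ enforces $\Psi_p(R_\ep)=0$. The jump condition $(\pt_\nu^+-\pt_\nu^-)\Psi_p=1$ on $\pt B_p$ fixes the coefficient $a\sim r_\ep^{d-1}$.

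From these formulas we read off the following sizes:
\begin{itemize}
\item $\Gamma$-part: of order $r_\ep^{d-1}\,r^{2-d}\sim \ep^d r^{2-d}$ for $d\ge3$, and $\ep^2\log(R_\ep/r)$ for $d=2$.
\item Quadratic part: of order $\ep^2$.
\end{itemize}
Consequently $\|\Psi_p\|_{L^\infty}\lesssim \ep^d r_\ep^{2-d}=\ep^{d/(d-1)}$ for $d\ge3$, and $\lesssim\ep^2|\log\ep|$ for $d=2$.

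Integrating over $Q_p$ gives:
\begin{itemize}
\item $\|\Psi_p\|_{L^1}\lesssim \ep^{d+2}$;
\item $\|\nabla\Psi_p\|_{L^2}^2\lesssim \ep^{2d}r_\ep^{2-d}+\ep^{d+2}$, which is $\lesssim\ep^d\omega_d(\ep)$;
\item $\|\Psi_p\|_{L^2}^2\lesssim \ep^{d}\ep^{2d/(d-1)}$, coming from the near-hole region together with the $\ep^4\ep^d$ tail.
\end{itemize}
The $Q_p$ are disjoint, so we sum over the $\sim\ep^{-d}$ points. Absorbing $|\kappa_\ep|\lesssim \sum_p\|\Psi_p\|_{L^1}\lesssim\ep^2$, this gives \eqref{eq.Phibar.summary}. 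The $L^\infty$ bound $\omega_d$ follows since $\ep^2\le\omega_d$.

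\textbf{Step 2: the remainder $\Ra_\ep$.} We use the weak form. $\Ra_\ep$ solves $\int\nabla\Ra_\ep\cdot\nabla\eta+\int\eta\,d\nu_\ep=0$, with the measure
\[
\nu_\ep=\beta\,\mathbf 1_{\Omega^*}\dV-\tilde\pi\,\dd\Ha^{d-1}|_{\pt\Omega^*}.
\]
The key observation is that $\nu_\ep(V_p)=0$ for each cell. This follows from the local balance \eqref{eq.localbalanceofmass} and the divergence theorem applied to $\Psi_p$ on $Q_p$: the flux $\int_{\pt Q_p}\pt_\nu^-\Psi_p$ equals $\int_{Q_p}\beta-A(\pt B_p)$. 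Moreover $\nu_\ep$ has density bounded by $C$ on a cell of diameter $\ep$.

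Testing with $\eta=\Ra_\ep$ and subtracting cell averages, Poincaré on each cell gives $|\int\eta\,d\nu_\ep|\lesssim \ep\|\nu_\ep\|\,\|\nabla\eta\|$, hence $\|\nabla\Ra_\ep\|_{L^2}\lesssim\ep$.

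For the $L^2$ bound we use duality. Write $\Ra_\ep=\int G(\cdot,y)\,d\nu_\ep(y)$, with $G$ the Neumann Green function of $\Delta$ on $M$ (normalized by the constant). Neutral cells allow a Taylor expansion in $y$ about $p$, giving a dipole of size $\ep\cdot\ep^d$ per cell. For $L^2$, take a dual solution $w$ with $\Delta w=f$; then $\int \Ra_\ep f=-\int w\,d\nu_\ep$, and neutrality yields $\lesssim \ep^2\|\nabla^2 w\|_{L^1}\lesssim\ep^2\|f\|_{L^2}$.

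For $L^\infty$ we split the sum into near cells (distance $\lesssim\ep$, contributing $O(\ep^2)$ in $d\ge3$ and $O(\ep^2|\log\ep|)$ in $d=2$) and far cells. The far dipoles contribute
\[
\sum\ep^{d+1}|x-p|^{1-d}\lesssim\ep .
\]
Under \eqref{eq.centroid.voronoi} the dipole moments cancel up to curvature and $\beta$-variation errors. The far terms then become quadrupoles, $\sum\ep^{d+2}|x-p|^{-d}\lesssim\ep^2|\log\ep|$, which gives \eqref{eq.Phi.Ra.improve.summary}.

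The trace bound $\|\Ra_\ep\|_{L^2(\pt\Omega)}$ uses $\Ra_\ep$ being harmonic in each $Q_p$. Hence $|\Ra_\ep|$ on $\pt B_p$ is controlled by its $Q_p$ average, and summing with $A(\pt B_p)\sim\ep^d$ gives $\lesssim\ep\cdot\|\Ra_\ep\|_{L^\infty}$-type bounds. More carefully, combining the $L^2$ bound $\ep^2$ with the mean value property gives $\ep^2\le\ep^{d/(d-1)}$.

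\textbf{Main obstacle.} We expect the hardest part to be the $L^\infty$ estimate of $\Ra_\ep$, particularly the improved bound: verifying that the centroid condition cancels the dipoles to within $O(\ep^{d+2})$ per cell, despite the nonflat geometry and the non-spherical Voronoi cells. The final bound \eqref{eq.Phi.summary} then follows by the triangle inequality.
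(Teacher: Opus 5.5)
\textbf{The gap: the $L^2$ bound on $\Ra_\ep$ without the centroid condition.} Most of your outline follows the paper: the explicit radial model for $\Psi_p$, cellwise neutrality of $\nu_\ep$ giving $\norm{\gd\Ra_\ep}_{L^2}\lesssim\ep$, the near/far Green-function split for $\norm{\Ra_\ep}_{L^\infty}\lesssim\ep$, and the quadrupole improvement under \eqref{eq.centroid.voronoi}. The step that fails is $\norm{\Ra_\ep}_{L^2(M)}\le C\ep^2$ when the points are not centroids.

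Neutrality only removes the monopole of $\nu_p:=\nu_\ep|_{V_p}$. The dipole moment $D_p=\int\exp_p^{-1}(y)\,\dd\nu_p(y)$ equals $\int_{V_p}\beta\,\exp_p^{-1}\dV+O(\ep^{d+2})$, the $\beta$-weighted centroid offset of the Voronoi cell. As you note yourself, this is generically of size $\ep^{d+1}$.

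After the second-order Taylor expansion you are left with $\sum_p\gd w(p)\cdot D_p$. Estimated cell by cell, this is only $O(\ep)$ times the $C^1$ size of $w$. So your argument actually gives $\norm{\Ra_\ep}_{L^2}\lesssim\ep$, no better than the $L^\infty$ bound. Also, since only $\norm{w}_{H^2}\lesssim\norm{f}_{L^2}$ is controlled, the per-cell expansion has to be done in averaged (Poincar\'e) form rather than pointwise.

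The downstream claims depend on this step: $\norm{\Ra_\ep}_{L^2(\pt\Omega)}\lesssim\ep^{d/(d-1)}$, $\norm{\Phi_\ep}_{L^2}\lesssim\ep^{d/(d-1)}$ and $\norm{\Phi_\ep}_{L^1}\lesssim\ep^2$.

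\textbf{The missing idea.} The dipole field is not small cell by cell, but globally it is the gradient of a small continuous function. Set $\xi_\ep(x)=\min_{q\in S_\ep}\frac12\dist^2(x,q)$. It is globally Lipschitz and equals $\frac12\dist^2(\cdot,p)$ on $V_p$, with $\norm{\xi_\ep}_{L^\infty}\lesssim\ep^2$, $\textup{Lip}(\xi_\ep)\lesssim\ep$ and $|\gd^2\xi_\ep-I|\lesssim\ep^2$ inside cells.

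On each cell write $\eta=c_p+\gd\eta\cdot\gd\xi_\ep+\hat\eta$. Since $|\gd\hat\eta|\lesssim\ep|\gd^2\eta|+\ep^2|\gd\eta|$, Poincar\'e and trace give $O(\ep^2\norm{\eta}_{H^2})$ for the $\hat\eta$ part after summing. The $\gd\eta\cdot\gd\xi_\ep$ terms aggregate, up to similar errors, into one global integral:
\[
\int_M\beta\,\gd\eta\cdot\gd\xi_\ep\dV=-\int_M\xi_\ep\,\textup{div}(\beta\gd\eta)\dV=O(\ep^2\norm{\eta}_{H^2}).
\]
There are no interface terms because $\xi_\ep$ is continuous across Voronoi faces. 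The remaining errors are $O(\ep^2)$: replacing $\beta$ and $\Tilde\pi$ by cell averages (using $\osc_{\pt Q_p}\Tilde\pi\lesssim\ep^2$), and the pieces on $Q_p$, where $\xi_\ep$ is radial. This gives $|\int\eta\,\dd\nu_\ep|\lesssim\ep^2\norm{\eta}_{H^2}$, and duality then gives the $\ep^2$ bound.

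\textbf{A smaller gap in the trace step.} You must also control $\Ra_\ep-\avg{\Ra_\ep}_{B_p}$ on $\pt B_p$, not just the mean. For example, $\osc_{B_p}\Ra_\ep\lesssim(r_{\ep,p}/R_\ep)\osc_M\Ra_\ep\lesssim\ep^{d/(d-1)}$, and this oscillation term is exactly where the exponent $d/(d-1)$ comes from.
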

\begin{remark}
    It can be shown with a simple example that the scalings obtained in this theorem for the $L^\infty$-norm of $\Phi_\ep$ are optimal, in the sense that for~$M = \mathbb{T}^d$ there is a choice of maximally $\ep$-separated points~$\{x_i\}$ such that the associated ``cell function''~$\Phi_\e$ achieves this scaling. See Section~\ref{ss.counterex}. 
\end{remark}

\subsection{Radial model and estimates for \texorpdfstring{$\bar{\Phi}_\ep$}{P}}
\label{subsection.radialmodel.barPhi}
To estimate $\bar{\Phi}_\ep$, it suffices to estimate each $\Psi_p$. To that end, we need the following radial model functions.  For each $p\in S_\ep$, we define for $0\le r \le R_\ep,$
\begin{equation}
\label{eq.Modelfunction}
L_p^\ep(r):=
\begin{dcases}
r_{\ep,p} \min\left\{ \ln \left( \frac{R_\ep}{r} \right), \ln \left( \frac{R_\ep}{r_{\ep,p}} \right) \right\} & \text{if } d=2 \\[12pt]
\frac{r_{\ep,p}^{d-1}}{(d-2)} \min\left\{ \left( \frac{1}{r^{d-2}} - \frac{1}{R_\ep^{d-2}} \right), \left( \frac{1}{r_{\ep,p}^{d-2}} - \frac{1}{R_\ep^{d-2}} \right) \right\} & \text{if } d\ge 3.
\end{dcases}
\end{equation}
We also define for $0\le r \le R_\ep$
\be
\label{eq.Modelfunction.Bulkadapted}
\bar{L}_{p}^\ep(r): = L_p^\ep(r) + \frac{\beta(p)}{2d}(r^2 - R_\ep^2).
\ee
All these functions are extended by $\bar{L}_{p}^\ep(r)=L_p^\ep(r) =0$ for $r> R_\ep$. We define
\be
\label{eq.defineLa.eps}
\La_\ep(x)  = \sum_{p\in S_\ep}  \La_{\ep,p}(x):= \sum_{p\in S_\ep} \bar{L}_{p}^\ep(\dist(x,p)).
\ee
Note that the support of each $\La_{\ep,p}$ is contained in $\overline{Q}_p\Subset V_p^\ep$.

\begin{lemma}
    \label{l.difference.modelandPhibar}
    For each $p\in S_\ep$ and $\ep < \ep_0(M,d,g)$ the difference $\Psi_p-\La_{\ep,p}$ satisfies
    \be
\label{eq.difference.modelandPhibar}
\norm{\Psi_p-\La_{\ep,p}}_{L^\infty(Q_p)}\le C \ep^3 \textup{ and }\norm{\Psi_p-\La_{\ep,p}}_{C^1(\overline{Q}_p)}\le C \ep^2,
    \ee
    where the constant $C$ depends only on $M,d,g$ and $S$.
\end{lemma}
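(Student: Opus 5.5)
The plan is to compare $\Psi_p$ and $\La_{\ep,p}$ through the PDE they satisfy on $Q_p$. I will show that $w_p:=\Psi_p-\La_{\ep,p}$ has no jump across $\pt B_p$, vanishes on $\pt Q_p$, and solves a Poisson equation with a right-hand side of size $O(\ep)$. The two bounds in \eqref{eq.difference.modelandPhibar} then follow from the maximum principle and a rescaled elliptic estimate.

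\emph{Step 1: the equation for $\La_{\ep,p}$.} I will work in geodesic polar coordinates centred at $p$, writing $r=\dist(x,p)<R_\ep<\textup{inj}(M,g)$. For a radial function $f(r)$ one has
\[
\Delta_g f=f''+\Big(\tfrac{d-1}{r}+h_p(r,\theta)\Big)f',
\]
where $h_p=\pt_r\log\sqrt{\det g}-\tfrac{d-1}{r}$ satisfies $|h_p|\le C(M,g)\,r$ uniformly in $p$. By construction $L_p^\ep$ is Euclidean-harmonic on each of the regions $\{r<r_{\ep,p}\}$ and $\{r_{\ep,p}<r<R_\ep\}$. Also $\tfrac{\beta(p)}{2d}r^2$ has Euclidean Laplacian $\beta(p)$. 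Hence on each of the two regions
\[
\Delta_g\La_{\ep,p}=\beta(p)+h_p(r,\theta)\,(\bar L_p^\ep)'(r).
\]
For the derivative I use $(L_p^\ep)'=0$ when $r<r_{\ep,p}$ and $|(L_p^\ep)'|=r_{\ep,p}^{d-1}/r^{d-1}\le 1$ when $r>r_{\ep,p}$; in addition $|(\tfrac{\beta(p)}{2d}r^2)'|\le C\ep$. Therefore $|h_p(\bar L^\ep_p)'|\le C\ep$ on $Q_p$.

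\emph{Step 2: the interface and boundary conditions.} The sphere $\pt B_p$ is a geodesic sphere, so its unit normal is exactly $\pt_r$. The smooth quadratic part of $\bar L_p^\ep$ contributes no jump. The jump of $(L_p^\ep)'$ at $r=r_{\ep,p}$ equals $-r_{\ep,p}^{d-1}/r_{\ep,p}^{d-1}-0=-1$. With the orientation conventions \eqref{eq.directionofNormalderivative.positive}--\eqref{eq.directionofNormalderivative.negative}, where the normal points into the hole relative to $\Omega$, this gives $(\pt_\nu^+-\pt_\nu^-)\La_{\ep,p}=1$, exactly as for $\Psi_p$. Moreover both functions vanish on $\pt Q_p$, since $L_p^\ep(R_\ep)=0$ and the quadratic part vanishes at $r=R_\ep$. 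I expect the main care of the proof to lie here: checking the sign conventions and that the cancellation is exact. Once it holds, $w_p\in H^1_0(Q_p)$ is a weak solution of
\[
\Delta_g w_p=F_p,\qquad F_p:=\beta(x)-\beta(p)-h_p(\bar L^\ep_p)' \ \textup{ in } Q_p,
\]
with no singular part on $\pt B_p$. Since $|\beta(x)-\beta(p)|\le S r\le C\ep$, we get $\|F_p\|_{L^\infty(Q_p)}\le C\ep$. Note that $F_p$ may be discontinuous across $\pt B_p$, but it is bounded.

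\emph{Step 3: the estimates.} For the $L^\infty$ bound I compare $\pm w_p$ with a barrier $A\ep(R_\ep^2-r^2)$. Its $\Delta_g$ is $-A\ep(2d+O(r^2))\le -A\ep d$, and it is nonnegative on $\pt Q_p$. Choosing $A$ large and applying the maximum principle gives $|w_p|\le C\ep R_\ep^2\le C\ep^3$. For the gradient I rescale: set $\tilde w(y)=w_p(\exp_p(\ep y))$ on the fixed ball $B_{1/8}$. The rescaled metric $\tilde g$ is uniformly close to Euclidean in $C^1$, $\tilde w=0$ on the boundary, and $\Delta_{\tilde g}\tilde w=\ep^2F_p$ has $L^\infty$ norm at most $C\ep^3$. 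Global $W^{2,q}$ estimates for $q>d$ (Calderón--Zygmund up to the smooth boundary of the ball) and Morrey's embedding then give $\|\tilde w\|_{C^1(\overline{B_{1/8}})}\le C(\|\tilde w\|_{L^\infty}+\ep^3)\le C\ep^3$. Scaling back gives $\|\nabla w_p\|_{L^\infty(Q_p)}\le C\ep^2$, which together with the sup bound yields the $C^1$ estimate. All constants depend only on $M,d,g,S$, because the bounds on $h_p$ and on the rescaled metrics are uniform in $p$ by compactness.
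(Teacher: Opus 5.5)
Your proposal is correct and follows the paper's proof. Both arguments compute $\Delta_g\La_{\ep,p}$ from the radial formula with its $O(r)$ metric correction, and both show that $\Psi_p-\La_{\ep,p}$ vanishes on $\pt Q_p$ and satisfies $|\Delta_g(\Psi_p-\La_{\ep,p})|\le C\ep$ in $Q_p$. Both then rescale to a ball of unit size and conclude with $W^{2,\gamma}$ estimates and Sobolev embedding.

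The differences are minor. You check explicitly that the jumps across $\pt B_p$ cancel, which the paper takes for granted. You also get the $L^\infty$ bound from a barrier, which conveniently supplies the zeroth-order term in the $W^{2,q}$ estimate. The paper instead rescales by $R_\ep$ and normalizes by $R_\ep^3$, reading both bounds off the rescaled global $W^{2,\gamma}$ estimate.
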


\begin{corollary}
    \label{cor.boundonpitilde}
    With $\Tilde{\pi}$ as defined in \eqref{eq.equationforRaep}, for $\ep<\ep_0(M,d,g)$ we have
    \be
\label{eq.boundonpitilde}
\norm{\Tilde{\pi}}_{L^\infty(\pt\Omega^*)} \le C \ep \textup{ and }\max_{p\in S_\ep} \osc_{\pt Q_p} \Tilde{\pi} \le C \ep^2.
    \ee
\end{corollary}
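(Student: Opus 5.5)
We need to bound $\tilde\pi = \pt_\nu^-\bar\Phi_\ep$ on $\pt\Omega^* = \bigcup_p \pt Q_p$. The plan is to reduce everything to the explicit radial model via Lemma \ref{l.difference.modelandPhibar}.

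First, a localization remark. Since the $Q_p$ have disjoint closures (they sit inside distinct Voronoi cells at distance at least $\ep/8$ from $\pt V_p^\ep$), on a neighborhood of $\pt Q_p$ inside $\overline{Q}_p$ we have $\bar\Phi_\ep = \Psi_p + \kappa_\ep$. The constant $\kappa_\ep$ does not affect derivatives. Hence on $\pt Q_p$ we have $\tilde\pi = \pt_\nu^-\Psi_p$, the normal derivative taken from inside $Q_p$ (outside $\Omega^*$), with $\nu$ the outer normal of $\Omega^*$, i.e. pointing into $Q_p$. Up to this sign convention,
\[
\tilde\pi = \pm\,\nu_{Q_p}\cdot\gd \Psi_p \quad \text{on } \pt Q_p .
\]

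Next, write $\Psi_p = \La_{\ep,p} + (\Psi_p - \La_{\ep,p})$. By the $C^1$ estimate in Lemma \ref{l.difference.modelandPhibar}, the second piece contributes at most $C\ep^2$ to $\tilde\pi$ pointwise on $\pt Q_p$. It therefore contributes at most $C\ep^2 \le C\ep$ to the sup norm and at most $2C\ep^2$ to the oscillation. It remains to compute the model term. On $\pt Q_p$, where $r = \dist(x,p) = R_\ep$, the radial function $\bar L_p^\ep$ is evaluated on the branch $r \ge r_{\ep,p}$. Moreover $\gd_x \dist(x,p)$ is exactly the unit outward radial vector, which coincides with the unit normal to the geodesic sphere $\pt Q_p$ (Gauss lemma, valid since $R_\ep < \textup{inj}(M,g)$). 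So the normal derivative of $\La_{\ep,p}$ is just $(\bar L_p^\ep)'(R_\ep)$, a constant on $\pt Q_p$:
\[
(\bar L_p^\ep)'(R_\ep) = -\frac{r_{\ep,p}^{d-1}}{R_\ep^{d-1}} + \frac{\beta(p)}{d}R_\ep
\]
in all $d\ge2$; for $d=2$ the first term is $-r_{\ep,p}/R_\ep$.

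The model term therefore has zero oscillation on $\pt Q_p$, which gives the oscillation bound $\osc_{\pt Q_p}\tilde\pi \le C\ep^2$. For the sup bound, use \eqref{eq.radii.estimate}: $r_{\ep,p}^{d-1}\le C\ep^d$ and $R_\ep = \ep/8$, so $r_{\ep,p}^{d-1}/R_\ep^{d-1} \le C\ep$, and $\beta(p)R_\ep/d \le S\ep$. Combining, $\norm{\tilde\pi}_{L^\infty(\pt\Omega^*)} \le C\ep$. (In fact the balance of mass makes the two terms nearly cancel, but this is not needed here.)

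The only delicate point is bookkeeping rather than analysis. One must check that the one-sided derivative $\pt_\nu^-$ in \eqref{eq.equationforRaep} is the one taken from within $Q_p$, where $\bar\Phi_\ep$ is nonzero. From outside, $\bar\Phi_\ep$ is locally the constant $\kappa_\ep$, so its derivative vanishes and the jump equals the interior derivative. One must also check that the $C^1(\overline{Q}_p)$ bound of Lemma \ref{l.difference.modelandPhibar} extends up to $\pt Q_p$, which it does as stated. With these checked, the argument is a direct consequence of Lemma \ref{l.difference.modelandPhibar}.
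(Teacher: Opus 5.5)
Your argument is correct and is the paper's own: the radial model $\La_{\ep,p}$ has constant normal derivative on the geodesic sphere $\pt Q_p$. Hence the oscillation of $\Tilde{\pi}$ comes only from the $C^1$ error $C\ep^2$ of Lemma~\ref{l.difference.modelandPhibar}, and the $L^\infty$ bound follows from the explicit value of $(\bar{L}_p^\ep)'(R_\ep)$.

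Your parenthetical aside is wrong, though harmless. By the balance of mass, $r_{\ep,p}^{d-1}/R_\ep^{d-1}\approx \int_{V_p^\ep}\beta\dV/A(\pt Q_p)$, while $\beta(p)R_\ep/d\approx\int_{Q_p}\beta\dV/A(\pt Q_p)$. The two terms therefore differ by roughly $\int_{V_p^\ep\setminus Q_p}\beta\dV/A(\pt Q_p)\sim\ep$, consistent with \eqref{eq.H1estimateRa.usebalanceofmass}, so there is no near cancellation and $\Tilde{\pi}$ is genuinely of order $\ep$.
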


\begin{proof}
    Since $\La_{\ep,p}$ is radial in a neighbor of $p\in S_\ep$, we know that $\pt_\nu^- \La_{\ep,p}$ is constant on each $\pt Q_p$, and then the second inequality in \eqref{eq.boundonpitilde} by \eqref{eq.difference.modelandPhibar}. The first inequality also follows from \eqref{eq.boundonpitilde} and a direct estimation on the explicit formula of $\pt_\nu^-\La_{\ep,p}$.
\end{proof}

\begin{corollary}
    
\label{cor.phibar.model.norms}
For $d\ge 2$ and $\ep<\ep_0(M,d,g)$, the normalizing constant $\kappa_\ep$ in \eqref{eq.majortermincellfunction} satisfies $|\kappa_\ep|\le C\ep^2$ and there is 
\begin{equation}
\label{eq.phibar.linfty.h1}
 \norm{\bar{\Phi}_\ep}_{L^\infty(M)}\le C\omega_d(\ep)\textup{ and }
 \norm{\gd \bar{\Phi}_\ep}_{L^2(M)}\le C\sqrt{\omega_d(\ep)}.
\end{equation}
For $1\le \gamma<\infty$,
\begin{equation}
\label{eq.phibar.Lq.correct}
 \norm{\bar{\Phi}_\ep}_{L^\gamma(M)}\le C\mathcal \theta_{d,\gamma}(\ep),
\end{equation}
where
\begin{equation}
\label{eq.Pdq.definition}
 \theta_{2,\gamma}(\ep):=\ep^2,
\end{equation}
and for $d\ge3$,
\begin{equation}
 \theta_{d,\gamma}(\ep):=
 \begin{cases}
   \ep^2, & 1\le \gamma<\dfrac d{d-2},\\[6pt]
   \ep^2|\log\ep|^{1/\gamma}, & \gamma=\dfrac d{d-2},\\[6pt]
   \displaystyle\ep^{\frac d{d-1}+\frac d{\gamma(d-1)}}, & \gamma>\dfrac d{d-2}.
 \end{cases}
\end{equation}
In particular, $\norm{\bar{\Phi}_\ep}_{L^2(M)}\le C\ep^{\frac{d}{d-1}}$ .
\end{corollary}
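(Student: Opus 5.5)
The plan is to reduce everything to explicit computations with the radial model $\La_{\ep,p}$, using Lemma \ref{l.difference.modelandPhibar} to control $\Psi_p-\La_{\ep,p}$. Since the $Q_p$ are disjoint, $\bar\Phi_\ep-\kappa_\ep$ equals $\Psi_p$ on $Q_p$ and vanishes outside $\bigcup_p Q_p$. Thus every norm splits as a sum (or a maximum) over $p$ of norms on $Q_p$ with about $\ep^{-d}$ terms. On each $Q_p$ I write $\Psi_p=\La_{\ep,p}+O(\ep^3)$ in $L^\infty$ and $\gd\Psi_p=\gd\La_{\ep,p}+O(\ep^2)$. The error contributions are harmless:
\[
\sum_p \ep^{3\gamma}|Q_p| \lesssim \ep^{3\gamma},\qquad \sum_p \ep^4|Q_p|\lesssim \ep^4 .
\]

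\textbf{Step 1: the constant $\kappa_\ep$.} From $\int_M\beta\bar\Phi_\ep\dV=0$ we get
\[
|\kappa_\ep|\int_M\beta\dV=\Big|\sum_p\int_{Q_p}\beta\Psi_p\dV\Big|\le S\sum_p\norm{\Psi_p}_{L^1(Q_p)} .
\]
So it suffices to bound $\norm{\La_{\ep,p}}_{L^1(Q_p)}\lesssim \ep^{d+2}$. This comes in two parts:
\begin{itemize}
\item The bulk part $\frac{\beta(p)}{2d}(r^2-R_\ep^2)$ has size $\ep^2$ on a set of volume $\ep^d$.
\item The singular part $L_p^\ep$, for $d\ge3$, integrates to $\frac{r_\ep^{d-1}}{d-2}\int_0^{R_\ep}r^{2-d}r^{d-1}\,dr\sim r_\ep^{d-1}R_\ep^2\sim \ep^{d+2}$, using $r_\ep^{d-1}\sim\ep^d$. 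The case $d=2$ is analogous, with $r_\ep\int_0^{R_\ep}\ln(R_\ep/r)\,r\,dr\sim r_\ep R_\ep^2\sim\ep^4$.
\end{itemize}
Summing over the $\ep^{-d}$ cells gives $|\kappa_\ep|\lesssim\ep^2$. The same computation, together with $|\kappa_\ep||M|$, gives the $L^1$ bound $C\ep^2$.

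\textbf{Step 2: $L^\infty$ and Dirichlet energy.} For $L^\infty$, the maximum of $L_p^\ep$ is attained at $r\le r_\ep$. It equals $r_\ep\ln(R_\ep/r_\ep)\sim\ep^2|\log\ep|$ when $d=2$, and $\frac{r_\ep}{d-2}(1-(r_\ep/R_\ep)^{d-2})\sim\ep^{d/(d-1)}$ when $d\ge3$. Both are $\lesssim\omega_d(\ep)$, and the bulk part and $\kappa_\ep$ are $O(\ep^2)\le C\omega_d(\ep)$. For the gradient, $|\gd L_p^\ep|=r_\ep^{d-1}r^{1-d}$ on $r_\ep<r<R_\ep$ and vanishes inside. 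Hence
\[
\int_{Q_p}|\gd L_p^\ep|^2\sim r_\ep^{2d-2}\int_{r_\ep}^{R_\ep}r^{1-d}\,dr ,
\]
which is $\sim r_\ep^{d}\sim\ep^{d^2/(d-1)}$ for $d\ge3$ and $\sim r_\ep^2\ln(R_\ep/r_\ep)\sim\ep^4|\log\ep|$ for $d=2$. Multiplying by $\ep^{-d}$ gives $\ep^{d/(d-1)}$, respectively $\ep^2|\log\ep|$, that is, $\omega_d(\ep)$. The bulk gradient contributes only $O(\ep^2)$, so $\norm{\gd\bar\Phi_\ep}_{L^2}\lesssim\sqrt{\omega_d(\ep)}$.

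\textbf{Step 3: $L^\gamma$ bounds.} Only the singular part needs care; everything else is $O(\ep^2)$ in every $L^\gamma$ since the functions are bounded by $\ep^2$. For $d\ge3$ I split $Q_p$ into the core $r<r_\ep$ and the annulus:
\begin{itemize}
\item On the core, $L_p^\ep\sim r_\ep$, which gives $\ep^{-d}r_\ep^{\gamma}r_\ep^d=\ep^{-d}r_\ep^{\gamma+d}$.
\item On the annulus, $L_p^\ep\lesssim r_\ep^{d-1}r^{2-d}$, which gives $\ep^{-d}r_\ep^{\gamma(d-1)}\int_{r_\ep}^{R_\ep}r^{\gamma(2-d)+d-1}\,dr$.
\end{itemize}
The annulus integral behaves in three regimes according to the sign of the exponent $\gamma(2-d)+d$, i.e.\ whether $\gamma$ is below, equal to, or above $d/(d-2)$:
\begin{itemize}
\item For $\gamma<d/(d-2)$ it is dominated by $R_\ep$, giving $\ep^{-d}\ep^{d\gamma}\ep^{\gamma(2-d)+d}=\ep^{2\gamma}$, so the norm is $\lesssim\ep^2$.
\item For $\gamma=d/(d-2)$ a logarithm appears, giving $\ep^2|\log\ep|^{1/\gamma}$.
\item For $\gamma>d/(d-2)$ it is dominated by $r_\ep$, giving $\ep^{-d}r_\ep^{\gamma+d}$ (matching the core). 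The norm is then $\ep^{-d/\gamma}\ep^{\frac{d}{d-1}(1+d/\gamma)}=\ep^{\frac d{d-1}+\frac{d}{\gamma(d-1)}}$.
\end{itemize}
For $d=2$ the log singularity is integrable to every power, and $r_\ep^\gamma\int_0^{R_\ep}\ln^\gamma(R_\ep/r)\,r\,dr\sim r_\ep^\gamma R_\ep^2$ gives $\ep^2$. The $L^2$ claim is the specialization $\gamma=2$: for $d\ge5$ this is $\gamma>d/(d-2)$ and gives exactly $\ep^{d/(d-1)\cdot 3/2}\le\ep^{d/(d-1)}$, while for $d\le4$ one has $\ep^2|\log\ep|^{1/2}\lesssim\ep^{d/(d-1)}$ since $d/(d-1)<2$ for $d\ge3$, and $d=2$ gives $\ep^2$.

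I expect the main obstacle to be purely bookkeeping: getting the borderline exponents right in Step 3, and checking in Step 2 that in $d=2$ the logarithm survives exactly once in the energy but not in $L^\gamma$. Everything else follows from the lemma.
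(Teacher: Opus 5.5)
Your proposal is correct and follows essentially the same route as the paper. Both arguments reduce to the explicit radial model $\bar L_p^\ep$ via Lemma \ref{l.difference.modelandPhibar}, absorb the $O(\ep^3)$ and $O(\ep^2)$ errors, and compute the singular part directly, splitting the $L^\gamma$ integral into three regimes according to the sign of $d-\gamma(d-2)$. Your version is in fact slightly more careful than the paper's displayed computation: for $d\ge3$ you explicitly include the core $r<r_{\ep,p}$, which contributes $\ep^{-d}r_{\ep,p}^{\gamma+d}$ and is of the same order as the annulus when $\gamma>d/(d-2)$, whereas the paper's display only integrates over $(r_{\ep,p},R_\ep)$.
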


\begin{proof}
The $L^\infty$ bound in \eqref{eq.phibar.linfty.h1} follows immediately from Lemma \ref{l.difference.modelandPhibar} and the explicit formula in \eqref{eq.Modelfunction} and \eqref{eq.Modelfunction.Bulkadapted}. One also has by the same lemma
    \[
    \begin{split}
       \norm{\sum_{p\in S_\ep} \gd \Psi_p}_{L^2(M)}&\le C \lmb\sum_{p\in S_\ep} \int_{r_{\ep,p}}^{R_\ep} \lb\frac{d}{dr}\bar{L}_{p}^\ep \rb^2 r^{d-1} \dd r\rmb^{1/2} + C \ep^2\\
     &\le C \lmb\sum_{p\in S_\ep} \int_{r_{\ep,p}}^{R_\ep} \lb\frac{d}{dr}{L}_{p}^\ep \rb^2 r^{d-1} \dd r\rmb^{1/2} + C \ep\\
     &\le C \lmb\sum_{p\in S_\ep} r_{\ep,p}^{2(d-1)}\int_{r_{\ep,p}}^{R_\ep}  r^{-(d-1)} \dd r\rmb^{1/2} + C \ep\\
     &\le C \sqrt{\omega_d(\ep)}.
    \end{split}
    \]
To prove \eqref{eq.phibar.Lq.correct}, we observe that by Lemma \ref{l.difference.modelandPhibar}, one has for $\ep<\ep_0(M,d,g)$ and $1\le \gamma <\infty$
    \[
    \norm{\sum_{p\in S_\ep} \Psi_p}_{L^\gamma(M)} \le C \lmb\sum_{p\in S_\ep} \int_{r_{\ep,p}}^{R_\ep} \lb\bar{L}_{p}^\ep \rb^\gamma r^{d-1} \dd r\rmb^{1/\gamma} + C \ep^3,
    \]
    and in particular $\displaystyle|\kappa_\ep|\le C \sum_{p\in S_\ep} \int_{0}^{R_\ep} \lw\bar{L}_{p}^\ep \rw r^{d-1} \dd r + C \ep^3$. 
    It is thus enough to estimate the explicit model $\bar   L_p^\ep$.  
In the case $d=2$, we have there is a constant $C$ such  that 
\[
\begin{split}
   \lmb\sum_{p\in S_\ep} \int_{r_{\ep,p}}^{R_\ep} \lb\bar{L}_{p}^\ep \rb^\gamma r \dd r\rmb^{1/\gamma} &\le \lmb\sum_{p\in S_\ep} \int_{0}^{R_\ep} \lb {L}_{p}^\ep \rb^\gamma r \dd r\rmb^{1/\gamma} + C \ep^2\\
   &\le C \ep^{2-2/\gamma} \lb \int_{0}^{R_\ep} \log^\gamma(R_\ep/r) r \dd r  \rb^{1/\gamma} + C\ep^2\\
 &\le C \ep^{2} \lb \int_{0}^{1} \log^\gamma(1/s) s \dd s \rb^{1/\gamma} +C \ep^2\\
 &\le C \ep^2.
\end{split}
\]
In dimension $d\ge 3$, we have by \eqref{eq.radii.estimate}
\[
\begin{split}
   \lmb\sum_{p\in S_\ep} \int_{r_{\ep,p}}^{R_\ep} \lb\bar{L}_{p}^\ep \rb^\gamma r^{d-1} \dd r\rmb^{1/\gamma} &\le \lmb\sum_{p\in S_\ep} \int_{r_{\ep,p}}^{R_\ep} \lb {L}_{p}^\ep \rb^\gamma r^{d-1} \dd r\rmb^{1/\gamma} + C \ep^2\\
   &\le C \ep^{d-d/\gamma} \lb\int_{r_{\ep,p}}^{R_\ep} r^{d-1} \lb \frac{1}{r^{d-2}} - \frac{1}{R_\ep^{d-2}} \rb^\gamma \dd r\rb^{1/\gamma} + C\ep^2\\
   &\le C \ep^{d-d/\gamma} R_\ep^{d/\gamma - (d-2)} \lb\int_{r_{\ep,p}/R_\ep}^1 s^{d-1}(1/s^{d-2}-1)^\gamma \dd s\rb^{1/\gamma}+C\ep^2\\
   &\le C\ep^2\lb\int_{C^{-1}\ep^{\frac{1}{d-1}}}^1 s^{d-1-\gamma(d-2)} \dd s\rb^{1/\gamma}.
\end{split}
\]
This proves the corollary.

\end{proof}

\begin{proof}[Proof of Lemma \ref{l.difference.modelandPhibar}]

Recall that in geodesic polar coordinate $(r,\varphi)$ based at $p$, for every smooth radial test function $f=f(r)$ 
\be
\label{eq.localCalcofLaplacian}
\Delta f = f'' + \frac{d-1}{r} f' + f' \pt_r \log\sqrt{|g|},
\ee
where for $0\le r<r_0(M,g)$ there is $C=C(\norm{g}_{C_r^2(M)})>0$ such that
\be
\label{eq.boundofChristofelSymbol}
|\pt_r \log\sqrt{|g|}(r,\varphi)| \le C r. 
\ee
The identity in \eqref{eq.localCalcofLaplacian} can be found in {\cite{aubin_nonlinear_1982}*{Page 106}}. This implies that
\[
|\Delta \La_{\ep,p} (x)- \beta(p)| \le  C \ep
\]
for all $x\in Q_p \setminus \pt B_p$.
By using the explicit form \eqref{eq.Modelfunction} and \eqref{eq.Modelfunction.Bulkadapted} the functions $\Psi_p - \La_{\ep,p}\in C(M)$ satisfies
\be
\label{eq.Laequation}
\bca
|\Delta (\Psi_p - \La_{\ep,p})| \le| \beta - \beta(p)| +C\ep\le C\ep & \textup{ in } Q_p\\
\Psi_p - \La_{\ep,p} = 0 & \textup{ elsewhere}.
\eca
\ee
If we write $h_\ep:=\Psi_p - \La_{\ep,p}$, then in a geodesic coordinate $z\in \R^d\cong T_pM$, the function $h_\ep$ solves
\[
\bca
|\pt_{z_i} (A_{ij}(z) \pt_{z_j} h_\ep(z)) | \le C \ep & \textup{ for }z\in B_{R_\ep}(0)\\
h_\ep = 0 & \textup{ on }\pt B_{R_\ep}(0),
\eca
\]
where $|A_{ij}(z)-\delta_{ij}| \le C|z|^2$ and $|\gd_z A_{ij}|(z) \le C |z|$ for $C=C(M,d,g)$ when $\ep<\ep_0(M,d,g)$. Note that after rescaling $h_\ep$ in the form $ h_\ep(R_\ep x)/R_\ep^3 = : \Tilde{h}_\ep(x)$, we have
\[
\bca
|\pt_{x_i} (A_{ij}^\ep(x) \pt_{x_j} \Tilde{h}_\ep(x)) | \le C  & \textup{ for }x\in B_1(0)\\
\Tilde{h}_\ep = 0 & \textup{ on }\pt B_1(0),
\eca
\]
where $A_{ij}^\ep(x)=A_{ij}(R_\ep x)$ satisfies $|A_{ij}^\ep-\delta_{ij}| \le C\ep^2$ and $|\gd_x A_{ij}^\ep|(x) \le C \ep$ for $C=C(M,d,g)$ when $\ep<\ep_0(M,d,g)$. By standard $W^{2,\gamma}$-estimates for elliptic equations with sufficiently large $\gamma$ (See {\cite{gilbarg_elliptic_2001}*{Theorem 9.13}}) and Sobolev Imbedding theorem, we obtain 
\[
\norm{\Tilde{h}_\ep}_{C^1(\overline B_{1}(0))}\le \norm{\Tilde{h}_\ep}_{W^{2,\gamma}(B_{1}(0))}\le C
\]
for some $C>0$ independent of $\ep$. This implies \eqref{eq.difference.modelandPhibar} by using the relation $\Psi_p - \La_{\ep,p} = R_\ep^3\Tilde{h}_\ep(z/R_\ep)$.

\end{proof}

\subsection{An \texorpdfstring{$H^1$}{H}-estimate for the remainder \texorpdfstring{$\Ra_\ep$}{Rs}}

Let us denote $\Tilde{\pi}:=\pt_\nu^- \bar{\Phi}_\ep$ on $\pt \Omega^*$ as in \eqref{eq.equationforRaep}, and then by integration by parts, the function $\Ra_\ep$ satisfies
\be
\label{eq.weakformofRa}
\int_{M} \gd \Ra_\ep\cdot \gd \eta \dV = -\int_{\Omega^*} \beta \eta \dV + \int_{\pt \Omega^*} \Tilde{\pi} \eta \dA,
\ee
for all $\eta \in H^1(M)$. Moreover, we have because of the choice of $\kappa_\ep$ in \eqref{eq.majortermincellfunction}
\be
\label{eq.normalizationofRa}
\int_{M} \beta \Ra_\ep \dV = 0.
\ee

\begin{lemma}
    \label{l.H1estimateforRaep}
    The remainder function $\Ra_\ep $ satisfies the following estimate
    \be
\label{eq.H1estimateforRaep}
\norm{\gd \Ra_\ep}_{L^2(M)} \le C \ep
    \ee
    for all $\ep<\ep_0(M,d,g)$ and some constant $C\lb M,d,g, S\rb>0$.
\end{lemma}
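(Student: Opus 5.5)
Test the weak formulation \eqref{eq.weakformofRa} with $\eta=\Ra_\ep$. This gives
\[
\norm{\gd\Ra_\ep}_{L^2(M)}^2 = -\int_{\Omega^*}\beta\Ra_\ep\dV + \int_{\pt\Omega^*}\Tilde{\pi}\,\Ra_\ep\dA .
\]
The right-hand side is a linear functional of $\Ra_\ep$, and the plan is to show that it is bounded by $C\ep\norm{\gd\Ra_\ep}_{L^2(M)}$.

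\textbf{Step 1 (cell-by-cell neutrality).} Apply \eqref{eq.weakformofRa} with $\eta\equiv1$, or integrate the divergence theorem for each $\Psi_p$ over $Q_p$. Since $\Psi_p$ vanishes outside $Q_p$ and satisfies \eqref{eq.equationforPsip}, we get
\[
\int_{\pt Q_p}\Tilde{\pi}\dA=\int_{Q_p}\beta\dV-A(\pt B_p).
\]
Together with \eqref{eq.localbalanceofmass} this yields, for every $p\in S_\ep$,
\[
\int_{\pt Q_p}\Tilde{\pi}\dA-\int_{V_p\setminus \overline Q_p}\beta\dV=0 .
\]
So the signed measure $\nu:=-\beta\dV|_{\Omega^*}+\Tilde{\pi}\dA|_{\pt\Omega^*}$ has zero mass on each Voronoi cell $V_p$. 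The right-hand side therefore equals $\sum_p\int_{V_p}(\Ra_\ep-c_p)\dd\nu$ for arbitrary constants $c_p$.

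\textbf{Step 2 (local Poincaré/trace on each cell).} Take $c_p$ to be the average of $\Ra_\ep$ over $V_p\setminus\overline Q_p$. This set is comparable, uniformly in $p$, to the rescaled domain $B_\ep\setminus B_{\ep/8}$ intersected with a convex cell, since $B_{\ep/4}(p)\subset V_p\subset B_\ep(p)$ and $Q_p=B_{\ep/8}(p)$. Scaling a unit-size Poincaré inequality and a trace inequality gives
\[
\norm{\Ra_\ep-c_p}_{L^1(V_p\setminus Q_p)}\le C\ep^{1+d/2}\norm{\gd\Ra_\ep}_{L^2(V_p)},\qquad
\norm{\Ra_\ep-c_p}_{L^1(\pt Q_p)}\le C\ep^{d/2}\norm{\gd\Ra_\ep}_{L^2(V_p)}.
\]
Combine these with $|\beta|\le S$ and $\norm{\Tilde{\pi}}_{L^\infty}\le C\ep$ from Corollary \ref{cor.boundonpitilde}. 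Each cell then contributes at most $C\ep^{1+d/2}\norm{\gd\Ra_\ep}_{L^2(V_p)}$. Summing over the $\#S_\ep\simeq\ep^{-d}$ cells and applying Cauchy--Schwarz gives
\[
C\ep^{1+d/2}\,\ep^{-d/2}\norm{\gd\Ra_\ep}_{L^2(M)}=C\ep\norm{\gd\Ra_\ep}_{L^2(M)}.
\]
Dividing through yields \eqref{eq.H1estimateforRaep}.

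\textbf{Main obstacle.} The difficulty is making the Poincaré and trace constants uniform over all cells, because the Voronoi cells are irregular. To handle this, I would first use the Poincaré inequality on the annulus $A_p=B_{\ep/4}(p)\setminus \overline Q_p$, which has a fixed rescaled shape, and control the trace on $\pt Q_p$ there. Then I would pass from the average over $A_p$ to the whole cell using that $V_p\setminus Q_p$ is star-shaped with respect to $p$, with radial extent between $\ep/4$ and $\ep$. Radial integration (a Lipschitz bilipschitz map to a fixed annulus in geodesic coordinates) gives a constant depending only on $M,d,g$. An alternative that avoids cell geometry altogether is to bound the bulk term globally. Note that $\int_M\beta\Ra_\ep=0$ by \eqref{eq.normalizationofRa}. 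The bulk part over $M\setminus\Omega^*$ combines with the $\pt Q_p$ terms using only the neutrality of each $Q_p$-centred ball, but I expect the cell-wise argument above to be the cleaner route.
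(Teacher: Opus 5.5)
Your argument is correct. It runs on the same mechanism as the paper: test with $\Ra_\ep$, use the balance of mass to subtract one constant per cell, then apply Poincar\'e/trace at scale $\ep$ and Cauchy--Schwarz over $\#S_\ep\le C\ep^{-d}$ cells. The bookkeeping, however, is cleaner. You use directly that $-\beta\dV|_{\Omega^*}+\Tilde{\pi}\dA|_{\pt\Omega^*}$ is neutral on each $V_p^\ep$. So an arbitrary constant per cell may be subtracted, and \eqref{eq.normalizationofRa} is never used.

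The paper instead uses \eqref{eq.normalizationofRa} to rewrite $-\int_{\Omega^*}\beta\Ra_\ep\dV$ as $\sum_p\int_{Q_p}\beta\Ra_\ep\dV$. The resulting piece $\beta\dV|_{Q_p}+\Tilde{\pi}\dA|_{\pt Q_p}$ has mass $\int_{V_p^\ep}\beta\dV\neq0$. That is why the paper needs the weighted cell averages $h_p$ (whose leftover $\sum_p h_p\int_{V_p^\ep}\beta\dV$ vanishes by the normalization), the ball averages $\Tilde{h}_p$, and four terms $J_1,\dots,J_4$. It is also why the alternative in your last paragraph is misstated: the $Q_p$-localized pieces are not neutral.

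What the paper's route buys is that every Poincar\'e and trace inequality lives on a geodesic ball $B_{2\ep}(p)$. You can have that too. Since $c_p$ is arbitrary, let it be the average of $\Ra_\ep$ over $B_{2\ep}(p)\supset V_p^\ep$. Then:
\begin{enumerate}
\item $\norm{\Ra_\ep-c_p}_{L^1(V_p^\ep\setminus Q_p)}\le C\ep^{1+d/2}\norm{\gd\Ra_\ep}_{L^2(B_{2\ep}(p))}$;
\item the trace on $\pt Q_p$ is estimated inside the same ball;
\item the bounded overlap of $\{B_{2\ep}(p)\}_{p\in S_\ep}$ replaces disjointness of the cells in the final Cauchy--Schwarz step.
\end{enumerate}
With this choice your ``main obstacle'' disappears. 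As written, the star-shapedness route would also need star-shapedness with respect to the ball $B_{\ep/4}(p)$ (which follows from geodesic convexity of the cells), not merely with respect to $p$, to get a Lipschitz radial function.

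There are two small slips in Step 1. First, testing \eqref{eq.weakformofRa} with $\eta\equiv1$ only gives global neutrality; the per-cell identity needs the divergence theorem for $\Psi_p$. Second, that computation gives $\int_{\pt Q_p}\Tilde{\pi}\dA=A(\pt B_p)-\int_{Q_p}\beta\dV$, the opposite of the sign you wrote. (With the conventions \eqref{eq.directionofNormalderivative.positive}--\eqref{eq.directionofNormalderivative.negative}, the normals on $\pt B_p$ and $\pt Q_p$ point toward $p$.) Only this sign, combined with \eqref{eq.localbalanceofmass}, yields the neutrality you state next.
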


\begin{proof}

By taking $\eta = \Ra_\ep$ in \eqref{eq.weakformofRa} we obtain by also using \eqref{eq.normalizationofRa}
\be\label{eq.H1estimateRa.eq1}
\begin{split}
    \int_{M} |\gd \Ra_\ep|^2 \dV &= -\int_{\Omega^*} \beta \Ra_\ep \dV + \int_{\pt \Omega^*} \Tilde{\pi} \Ra_\ep \dA\\
    &=\sum_{p\in S_\ep} \lmb \int_{Q_p} \beta \Ra_\ep \dV +   \int_{\pt Q_p} \Tilde{\pi} \Ra_\ep \dA \rmb.
\end{split}
\ee
Note that by the compatibility condition of $\Psi_p$ in \eqref{eq.equationforPsip} and the balance of mass in \eqref{eq.localbalanceofmass}, there is 
\be
\label{eq.H1estimateRa.usebalanceofmass}
\begin{split}
    \int_{\pt Q_p} \Tilde{\pi} \dA &= \int_{\pt Q_p} \pt_\nu^- \Psi_p \dA\\
&=\int_{\pt B_p} (\pt_\nu^+ - \pt_\nu^-)\Psi_p \dA - \int_{Q_p} \beta \dV\\
&=A(\pt B_p) - \int_{Q_p} \beta \dV\\
&= \int_{V_p^\ep\setminus Q_p} \beta \dV\\
\end{split}
\ee
This implies that for any constants $h_p$ that depend on each $p\in S_\ep$, there is 
\be
\label{eq.H1estimateRA.addsubtract}
\begin{split}
     \int_{M} |\gd \Ra_\ep|^2 \dV &=\sum_{p\in S_\ep} \lmb \int_{Q_p} \beta \Ra_\ep \dV +   \int_{\pt Q_p} \Tilde{\pi} \Ra_\ep \dA \rmb\\
&=\sum_{p\in S_\ep} \lmb \int_{Q_p} \beta (\Ra_\ep -h_p) \dV +   \int_{\pt Q_p} \Tilde{\pi} (\Ra_\ep -h_p) \dA 
+ h_p \int_{V_p^\ep} \beta \dV\rmb\\
&=\sum_{p\in S_\ep} \lmb \int_{Q_p} \beta (\Ra_\ep -h_p) \dV +   \int_{\pt Q_p} \Tilde{\pi} (\Ra_\ep -\Tilde{h}_p) \dA \right.\\
&\qquad \left.
+(h_p-\Tilde{h}_p)\int_{\pt Q_p} \Tilde{\pi} \dA+ h_p \int_{V_p^\ep} \beta \dV\rmb\\
&=:\sum_{p\in S_\ep} \lmb J_1(p) + J_2(p) + J_3(p) + J_4(p)\rmb.
\end{split}
\ee
We choose
\be
\label{eq.H1estiamteRa.choosehp}
h_p : = \frac{\int_{V_p^\ep }\beta \Ra_\ep \dV}{ \int_{V_p^\ep} \beta \dV} \textup{ and }\Tilde{h}_p:=\frac{\int_{B_{2\ep}(p)} \Ra_\ep \dV}{ |B_{2\ep}(p)|}.
\ee
By the choice of $h_p$, we immediately obtain by \eqref{eq.normalizationofRa} that 
\be
\label{eq.H1estimateRa.J4}
\sum_{p\in S_\ep } J_4(p) = \int_{M} \beta \Ra_\ep \dV = 0.
\ee
By Cauchy-Schwarz inequality, we obtain
\be
\label{eq.H1estimateRa.J1}
   |J_1(p)|\le \frac{\norm{\beta}_{L^2(V_p^\ep)}^2}{\int_{V_p^\ep}\beta \dV} \lb \iint_{B_{2\ep}(p)\times B_{2\ep}(p)} \lb \Ra_\ep(x) - \Ra_\ep(y) \rb^2 \dV(x)\dV(y) \rb^{1/2}
\ee
In a geodesic coordinate $z_1,z_2\in \R^d \cong T_pM$, we have 
\[
\begin{split}
   |J_1(p)|&\le C\lb M,d,g,S\rb  ~ \lb \iint_{B_{2\ep}(0)\times B_{2\ep}(0)} \lb \Ra_\ep(z_1) - \Ra_\ep(z_2) \rb^2 \dd z_1\dd z_2 \rb^{1/2}\\
 &=C\lb M,d,g,S\rb  ~|B_{2\ep}(0)|^{1/2} ~ \lb \int_{B_{2\ep}(0)} \lb \Ra_\ep (z_2) - \frac{\int_{B_{2\ep}(0)} \Ra _ \ep \dd z_1}{|B_{2\ep}(0)|}\rb^2 \dd z_2 \rb^{1/2}
\end{split}
\]
By the standard Poincar\'{e} inequality for $B_{2\ep}(0)\subset \R^d$, we have
\be
\label{eq.H1estimateRa.J1.equation2}
\begin{split}
    |J_1(p)| &\le C\lb M,d,g,S\rb ~ \ep ~ |B_{2\ep}(0)|^{1/2} ~ \lb \int_{B_{2\ep}(0)} |\gd_z \Ra_\ep|^2 \dd z \rb^{1/2}\\
&\le C\lb M,d,g,S\rb  ~ \ep ~ |B_{2\ep}(p)|^{1/2} ~ \norm{\gd \Ra_\ep}_{L^2(B_{2\ep}(p))}.
\end{split}
\ee
For the second term $J_2$, we also write in a geodesic coordinate $z$ and obtain by \eqref{eq.boundonpitilde}
\[
\begin{split}
  |J_2(p)| &\le  C \ep A(\pt Q_p)^{1/2} \norm{\Ra_\ep - \Tilde{h}_p}_{L^2(\pt Q_p)}\\
  &\le  C \ep A(\pt Q_p)^{1/2} \lb \int_{\pt B_{R_\ep}(0)} \lb \Ra_\ep(z) - \Tilde{h}_p \rb^2 \dd \theta(z) \rb^{1/2},
\end{split}
\]
where $C=C(M,d,g,S)$ and $\theta$ is the standard Lebesgue measure on the round sphere $\pt B_{R_\ep}(0) \subset \R^d$. By the Poincar\'{e} inequality and trace theorem on $ B_{2\ep}(0)$ (when restricted to $\pt B_{R_\ep}(0)$), we have
\be
\label{eq.H1estimateRa.J2}
\begin{split}
    |J_2(p)| & \le C \ep^{1+1/2} A(\pt Q_p)^{1/2} \lmb \norm{\gd_z \Ra_\ep}_{L^2(B_{2\ep}(0))} + \ep^{-1}\norm{\Ra_\ep - \Tilde{h}_p}_{L^2(B_{2\ep}(0))} \rmb\\
    &\le C \ep^{1+1/2} A(\pt Q_p)^{1/2} \lmb \norm{\gd \Ra_\ep}_{L^2(B_{2\ep}(p))} + \ep^{-1}\norm{\Ra_\ep - \Tilde{h}_p}_{L^2(B_{2\ep}(p))} \rmb\\
    &\le C \ep^{1+1/2} A(\pt Q_p)^{1/2} \lmb \norm{\gd \Ra_\ep}_{L^2(B_{2\ep}(p))}\right.\\
   &\qquad \left. + \ep^{-1}|B_{2\ep}(p)|^{-1/2}\lb \iint_{B_{2\ep}(p)\times B_{2\ep}(p)} \lb \Ra_\ep(x) - \Ra_\ep(y) \rb^2 \dV(x)\dV(y) \rb^{1/2} \rmb\\
    &\le C\ep^{1+1/2} A(\pt Q_p)^{1/2}\norm{\gd \Ra_\ep}_{L^2(B_{2\ep}(p))}
\end{split}
\ee
where $C=C(M,d,g,S)>0$ and the last inequality follows from the same argument in $J_1(p)$ above.
On the other hand, we observe that 
\be
\label{eq.difference.hp.htildp}
\begin{split}
    |h_p-\Tilde{h}_p|&=\int_{B_{2\ep}(p)} \Ra_{\ep} \lmb\frac{\beta}{\int_{V_p^\ep}\beta\dV} - \frac{1}{|B_{2\ep}(p)|}\rmb \dV\\
    &=\int_{B_{2\ep}(p)} \lb \Ra_{\ep} - \Tilde{h}_p \rb \lb\frac{\beta}{\int_{V_p^\ep}\beta\dV} - \frac{1}{|B_{2\ep}(p)|}\rb \dV\\
    &\le C(S) |V_p^\ep|^{-1/2} \norm{\Ra_\ep - \Tilde{h}_p}_{L^2(B_{2\ep}(p))}\\
    &\le C(M,d,g,S)~ \ep~ |B_{2\ep}(p)|^{-1/2} \norm{\gd \Ra_\ep}_{L^2(B_{2\ep}(p))}
\end{split}
\ee
similar to the arguments in \eqref{eq.H1estimateRa.J1.equation2}. By using \eqref{eq.boundonpitilde} and \eqref{eq.difference.hp.htildp}, we obtain that
\be
\label{eq.H1estimateRa.J3}
|J_3(p)| \le C(M,d,g,S) \ \ep^2 \ |B_{2\ep}(p)|^{-1/2} \ A(\pt Q_p) \norm{\gd \Ra_\ep}_{L^2(B_{2\ep}(p))}.
\ee
The proof of this lemma is complete by combining \eqref{eq.H1estimateRa.J4}, \eqref{eq.H1estimateRa.J1}, \eqref{eq.H1estimateRa.J2}, \eqref{eq.H1estimateRa.J3} and the fact that $B_{2\ep}(p)$ intersects with at most $N=N(M,d,g)<\infty$ many $B_{2\ep}(p')$ for $p,p'\in S_\ep$ due to the $\ep$-separatedness.

\end{proof}

\subsection{An \texorpdfstring{$L^\infty$}{L}-estimate for \texorpdfstring{$\Ra_\ep$}{R}}

Now we prove a sharp estimate for the global oscillation of $\Ra_\ep$ as defined in \eqref{eq.equationforRaep}, which also satisfies the weak form equation \eqref{eq.weakformofRa}.

\begin{lemma}
    \label{l.globalosc.Raep}
    Let $\Ra_\ep$ be as defined in \eqref{eq.weakformofRa} and \eqref{eq.normalizationofRa}, then for $\ep<\ep_0(M,d,g)$ there is a constant $C(M,d,g,S)>0$ such that 
    \be
\label{eq.globalosc.Raep}
\norm{\Ra_\ep}_{L^\infty(M)} \le C \ep.
    \ee
\end{lemma}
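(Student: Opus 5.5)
The plan is to represent $\Ra_\ep$ through the Green function of $\Delta_g$ on the closed manifold and to exploit the fact that the source has zero net mass on every Voronoi cell. Let $G(x,y)$ be the Green function satisfying $\Delta_y G(x,\cdot)=\delta_x-|M|^{-1}$ with $\int_M G(x,y)\dV(y)=0$. It satisfies the standard bounds $|G(x,y)|\le C\,\Gamma_d(\dist(x,y))$ and $|\gd_y G(x,y)|\le C\dist(x,y)^{1-d}$, where $\Gamma_d(r)=r^{2-d}$ for $d\ge3$ and $\Gamma_2(r)=1+|\log r|$. By \eqref{eq.weakformofRa}, $\Ra_\ep$ solves $\Delta\Ra_\ep=\nu_\ep$ in the distributional sense, where
\[
\dd\nu_\ep:=\beta\,\mathbf 1_{\Omega^*}\dV-\Tilde{\pi}\,\restr{\dA}{\pt\Omega^*}=\sum_{p\in S_\ep}\dd\nu_p,\qquad \dd\nu_p:=\beta\,\mathbf 1_{V_p^\ep\setminus Q_p}\dV-\Tilde{\pi}\,\restr{\dA}{\pt Q_p}.
\]
The identity \eqref{eq.H1estimateRa.usebalanceofmass} says exactly that $\nu_p(M)=0$ for every $p$. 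Consequently $\nu_\ep(M)=0$, and
\[
\Ra_\ep(x)=\sum_{p}\int_M \bigl(G(x,y)-G(x,p)\bigr)\dd\nu_p(y)+c_\ep,
\]
where $c_\ep$ is the constant fixed by \eqref{eq.normalizationofRa}. Since $\int_M\beta\Ra_\ep\dV=0$ and $\beta\ge S^{-1}$, $|c_\ep|$ is bounded by the sup of the sum, so it suffices to bound the sum by $C\ep$ uniformly in $x$.

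Fix $x$ and split the cells into near cells ($\dist(x,p)\le 4\ep$), of which there are at most $N(M,d,g)$ by separatedness, and far cells. For the near cells I will bound each piece directly.

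\emph{Bulk part of a near cell.} Using $|G(x,y)-G(x,p)|\le C\,\Gamma_d(\dist(x,y))+C\,\Gamma_d(\dist(x,p))$ is too crude in $d=2$, so I keep the difference. Then
\[
\int_{V_p}|G(x,y)-G(x,p)|\dV(y)\le C\ep^2,
\]
because in $d=2$ the logarithms cancel up to $\log(\ep/\dist(x,y))$, and in $d\ge3$ one has $\int_{B_{5\ep}(x)}\dist(x,y)^{2-d}\dV(y)\le C\ep^2$ (when $\dist(x,p)\lesssim\ep$ the $G(x,p)$ term can be paired with the $\dist(x,y)$ term after a case split).

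\emph{Surface part of a near cell.} By Corollary \ref{cor.boundonpitilde}, $\|\Tilde\pi\|_\infty\le C\ep$. A singularity $\dist(x,y)^{2-d}$ is integrable on the sphere $\pt Q_p$ of radius $\ep/8$, with $\int_{\pt Q_p}\dist(x,y)^{2-d}\dA\le C\ep$ (log-type in $d=2$, again handled by the difference form). So this piece contributes at most $C\ep\cdot\ep=C\ep^2$.

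Near cells therefore give $O(\ep^2)$ in total.

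For the far cells I Taylor expand in $y$. For $y\in V_p\subset B_\ep(p)$ and $\dist(x,p)\ge4\ep$, the mean value theorem gives $|G(x,y)-G(x,p)|\le C\ep\,\dist(x,p)^{1-d}$. The total variation of $\nu_p$ is at most $C\ep^d$, since the bulk part has mass $\lesssim\ep^d$ and the surface part has mass $\le C\ep\cdot\ep^{d-1}$. Hence
\[
\sum_{\text{far }p}\Bigl|\int(G(x,y)-G(x,p))\dd\nu_p\Bigr|\le C\ep^{d+1}\sum_{\dist(x,p)\ge4\ep}\dist(x,p)^{1-d}\le C\ep\int_M\dist(x,y)^{1-d}\dV(y)\le C\ep.
\]
Here the sum is compared with the integral using $|V_p|\simeq\ep^d$ and $\dist(x,\cdot)\simeq\dist(x,p)$ on $V_p$.

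The main obstacle I expect is the two-dimensional near-field bookkeeping, where $G$ has a logarithm that must not produce a $|\log\ep|$ factor. The neutrality of each $\nu_p$ and the difference form $G(x,y)-G(x,p)$ are what prevent this, and I will verify that $\sup_{y\in V_p}|\log(\dist(x,y)/\ep)|$-type integrals against $\nu_p$ are $O(\ep^2)$. If the pointwise Green-function bounds on $M$ are inconvenient, a fallback is local: combine Lemma \ref{l.H1estimateforRaep} with a De Giorgi/Moser estimate on balls $B_{2\ep}(p)$ for the rescaled function. This yields oscillation control $\osc_{B_\ep(p)}\Ra_\ep\le C\ep^2$, but not the global $C\ep$ bound, which still needs the Green representation above.
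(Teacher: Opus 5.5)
Your argument is essentially the paper's: the Green representation of $\Ra_\ep$, per-cell neutrality $\nu_p(M)=0$ to replace $G(x,y)$ by $G(x,y)-G(x,p)$ on far cells, and the gradient bound combined with $\|\nu_p\|_{TV}\le C\ep^d$, summed to give $C\ep$ (the paper uses a $\beta$-adapted Green function with $\Delta_g G_x=\delta_x-\beta/\int_M\beta\dV$, so no additive constant appears, and it sums dyadically rather than by comparison with an integral). On the near cells, do not subtract $G(x,p)$: that term is unbounded as $x\to p$, so your displayed bound $\int_{V_p}|G(x,y)-G(x,p)|\dV(y)\le C\ep^2$ fails there; instead bound $\int|G(x,\cdot)|\dd|\nu_p|$ directly as the paper does, which gives $O(\ep^2|\log\ep|)\le C\ep$, so the two-dimensional logarithm is harmless and no difference form is needed.
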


\begin{remark}
    \label{r.thematterofcentroidVoronoiTessellation}
    This lemma is sharp and we will show the sharpness by providing an example in Section \ref{ss.counterex}.
\end{remark}

\begin{proof}
    Recall that in the sense of distributions the remainder function $\Ra_\ep$ satisfies the following equation
\be
\label{eq.equationforRa.distri}
\Delta_g \Ra_\ep = \beta 1_{\Omega^*} \dV - \Tilde{\pi} \restr{\dA}{\pt\Omega^*} =: \drho.
\ee
Let $G_x(y)$ be the Green function on $(M,g)$ satisfying 
\be
\label{eq.Greensfunction}
\Delta_g G_x = \delta_x - \frac{\beta}{\int_M \beta \dV}.
\ee
Note that $G$ differs from the standard Green's function by a smooth function and it is uniquely determined up to an additive constant. By applying $G_x$ to \eqref{eq.equationforRa.distri} and the assumption \eqref{eq.normalizationofRa}, we obtain for every $x\in M$
\be
\label{eq.aGreenrepofRa}
\Ra_\ep(x) = \int_{M} G_x(y) \drho(y).
\ee
Note that the integral on the right hand side is well-defined for all $y$ because the singularity of $G_x(y)$ at $y=x$ is integrable both on $M$ and $\pt \Omega^*$.
To make an $L^\infty$ estimate, we decompose 
\be
\label{eq.decompose.drho}
\rho = \sum_{q\in S_\ep} \rho_q :=\sum_{p\in S_\ep} \restr{\rho}{V_q^\ep}.
\ee
Note that by \eqref{eq.boundonpitilde}, for $\ep< \ep_0(M,d,g)$
\be\label{eq.totalvariation.rhoq}
\max_{q\in S_\ep}\norm{\rho_q}_{TV} \le C \ep^d.
\ee
Now we can write for $x\in V_p^\ep$ (we may without loss assume that $x$ is in the interior of $V_p^\ep$) for some $p\in S_\ep$
\be
\label{eq.decomposeerror}
\begin{split}
    \Ra_\ep(x) & = \int_{M} G_x \drho\\
    &=\sum_{q\in S_\ep} \int_{V_q^\ep} G_x \drho_q\\
    &=\underbrace{\sum_{\dist(q,p)\le 8\ep}\int_{V_q^\ep}G_x \drho_q }_{A_1} ~+~\underbrace{\sum_{\dist(q,p)>8\ep} \int_{V_q^\ep} G_x \drho_q}_{A_2}. 
\end{split}
\ee
We first claim that 
\be
\label{eq.claimA1bound}
|A_1| \le C \ep
\ee
for some $C=C(M,d,g,S)>0$. To prove this we observe that if we denote 
$$
T_{x}:= \bigcup_{\dist(q,p)\le 2^3\ep} \pt Q_p \textup{ and }T_{x,r}:=\{y\in T_x\;:\;\dist(y,x)\le r\}
$$
then we have
\[
\begin{split}
    |A_1| & \le C \sum_{\dist(q,p)\le 2^3 \ep} \int_{V_q^\ep} |G_x| \dV + \int_{T_x} |G_x|\dA\\
    &=: A_{11} + A_{12},
\end{split}
\]
where $A_{11}$ satisfies the bound
\[
|A_{11}| \le C \bca \displaystyle \int_{C\ep^{d/(d-1)}}^{8\ep} r \dd r & \textup{ when }d\ge 3\\[6pt]
\displaystyle \int_{C\ep^2}^{8\ep} r \log \lb\frac{1}{r}\rb \dd r & \textup{ when }d=2
\eca\le C \ep^2 |\log \ep|\le C \ep,
\]
where we have recalled the bound of the Green functions that for $y\in M$ (See \cite{aubin_nonlinear_1998}*{Chapter 4})
\be\label{eq.bound.green}
G_x(y)\le \bca
\displaystyle C\frac{1}{\dist(y,x)^{d-2}}  & \textup{ when }d\ge 3\\[12pt]
\displaystyle  C\log\lb\frac{1}{\dist(y,x)}\rb + C_0 & \textup{ when }d=2.
\eca
\ee
To bound $A_{12}$, we apply \eqref{eq.boundonpitilde} and obtain
\be
\label{eq.boundA1}
\begin{split}
 \lw A_{12}\rw &\le C\ep \int_{T_{x,2^{-5}\ep}} |G_x| \dA + C \ep\int_{T_{x}\setminus T_{x,2^{-5}\ep}} |G_x| \dA\\
&=: A_{121} + A_{122},
\end{split}
\ee
where it is not difficult to derive the bound for $A_{122}$ by applying \eqref{eq.bound.green}
\[
A_{122} \le C \ep^{3-d}|\log \ep| A(T_x) \le C \ep^2 |\log \ep|.
\]
By applying \eqref{eq.bound.green} again, we have the following inequality
\[
\begin{split}
    A_{121} & = C\ep \sum_{k=0}^\infty \int_{T_{x,2^{-5-k}\ep} \setminus T_{x,2^{-6-k}\ep} } |G_x| \dA\\
    &\le C \ep \sum_{k=0}^\infty \int_{T_{x,2^{-5-k}\ep} \setminus T_{x,2^{-6-k}\ep} } |G_x| \dA\\
    &\le C \ep \sum_{k=0}^\infty \lb\frac{2^k}{\ep}\rb^{d-2} \lw\log \lb \frac{2^k}{\ep} \rb\rw  A(T_{x,2^{-5-k}\ep}).
\end{split}
\]
Note that $B_{2^{-5-k}\ep}(x)$ intersects at most one $\pt Q_p$ with $p\in S_\ep$ when $ k\ge 0$. We fix this $p$ whenever it exists and then if we denote $x^*\in \pt Q_p$ a point satisfying $\dist(x,\pt Q_p)= \dist(x,x^*)$, then the set $T_{x,2^{-5-k}\ep}$, by triangle inequality, is contained in the spherical cap of $\pt Q_p$ centered at $x^*$ having geodesic radius (with respect to the manifold $M$ instead of $\pt Q_p$) $2^{-4-k}\ep<R_\ep = 2^{-3}\ep$ as defined in \eqref{eq.defineOmegastar}. This implies that 
\[
A(T_{x,2^{-5-k}\ep}) \le C \lb 2^{-k}\ep \rb^{d-1},
\]
and hence
\[
A_{121} \le C \ep \sum_{k=0}^\infty \lb\frac{2^k}{\ep}\rb^{d-2} \lw\log \lb \frac{2^k}{\ep} \rb\rw  \lb 2^{-k}\ep \rb^{d-1} \le C \ep^2 |\log \ep| \sum_{k=0}^\infty k 2^{-k} \le C \ep^2 |\log \ep|
\]
as desired.
To estimate $A_2$, we fix an arbitrary $q\in S_\ep$ such that $\dist(q,p)>8\ep$, and then we have by the balance of mass \eqref{eq.normalizationofRa}
\be
\label{eq.estimateA2}
\begin{split}
   \lw\int_{V_q^\ep} G_x \drho_q\rw & =  \lw\int_{V_q^\ep} G_x  - G_x(q) \drho_q\rw\\
 &\le C \ep \|\rho_q\|_{TV} \sup_{y\in V_q^\ep} |\gd G_x|(y)\\
 &\le C \sup_{y\in V_q^\ep}\frac{\ep^{d+1}}{\dist(y,x)^{d-1}},
\end{split}
\ee
where we have used the gradient estimate for the Green's function and the constant $C=C(M,d,g,S)>0$ may be different from line to line. Because $x\in V_p^\ep$ and $\dist(p,q) > 8\ep$, we observe that for all $y\in V_q^\ep$
\[
\dist(y,x) \ge \dist(p,q) - 2 \ep > \frac{3}{4} \dist(p,q).
\]
This shows that 
\be
\label{eq.estimateA2.2}
   \lw\int_{V_q^\ep} G_x \drho_q\rw \le  C \frac{\ep^{d+1}}{\dist(p,q)^{d-1}}.
\ee
We divide the following set dyadically into
\[
S_\ep \setminus \{q\;;\; \dist(q,p)\le 2^3\ep\} = \bigcup_{k = 0}^N \left\{q\in S_\ep \;:\; \dist(q,p)\in (2^{k+3}\ep, 2^{k+4}\ep]\right\}=:\bigcup_{k = 0}^N \mathcal{O}_k,
\]
where $2^{N + 4}\ep \approx \diam(M)$ and therefore $N = O(|\log\ep|)$. Note that each $\Oa_k \subset \{x\in M\;:\; \dist_g(x,p)\le 2^{k+4}\ep\}$, but the latter can only have at most $C 2^{kd}$ points in $S_\ep$ due to the $\ep$-separatedness of $S_\ep$. This, combining \eqref{eq.estimateA2} and \eqref{eq.estimateA2.2}, implies that 
\[
\begin{split}
    |A_2| & \le C\sum_{\dist(q,p)> 2^3\ep} \frac{\ep^{d+1}}{\dist(p,q)^{d-1}}\\
    &\le C \sum_{k=0}^N \# \Oa_k \cdot \frac{\ep^{d+1}}{(2^k\ep)^{d-1}} \\
    &\le C \sum_{k=0}^N 2^{kd} \cdot \frac{\ep^{d+1}}{(2^k\ep)^{d-1}} \\
    &\le C \ep.
\end{split}
\]
This completes the proof.

\end{proof}

\subsection{An example showing sharpness of Lemma \texorpdfstring{\ref{l.globalosc.Raep}}{sd}} \label{ss.counterex}
In this subsection, we discuss an example in 2D torus that illustrates the sharpness of Lemma \ref{l.globalosc.Raep}.

We denote $\ep = \frac{1}{2D}$ for each $D\in \N_+$. Note that given a point set of the form $\ep \Z^2 \cap (-1/2,1/2)^2$, the rescaled point set $(1-\eta\ep) \ep \Z^2 \cap (-1/2,1/2)^2$ forms a maximally $(1-\eta\ep)\ep$-separated set in the torus as long as $\eta < \frac{\sqrt{2}-1}{1+(\sqrt{2}-1)\ep}$.

We denote 
\be
\label{eq.Seps}
S_\ep:=\ep \Z^2 \cap (-1/2,1/2)^2 \textup{ and } L_\ep=L_\ep^\eta : = (1-\eta\ep) \ep \Z^2 \cap (-1/2,1/2)^2,
\ee
where $\ep = 1/(2D)$ for positive integers $D$.

\begin{figure}[htbp]
    \centering
    \begin{minipage}{0.47\linewidth}
        \centering
        \includegraphics[width=0.7\linewidth]{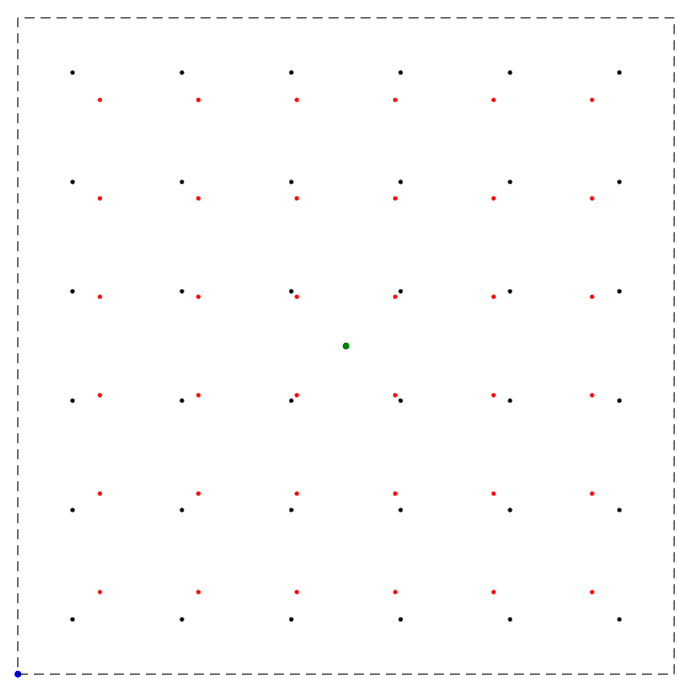}
    \end{minipage}
    \begin{minipage}{0.47\linewidth}
        \centering
        \includegraphics[width=0.7\linewidth]{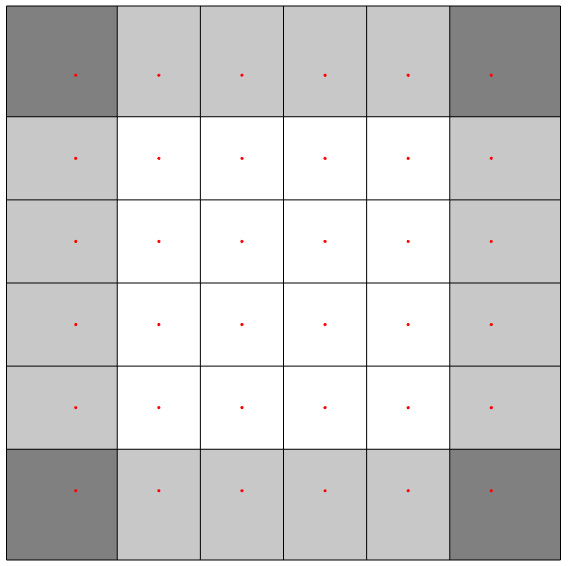}
       
    \end{minipage}
    \caption{Left: A picture of points \texorpdfstring{$\frac{1}{6}\Z^2 \cap (-1/2,1/2)^2$}{sd} (the black ones) and its scaling (the red ones) at the origin with factor \texorpdfstring{$1-\eta\ep = 0.95$}{as}. Note that there are \texorpdfstring{$6*6=36$}{sd} points, with the black ones being maximally $\frac{1}{6}$-separated and the red ones being maximally \texorpdfstring{$\frac{0.95}{6}$}{sd}-separated. The origin $(0,0)$ is colored by green and the point $(-0.5,-0.5)$ is colored blue. Right: the square $(-1/2,1/2)^2$ is separated into the Voronoi cells generated by the rescaled points. Darker cells have larger area. Note that there are three classes of cells of the same area.
    } \label{fig:twofigures}
\end{figure}

Let us now construct the cell functions $\Phi_\ep$. For simplicity, we start with the standard Green's function
\be
\label{eq.Greensfunction.2}
\bca
\displaystyle\Delta G(x,p) =\dV - \delta_p   & \textup{ in }\T^2\\[5pt]
\displaystyle\int_{\T^2} G(x,p) \dV(x) =0, &
\eca
\ee
where $x,p\in \T^2.$

Recall that by standard theory, the Green function $G(x,p)$ can be decomposed as
\be
\label{eq.decomposeGreen}
G(x,p) = g_p(x) + h_p(x),
\ee
where $h_p$ is a globally smooth function and $g_p$ is supported in the ball $B_{1/2}(p)$, and in geodesic polar coordinate $(r,\theta)$ we have the formula
\be
g_p(r,\theta) = g_p(r) = \frac{\phi(r)}{2\pi} \log \lb \frac{1}{r}    \rb,
\ee
where $0\le \phi\le 1$ is a smooth cut-off function on $\overline{\R}_+$ that is 1 near zero and 0 for $r>1/4$. Because $\T^2$ is flat, we know that $\Delta g_p = -\delta_p$ in $B_{1/2}(p)$ and therefore $\Delta h_p = 1$ in that ball.
We denote $\dV$ the Lebesgue measure and $\dA$ the 1D Hausdorff measure on $\T^2$. Let us now recall that the cell function $\Phi_\ep$ satisfies
\be
\label{eq.cellfunction.2}
\Delta \Phi_\ep =\dV-  \restr{\dA}{\pt\Omega},
\ee
where $\Omega \subset \T^2$ is a subdomain obtained by removing disks of the form $B_{r_{\ep}}(p)$ for $p\in S_\ep$ and the radii $r_{\ep} = \frac{\ep^2}{2\pi}$. 
We can solve \eqref{eq.cellfunction.2} by explicitly modifying the Green's function. Specifically, we write $G(x,p)=g_p + h_p$ as in \eqref{eq.Greensfunction.2} and then we write
\be\label{eq.definePsip}
\Psi_p(x) : = \Tilde{g}_p + h_p + c_\ep
\ee
where $c_\ep=\int_{B_{r_\ep}(p)}G(x,p) \dV(x) =O(\ep^4|\log \ep|)$ is a normalizing constant so that $\int_{\T^2}\Psi_p \dV =0$ and
\be
\label{eq.definegtilde}
\Tilde{g}_p(r):= \bca
g_p(r_\ep) & \textup{ for }r\le r_\ep\\
g_p(r)   & \textup{ elsewhere.}
\eca
\ee
Each $\Psi_p$ satisfies  
\be
\label{eq.equationPsiep}
\Delta \Psi_p = \dV- \frac{1}{2\pi r_\ep} \restr{\dA}{\pt B_{r_\ep}(p)}
\ee
In particular, by testing this equation by~$|x-p|^2$  above (we do not distinguish functions on $\T^2$ and their periodic extension on $\R^2$) and integrate over $p + (-1/2,1/2)^2$, we obtain
\be
\label{eq.scaleofPsip.atboundary}
\int_{\pt (p+(-1/2,1/2)^2)}\Psi_p \dA = -\int_{(-1/2,1/2)^2} |x|^2 \dV(x) + O(\ep^4) =-\frac{1}{6} + O(\ep^4).
\ee

\vspace{0.5cm}
\noindent Note that it is not difficult to check the following identity
\be
\label{eq.anewformulaforPhiep}
\Phi_\ep(x) = \sum_{p\in S_\ep} 2\pi r_\ep \Psi_p(x).
\ee

Let us now define the corresponding cell function $\Tilde{\Phi}_\ep$ of the perturbed point set $L_\ep$. Note that $L_\ep$ is less regular compared to $S_\ep$ and there are three different types of Voronoi cells as shown in Figure \ref{fig:twofigures}. We classify the points $q\in L_\ep$ into three types $i=1,2,3$ by the order of the area in the corresponding Voronoi cell, the points with largest Voronoi cell area are collected in $L_{1,\ep}$ and the least in $L_{3,\ep}$. Correspondingly we define $2\pi r_{i,\ep}$ to be the area of the Voronoi cells of points in $L_{i,\ep}$. We define, similarly to \eqref{eq.anewformulaforPhiep}
\be
\label{eq.anewformulaforPhiep.perturbed}
\Tilde{\Phi}_\ep(x) =\sum_{i=1}^3 \sum_{q\in L_{i,\ep}} 2\pi r_{i,\ep} \Tilde \Psi_q(x),
\ee
where for $q\in L_{i,\ep}$
\be\label{eq.tildePsi}
\Delta \tilde \Psi_{q} = \dV- \frac{1}{2\pi r_{i,\ep}} \restr{\dA}{\pt B_{r_{i,\ep}}(q)}.
\ee
This makes sure that $\Tilde{\Phi}_\ep$ satisfies the equation similar to \eqref{eq.cellfunction.2}
\be
\label{eq.cellfunction.Phiep}
\Delta \tilde \Phi_\ep =\dV -  \restr{\dA}{\pt\Tilde{\Omega}},
\ee
where $\Tilde{\Omega}$ is defined similarly to $\Omega$ by removing disks of the form $B_{r_{i,\ep}}(q)$ for $q\in L_{i,\ep}$ and $i=1,2,3$.

\begin{proposition}
    \label{prop.comparetwocellfunction}
    For  $\ep = 1/(2D)$ with $D$ being any positive integer, there is
    \be
\label{eq.comparetwocellfunction}
\osc_{\T^2}\Phi_\ep \le C \ep^2 |\log \ep|.
    \ee
However, for the perturbed point set $L_\ep$, 
    \be
\label{eq.comparetwocellfunction.2}
|\Tilde{\Phi}_\ep(x_{\textup{green}})|= \lw\Tilde{\Phi}_\ep(x_{\textup{green}})-\avg{\Tilde{\Phi}_\ep}_{\T^2}\rw \ge C \ep
    \ee
    for a constant $C>0$ independent of $\ep$. Here $x_{\textup{green}}$ is the origin as illustrated in Figure \ref{fig:twofigures}.
\end{proposition}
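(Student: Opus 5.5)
The two claims call for different tools. For \eqref{eq.comparetwocellfunction} I will simply invoke the improved part of Theorem~\ref{t.summaryofPhi}. For \eqref{eq.comparetwocellfunction.2} I will make an explicit computation with the representation \eqref{eq.anewformulaforPhiep.perturbed} in terms of the torus Green function \eqref{eq.Greensfunction.2}.

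\emph{Upper bound for the unperturbed lattice.} Here $\beta\equiv1$ and every Voronoi cell of $S_\ep=\ep\Z^2\cap(-1/2,1/2)^2$ is the square $p+(-\ep/2,\ep/2)^2$. Hence each $p$ is the centroid of its cell, which is the condition \eqref{eq.centroid.voronoi}. The balance of mass \eqref{eq.localbalanceofmass} holds with $2\pi r_\ep=\ep^2$. So \eqref{eq.Phi.Ra.improve.summary} gives $\norm{\Phi_\ep}_{L^\infty}\le C\omega_2(\ep)=C\ep^2|\log\ep|$, and \eqref{eq.comparetwocellfunction} follows. (One can also see this directly: \eqref{eq.anewformulaforPhiep} together with the translation and reflection symmetries of the square lattice cancels all dipole contributions, since the Riemann sum $\sum_p\ep^2 G(x,p)$ of a function with vanishing integral is exact up to the quadrupole and near-field terms.)

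\emph{Lower bound for the perturbed lattice.} I will write $\tilde\Phi_\ep(0)=\sum_{q\in L_\ep}a_q\tilde\Psi_q(0)$, where $a_q=|V_q|$ is the area of the Voronoi cell of $q$. By the decomposition \eqref{eq.decomposeGreen} and \eqref{eq.definePsip}, $\tilde\Psi_q(0)=G(0,q)+O(\ep^4|\log\ep|)$ for $q\neq0$. The term for $q=0$ is $O(\ep^2|\log\ep|)$. Therefore $\tilde\Phi_\ep(0)=\sum_{q\ne0}a_qG(0,q)+O(\ep^2|\log\ep|)$.

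I then compare this with $\int_{\T^2}G(0,y)\dV(y)=0$, cell by cell. Write $\sum_q\int_{V_q}\bigl(G(0,q)-G(0,y)\bigr)\dV(y)$ and Taylor expand $G(0,\cdot)$ about $q$, using $G=g_0+h_0$ with $\Delta h_0=1$, so that $h_0$ has the quadratic part $|y|^2/4$ up to a harmonic correction.
\begin{itemize}
\item The interior cells are squares centred at their points. Their first-order terms vanish, and their second-order terms sum to $O(\ep^2)$, as in the unperturbed case.
\item The remaining cells sit along the seam $\{|x_1|=1/2\}\cup\{|x_2|=1/2\}$. The rescaling leaves a gap of total width $\ep-2D(1-\eta\ep)\ep=\eta\ep$ in each direction, shared among the seam cells. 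Thus each seam cell has its generating point displaced by about $\eta\ep/4$ from its centroid.
\end{itemize}
The task is then to show that these seam discrepancies produce a contribution of order $\eta\ep$ at the origin.

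\emph{Identifying the order-$\ep$ term.} The cleanest way I see is to change variables $y\mapsto(1-\eta\ep)y$. This maps $L_\ep$ onto the regular lattice $S_\ep$, but now viewed inside the contracted square $(1-\eta\ep)(-1/2,1/2)^2$. After the change of variables, the sum $\sum_q a_qG(0,q)$ becomes the regular-lattice Riemann sum, which is $O(\ep^2|\log\ep|)$ by the first part, plus a boundary-layer term. That term is the integral of the dilation derivative $\frac{d}{dt}G(0,ty)\big|_{t=1}$, or equivalently the integral of $G$ over the strip of width $\eta\ep/2$ near $\pt(-1/2,1/2)^2$. By the mean-zero normalization and \eqref{eq.scaleofPsip.atboundary}, this should reduce to
\[
\eta\ep\Bigl(-\tfrac{1}{2}\int_{\pt(-1/2,1/2)^2}G(0,\cdot)\dA+\text{(bulk term from }\Delta h_0=1)\Bigr)
=-c\,\eta\ep\cdot\tfrac16+O(\ep^2|\log\ep|)
\]
for an explicit $c\neq0$. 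Taking $\eta$ fixed and admissible gives $|\tilde\Phi_\ep(0)|\ge C\ep$. Finally, $\langle\tilde\Phi_\ep\rangle_{\T^2}=0$, since each $\tilde\Psi_q$ has zero average.

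\emph{Main obstacle.} The hardest step is to make this bookkeeping rigorous and to verify that the resulting constant is genuinely nonzero. Naive per-cell dipole counting along the seam only gives $O(\ep^2)$. The order-$\ep$ effect comes from the global second moment: the quadratic part of $G$ is not periodic-compatible with a uniform contraction. Identity \eqref{eq.scaleofPsip.atboundary}, obtained by testing with $|x|^2$, is exactly the tool for extracting this term, so I would organize the computation around it.
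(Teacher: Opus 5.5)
Your treatment of \eqref{eq.comparetwocellfunction} is fine and agrees with the paper, which also reduces it to the centroidal bound \eqref{eq.Phi.Ra.improve.summary}. The gap is in the second part, at the step where you assert an ``explicit $c\neq0$''. In fact $c=0$, and no argument of this kind can give $|\Tilde{\Phi}_\ep(0)|\ge C\ep$. Your own cell decomposition is exact,
\[
\Tilde{\Phi}_\ep(0)=\sum_{q\neq 0}\int_{V_q}\bigl(G(0,q)-G(0,y)\bigr)\,dy+O(\ep^2|\log\ep|),
\]
and it can be estimated completely:
\begin{itemize}
\item The Voronoi cells of $L_\ep$ are rectangles, namely products of one-dimensional Voronoi intervals.
\item All but $8D-4<4/\ep$ of them are centred at their generators.
\item The remaining seam cells have $|c_q-q|\le\eta\ep/2$, area at most $2\ep^2$, and lie where $|\nabla_yG(0,\cdot)|\le C$. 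So their first-order Taylor terms sum to at most $C\eta\ep^2$.
\item The second-order remainders sum to at most $C\sum_{q\neq0}\ep^4|q|^{-2}\le C\ep^2|\log\ep|$.
\end{itemize}
Hence $|\Tilde{\Phi}_\ep(0)|\le C\ep^2|\log\ep|$ for every admissible $\eta$. The ``naive'' seam dipole count you mention is therefore the whole answer. There is no separate global second-moment effect, because this expansion of an exact identity has a controlled remainder.

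Concretely, your dilation bookkeeping merges two contributions of equal size and opposite sign. Write $Q=(-1/2,1/2)^2$. Moving the points, $q=(1-\eta\ep)q_0$, produces
\[
-\eta\ep\int_Q y\cdot\nabla_yG(0,y)\,dy=-\frac{\eta\ep}{2}\int_{\partial Q}G(0,\cdot)\,dA=+\frac{\eta\ep}{12},
\]
using $\int_QG(0,\cdot)=0$ and \eqref{eq.scaleofPsip.atboundary}. The weights, however, are not uniform. The seam cells carry excess area $\approx\eta\ep^2/2$, which contributes $\frac{\eta\ep}{2}\int_{\partial Q}G(0,\cdot)\,dA=-\frac{\eta\ep}{12}$. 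These are not two descriptions of one boundary layer; they cancel.

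The paper's proof has the same defect. It keeps only the $\partial_\eta r_{i,\ep}$ terms, which give $-\ep/12$, and discards $2\pi\sum_q r_{i,\ep}\,\partial_\eta\Tilde\Psi_q(0)$ ``by symmetry''. But up to negligible terms $\partial_\eta\Tilde\Psi_q(0)=-\ep\,q\cdot\nabla_yG(0,q)$, which is even under $q\mapsto-q$. So the discarded sum is, per unit $\eta$, exactly the $+\ep/12$ above.

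Thus \eqref{eq.comparetwocellfunction.2} is not established by either route, and the estimate above shows it actually fails for this configuration. An example saturating Lemma \ref{l.globalosc.Raep} needs generators displaced from their centroids by order $\ep$ throughout a macroscopic region. The displacement pattern must give a dipole density with nonvanishing divergence, rather than being confined to a seam.
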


\begin{remark}
    \label{r.centroidmattersforLinfinitybound.cellfunction}
    The reason for this dramatic change of the global oscillation from $O(\ep^2 |\log \ep|)$ of $\Phi_\ep$ to $O(\ep)$ of $\Tilde{\Phi}_\ep$ is that the points $q\in L_\ep$ are not the mass centers of the corresponding Voronoi cells.
\end{remark}

\begin{proof}

We will prove \eqref{eq.comparetwocellfunction} in Proposition \ref{prop.comparetwocellfunction} in a more general setting, so we only worry about \eqref{eq.comparetwocellfunction.2}.
To prove \eqref{eq.comparetwocellfunction.2}, we just take derivative in $\eta$ of $\Tilde{\Phi}_\ep(x_{\textup{green}})$ 
\[
\begin{split}
\pt_\eta \Tilde{\Phi}_\ep(x_{\textup{green}})&= 2\pi\sum_{i=1}^3 \sum_{q\in L_{i,\ep}^\eta} \lb \pt_\eta r_{i,\ep} \tilde\Psi_q +  r_{i,\ep} \pt_\eta\tilde\Psi_q \rb\\
   &\overset{\eta=0}{=}~2\pi\sum_{i=1}^3 \sum_{q\in S_{i,\ep}} \pt_\eta r_{i,\ep} \Psi_{q} , 
\end{split}
\]
where $S_{i,\ep}:=\lim_{\eta\rta0^+} L_{i,\ep}$ and the last equality holds due to the symmetry of $S_\ep$.
Note that 
\[
r_{1,\ep} = \frac{1}{2\pi}(1+\eta/2 - \eta \ep)^2 \ep^2, \ r_{2,\ep}=\frac{1}{2\pi}(1-\eta\ep)(1+\eta/2-\eta\ep)\ep^2 \textup{ and }r_{3,\ep}=\frac{1}{2\pi}(1-\eta\ep)^2\ep^2.
\]
This shows that the derivatives at $\eta = 0$ takes the following form
\[
\pt_\eta r_{1,\ep} = \frac{1}{2\pi}\ep^2 - \frac{1}{\pi}\ep^3, \ \pt_\eta r_{2,\ep}=\frac{1}{4\pi}\ep^2 - \frac{1}{\pi}\ep^3 \textup{ and }\pt_\eta r_{3,\ep} = -\frac{1}{\pi}\ep^3.
\]
This shows that
\be
\label{eq.derivativeinetaofPhi}
\restr{\pt_\eta}{\eta=0} \Tilde{\Phi}_\ep(x_{\textup{green}}) = \frac{\ep^2}{2}\sum_{p\in S_{2,\ep}} \Psi_p(x_{\textup{green}}) +\underbrace{\ep^2\sum_{p\in S_{3,\ep}}\Psi_p(x_{\textup{green}})}_{O(\ep^2)}  - \ \underbrace{4\pi\ep \Phi_\ep(x_{\textup{green}})}_{O(\ep^3|\log \ep|)}.
\ee
 Here the second term in the last line is of order $O(\ep^2)$ is because $S_{3,\ep}$ has only 4 elements, and the third term has order $O(\ep^3|\log \ep|)$ is due to \eqref{eq.comparetwocellfunction}.
Now, for the first term in \eqref{eq.derivativeinetaofPhi}, we have
\[
\begin{split}
    \frac{\ep^2}{2}\sum_{p\in S_{2,\ep}} \Psi_p(x_{\textup{green}})& = \frac{\ep}{2}\sum_{p\in S_{2,\ep}} \ep\Psi_p(x_{\textup{green}})\\
&\approx \frac{\ep}{2} \int_{\pt (x_{\textup{green}}+(-1/2,1/2)^2)} \Psi_{x_{\textup{green}}} \dA\\
&=-\frac{\ep}{12},
\end{split}
\]
where the last line is according to \eqref{eq.scaleofPsip.atboundary}. This finishes the proof.

\end{proof}

\subsection{A sharper \texorpdfstring{$L^2(M)$}{L2}-estimate of \texorpdfstring{$\Ra_\ep$}{R}}

It is not difficult to show that by Lemma \ref{l.globalosc.Raep}
$$\norm{\Ra_\ep}_{L^2(M)}\le C\norm{\Ra_\ep}_{L^\infty(M)}\le C\ep.$$ In this subsection, we improve this result by utilizing the equation \eqref{eq.equationforRa.distri}.

\begin{lemma}
    \label{l.sharperL2estimateforRaep}
    Under the assumptions of Lemma \ref{l.globalosc.Raep}, then there is a constant $C(M,d,g,S)>0$ such that 
    \be
\label{eq.sharperL2estimateforRaep}
\norm{\Ra_\ep}_{L^2(M)} \le C\ep^2.
    \ee
    
\end{lemma}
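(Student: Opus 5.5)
We argue by duality. Since $\int_M\beta\Ra_\ep\dV=0$ by \eqref{eq.normalizationofRa}, we have $\norm{\Ra_\ep}_{L^2(M)}=\sup\int_M \Ra_\ep f\dV$, the supremum taken over $f$ with $\norm{f}_{L^2(M)}\le 1$, and we may subtract a multiple of $\beta$ from $f$ so that $\int_M f\dV=0$. For such $f$, let $w\in H^2(M)$ solve $\Delta_g w=f$ with a normalization chosen so that $\int_M \beta w\dV=0$, say. Elliptic regularity gives $\norm{w}_{H^2(M)}\le C$, hence $\norm{\gd w}_{L^2}\le C$ and, by Sobolev embedding, $\gd w\in L^{2^*}$ with a uniform bound (any $L^q$, $q<\infty$, when $d=2$).

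Testing \eqref{eq.weakformofRa} against $\eta=w$ gives
\[
\int_M \Ra_\ep f\dV=-\int_M\gd\Ra_\ep\cdot\gd w\dV=\int_{\Omega^*}\beta w\dV-\int_{\pt\Omega^*}\Tilde\pi w\dA=-\sum_{p\in S_\ep}\lmb\int_{Q_p}\beta w\dV+\int_{\pt Q_p}\Tilde\pi w\dA\rmb ,
\]
where the last equality uses $\int_M\beta w\dV=0$. This is the same structure as in \eqref{eq.H1estimateRa.eq1}, with $w$ in place of $\Ra_\ep$. By \eqref{eq.H1estimateRa.usebalanceofmass}, each bracket vanishes when $w$ is replaced by a constant $h_p$ provided we also add $h_p\int_{V_p^\ep}\beta\dV$. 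Summing $h_p\int_{V_p^\ep}\beta\dV$ over $p$, with $h_p$ the $\beta$-average of $w$ on $V_p^\ep$, again gives $\int_M\beta w\dV=0$. So we repeat the decomposition $J_1(p),\dots,J_4(p)$ from the proof of Lemma \ref{l.H1estimateforRaep}.

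The bounds \eqref{eq.H1estimateRa.J1.equation2}, \eqref{eq.H1estimateRa.J2} and \eqref{eq.H1estimateRa.J3} give
\[
|J_1(p)|+|J_2(p)|+|J_3(p)|\le C\ep\,\ep^{d/2}\norm{\gd w}_{L^2(B_{2\ep}(p))}.
\]
Summing over $p$ with Cauchy--Schwarz and the bounded overlap of the balls $B_{2\ep}(p)$, and using $\#S_\ep\sim\ep^{-d}$, we get
\[
\Bigl|\int_M\Ra_\ep f\dV\Bigr|\le C\ep\Bigl(\sum_p\ep^d\Bigr)^{1/2}\norm{\gd w}_{L^2}\le C\ep .
\]
This is only the trivial rate, so we need one more derivative of $w$ at this step.

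The main obstacle is to gain the extra $\ep$ by using $H^2$ regularity. Replace constants by affine corrections: on each cell use $h_p+\ell_p(x)$ with $\ell_p(x)=\gd w(p)\cdot\exp_p^{-1}(x)$, or more robustly the average of $\gd w$ over $B_{2\ep}(p)$ in geodesic coordinates. The second-order Poincar\'e inequality then gives $\norm{w-h_p-\ell_p}_{L^2(B_{2\ep})}\le C\ep^2\norm{\gd^2 w}_{L^2(B_{2\ep})}$, and the trace analogue on $\pt Q_p$ follows after rescaling. The linear part produces the first moment of the local measure $\rho_p=\beta 1_{Q_p}\dV-\Tilde\pi\,\dA|_{\pt Q_p}$, together with $\beta\dV$ on $V_p^\ep\setminus Q_p$.

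On $Q_p$ this moment is $O(\ep^{d+2})$, because $\Tilde\pi$ is constant up to $O(\ep^2)$ by Corollary \ref{cor.boundonpitilde} and $Q_p$ is a geodesic ball, so odd moments vanish up to metric errors. The remaining term is the uncontrolled first moment $\int_{V_p^\ep}\exp_p^{-1}(x)\beta\dV=O(\ep^{d+1})$. Its contribution is $\sum_p\gd w(p)\cdot m_p$ with $|m_p|\le C\ep^{d+1}$, which is bounded by $C\ep\sum_p\ep^d|\gd w(p)|\approx C\ep\norm{\gd w}_{L^1}\le C\ep$. This is still only $\ep$, since the centroid condition \eqref{eq.centroid.voronoi} is not assumed.

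To resolve this, I would instead handle the term $\int_{V_p\setminus Q_p}\beta w\dV$ directly, without splitting it. Note that $\int_M\beta w\dV=0$ removes only the global constant, and that
\[
\sum_p\lmb\int_{V_p\setminus Q_p}\beta w\dV+\int_{Q_p}\beta w\dV\rmb=0.
\]
So the identity reduces to $-\sum_p\lmb\int_{\pt Q_p}\Tilde\pi w\dA-\int_{V_p\setminus Q_p}\beta w\dV\rmb$ with a cancellation between these two terms. Here $\Tilde\pi\approx|V_p\setminus Q_p|_\beta/A(\pt Q_p)=O(\ep)$, and the relevant quantity is the mismatch between the average of $w$ on $\pt Q_p$ and its $\beta$-average on $V_p\setminus Q_p$. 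This mismatch is $\gd w(p)\cdot m_p+O(\ep^2|\gd^2w|)$.

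Summing the linear terms, I would rewrite $\sum_p\gd w(p)\cdot m_p$ as $\int_M\gd w\cdot X_\ep\dV$, where $X_\ep$ is the piecewise vector field $\exp_p^{-1}(x)\beta 1_{V_p\setminus Q_p}$. The hope is that $X_\ep$ is an $\ep$-oscillating field whose cell-wise divergence structure lets one integrate by parts once more and gain $\ep\norm{\gd^2 w}_{L^2}$. I expect this final step to be the genuinely delicate point, and I would check it against the torus example of Section \ref{ss.counterex}, where the $L^\infty$ norm is of order $\ep$ but the $L^2$ norm should be of order $\ep^2$, since the $O(\ep)$ deviation is concentrated near a codimension-one set.
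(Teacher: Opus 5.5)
There is a genuine gap at the decisive step. Your setup is sound and matches the paper's framework. You use duality with $\Delta w=f$, after first subtracting a multiple of $\beta$ from $f$. You reduce to $\int_M w\dd\rho$ with $\rho_p=\beta 1_{V_p^\ep\setminus Q_p}\dV-\Tilde{\pi}\,\dA|_{\pt Q_p}$. You then correctly identify the obstruction: after removing constants and cell-wise affine parts, what is left is $\sum_p\gd w(p)\cdot m_p$ with $|m_p|\le C\ep^{d+1}$, and cell by cell this is only $O(\ep)$.

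But the proposal stops there. The final step is stated as a hope, so as written it proves only the trivial bound $\norm{\Ra_\ep}_{L^2(M)}\le C\ep$. That step is also not routine. Putting the derivative on $X_\ep$ does not help. Its normal component jumps by roughly $\beta|p-q|\sim\ep$ across every Voronoi face, so $\operatorname{div}X_\ep$ carries face measures of total mass $O(1)$. To gain $\ep\norm{\gd^2 w}_{L^2}$ you must move the derivative onto $w$. That requires writing $X_\ep$, extended to whole cells, as $\beta\gd\Xi$ for a potential $\Xi$ that is continuous across all faces and has $\norm{\Xi}_{L^\infty}=O(\ep^2)$. You do not identify such a potential.

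The missing ingredient is the auxiliary function $\xi_\ep(x)=\min_{p\in S_\ep}\frac12\dist^2(x,p)$ of Lemma \ref{l.atechnicallemma.L2estimateRa}. It has four properties that close the argument:
\begin{itemize}
\item On $V_p^\ep$ its gradient is the position vector of $x$ relative to $p$; in normal coordinates at $p$, $\gd\xi_\ep=z$ by the Gauss lemma. So $\beta\gd\xi_\ep$ is exactly your field on full cells.
\item It is continuous across Voronoi faces, since $\dist(x,p)=\dist(x,q)$ there. Hence it is globally Lipschitz.
\item It satisfies $\norm{\xi_\ep}_{L^\infty}\le C\ep^2$.
\item Inside each cell, $\gd^2\xi_\ep=I+O(\ep^2)$.
\end{itemize}

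The paper uses $\gd w\cdot\gd\xi_\ep$ as one global replacement for your cell-wise $\ell_p$. Since $\gd(w-\gd w\cdot\gd\xi_\ep)=-\gd^2w\,\gd\xi_\ep-(\gd^2\xi_\ep-I)\gd w$, Poincar\'e on $V_p^\ep$ bounds the remainder, tested against the averaged mass-zero measure $\Tilde{\rho}_p$, by $C\ep^2|V_p^\ep|^{1/2}\norm{w}_{H^2(V_p^\ep)}$. This never evaluates $\gd w$ pointwise.

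The first-moment term then becomes $\sum_p\int_{V_p^\ep}\beta\,\gd w\cdot\gd\xi_\ep\dV=\int_M\beta\,\gd w\cdot\gd\xi_\ep\dV=-\int_M\xi_\ep\operatorname{div}(\beta\gd w)\dV$. This is $O(\ep^2\norm{w}_{H^2(M)})$, as in \eqref{eq.J31}. There are no interface terms precisely because Voronoi faces are equidistant sets.

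Finally, $\rho_p$ lives on $V_p^\ep\setminus Q_p$, so pieces on $Q_p$ and $\pt Q_p$ are left over. These are handled by integrating by parts inside the ball, where $\xi_\ep=R_\ep^2/2$ and $\gd\xi_\ep=R_\ep\nu$ on $\pt Q_p$.
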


To prove the above lemma we need the following auxiliary function.
\be
\label{eq.definecellXi}
\xi_\ep(x):=\min_{p\in S_\ep} \frac{1}{2} \dist^2(x,p).
\ee

\begin{lemma}
    \label{l.atechnicallemma.L2estimateRa}
The auxiliary function $\xi=\xi_\ep$ as defined in \eqref{eq.definecellXi} is a globally Lipschitz function such that
\[
\xi(x) = \frac{1}{2}\dist^2(x,p)
\]
for every $x\in V_p^\ep$ by the definition of the Voronoi cells. Moreover, we have the following estimates
\be
\label{eq.property.Xi}
\norm{\xi}_{L^\infty(M)} \le C \ep^2 \textup{ and the Lipschitz constant }\textup{Lip}_M(\xi) \le C \ep,
\ee
and for every $x\in V_p^\ep$ (not including the boundary), there is 
\be
\label{eq.property.Xi.hessian}
|\gd^2\xi(x) - I_x| \le C(M,d,g) \ep^2,
\ee
where $I_x: T_xM \rta T_xM$ is the identity map.
\end{lemma}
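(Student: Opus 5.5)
The identity $\xi(x)=\tfrac12\dist^2(x,p)$ on $V_p^\ep$ is immediate from the definitions. Since $x\in V_p^\ep$ means $\dist(x,p)<\dist(x,q)$ for every $q\neq p$, the minimum in \eqref{eq.definecellXi} is attained at $q=p$. For the $L^\infty$ bound we use that every $x\in M$ lies in the closure of some $V_p^\ep\subset B_\ep(p)$, which gives $\xi(x)\le \tfrac12\ep^2$. We will take $\ep<\ep_0(M,d,g)$ with $\ep\ll \textup{inj}(M,g)$ throughout.

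For the Lipschitz bound we first work inside one cell. On $V_p^\ep$ we have $\xi=\tfrac12\dist^2(\cdot,p)$, which is smooth because $\dist(x,p)<\ep<\textup{inj}(M,g)$. Its gradient is $\gd\xi(x)=-\exp_x^{-1}(p)$, so $|\gd\xi(x)|=\dist(x,p)\le\ep$. To pass to a global bound, we note that $\xi$ is a minimum of finitely many functions $\tfrac12\dist^2(\cdot,q)$, so it is continuous on $M$. Take any $x,y\in M$ and a minimizing geodesic joining them. Partition it into finitely many segments, each contained in the closure of a single Voronoi cell; this is possible since the cells are geodesically convex with piecewise smooth boundaries, and a geodesic in general position crosses the cell boundaries finitely often. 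A density/perturbation argument handles the geodesics that run along a boundary. On each segment $\xi$ agrees with one smooth $\tfrac12\dist^2(\cdot,q)$ whose gradient is bounded by $\ep$ there. Summing over segments gives $|\xi(x)-\xi(y)|\le \ep\,\dist(x,y)$. A cleaner alternative: on the ball $B_{2\ep}(x)$, $\xi$ equals the minimum over the finitely many $q$ with $\dist(x,q)\le 3\ep$, each of which is $3\ep$-Lipschitz there. A minimum of $L$-Lipschitz functions is $L$-Lipschitz, so $\xi$ is locally $C\ep$-Lipschitz, and hence globally so along geodesics.

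The Hessian estimate \eqref{eq.property.Xi.hessian} is the main computation. We use the standard expansion of the Hessian of the squared distance in normal coordinates: for $r=\dist(x,p)$ small,
\[
\gd^2\Big(\tfrac12 r^2\Big)=I_x - \tfrac13\,\mathcal{K}_x + O(r^3),
\]
where $\mathcal{K}_x$ is a curvature term of size $O(|Rm|\,r^2)$. Equivalently, one can use Hessian comparison: with sectional curvature bounds $|K|\le \kappa$, the Hessian of $\tfrac12 r^2$ has eigenvalue $1$ in the radial direction and eigenvalue $r\sqrt{\kappa}\cot(r\sqrt{\kappa})$ (resp.\ $r\sqrt{\kappa}\coth(r\sqrt\kappa)$) bounding it from below (resp.\ above) in the tangential directions. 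Both bounds are $1+O(\kappa r^2)$. Since $r<\ep$ on $V_p^\ep$, this yields $|\gd^2\xi-I_x|\le C(M,d,g)\ep^2$, with $C$ depending only on curvature bounds of the closed manifold $M$.

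The only delicate step is the global Lipschitz claim across cell boundaries, where $\xi$ is not differentiable. The local minimum-of-Lipschitz-functions argument resolves it without any fine geometry of the Voronoi boundaries.
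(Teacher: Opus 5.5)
Your proof is correct and takes essentially the paper's approach: the paper cites Pennec's normal-coordinate expansion of the Hessian of $\tfrac12\dist^2(\cdot,p)$ for the Hessian estimate, and leaves the identity, sup and Lipschitz bounds implicit, which you fill in. One small slip: in your local argument, each $\tfrac12\dist^2(\cdot,q)$ with $\dist(x,q)\le 3\ep$ is $5\ep$-Lipschitz on $B_{2\ep}(x)$ (not $3\ep$), which does not affect the $C\ep$ bound.
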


\begin{proof}
    The proof of this lemma can be immediately obtained by the computations done in \cite{pennec_hessian_2017}*{Section 2}.
\end{proof}

\begin{proof}[Proof of Lemma \ref{l.sharperL2estimateforRaep}]
Recall the equation \eqref{eq.equationforRa.distri}
\[
\Delta_g \Ra_\ep = \beta 1_{\Omega^*} \dV - \Tilde{\pi} \restr{\dA}{\pt\Omega^*} = : \drho
\]
and the decomposition \eqref{eq.decompose.drho}
\[
\rho = \sum_{q\in S_\ep} \rho_q, \textup{ where }\rho_q := \restr{\rho}{V_q^\ep}.
\]
For every $\eta \in C^2(M)$, it suffices to show the functional $l(\eta):=\int_M \eta \drho$ satisfies
\be
\label{eq.functionalclaim.1}
|l(\eta)| \le C(M,d,g,S) \ep^2 \norm{\eta}_{H^2(M)}.
\ee
Indeed, suppose we have \eqref{eq.functionalclaim.1}, then for every $f\in C^\infty(M)$  we solve for 
\be
\label{eq.anauxiliaryeta}
\bca
\Delta_g \eta_f = f - \frac{\int_M f \dV}{\int_M \beta \dV} \beta & \textup{ on }M \\
\int_M \eta_f \dV = 0.
\eca
\ee
By standard theory, there is a constant $C(M,d,g,S)>0$ such that 
\[
\norm{\eta_f}_{H^2(M)} \le C \norm{f}_{L^2}.
\]
This implies that 
\[
\lw\int_{M}\Ra_\ep f \dV\rw = |l(\eta_f)| \le C \ep^2 \norm{\eta_f}_{H^2(M)} \le C \ep^2 \norm{f}_{L^2(M)},
\]
which finishes the proof by a standard duality argument.
Let us now prove the claim \eqref{eq.functionalclaim.1}.  We will be using multiple times the Poincar\'{e} inequality on Voronoi cells $V_p^\ep$, in which the bounding coefficients have to be uniform over different cells. We refer to \cite{lo_homogenisation_2025}*{Proposition 3.10} for more details.
For every $p\in S_\ep$ we denote $$\Tilde{\rho}_p:=\avg{\beta}_{V_p^\ep\setminus B_{R_\ep}(p)} \restr{\dV}{V_p^\ep\setminus B_{R_\ep}(p)} - \avg{\Tilde{\pi}}_{\pt B_{R_\ep}(p)}\restr{\dA}{\pt B_{R_\ep}(p)}.$$ We then have for all constants $c_1,c_2$
\be
\label{eq.computeleta}
\begin{split}
   \int_M \eta \drho_p & =\int_{V_p^\ep\setminus B_{R_\ep}(p)} \eta \lb \beta - \avg{\beta}_{V_p^\ep\setminus B_{R_\ep}(p)}\rb\dV -\int_{\pt B_{R_\ep}(p)} \eta \lb \Tilde{\pi} - \avg{\Tilde{\pi}}_{\pt B_{R_\ep}(p)}\rb\dA   + \int_{M}  \eta   \dtrho_p\\
   &=\int_{V_p^\ep\setminus B_{R_\ep}(p)} (\eta-c_1) \lb \beta  - \avg{\beta}_{V_p^\ep\setminus B_{R_\ep}(p)}\rb\dV -\int_{\pt B_{R_\ep}(p)} (\eta-c_1) \lb \Tilde{\pi} - \avg{\Tilde{\pi}}_{\pt B_{R_\ep}(p)}\rb\dA   \\
   &\qquad+\int_{M}  \eta - c_2  \dtrho_p \\
 &=  \int_{V_p^\ep\setminus B_{R_\ep}(p)} (\eta-c_1) \lb \beta  - \avg{\beta}_{V_p^\ep\setminus B_{R_\ep}(p)}\rb\dV-\int_{\pt B_{R_\ep}(p)} (\eta-c_1) \lb \Tilde{\pi} - \avg{\Tilde{\pi}}_{\pt B_{R_\ep}(p)}\rb\dA   \\
 &\qquad+\int_{M}  \eta - c_2 - \gd \eta \cdot \gd \xi \dtrho_p + \int_{M}  \gd \eta \cdot \gd \xi \dtrho_p\\
 &=: J_1 - \Tilde{J}_1 + J_2 + J_3.
\end{split}
\ee
By \eqref{eq.uniformbound.weights}, we know 
\be
\label{eq.boundJ1.H2}
\begin{split}
    |J_1| & \le C(S) \ep \norm{\eta - c_1}_{L^2(V_p^\ep)} |V_p^\ep|^{1/2}\\
    &\le C(M,d,g,S) \ep^2 \norm{\gd \eta}_{L^2(V_p^\ep)} |V_p^\ep|^{1/2},
\end{split}
\ee
where in the last line we choose $c_1=\avg{\eta}_{V_p^\ep}$ so that we can apply the Poincar\'{e} inequality. By Corollary \ref{cor.boundonpitilde} we know that $\lw \Tilde{\pi} - \avg{\Tilde{\pi}}_{\pt B_{R_\ep}(p)}\rw \le C \ep^2$ and hence by applying the trace theorem and Poincar\'{e} inequality together we have
\be
\label{eq.boundtildeJ1.H2}
\begin{split}
    |\Tilde{J}_1| & \le C\ep^2 \norm{\eta - c_1}_{L^2(\pt B_{R_\ep}(p))} A(\pt B_{R_\ep}(p))^{1/2}\\
    &\le C(M,d,g,S) \ep^2 \norm{\gd \eta}_{L^2(V_p^\ep)} |V_p^\ep|^{1/2}.
\end{split}
\ee
For $J_2$ we denote $\hat{\eta} := \eta - c_2 - \gd \eta \cdot \gd \xi$ and decompose
\[
J_2 = \avg{\beta}_{V_p^\ep\setminus B_{R_{\ep}}(p)}\int_{V_p^\ep\setminus B_{R_{\ep}}(p)} \hat{\eta} \dV  - \avg{\Tilde{\pi}}_{\pt B_{R_\ep}(p)}\int_{\pt B_{R_\ep}(p)}\hat{\eta} \dA =: J_{21} - J_{22}.
\]
Note that by choosing  $c_2 $ so that $\avg{\hat{\eta}}_{V_p^\ep} = 0$ we have
\[
\begin{split}
    |J_{21}| & \le C(S) \norm{\hat{\eta}}_{L^2(V_p^\ep)} |V_p^\ep|^{1/2}\\
    &\le C(M,d,g,S) \ep \norm{\gd \hat{\eta}}_{L^2(V_p^\ep)} |V_p^\ep|^{1/2}\\
    &\le  C \ep \norm{\gd^2 \eta \gd \xi}_{L^2(V_p^\ep)} |V_p^\ep|^{1/2} +  C \ep \norm{\lb \gd^2 \xi - I \rb\gd \eta }_{L^2(V_p^\ep)} |V_p^\ep|^{1/2}\\
    &\le C \ep^2|V_p^\ep|^{1/2} \lmb\norm{\gd^2 \eta}_{L^2(V_p^\ep)}  +  \norm{\gd \eta}_{L^2(V_p^\ep)}\rmb\\
    &\le C \ep^2 |V_p^\ep|^{1/2} \norm{\eta}_{H^2(V_p^\ep)},
\end{split}
\]
where we have applied \eqref{eq.property.Xi} and \eqref{eq.property.Xi.hessian}. On the other hand, by using trace theorem and Poincar\'{e} inequality together and Corollary \ref{cor.boundonpitilde}, we know that 
\[
\begin{split}
    |J_{22}| & \le C \ep\norm{\hat{\eta}}_{L^2(\pt B_{R_\ep}(p))} A(\pt B_{R_\ep}(p))^{1/2}\\
    &\le C \ep^{1+1/2} \norm{\gd\hat{\eta}}_{L^2( V_p^\ep)} A(\pt B_{R_\ep}(p))^{1/2}\\
    &\le C(M,d,g,S) \ep  \norm{\gd \hat{\eta}}_{L^2(V_p^\ep)} |V_p^\ep|^{1/2}\\
    &\le C \ep^2 |V_p^\ep|^{1/2} \norm{\eta}_{H^2(V_p^\ep)},
\end{split}
\]
where the last line is similar to $J_{21}$. Combining the previous two displays we have 
\be
\label{eq.boundJ2.H2}
|J_{2}| \le C(M,d,g,S) \ep^2 |V_p^\ep|^{1/2} \norm{\eta}_{H^2(V_p^\ep)}.
\ee
For $J_3$, we also decompose the integral
\[
\begin{split}
    J_3 &= \avg{\beta}_{V_p^\ep\setminus B_{R_{\ep}}(p)}\int_{V_p^\ep\setminus B_{R_{\ep}}(p)} \gd \eta \cdot \gd \xi \dV  - \avg{\Tilde{\pi}}_{\pt B_{R_\ep}(p)}\int_{\pt B_{R_\ep}(p)}\gd \eta \cdot \gd \xi \dA\\
&=\int_{V_p^\ep} \beta \gd \eta \cdot \gd \xi \dV + \int_{V_p^\ep} \lb \avg{\beta}_{V_p^\ep\setminus B_{R_{\ep}}(p)} - \beta  \rb \gd \eta \cdot \gd \xi \dV \\
&\qquad- \lb\avg{\beta}_{V_p^\ep\setminus B_{R_{\ep}}(p)}\int_{ B_{R_{\ep}}(p)} \gd \eta \cdot \gd \xi \dV  + \avg{\Tilde{\pi}}_{\pt B_{R_\ep}(p)}\int_{\pt B_{R_\ep}(p)}\gd \eta \cdot \gd \xi\dA\rb\\
&=J_{31} + J_{32} - J_{33}
\end{split}
\]
Note that 
\be\label{eq.J31}
\lw\sum_{p\in S_\ep} J_{31}(p)\rw = \lw\int_{M} \beta \gd \eta \cdot \gd \xi \dV \rw \le C(M,d,g,S) \norm{\eta}_{H^2(M)} \norm{\xi}_{L^\infty(M)} \le C\ep^2,
\ee
so we only worry about $J_{32}$ and $J_{33}$. By a similar argument as in \eqref{eq.boundJ1.H2} and also applying \eqref{eq.property.Xi}, we know that 
\[
|J_{32}| \le C(M,d,g,S) \ep ^2 |V_p^\ep|^{1/2}\norm{\gd \eta}_{L^2(V_p^\ep)}.
\]
We write $J_{33}$ as 
\[
\begin{split}
  J_{33} & =   \avg{\beta}_{V_p^\ep\setminus B_{R_{\ep}}(p)}\int_{ B_{R_{\ep}}(p)} \gd \eta \cdot \gd \xi \dV  + \avg{\Tilde{\pi}}_{\pt B_{R_\ep}(p)}\int_{\pt B_{R_\ep}(p)}\gd \eta \cdot \gd \xi  \dA\\
  &=: J_{331} + J_{332}.
\end{split}
\]
We observe that 
\[
\begin{split}
   |J_{331}| &\le C \lw\int_{B_{R_\ep}(p)} \gd \eta\cdot \gd \xi \dV\rw  \\
   &\le C \lw \int_{B_{R_\ep}(p)} \xi \Delta \eta \dV\rw + C \frac{R_\ep^2}{2}\lw \int_{\pt B_{R_\ep}(p)} \pt_\nu \eta \dA \rw\\
 &\le C(M,d,g,S) \ep^2 |V_p^\ep|^{1/2} \norm{\eta}_{H^2(B_{R_\ep}(p))},
\end{split}
\]
and by Corollary \ref{cor.boundonpitilde} again
\[
\begin{split}
    |J_{332}| & \le C \ep R_\ep\lw \int_{\pt B_{R_\ep}(p)} \pt_\nu \eta \dA\rw\\
    &\le C \ep^2 \lw \int_{B_{R_\ep}(p)} \Delta \eta \dV \rw\\
    & \le C \ep^2 |V_p^\ep|^{1/2} \norm{\eta}_{H^2(B_{R_\ep}(p))}. 
\end{split}
\]
Combining \eqref{eq.J31}  and the previous displays about $ J_{32}$ and $J_{33}$, we obtain 
\be
\label{eq.boundJ3.H2}
\lw \sum_{p\in S_\ep } J_3(p)\rw \le C \ep^2 \norm{\eta}_{H^2(M)}.
\ee
The proof is finished by combining \eqref{eq.computeleta}, \eqref{eq.boundJ1.H2}, \eqref{eq.boundtildeJ1.H2}, \eqref{eq.boundJ2.H2} and \eqref{eq.boundJ3.H2}.

\end{proof}

\subsection{A sharper \texorpdfstring{$L^2(\pt\Omega)$}{L2}-estimate of \texorpdfstring{$\Ra_\ep$}{R}}

\begin{lemma}
     \label{l.sharperL2estimateforRaep.boundary}
    Under the assumptions of Lemma \ref{l.sharperL2estimateforRaep}, then there is a constant $C(M,d,g,S)>0$ such that 
     \be
\label{eq.sharperL2estimateforRaep.boundary}
\norm{\Ra_\ep}_{L^2(\pt\Omega)} \le C\ep^{\frac{d}{d-1}},
    \ee
    where $\Omega \subset M$ is the domain obtained by removing $\overline{B}_p$ with $p\in S_\ep$. In particular, in the case $d=2$, we have
    \[
    \norm{\Ra_\ep}_{L^2(\pt\Omega)} \le C\ep^{2}.
    \]

\end{lemma}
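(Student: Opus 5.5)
We need $\|\Ra_\ep\|_{L^2(\pt\Omega)}\le C\ep^{d/(d-1)}$, i.e. an $L^2$ bound of $\Ra_\ep$ on the union of the small spheres $\pt B_p$. The key structural fact from \eqref{eq.equationforRaep} is that $\Ra_\ep$ is harmonic in $M\setminus\overline{\Omega^*}=\bigcup_p Q_p$. So on each $Q_p$ it is a genuinely regular function, whose values on the tiny sphere $\pt B_p$ (radius $r_{\ep,p}\simeq\ep^{d/(d-1)}$, much smaller than $R_\ep=\ep/8$) are controlled by interior estimates. The plan is to compare the boundary $L^2$ norm with the bulk $L^2$ norm on $Q_p$ using harmonicity, and then sum over $p$ using Lemma \ref{l.sharperL2estimateforRaep} and Lemma \ref{l.H1estimateforRaep}.

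First, for each $p$, I will use the mean value property (or interior elliptic estimates in geodesic coordinates, rescaled to $B_1(0)$ as in the proof of Lemma \ref{l.difference.modelandPhibar}) on $Q_p$. Since $\pt B_p\subset B_{R_\ep/2}(p)$, this gives
\[
\sup_{B_{R_\ep/2}(p)}|\Ra_\ep|\le C R_\ep^{-d/2}\norm{\Ra_\ep}_{L^2(Q_p)}.
\]
Then
\[
\norm{\Ra_\ep}_{L^2(\pt B_p)}^2\le A(\pt B_p)\sup_{B_{R_\ep/2}(p)}|\Ra_\ep|^2\le C\,\ep^{d}\cdot\ep^{-d}\norm{\Ra_\ep}_{L^2(Q_p)}^2,
\]
where we used $A(\pt B_p)\simeq\ep^d$ from the balance of mass \eqref{eq.localbalanceofmass}. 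Summing over $p$ and using the disjointness of the $Q_p$ together with Lemma \ref{l.sharperL2estimateforRaep}, this yields
\[
\norm{\Ra_\ep}_{L^2(\pt\Omega)}\le C\norm{\Ra_\ep}_{L^2(M)}\le C\ep^2.
\]
Since $2\ge d/(d-1)$ for all $d\ge2$, and with equality exactly when $d=2$, this gives the claim, including the stated $\ep^2$ in $d=2$.

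The main thing to check is that the interior estimate is uniform: harmonicity holds for the Laplace–Beltrami operator in $Q_p$, so in rescaled geodesic coordinates the coefficients satisfy $|A^\ep_{ij}-\delta_{ij}|\le C\ep^2$, exactly as in the proof of Lemma \ref{l.difference.modelandPhibar}. The De Giorgi–Nash–Moser or standard $L^\infty$–$L^2$ local bound (\cite{gilbarg_elliptic_2001}) on $B_1$ then gives a constant independent of $\ep$ and $p$. One should also confirm that $\Ra_\ep$ is indeed harmonic across $\pt B_p$. This holds because the jump in normal derivative of $\Phi_\ep$ on $\pt B_p$ is entirely absorbed by $\Psi_p$, so the only source terms of $\Ra_\ep$ are on $\pt\Omega^*$ and in $\Omega^*$. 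If this route were to fail, a fallback is the cruder trace estimate on each $Q_p$: by the trace inequality on the small sphere together with Poincaré, one bounds the boundary norm by $\ep^{d/2}\cdot$ (average) plus gradient terms, using $\norm{\gd\Ra_\ep}_{L^2}\le C\ep$. That fallback would itself produce the rate $\ep^{d/(d-1)}$ through the factor $r_{\ep,p}^{1/2}$-type scalings.
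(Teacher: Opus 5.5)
Your argument is correct, and it takes a more direct route than the paper's.

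The paper splits $\Ra_\ep$ on each $\overline B_p$ into a mean and an oscillation. The oscillation is bounded by Lemma~\ref{l.HarnackonManifolds}, fed with the \emph{global} bound $\norm{\Ra_\ep}_{L^\infty(M)}\le C\ep$ of Lemma~\ref{l.globalosc.Raep}. This gives $\osc_{\overline B_p}\Ra_\ep\le C\ep\, r_{\ep,p}/R_\ep\le C\ep^{\frac{d}{d-1}}$. The mean over $B_p$ is traded for the mean over $Q_p$ via Lemma~\ref{l.meanvalueproperty}, at a cost of $CR_\ep^2\ep\le C\ep^3$. Weighted by $A(\pt B_p)\simeq|V_p^\ep|$, these means sum to $C\norm{\Ra_\ep}_{L^2(M)}\le C\ep^2$.

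You replace all of this with a single local $L^\infty$--$L^2$ estimate for the harmonic function $\Ra_\ep$ on $Q_p$, which controls mean and oscillation at once. The scaling $A(\pt B_p)R_\ep^{-d}\le C$ then collapses the sum to $\norm{\Ra_\ep}_{L^2(M)}$. You never need Lemma~\ref{l.globalosc.Raep}, the oscillation-decay lemma, or the mean-value comparison.

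Your route also gives a strictly stronger conclusion: $\norm{\Ra_\ep}_{L^2(\pt\Omega)}\le C\ep^2$ in every dimension. The exponent $\frac{d}{d-1}$ in the paper comes only from bounding the oscillation on $B_p$ by the crude global sup. Local information shows that term is of lower order; for instance, the interior gradient estimate gives $\osc_{\overline B_p}\Ra_\ep\le C\,(r_{\ep,p}/R_\ep)\,R_\ep^{-d/2}\norm{\Ra_\ep}_{L^2(Q_p)}$. The paper's route offers nothing beyond reusing lemmas it proves anyway.

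Drop your closing ``fallback''. It is unnecessary and too loosely sketched to check. Moreover, if it were carried out with only the global bound $\norm{\gd\Ra_\ep}_{L^2(M)}\le C\ep$ and a trace/capacity estimate on the small spheres, without using harmonicity in $Q_p$, then in $d=2$ it would pick up an extra factor $|\log\ep|^{1/2}$. It would therefore not recover the stated $\ep^2$ there.
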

To prove Corollary \ref{l.sharperL2estimateforRaep.boundary}, we require the following auxiliary lemmata.

\begin{lemma}
    \label{l.HarnackonManifolds}
    Let $B_{\delta t}\subset B_{t}$ be two geodesic balls centered at $x\in M$ with $\delta\in (0,1)$ and the radius $0< t < r_0(M,g)$ the injectivity radius of $(M,g)$ (which can be chosen to be independent of $x$ because of compactness of $M$). Suppose $h$ is a continuous harmonic function on $B_t$, then there is a constant $C(M,g)>0$ such that
    \be
\label{eq.harnackonManifold}
\osc_{B_{\delta t}} h \le C  \delta  \osc_{B_t} h.
    \ee
\end{lemma}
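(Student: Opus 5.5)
The plan is to reduce the statement to an interior gradient estimate for harmonic functions on a fixed-size ball, applied after subtracting a constant. The only inputs are local elliptic theory in geodesic normal coordinates and the mean value theorem. The constant must be uniform in $x$, which compactness of $M$ provides.

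\emph{Reductions.} First observe that \eqref{eq.harnackonManifold} is trivial when $\delta\ge \delta_0$ for a fixed $\delta_0\in(0,1)$: there $\osc_{B_{\delta t}}h\le \osc_{B_t}h\le \delta_0^{-1}\delta\,\osc_{B_t}h$. So assume $\delta<1/2$. Replacing $h$ by $h-c$ with $c$ the midpoint of the range of $h$ on $B_t$ (if $\osc_{B_t}h=\infty$ there is nothing to prove), we may also assume
\[
\sup_{B_t}|h|\le \tfrac12\osc_{B_t}h .
\]

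\emph{Rescaling.} Work in geodesic normal coordinates $z\in B_t(0)\subset\R^d\cong T_xM$. There $h$ solves $\pt_{z_i}(a_{ij}(z)\pt_{z_j}h)=0$ with $a_{ij}=\sqrt{|g|}g^{ij}$. Since $t<r_0(M,g)$, the coefficients are uniformly elliptic and have $C^1$ norms bounded in terms of $M$ and $g$ only, uniformly in the base point $x$. Set $\tilde h(y):=h(ty)$ on $B_1(0)$. It solves the equation with coefficients $a_{ij}(ty)$, which are again uniformly elliptic. Their $C^1$ norms are bounded by $\max(1,r_0)\,\norm{a}_{C^1}$, so they are uniform in $t$ as well.

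\emph{Gradient estimate.} Interior estimates for divergence-form equations with Lipschitz coefficients (for instance $W^{2,\gamma}$ estimates with large $\gamma$ plus Sobolev embedding, as in the proof of Lemma \ref{l.difference.modelandPhibar}, or Schauder theory) give
\[
\norm{\gd\tilde h}_{L^\infty(B_{1/2}(0))}\le C\norm{\tilde h}_{L^\infty(B_1(0))}\le C\,\osc_{B_t}h,
\]
with $C=C(M,g)$.

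\emph{Conclusion.} Since $\delta<1/2$, the mean value theorem along segments in the convex set $B_\delta(0)$ yields
\[
\osc_{B_\delta(0)}\tilde h\le 2\delta\norm{\gd\tilde h}_{L^\infty(B_{1/2}(0))}\le C\delta\,\osc_{B_t}h .
\]
Geodesic balls centered at $x$ correspond exactly to Euclidean balls in normal coordinates, so $B_{\delta t}$ is the image of $B_\delta(0)$ under $y\mapsto ty$ followed by $\exp_x$. Hence the left-hand side equals $\osc_{B_{\delta t}}h$, which is the claim.

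\emph{Main obstacle.} The only delicate point is the uniformity of the constant. It must not depend on $t$, which is handled by the rescaling: small $t$ only makes the coefficients flatter. It must also not depend on $x$, which follows from compactness and the uniform bounds on normal coordinates below the injectivity radius. Continuity of $h$ up to $\partial B_t$ is not needed beyond boundedness of $h$.
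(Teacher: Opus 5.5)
Your proof is correct in substance and takes a different route from the paper. The paper's proof is only a citation to Li's \emph{Geometric Analysis} (Corollary 6.2), i.e.\ to the intrinsic gradient-estimate and Harnack theory for harmonic functions on manifolds. Within that theory, the linear factor $\delta$ comes from a Cheng--Yau type bound $|\gd u|\le C\,t^{-1}u$ on $B_{t/2}$, applied to the positive harmonic function $u=h-\inf_{B_t}h$ (plus a vanishing constant). Here $C$ depends only on $d$ and on $t^2$ times a lower Ricci bound, and one then integrates along geodesics through the center, as in your final step. That route needs no coordinates and no restriction on the radius. You instead rescale in normal coordinates and use Euclidean interior regularity for divergence-form equations with Lipschitz coefficients, the same $W^{2,\gamma}$ tool the paper uses in Lemma \ref{l.difference.modelandPhibar}. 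This is more elementary and self-contained, and it shows clearly why the constant is uniform in $t$ and $x$. But it depends on charts, and that is where one of your claims needs correcting.

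The coefficients $a_{ij}=\sqrt{|g|}g^{ij}$ are not uniformly elliptic on $B_t(0)$ for every $t$ below the injectivity radius. On the round $S^2$, whose injectivity radius is $\pi$, the eigenvalues of $a_{ij}$ at $|z|=r$ are $\sin r/r$ and $r/\sin r$, and both degenerate as $r\to\pi$. So run your argument only for $t\le r_1$. Choose $r_1=r_1(M,g)<r_0$ so that, uniformly in $x$, $\tfrac12\delta_{ij}\le g_{ij}\le 2\delta_{ij}$ and $|\pt g_{ij}|\le C$ on $B_{r_1}(0)$.

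For $r_1<t<r_0$, reduce to this case. If $\delta t\ge r_1$, then $\delta\ge r_1/r_0$ and the claim is trivial. Otherwise, apply the small-ball result on $B_{r_1}$ with ratio $\delta t/r_1\in(0,1)$ to get $\osc_{B_{\delta t}}h\le C(\delta t/r_1)\osc_{B_{r_1}}h\le C(r_0/r_1)\,\delta\,\osc_{B_t}h$.

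The paper only applies the lemma with $t=R_\ep=\ep/8$, so this repair is harmless, and with it your proof is complete.
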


\begin{proof}
See \cite{li_geometric_2012}*{Corollary 6.2}. 
\end{proof}

\begin{lemma}\label{l.meanvalueproperty}
Fix $x_0\in M$ and let $0<r<R$ be both small numbers. If $u$ is harmonic on the geodesic ball $B_R(x_0)$, then the mean value
\[
M(t)=\frac{1}{|B_t(x_0)|}\int_{B_t(x_0)} u \dV \textup{ and }S(t)=\frac{1}{A(\partial B_t(x_0))}\int_{\partial B_t(x_0)} u \dA
\]
satisfy
\[
|M(R)-M(r)|+|S(R)-S(r)|\le C\,R^2\sup_{B_R(x_0)}|u|,
\]
where $C$ is a constant depending only on the manifold.
\end{lemma}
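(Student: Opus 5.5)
The plan is to work in geodesic normal coordinates $z\in\R^d\cong T_{x_0}M$ centered at $x_0$. There the Laplace--Beltrami operator reads $\Delta_g u = \Delta_0 u + b^{ij}(z)\pt_{ij}u + c^i(z)\pt_i u$ with $|b^{ij}(z)|\le C|z|^2$ and $|c^i(z)|\le C|z|$. Likewise the volume density $\sqrt{|g|}=1+O(|z|^2)$, and the area density on geodesic spheres is $t^{d-1}(1+O(t^2))$.

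I would first treat the spherical means. Differentiating $S(t)$ gives
\[
S'(t)=\frac{1}{A(\pt B_t)}\int_{\pt B_t}\pt_\nu u\dA \;+\; \text{(error from differentiating the normalized density)}.
\]
Since $\sqrt{|g|}$ in polar coordinates satisfies $\pt_r\log\sqrt{|g|}=O(r)$ by \eqref{eq.boundofChristofelSymbol}, the error term is bounded by $Ct\,\osc_{\pt B_t}u\le Ct\sup_{B_R}|u|$. This uses the fact that the normalized weight differs from the constant-normalized one by $O(t^2)$, and its derivative is $O(t)$ times a mean-zero-compatible quantity.

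The main term vanishes by harmonicity and the divergence theorem:
\[
\int_{\pt B_t}\pt_\nu u\dA=\int_{B_t}\Delta_g u\dV=0.
\]
Hence $|S'(t)|\le Ct\sup_{B_R}|u|$. Integrating from $r$ to $R$ gives $|S(R)-S(r)|\le CR^2\sup|u|$.

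For the solid means, I would write $M(t)=\frac{1}{|B_t|}\int_0^t A(\pt B_s)S(s)\dd s$. Using $S(s)=S(0^+)+O(s^2\sup|u|)$, where $S(0^+)=u(x_0)$ by continuity, and the volume normalization, this gives $M(t)=u(x_0)+O(t^2\sup|u|)$. The same expansion also gives $S(t)=u(x_0)+O(t^2\sup|u|)$. Subtracting at $t=R$ and $t=r$ then bounds $|M(R)-M(r)|$ by $C(R^2+r^2)\sup|u|\le CR^2\sup|u|$.

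The main obstacle is the careful handling of the derivative of the normalized spherical measure in the first step. The cleanest route is to write
\[
S(t)=\frac{\int_{\mathbb S^{d-1}}u(t\theta)J(t,\theta)\dd\theta}{\int_{\mathbb S^{d-1}}J(t,\theta)\dd\theta},
\]
with $J=t^{d-1}\sqrt{|g|}$ in polar form, and to use $\pt_t\log J=\frac{d-1}{t}+O(t)$. In $S'(t)$, the $\frac{d-1}{t}$ parts cancel between numerator and denominator. The remaining terms are $O(t)\cdot(\text{weighted average of }|u-S(t)|)$, plus the flux term, which is zero. Each such term is controlled by $Ct\sup_{B_R}|u|$. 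Interior regularity is not even needed, only the bound on the Jacobian derivative and harmonicity.
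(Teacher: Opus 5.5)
Your proof is correct, and for the spherical means it is the differential form of the paper's argument. The paper applies Green's identity on $D_{r,R}$ with the flat fundamental-solution profile ($\log(1/t)$ or $t^{2-d}$). Since the flat radial Laplacian kills that profile, \eqref{eq.localCalcofLaplacian} leaves only its derivative times $\pt_r\log\sqrt{|g|}$. The resulting identity is exactly $\pt_t\bigl(t^{1-d}\int_{\pt B_t}u\dA\bigr)$ integrated from $r$ to $R$, with the same Jacobian error term you isolate.

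The differences are in bookkeeping. You normalize with the true area $A(\pt B_t)$ instead of $t^{d-1}$, so the $\frac{d-1}{t}$ part cancels exactly against the mean-zero function $u-S(t)$. This means you never need the paper's final conversion via \eqref{eq.volumeandarea.expansion}. For the solid means, you anchor $M(t)=|B_t|^{-1}\int_0^tA(\pt B_s)S(s)\,ds$ at $u(x_0)$. This is shorter than the paper's double integration of the sphere identity over $(\tau_1,\tau_2)\in(0,r)\times(0,R)$.

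Your formulation also makes visible that full-ball harmonicity enters only through the vanishing flux $\int_{\pt B_t}\pt_\nu u\dA$, which the paper's Green-identity step uses implicitly. Consequently, the annulus variant in Remark \ref{r.sphericalmean.annuli} would need an extra zero-flux hypothesis; without it, a fundamental solution with pole at $x_0$ is a counterexample.
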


\begin{remark}
    \label{r.sphericalmean.annuli}
    The bound on the spherical means can be improved to
    \[
    |S(R)-S(r)|\le C\,R^2\sup_{B_R(x_0)\setminus B_r(x_0)}|u|
    \]
    for $u$ that is harmonic merely on the annulus $B_R(x_0)\setminus B_r(x_0)$.
\end{remark}

\begin{proof}
In a geodesic polar coordinate $(t,\varphi)$ based at $x_0$, we denote for $t<R$
\[
g(t) : = \bca
\log(1/t) & \textup{ if }d=2\\
1/t^{d-2} & \textup{ if }d\ge3.
\eca
\]
By testing $\Delta u = 0$ with $g(t)$ in the annulus $D_{r,R}:=\{(t,\varphi)\;:\;r< t < R\}$, we obtain
\[
\lw\frac{1}{R^{d-1}}\int_{\pt B_R} u \dA - \frac{1}{r^{d-1}}\int_{\pt B_r} u \dA \rw \le \sup_{B_R(x_0)}|u| \int_{D_{r,R}} |\Delta g(t)| \dV.
\]
Note that by \eqref{eq.localCalcofLaplacian} and \eqref{eq.boundofChristofelSymbol}, we have
\[
|\Delta g(t)| \le C t |g'|(t) \le C\frac{1}{t^{d-2}} .
\]
This shows that 
\[
\lw\frac{1}{R^{d-1}}\int_{\pt B_R} u \dA - \frac{1}{r^{d-1}}\int_{\pt B_r} u \dA \rw \le C R^2\sup_{B_R(x_0)}|u| .
\]
Note that here $r<R$ is arbitrary, we also know that for all $\tau_1<\tau_2<R$
\[
\lw\frac{1}{\tau_2^{d-1}}\int_{\pt B_{\tau_2}} u \dA - \frac{1}{\tau_1^{d-1}}\int_{\pt B_{\tau_1}} u \dA \rw \le C R^2\sup_{B_R(x_0)}|u| .
\]
In particular,
\[
\tau_1^{d-1}\int_{\pt B_{\tau_2}} u \dA = \tau_2^{d-1}\int_{\pt B_{\tau_1}} u \dA + O\lb\tau_1^{d-1}\tau_2^{d-1}R^2\sup_{B_R(x_0)}|u|\rb.
\]
Integrating both sides in $\tau_1\in (0,r)$ and $\tau_2\in (0,R)$, we obtain by the co-area formula
\[
\lw\frac{1}{R^{d}}\int_{ B_R} u \dV - \frac{1}{r^{d}}\int_{B_r} u \dV \rw \le C R^2\sup_{B_R(x_0)}|u| .
\]
The proof is finished by recalling the following expansions of volumes and areas of geodesic balls and spheres
\be\label{eq.volumeandarea.expansion}
|B_t(x_0)|
=
\omega_n t^n
\left(
1+ O(t^2)
\right) \textup{ and }A(\partial B_t(x_0))
=
n\omega_n t^{n-1}
\left(
1+O(t^2)
\right).
\ee
For the reference of these expansions, we refer to \cite{chavel_eigenvalues_1984}*{Section XII.8}.

\end{proof}

\begin{proof}[Proof of Lemma \ref{l.sharperL2estimateforRaep.boundary}]

Note that $\Ra_\ep$ is harmonic on $\bigcup_{p\in S_\ep} B_{R_\ep}(p)$, and therefore, by applying Lemma \ref{l.globalosc.Raep} and Lemma \ref{l.HarnackonManifolds} we have for $d\ge 2$
\be
\label{eq.localoscbound.Raep}
\max_{p\in S_\ep}\osc_{\overline{B}_p} \Ra_\ep \le C\frac{\osc_{M} \Ra_\ep}{R_\ep}\max_{p\in S_\ep} r_{\ep,p} \le C \ep^{\frac{d}{d-1}},
\ee
where $\overline{B}_p = \overline{B}_{r_{\ep,p}}(p)$.
On the other hand, we know that by Lemma \ref{l.meanvalueproperty}, there is a universal $C>0$ so that for every $p\in S_\ep$
\[
\lw \avg{\Ra_\ep}_{B_{R_\ep}(p)} - \avg{\Ra_\ep}_{B_p}\rw \le C R_\ep^2 \norm{\Ra_\ep}_{L^\infty(M)} \le C \ep^3.
\]
This shows that 
\[
\begin{split}
   \norm{\Ra_\ep}_{L^2(\pt \Omega)}&\le \lb \sum_{p\in S_\ep} A(\pt B_p) \avg{\Ra_\ep}_{B_p}^2  \rb^{1/2} + C\ep^{\frac{d}{d-1}}\\
 &\le \lb \sum_{p\in S_\ep} A(\pt B_p) \avg{\Ra_\ep}_{B_{R_\ep}(p)}^2  \rb^{1/2} + C\ep^{\frac{d}{d-1}}\\
 &\le C\lb \sum_{p\in S_\ep} |V_p^\ep| \avg{\Ra_\ep}_{B_{R_\ep}(p)}^2  \rb^{1/2} + C\ep^{\frac{d}{d-1}}\\
 &\le C \norm{\Ra_\ep}_{L^2(M)} + C \ep^{\frac{d}{d-1}}\\
 &\le C\ep^{\frac{d}{d-1}},
\end{split}
\]
 where the last line follows from Lemma \ref{l.sharperL2estimateforRaep}.   
\end{proof}

\subsection{An \texorpdfstring{$L^\infty(M)$}{L2}-estimate of \texorpdfstring{$\Ra_\ep$}{R} for centralized \texorpdfstring{$S_\ep$}{S}}

In this subsection we prove an improved $L^\infty$ estimate for $\Ra_\ep$ and therefore $\Phi_\ep$ in the case that the points $p\in S_\ep$ are also the mass centers of $V_p^\ep$ in the sense of \eqref{eq.centroid.voronoi}.

\begin{lemma}
    \label{l.centroidvoronoi.Linfity}
     Under the assumptions of Lemma \ref{l.globalosc.Raep} with the additional assumption \eqref{eq.centroid.voronoi} that 
     \be\label{eq.centroid.voronoi.proof}
     \int_{V_p^\ep} \exp_p^{-1}(x) \beta(x) \dV = 0 \in T_pM
     \ee
  for all $p\in S_\ep$,  then there is a constant $C(M,d,g,S)>0$ such that 
    \be
\label{eq.globalosc.Raep.centroid}
\norm{\Ra_\ep}_{L^\infty(M)} \le C \ep^2|\log \ep|.
    \ee
    In particular, by combining \eqref{eq.phibar.linfty.h1} there is 
       \be
\label{eq.globalosc.Phiep}
\norm{\Phi_\ep}_{L^\infty(M)} \le C \omega_d(\ep).
    \ee
\end{lemma}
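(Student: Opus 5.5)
We follow the Green-function argument of Lemma~\ref{l.globalosc.Raep} line by line and improve only the far-field piece $A_2$, which gave the $O(\ep)$ bound there. As before, fix $x$ in the interior of $V_p^\ep$ and write $\Ra_\ep(x)=\int_M G_x\drho=A_1+A_2$ using the decomposition \eqref{eq.decompose.drho} into near cells ($\dist(q,p)\le 8\ep$) and far cells. The proof of Lemma~\ref{l.globalosc.Raep} already shows $|A_1|\le C\ep^2|\log\ep|$: the bounds for $A_{11}$, $A_{121}$ and $A_{122}$ were each of that size. So the entire task is to show $|A_2|\le C\ep^2|\log\ep|$.

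For a far cell $V_q^\ep$ we expand $G_x$ to second order around $q$ rather than first order. In geodesic normal coordinates $z=\exp_q^{-1}(y)$ we write
\[
G_x(y)=G_x(q)+\gd G_x(q)\cdot z+O\bigl(|z|^2\sup_{V_q^\ep}|\gd^2G_x|\bigr).
\]
The zeroth-order term vanishes because $\rho_q(V_q^\ep)=0$, by \eqref{eq.H1estimateRa.usebalanceofmass}. The key new point is that the first moment of $\rho_q$ also nearly vanishes. Recall
\[
\rho_q=\beta 1_{V_q^\ep\setminus Q_q}\dV-\Tilde\pi\,\dA|_{\pt Q_q}.
\]
We have $\int_{V_q^\ep}z\,\beta\dV=0$ by \eqref{eq.centroid.voronoi.proof}. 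The inner ball is also harmless: $\int_{Q_q}z\beta\dV=O(\ep^{d+2})$, since $\beta(y)=\beta(q)+O(\ep)$ and the volume form is symmetric up to $O(|z|^2)$ corrections. For the sphere term, Corollary~\ref{cor.boundonpitilde} gives $\Tilde\pi=\avg{\Tilde\pi}+O(\ep^2)$, and $\int_{\pt Q_q}z\,\dA=O(\ep^{d+2})$ by the near-symmetry of geodesic spheres. Combining these,
\[
\Bigl|\int z\,\drho_q\Bigr|\le C\ep^{d+1}\cdot\ep=C\ep^{d+2}.
\]
Checking this moment computation carefully, including the metric corrections and the $O(\ep)$ variation of $\beta$, is the main obstacle. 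The point is that each error must cost one extra power of $\ep$ relative to the trivial bound $\ep\,\|\rho_q\|_{TV}\le C\ep^{d+1}$.

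With the moment bound in hand, and using $\|\rho_q\|_{TV}\le C\ep^d$ from \eqref{eq.totalvariation.rhoq}, the standard Green-function bounds $|\gd G_x(q)|\le C\dist(x,q)^{1-d}$ and $|\gd^2G_x(y)|\le C\dist(x,y)^{-d}$, and $\dist(x,y)\ge\tfrac34\dist(p,q)$, we obtain
\[
\Bigl|\int G_x\drho_q\Bigr|\le C\frac{\ep^{d+2}}{\dist(p,q)^{d-1}}+C\frac{\ep^{d+2}}{\dist(p,q)^{d}}\le C\frac{\ep^{d+2}}{\dist(p,q)^{d}}.
\]
Here we used that $\dist(p,q)$ is bounded above by $\diam(M)$, so the second term dominates.

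Summing over the dyadic shells $\Oa_k$, each with $\#\Oa_k\le C2^{kd}$ and $k\le N=O(|\log\ep|)$, gives
\[
|A_2|\le C\sum_{k=0}^N 2^{kd}\frac{\ep^{d+2}}{(2^k\ep)^d}=C\ep^2 N\le C\ep^2|\log\ep|.
\]
The logarithm comes exactly from this critical summation. This proves \eqref{eq.globalosc.Raep.centroid}.

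For \eqref{eq.globalosc.Phiep}, we use $\Phi_\ep=\bar\Phi_\ep+\Ra_\ep$, the bound $\|\bar\Phi_\ep\|_{L^\infty}\le C\omega_d(\ep)$ from \eqref{eq.phibar.linfty.h1}, and the observation that $\ep^2|\log\ep|\le C\omega_d(\ep)$ for every $d\ge2$. Indeed, $\ep^2|\log\ep|\le\ep^{d/(d-1)}$ when $d\ge3$, and the two quantities coincide when $d=2$.
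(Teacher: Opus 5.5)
Your proposal is correct and follows the paper's proof. It uses the same Green-function representation and near/far split, and takes the near-field bound $O(\ep^2|\log\ep|)$ from the proof of Lemma \ref{l.globalosc.Raep}. On far cells it uses a second-order Taylor expansion: the zeroth moment of $\rho_q$ vanishes by mass balance, the first moment is $O(\ep^{d+2})$ by the centroid condition and $\osc_{\pt Q_q}\Tilde{\pi}\le C\ep^2$, and the critical dyadic sum then produces the logarithm. Your first-moment computation is slightly more complete than the paper's, which bounds only the sphere contribution $\int_{\pt Q_q}\gd G_x(q)\cdot\exp_q^{-1}(y)\,\Tilde{\pi}\dA$ and passes silently over the inner-ball term $\int_{Q_q}\exp_q^{-1}(y)\,\beta\dV$ that you estimate explicitly.
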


\begin{proof}

As in the proof of Lemma \ref{l.globalosc.Raep}, we use the following representation of $\Ra_\ep$
\be
\label{eq.aGreenrepofRa.withdecomp}
\Ra_\ep(x) = \int_{M} G_x(y) \drho(y) = \sum_{p\in S_\ep}  \int_{V_p^\ep} G_x(y) \drho_p(y),
\ee
where $G_x$ is the Green function on $(M,g)$ that solves
\[
\Delta_g G_x = \delta_x - \frac{\beta}{\int_M \beta \dV}.
\]
As in \eqref{eq.decomposeerror}, for $x\in V_p^\ep$ (we may without loss assume that $x$ is in the interior of $V_p^\ep$) for some $p\in S_\ep$, we write
\[
\Ra_\ep(x) = \underbrace{\sum_{\dist(q,p)\le 8\ep}\int_{V_q^\ep}G_x \drho_q }_{A_1} ~+~\underbrace{\sum_{\dist(q,p)>8\ep} \int_{V_q^\ep} G_x \drho_q}_{A_2}
\]
By the same argument as in \eqref{eq.boundA1}, we obtain
\[
|A_1| \le C \ep^2 |\log \ep|.
\]
To estimate $A_2$, we fix an arbitrary $q\in S_\ep$ such that $\dist(q,p)>8\ep$, and then we compute in a geodesic coordinate $z\in \R^d \cong T_qM$
\[
\begin{split}
    \lw\int_{\pt Q_q}\gd G_x(q)\cdot \exp_q^{-1}(y) ~ \Tilde{\pi} \dA\rw & = \lw\int_{\pt B_{R_\ep}(0)} \gd_z G_x(0) \cdot z ~\Tilde{\pi}(z) \sigma (z)\dd \theta(z)\rw\\[5pt]
&\le C \sup_{V_q^\ep}{|\gd G_x|}~ \ep A(\pt Q_q) \osc_{z\in\pt B_{R_\ep}(0)} (\Tilde{\pi}(z) \sigma (z)),
\end{split}
\]
where $\sigma$ is the area element of $\pt Q_q$ in the coordinate $z$, $\theta$ is the Lebesgue measure on the round sphere $\pt B_{R_\ep}(0)$, and the last line above is obtained by the symmetry of the integral. Because in the geodesic coordinate $z$ the metric $g_{ij}(z) = \delta_{ij} + O(|z|^2)$ (See \cite{misner1973gravitation}*{Page 285}), we have by combining \eqref{eq.boundonpitilde}
\[
\osc_{z\in\pt B_{R_\ep}(0)} (\Tilde{\pi}(z) \sigma (z)) \le C \ep^2,
\]
which shows that 
\be\label{eq.centroid.symmetry.geodesicsphere}
 \lw\int_{\pt Q_q}\gd G_x(q)\cdot \exp_q^{-1}(y) ~ \Tilde{\pi} \dA \rw \le C \ep^2 \sup_{V_q^\ep}{|\gd G_x|} |V_q^\ep|.
\ee
Now we have by the balance of mass \eqref{eq.normalizationofRa}, \eqref{eq.centroid.symmetry.geodesicsphere} and the centroid assumption \eqref{eq.centroid.voronoi.proof}
\be
\label{eq.estimateA2.centroid}
\begin{split}
   \lw\int_{V_q^\ep} G_x \drho_q\rw & \le  \lw\int_{V_q^\ep} G_x(y)  - G_x(q) - \gd G_x(q)\cdot \exp_q^{-1}(y)  \drho_q(y)\rw\\
   &\qquad + \lw\int_{\pt Q_q}\gd G_x(q)\cdot \exp_q^{-1}(y) \Tilde{\pi} \dA\rw\\
 &\le C \ep^2 \|\rho_q\|_{TV} \sup_{ V_q^\ep} |\gd^2 G_x| + C \ep^2 |V_q^\ep| \sup_{V_q^\ep}{|\gd G_x|}\\
 &\le C \frac{\ep^{d+2}}{\dist(p,q)^{d}},
\end{split}
\ee
where we have used the Hessian and gradient estimate (See for example \cite{aubin_nonlinear_1982}*{Chapter 4}) for the Green function and the constant $C=C(M,d,g,S)>0$ may be different from line to line.
Similar to the proof of Lemma \ref{l.globalosc.Raep}, we divide the following set dyadically into
\[
S_\ep \setminus \{q\;;\; \dist(q,p)\le 2^3\ep\} = \bigcup_{k = 0}^N \left\{q\in S_\ep \;:\; \dist(q,p)\in (2^{k+3}\ep, 2^{k+4}\ep]\right\}=:\bigcup_{k = 0}^N \mathcal{O}_k,
\]
where $\diam(M)\le 2^{N + 4}\ep \le 2\diam(M)$ and therefore $N = O(|\log\ep|)$. Note that each $\Oa_k \subset \{x\in M\;:\; \dist_g(x,p)\le 2^{k+4}\ep\}$, but the latter can only have at most $C 2^{kd}$ points in $S_\ep$ due to the $\ep$-separatedness of $S_\ep$. This, combining \eqref{eq.estimateA2.centroid}, implies that 
\[
\begin{split}
    |A_2| & \le C\sum_{\dist(q,p)>2^3\ep} \frac{\ep^{d+2}}{\dist(p,q)^{d}}\\
    &\le C \sum_{k=0}^N \# \Oa_k \cdot \frac{\ep^{d+2}}{(2^k\ep)^{d}} \\
    &\le C \sum_{k=0}^N 2^{kd} \cdot \frac{\ep^{d+2}}{(2^k\ep)^{d}} \\
    &\le C \ep^2|\log \ep|.
\end{split}
\]
This completes the proof.
    
\end{proof}

\subsection{Proof of Theorem \texorpdfstring{\ref{t.summaryofPhi}}{2}}

\begin{proof}[Proof of Theorem \ref{t.summaryofPhi}]

The proof is done by combining Corollary \ref{cor.phibar.model.norms}, Lemma \ref{l.H1estimateforRaep}, Lemma \ref{l.globalosc.Raep}, Proposition \ref{prop.comparetwocellfunction}, Lemma \ref{l.sharperL2estimateforRaep}, Lemma \ref{l.sharperL2estimateforRaep.boundary} and Lemma \ref{l.centroidvoronoi.Linfity}.

\end{proof}

\section{Optimal convergence rates of the eigenvalue and functions}
\label{th1.2}
In this section, we prove Theorem \ref{t.quantitative.StekNeum} and in particular address the optimal convergence rate of eigenvalues and eigenfunctions. 

\subsection{Optimal convergence rate of the eigenvalues}
\label{subsection.optconvergerate.eigenvalue}
In this subsection, we focus on proving \eqref{eq.Optestimate.stektoNeum} in Theorem \ref{t.quantitative.StekNeum}. Let us make a specific lemma as follows.

\begin{lemma}
    \label{l.Optestimate.stektoNeum}
    Same assumptions in Theorem \ref{t.quantitative.StekNeum}, then for $\ep<\ep_0(M,d,g,S,k)$ there is a constant $C=C(M,d,g,S,k)>0$ such that
    \be\label{eq.Optestimate.stektoNeum.proof}
   |\lambda_k - \sigma_k^\ep| \le C {\omega}_d(\ep).
    \ee
\end{lemma}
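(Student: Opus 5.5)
The plan is to start from the identity \eqref{eq.atrickintegrate.intro}. Let $u^\ep$ be the harmonically extended $k$-th Steklov eigenfunction and $u$ a $k$-th weighted Laplace--Beltrami eigenfunction, with the normalizations \eqref{eq.normalize.Steklov.and.Neumann}. Testing the weak form of $-\Delta u=\lambda_k\beta u$ against $u^\ep$ gives $\int_M\gd u\cdot\gd u^\ep\dV=\lambda_k\int_M\beta uu^\ep\dV$. Since $u^\ep$ is harmonic in $M\setminus\Omega$ and in $\Omega$, Green's formula on $\Omega$ gives $\int_\Omega \gd u^\ep\cdot\gd u\dV=\sigma_k^\ep\int_{\pt\Omega}u^\ep u\dA$. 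Subtracting yields
\[
(\sigma_k^\ep-\lambda_k)\int_{\pt\Omega}u^\ep u\dA=\lambda_k\int_M u^\ep u\dd\mu_\ep-\int_{M\setminus\Omega}\gd u^\ep\cdot\gd u\dV=J_1+J_2 .
\]
By Theorem~\ref{t.qualitativeconverge} we may choose $u$ (within the eigenspace, via the orthogonal matrices) so that $\int_{\pt\Omega}u^\ep u\dA\to\norm{u}_{L^2_\beta}^2=1$. Hence it suffices to show $|J_1|+|J_2|\le C\omega_d(\ep)$. Throughout we use $\norm{u^\ep}_{H^1(M)}\le C$, which follows from the Rayleigh quotient, the $L^2(\pt\Omega)$ normalization and the uniform extension and trace bounds from \cite{girouard_large_2021}. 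We also use $\norm{u}_{C^2}\le C(\lambda_k)$ by elliptic regularity.

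\textbf{Estimate of $J_1$.} Test the cell problem \eqref{eq.cellfunction} with $\eta=uu^\ep$:
\[
J_1=-\lambda_k\int_M\gd\Phi_\ep\cdot\gd(uu^\ep)\dV .
\]
Split $\Phi_\ep=\bar\Phi_\ep+\Ra_\ep$ as in \eqref{eq.decomposePhi}.

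\emph{The $\Ra_\ep$ part.} Integrate by parts again using \eqref{eq.weakformofRa}:
\[
\int\gd\Ra_\ep\cdot\gd(uu^\ep)=-\int_{\Omega^*}\beta uu^\ep+\int_{\pt\Omega^*}\tilde\pi\, uu^\ep .
\]
This is the same quantity $l(\eta)$ bounded in the proof of Lemma~\ref{l.sharperL2estimateforRaep}, except that $\eta=uu^\ep$ is only $H^1$. On $\Omega^*$, however, $u^\ep$ is harmonic, and in fact harmonic on all of $\bigcup_p Q_p\setminus B_p$. I expect the $J_1,\tilde J_1,J_2,J_3$ bookkeeping in that proof to go through with $\norm{\eta}_{H^2}$ replaced by interior $H^2$ bounds of $u^\ep$ away from the holes. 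Alternatively, and more simply, I would write $\int\gd\Ra_\ep\cdot\gd(uu^\ep)=-\int\Ra_\ep\Delta(uu^\ep)$ away from $\pt\Omega$, plus a boundary term on $\pt\Omega$. The bulk term is controlled by $\norm{\Ra_\ep}_{L^2}\le C\ep^2$ together with $\Delta(uu^\ep)=u^\ep\Delta u+2\gd u\cdot\gd u^\ep$, which lies in $L^2$ uniformly. The jump term on $\pt\Omega$ involves $\pt_\nu^\pm u^\ep$ integrated against $\Ra_\ep u$. It is handled by $\norm{\Ra_\ep}_{L^2(\pt\Omega)}\le C\ep^{d/(d-1)}$ and the Steklov condition $\pt_\nu u^\ep=\sigma u^\ep$ on the outer side. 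On the inner side, the Dirichlet-to-Neumann data of the harmonic extension inside the small ball is bounded via the mean-value/oscillation Lemmas~\ref{l.HarnackonManifolds}--\ref{l.meanvalueproperty}.

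\emph{The $\bar\Phi_\ep$ part.} Here I would use the explicit radial structure from Lemma~\ref{l.difference.modelandPhibar}. Integrating by parts on each $Q_p$ gives $\int_{Q_p}\beta uu^\ep$ minus $\int_{\pt B_p}uu^\ep$, plus boundary terms on $\pt Q_p$. These are weighted by $\bar\Phi_\ep$, whose size is at most $C\omega_d(\ep)$ in $L^\infty$ and at most $C\ep^{d/(d-1)}$ in $L^2$ by \eqref{eq.Phibar.summary}. Pairing these with the uniform bounds on $u,u^\ep$ gives a contribution of $O(\omega_d(\ep))$.

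\textbf{Estimate of $J_2$.} Freeze $\gd u\approx\gd u(p)$ on each $B_p$, with error $O(r_\ep)\,|B_p|^{1/2}\norm{\gd u^\ep}_{L^2(B_p)}$, which sums to well below $\omega_d$. This leaves $-\sum_p\gd u(p)\cdot\int_{B_p}\gd u^\ep$, which in geodesic coordinates equals a boundary integral of $(\gd u(p)\cdot z/|z|)\,u^\ep$ over $\pt B_p$. Green's identity against the auxiliary function $w$ from the roadmap moves this to the annulus $Q_p\setminus B_p$. There $u^\ep$ is harmonic and $w$ is $O(r_\ep^{d-1}|z|^{1-d})$, so the integral is bounded by $\norm{\gd u^\ep}_{L^2(Q_p)}\norm{\gd w}_{L^2}$ plus terms involving $\pt_\nu u^\ep=\sigma u^\ep$ on $\pt B_p$. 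Since $\norm{\gd w}_{L^2}^2\sim r_\ep^{d}$ per hole, summing over $p$ by Cauchy--Schwarz should give $C\ep^{d/(d-1)}$.

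\textbf{Main obstacle.} I expect the hardest step to be the $J_1$ boundary terms on $\pt\Omega$ coming from the interior normal derivative of the harmonic extension of $u^\ep$. These need a sharp local estimate that the extension inside each tiny ball is nearly constant, with oscillation controlled by its $H^1$ energy on $Q_p$. I would try first to prove this by rescaling $Q_p$ to unit size and using Lemma~\ref{l.HarnackonManifolds}.
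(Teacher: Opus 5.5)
Your architecture matches the paper's: the identity \eqref{eq.atrickintegrate.intro}, the cell function $\Phi_\ep$ for $J_1$, and the auxiliary field $w$ for $J_2$. However, the two steps that actually produce the optimal rate are not closed.

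\textbf{The $J_2$ estimate gives at best the naive rate.} The field $w$ with trace $\xi\cdot z/|z|$ on $\pt B_p$ has $\norm{\gd w}_{L^2(Q_p\setminus B_p)}^2\simeq r_{\ep,p}^{d-2}$, not $r_{\ep,p}^{d}$. Even granting your value, Cauchy--Schwarz gives $\bigl(\sum_p r_{\ep,p}^d\bigr)^{1/2}\simeq |M\setminus\Omega|^{1/2}\simeq \ep^{\frac{d}{2(d-1)}}$. That is the square root of the target $\ep^{\frac{d}{d-1}}$, i.e.\ exactly the crude bound the paper wants to improve. Any argument that pairs $\gd u^\ep$ with $\gd w$ near the hole, where $|\gd w|\sim r_{\ep,p}^{d-1}|z|^{-d}$ is huge, loses this square root. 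The missing idea is that this region never has to be estimated. On $\pt B_p$, $w$ satisfies the Robin relation $\pt_\nu^+\Tilde{w}_\xi=(A_\ep/r_{\ep,p})\Tilde{w}_\xi$ with $A_\ep\to d-1$, see \eqref{eq.boundary.Neumann.Diri.response.w.xi}. Green's second identity on $Q_p\setminus B_p$, combined with the Steklov condition, then makes $T_0=\int_{\pt B_p}u^\ep\Tilde{w}_\xi\dA$ appear on both sides. Solving gives $T_0=(A_\ep/r_{\ep,p}-\sigma_k^\ep)^{-1}(T_1+T_2)$ as in \eqref{eq.decompose.DtN.2}. Here $T_1,T_2$ only test $u^\ep$ against $\pt_\nu\Tilde{w}_\xi=O(1)$ on $\pt Q_p$ and $\Delta_g\Tilde{w}_\xi=O(1)$ in the annulus. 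The odd-symmetry cancellation $\int_{\pt Q_p}\pt_\nu\Tilde{w}_\xi\dA=O(\ep|Q_p|)$ removes the mean of $u^\ep$. Hence $|T_1|+|T_2|\le C|Q_p|^{1/2}\norm{u^\ep}_{H^1(Q_p)}$, and the prefactor $\approx r_{\ep,p}/(d-1)$ supplies the full factor $\ep^{\frac{d}{d-1}}$ after summing over $p$.

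\textbf{The inner Dirichlet-to-Neumann terms in $J_1$ are not handled.} In your $\bar\Phi_\ep$ part, pairing $\norm{\bar\Phi_\ep}_{L^\infty}\le C\omega_d(\ep)$ with ``uniform bounds'' presupposes a uniform $L^1(\pt\Omega)$ bound on $\Lambda_\ep(u^\ep)=-\pt_\nu^-u^\ep$. The energy does not give this; only $\int_{\pt\Omega}u^\ep\Lambda_\ep(u^\ep)\dA=\norm{\gd u^\ep}_{L^2(M\setminus\Omega)}^2$ is controlled. Also, the terms you obtain by moving derivatives onto $\bar\Phi_\ep$ are not weighted by $\bar\Phi_\ep$. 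Your proposed fix, showing the extension of $u^\ep$ is nearly constant in $B_p$ via Lemma~\ref{l.HarnackonManifolds}, cannot work as stated: $u^\ep$ is not harmonic in $Q_p$, because its normal derivative jumps across $\pt B_p$. What must be nearly constant in the holes is $\Phi_\ep$, not $u^\ep$. Since $u^\ep$ is harmonic in $B_p$, $\int_{\pt B_p}f\,\Lambda_\ep(u^\ep)\dA=\int_{B_p}\gd f\cdot\gd u^\ep\dV$ for any $H^1$ extension $f$. With $f=\Phi_\ep u$, one only needs $\gd\Phi_\ep$ to be small on $M\setminus\Omega$. For $\bar\Phi_\ep$ this follows from the radial model $\La_\ep$, which is constant on each $\pt B_p$, together with the $C^1$ bound of Lemma~\ref{l.difference.modelandPhibar}. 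For $\Ra_\ep$, which is harmonic in $Q_p$, use the energy decay $\int_{B_p}|\gd\Ra_\ep|^2\dV\le C(r_{\ep,p}/R_\ep)^d\int_{Q_p}|\gd\Ra_\ep|^2\dV$ together with Lemma~\ref{l.H1estimateforRaep}. This gives $\norm{\gd\Ra_\ep}_{L^2(M\setminus\Omega)}\le C\ep^{1+\frac{d}{2(d-1)}}\le C\omega_d(\ep)$.
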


\smallskip \noindent Let us begin with the strategy of proving Lemma \ref{l.Optestimate.stektoNeum}. Denote $u^\ep=u_k^\ep$, $u=u_k$ as the $k$-th (harmonically extended) Steklov and Laplace--Beltrami eigenfunctions respectively as defined and normalized in Section \ref{subsection.Steklov.Neumann.definition}, then we have
\be
\label{eq.atrickintegrate}
\begin{split}
    \sigma_k^\ep \int_{\pt \Omega} u^\ep ~u~ \dA & = \int_{\pt \Omega} \pt_\nu u^\ep ~u ~ \dA\\
    &=\int_{\Omega} \gd u^\ep \cdot \gd u \dV\\
    &=\int_{\pt \Omega} u^\ep \pt_\nu u ~\dA - \int_{\Omega}u^\ep \Delta u \dV\\
    &=\int_{\pt \Omega} u^\ep \pt_\nu u ~ \dA + \lambda_k \int_{\Omega}\beta u^\ep ~u ~\dV.
\end{split}
\ee
This implies that by using the cell function $\Phi_\ep$ as defined in  \eqref{eq.cellfunction}
\be
\label{eq.atrickintegrate.2}
\begin{split}
    (\sigma_k^\ep - \lambda_k) \int_{\pt\Omega} u^\ep ~u~ \dA &= \lambda_k\lmb\int_M u^\ep u \lb\beta\dV - \restr{\dA}{\pt\Omega}\rb\rmb + \int_{\pt\Omega} u^\ep\pt_\nu u \dA - \lambda_k \int_{M\setminus \Omega} \beta  u^\ep u \dV\\
    &= - \lambda_k \int_M \gd \Phi_\ep \cdot \gd (u^\ep u) \dV+ \int_{\pt\Omega} u^\ep\pt_\nu u \dA- \lambda_k \int_{M\setminus \Omega} \beta  u^\ep u \dV\\
    &= - \lambda_k \int_M \gd \Phi_\ep \cdot \gd (u^\ep u) \dV -  \int_{M \setminus \Omega} \gd u^\ep \cdot \gd u \dV\\
& =: J_1 + J_2 .
\end{split}
 \ee
By Theorem \ref{t.qualitativeconverge} and specifically the arguments in \cite{girouard_large_2021}*{Proposition 5.2}, we can choose $u^\ep$ and $u$ so that 
\[
\lim_{\ep\rta0} \int_{\pt\Omega} u^\ep u \dA = \int_M \beta u^2 \dV = 1.
\]
Therefore, it then suffices to estimate $J_1$ and $J_2$ in \eqref{eq.atrickintegrate.2} in the following lemmas.

\begin{remark}
    \label{r.sizeofprojectionforDifferentOrdereigenv}
    One can also do the same integration by parts argument in \eqref{eq.atrickintegrate.2} for eigen-pairs $(\sigma_l^\ep, u_l^\ep)$ and $(\lambda_k,u_k)$ in the case that $l\ne k$ and $|\sigma_l^\ep-\lambda_k| \ge c$ for some $c>0$ and all small $\ep>0$. In this case, we have the same estimate as the eigenvalues for 
    \be
\label{eq.convergencerate.projection.coefficients}
\lw\int_{\pt\Omega} u_l^\ep ~u_k~ \dA \rw \le \frac{1}{c}\lw J_1 + J_2 \rw \le C(M,d,g,S,l,k) \omega_d(\ep).
    \ee
\end{remark}

\begin{lemma}
    \label{l.Optestimate.stektoNeum.J1}
    Under the assumptions of Lemma \ref{l.Optestimate.stektoNeum}, then the term $J_1$ as defined in \eqref{eq.atrickintegrate.2} satisfies
\be
\label{eq.boundJ1.convergerate.done}
|J_1| \le C \omega_d(\ep).
\ee
\end{lemma}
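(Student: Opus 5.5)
The plan is to avoid estimating $\norm{\gd\Phi_\ep}_{L^2(M)}$ against $\norm{\gd(u^\ep u)}_{L^2(M)}$ directly. That product is only $O(\sqrt{\omega_d(\ep)})$, and this is exactly the suboptimal route of \eqref{eq.acrude.J1bound.intro}. Instead I will move derivatives off $\Phi_\ep$ so that only the small norms of Theorem \ref{t.summaryofPhi} appear: $\norm{\Phi_\ep}_{L^2(M)}\le C\ep^{\frac d{d-1}}$, $\norm{\Ra_\ep}_{L^2(\pt\Omega)}\le C\ep^{\frac d{d-1}}$, and the explicit size of $\bar\Phi_\ep$ on $\pt\Omega$. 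Throughout, $u=u_k$ is smooth with $\norm{u}_{C^2(M)}\le C(\lambda_k)$. I use $\norm{u^\ep}_{H^1(M)}+\norm{u^\ep}_{L^2(\pt\Omega)}\le C$, which follows from the normalization \eqref{eq.normalize.Steklov.and.Neumann} and the uniform energy bound $\int_\Omega|\gd u^\ep|^2=\sigma_k^\ep\le C$ together with the uniform control of harmonic extensions used in Theorem \ref{t.qualitativeconverge}.

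\emph{Step 1 (splitting).} Write $\gd(u^\ep u)=u\,\gd u^\ep+u^\ep\gd u$. For the second piece, integrate by parts on $M$ ($\Phi_\ep\in H^1(M)$, and $u^\ep\gd u\in H^1$):
\[
\int_M \gd\Phi_\ep\cdot u^\ep\gd u\dV=-\int_M\Phi_\ep\bigl(\gd u^\ep\cdot\gd u+u^\ep\Delta u\bigr)\dV .
\]
This is bounded by $C\norm{\Phi_\ep}_{L^2(M)}\norm{u^\ep}_{H^1(M)}\le C\ep^{\frac d{d-1}}\le C\omega_d(\ep)$.

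\emph{Step 2 (the $u\,\gd u^\ep$ piece, outside the holes).} Split $\int_M u\,\gd\Phi_\ep\cdot\gd u^\ep=\int_\Omega+\int_{M\setminus\Omega}$. On $\Omega$ I use the product rule $u\gd\Phi_\ep=\gd(u\Phi_\ep)-\Phi_\ep\gd u$ and then that $u^\ep$ is a Steklov eigenfunction, so that $\int_\Omega\gd u^\ep\cdot\gd w=\sigma_k^\ep\int_{\pt\Omega}u^\ep w\dA$ for $w=u\Phi_\ep$. This gives
\[
\int_\Omega u\,\gd\Phi_\ep\cdot\gd u^\ep=\sigma_k^\ep\int_{\pt\Omega}u^\ep u\,\Phi_\ep\dA-\int_\Omega\Phi_\ep\,\gd u\cdot\gd u^\ep\dV .
\]
The second term is again $\le C\norm{\Phi_\ep}_{L^2}\le C\ep^{\frac d{d-1}}$. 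The first term is bounded by $C\norm{\Phi_\ep}_{L^2(\pt\Omega)}\norm{u^\ep}_{L^2(\pt\Omega)}$. Here $\norm{\Ra_\ep}_{L^2(\pt\Omega)}\le C\ep^{\frac d{d-1}}$ by Lemma \ref{l.sharperL2estimateforRaep.boundary}. For $\bar\Phi_\ep$ on $\pt B_p$, I use Lemma \ref{l.difference.modelandPhibar} and the explicit value $\bar L^\ep_p(r_{\ep,p})$: it is $O(r_{\ep,p}|\log\ep|)=O(\ep^2|\log \ep|)$ for $d=2$, and $O(r_{\ep,p})=O(\ep^{\frac d{d-1}})$ for $d\ge3$. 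Adding $|\kappa_\ep|\le C\ep^2$ gives $\norm{\bar\Phi_\ep}_{L^\infty(\pt\Omega)}\le C\omega_d(\ep)$. Since $A(\pt\Omega)\le C$, this yields $\norm{\Phi_\ep}_{L^2(\pt\Omega)}\le C\omega_d(\ep)$.

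\emph{Step 3 (inside the holes) --- the main obstacle.} It remains to bound $\int_{M\setminus\Omega}u\,\gd\Phi_\ep\cdot\gd u^\ep$, which I bound by $C\norm{\gd\Phi_\ep}_{L^2(M\setminus\Omega)}\norm{\gd u^\ep}_{L^2(M\setminus\Omega)}$. I aim to show both factors are $O(\sqrt{\omega_d(\ep)})$.

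For $\Phi_\ep$: inside $B_p$ the model $\bar L^\ep_p$ has gradient $\beta(p)r/d\le Cr_{\ep,p}$, and the $C^1$ error in \eqref{eq.difference.modelandPhibar} is $\le C\ep^2$. The remainder $\Ra_\ep$ is harmonic in $Q_p$ with $\norm{\Ra_\ep}_{L^\infty}\le C\ep$ by Lemma \ref{l.globalosc.Raep}, so interior gradient estimates give $|\gd\Ra_\ep|\le C\ep/R_\ep\le C$ on $B_p$. Hence $\norm{\gd\Phi_\ep}_{L^2(M\setminus\Omega)}\le C|M\setminus\Omega|^{1/2}\le C(\ep^{-d}r_{\ep,p}^d)^{1/2}=C\ep^{\frac d{2(d-1)}}\le C\sqrt{\omega_d(\ep)}$.

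For $u^\ep$, the Dirichlet energy of the harmonic extension in $B_p$ is controlled by the oscillation of the trace. I will use the scaling bound $\int_{B_p}|\gd u^\ep|^2\le C r_{\ep,p}^{-1}\int_{\pt B_p}|u^\ep-\avg{u^\ep}_{\pt B_p}|^2\dA$. I will then bound the right-hand side through the energy of $u^\ep$ in the adjacent annulus of $\Omega$, via a trace/Poincar\'e inequality at scale $r_{\ep,p}$ and a comparison with the harmonic function in the annulus. Summing over $p$, I expect $\norm{\gd u^\ep}_{L^2(M\setminus\Omega)}\le C\sqrt{\omega_d(\ep)}$; the $|\log\ep|$ in $d=2$ comes from the capacity of the annulus $B_{R_\ep}\setminus B_{r_{\ep,p}}$. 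This is the same local analysis that underlies the bound on $J_2$ in Lemma \ref{l.Optestimate.stektoNeum.J2}.

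If this step resists, my fallback is the cruder $\norm{\gd u^\ep}_{L^2(M\setminus\Omega)}\le C$. That only closes when $\norm{\gd\Phi_\ep}_{L^2(M\setminus\Omega)}\le C\omega_d(\ep)$, so I would then need to sharpen $|\gd\Ra_\ep|$ on $B_p$, using the centroid-type cancellation behind Lemma \ref{l.centroidvoronoi.Linfity}. Combining Steps 1--3 gives $|J_1|\le C\omega_d(\ep)$.
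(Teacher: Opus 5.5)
Your Steps 1 and 2 are correct and amount to the paper's treatment of $J_{11}$ and $J_{13}$. The gap is in Step 3.

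\textbf{The scaling bound is false.} The claimed inequality
$\int_{B_p}|\gd u^\ep|^2\dV\le C r_{\ep,p}^{-1}\int_{\pt B_p}|u^\ep-\avg{u^\ep}_{\pt B_p}|^2\dA$
fails for harmonic extensions of general traces. If the trace $f$ is a degree-$\ell$ spherical harmonic, the (flat) energy of its extension is exactly $\frac{\ell}{r}\int_{\pt B_r}f^2$. So the inequality holds in the \emph{reverse} direction, and high angular modes break yours.

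What is true is the extension bound $\int_{B_p}|\gd u^\ep|^2\le C\int_{B_{2r_{\ep,p}}(p)\setminus B_p}|\gd u^\ep|^2$. But then you still need $\sum_p\int_{B_{2r_{\ep,p}}(p)\setminus B_p}|\gd u^\ep|^2\le C\omega_d(\ep)$. That says the energy of the Steklov eigenfunction does not concentrate on a set of volume $\sim\ep^{d/(d-1)}$ around the holes. This is a genuine decay estimate for the Robin problem in $Q_p\setminus\overline{B}_p$: suppression of the non-radial modes, plus control of the monopole forced by $\sigma_k^\ep$. Your sketch does not supply it.

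Lemma \ref{l.Optestimate.stektoNeum.J2} does not supply it either. It controls only the pairing of $\gd u^\ep$ with the nearly constant field $\gd u$ on each $B_p$, not $\norm{\gd u^\ep}_{L^2(M\setminus\Omega)}$. Your fallback is also unavailable, because the centroid condition \eqref{eq.centroid.voronoi} behind Lemma \ref{l.centroidvoronoi.Linfity} is not among the hypotheses.

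\textbf{The missing idea.} You do not need $\gd u^\ep$ to be small in the holes. The crude bound $\norm{\gd u^\ep}_{L^2(M)}\le C$ suffices once $\norm{\gd\Phi_\ep}_{L^2(M\setminus\Omega)}\le C\omega_d(\ep)$. That holds if you treat the three pieces of $\Phi_\ep=\La_\ep+(\bar\Phi_\ep-\La_\ep)+\Ra_\ep$ separately.

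Inside $B_p$ one has $|\gd\La_\ep|\le Cr_{\ep,p}$. By Lemma \ref{l.difference.modelandPhibar}, $|\gd(\bar\Phi_\ep-\La_\ep)|\le C\ep^2$. After multiplying by $|M\setminus\Omega|^{1/2}$, both contributions are $o(\omega_d(\ep))$.

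For $\Ra_\ep$, the pointwise gradient bound $\norm{\Ra_\ep}_{L^\infty}/R_\ep\le C$ is what costs you a factor $\ep$. Use instead the energy estimate $\norm{\gd\Ra_\ep}_{L^2(M)}\le C\ep$ of Lemma \ref{l.H1estimateforRaep}, together with the harmonicity of $\Ra_\ep$ in $Q_p$. That gives $\int_{B_p}|\gd\Ra_\ep|^2\le C(r_{\ep,p}/R_\ep)^d\int_{Q_p}|\gd\Ra_\ep|^2$, as in \eqref{eq.Dirichletdecay}. Summing over $p$ yields $\norm{\gd\Ra_\ep}_{L^2(M\setminus\Omega)}\le C\ep^{1+\frac{d}{2(d-1)}}\le C\omega_d(\ep)$. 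Your Step 3 term is then bounded by $C\omega_d(\ep)\norm{u^\ep}_{H^1(M)}$.

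This is exactly how the paper handles its Dirichlet-to-Neumann term $J_{12}$, through $J_{121}$, $J_{122}$ and $J_{123}$.
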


\begin{proof}

Note that by integrating by parts
\be
\label{eq.boundJ1.convergerate}
\begin{split}
   J_1 & = -  \lambda_k\int_M \gd \Phi_\ep \cdot \gd (u^\ep u) \dV \\
&= -\lambda_k \sigma_k^\ep \int_{\pt\Omega}  \Phi_\ep u^\ep u \dA - \lambda_k\int_{\pt\Omega}  \Phi_\ep \Lambda_\ep(u^\ep) u \dA + \lambda_k \int_M \Phi_\ep \lb2 \gd u \cdot \gd u^\ep - \lambda_k u^\ep u \rb \dV\\
&=:J_{11} + J_{12} + J_{13},
\end{split}
\ee
where $\Lambda_\ep$ is the Dirichlet-to-Neumann map on the boundary $\pt \Omega$ and we also have $\Lambda_\ep(u^\ep)= - \pt_\nu^{-}(u^\ep)$. 
By Theorem \ref{t.summaryofPhi}, we decompose $\Phi_\ep = \bar \Phi_\ep + \Ra_\ep$ and obtain
\be
\label{eq.boundJ11.convergerate}
\begin{split}
    |J_{11}| &\le C\int_{\pt\Omega}  |\bar{\Phi}_\ep u^\ep u| \dA + C \int_{\pt\Omega}  |\Ra_\ep u^\ep u| \dA\\
    &\le C\lb \norm{\bar{\Phi}_\ep}_{L^\infty(M)} + \norm{\Ra_\ep}_{L^2(\pt \Omega)} \rb \norm{u}_{L^\infty(M)} \norm{u^\ep}_{L^2(\pt\Omega)}\\
    &\le C \omega_d(\ep).
\end{split}
\ee
To bound $J_{12}$, we recall the decomposition in Section \ref{subsection.radialmodel.barPhi}
\[
\Phi_\ep = \La_\ep + \lb \bar \Phi_\ep - \La_\ep\rb + \Ra_\ep
\]
and then we can write
\be
\label{eq.boundJ12.convergerate}
\begin{split}
  J_{12} &=  \lambda_k\int_{\pt\Omega}  \La_\ep \Lambda_\ep(u^\ep) u \dA  + \lambda_k\int_{\pt\Omega}  (\bar{\Phi}_\ep - \La_\ep)\Lambda_\ep(u^\ep) u \dA  + \lambda_k \int_{\pt\Omega} \Ra_\ep \Lambda_\ep(u^\ep) u \dA\\
  &=\lambda_k \sum_{p\in S_\ep} \lb\La_\ep(p) + \frac{\beta(p)}{2d}r_{\ep,p}^2\rb \int_{\pt B_p} \Lambda_\ep(u^\ep) u \dA + \lambda_k\int_{M\setminus\Omega} \gd \lmb (\bar{\Phi}_\ep - \La_\ep) u \rmb\cdot \gd u^\ep \dV \\
  &\qquad +\lambda_k \int_{M\setminus\Omega} \gd (\Ra_\ep u) \cdot \gd u^\ep \dV \\
  &=:J_{121} +J_{122} + J_{123}.
\end{split}
\ee
We observe that because $u^\ep$ is harmonic in $M\setminus \Omega$, by integrating by parts and the explicit formula \eqref{eq.Modelfunction.Bulkadapted}
\[
|J_{121}| \le C \norm{\La_\ep}_{L^\infty(M)} \norm{u^\ep}_{H^1(M)} \norm{u}_{C^1(M)}|M\setminus\Omega|^{1/2} \le C \omega_d(\ep) \ep^{\frac{d}{2(d-1)}}\le O(\omega_d(\ep)^{3/2}).
\]
Next, we have by applying Lemma \ref{l.difference.modelandPhibar}
\[
|J_{122}| \le C \norm{\gd(\bar{\Phi}_\ep - \La_\ep)}_{C^1(M\setminus\Omega)}  \norm{u}_{C^1(M)}\norm{u^\ep}_{H^1(M)} |M\setminus \Omega|^{1/2}\le C \ep^{2+\frac{d}{2(d-1)}} = o(\ep^2).
\]
As for $J_{123}$, we first observe that $\Ra_\ep$ is harmonic in each $Q_p$ and hence by \eqref{eq.radii.estimate} and the Almgren's (almost) monotonicity formula (See \cites{garofalo_monotonicity_1986})
\be
\label{eq.Dirichletdecay}
\int_{B_p} |\gd \Ra_\ep|^2 \dV \le C\lb \frac{r_{\ep,p}}{R_\ep} \rb^d \int_{Q_p} |\gd \Ra_\ep|^2 \dV \le C \ep^{\frac{d}{d-1}} \int_{Q_p} |\gd \Ra_\ep|^2 \dV.
\ee
Summing over all $p\in S_\ep$ and applying Lemma \ref{l.H1estimateforRaep}, we obtain
\[
\norm{\gd \Ra_\ep}_{L^2(M\setminus \Omega)} \le C \ep^{1+\frac{d}{2(d-1)}}.
\]
This implies that, by Lemma \ref{l.sharperL2estimateforRaep}
\[
|J_{123}|\le C \norm{u}_{C^1(M)}\norm{u^\ep}_{H^1(M)}\lmb \norm{\gd \Ra_\ep}_{L^2(M\setminus \Omega)} + \norm{\Ra_\ep}_{L^2(M)} \rmb \le C \ep^{1+\frac{d}{2(d-1)}} \le C \omega_d(\ep).
\]
Combining the previous estimates about $J_{121}, J_{122}$ and $J_{123}$ we obtain 
\be
\label{eq.boundJ12.convergerate.done}
|J_{12}|\le C \omega_d(\ep).
\ee
The term $J_{13}$ is bounded by the following inequality
\be
\label{eq.boundJ13.convergerate}
|J_{13}| \le C \norm{\Phi_\ep}_{L^2(M)} \norm{u}_{C^1(M)}\norm{u^\ep}_{H^1(M)} \le C \ep^{\frac{d}{d-1}},
\ee
where we have used the estimates in \eqref{eq.Phi.summary}. 

Combining \eqref{eq.boundJ11.convergerate}, \eqref{eq.boundJ12.convergerate.done} and \eqref{eq.boundJ13.convergerate}, we finish the proof of the lemma.

\end{proof}

\begin{lemma}
    \label{l.Optestimate.stektoNeum.J2}
    Under the assumptions of Lemma \ref{l.Optestimate.stektoNeum}, then the term $J_2$ as defined in \eqref{eq.atrickintegrate.2} satisfies
\be
\label{eq.boundJ2.convergerate.done}
|J_2| \le C \ep^{\frac{d}{d-1}}.
\ee
\end{lemma}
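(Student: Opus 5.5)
We bound $J_2 = -\int_{M\setminus\Omega}\gd u^\ep\cdot\gd u\dV = -\sum_{p\in S_\ep}\int_{B_p}\gd u^\ep\cdot \gd u\dV$ one hole at a time, following the strategy sketched in the roadmap. Fix $p\in S_\ep$ and work in geodesic normal coordinates $z\in\R^d\cong T_pM$ on $Q_p=B_{R_\ep}(p)$. The plan has three parts: freeze $\gd u$ at the centre, rewrite the frozen term as a boundary integral on $\pt B_p$, and then transport that boundary integral onto the annulus $Q_p\setminus\overline B_p$ using an auxiliary dipole function.

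\emph{Freezing.} Write $\gd u(x)=\gd_z u(p)+O(|z|)$. The metric is $g_{ij}=\delta_{ij}+O(|z|^2)$, and by the Dirichlet energy bound $\norm{u^\ep}_{H^1(M)}\le C$ together with Cauchy--Schwarz, the error is at most
\[
C r_{\ep,p}\,|B_p|^{1/2}\norm{\gd u^\ep}_{L^2(B_p)}.
\]
Summing over $p$ gives $C\ep^{d/(d-1)}\,|M\setminus\Omega|^{1/2}=o(\ep^{d/(d-1)})$. The main term is $-\gd_z u(p)\cdot\int_{B_r(0)}\gd_z u^\ep\dd z$, up to similar metric errors. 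Since $u^\ep$ is (almost) harmonic inside $B_p$, the divergence theorem turns this into
\[
-\int_{\pt B_r(0)} \bigl(\gd_zu(p)\cdot \tfrac{z}{|z|}\bigr)u^\ep\dd\Ha^{d-1}.
\]

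\emph{Transfer to the annulus.} Let $w$ be the dipole function from the roadmap, which equals $a\cdot z/|z|$ on $|z|=r$ (with $a=\gd_zu(p)$) and vanishes on $|z|=R_\ep$. Green's identity on the annulus $A_p=B_{R_\ep}\setminus \overline B_r$, using that $u^\ep$ is harmonic there, gives
\[
\int_{\pt B_r}w\,u^\ep = -\int_{A_p}\gd w\cdot\gd u^\ep - \int_{A_p}w\,\Delta u^\ep ,
\]
where the last term is only a metric correction. Once we pass to the outer side we are on the boundary of $\Omega$, and the right side is naturally controlled by $\norm{\gd u^\ep}_{L^2(A_p)}\norm{\gd w}_{L^2(A_p)}$. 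A direct computation gives $|\gd w|\lesssim |a|\,r^{d-1}|z|^{-d}$ on most of $A_p$, so
\[
\norm{\gd w}^2_{L^2(A_p)}\lesssim r^{2d-2}\,r^{-d}=r^{d-2}.
\]
Cauchy--Schwarz summed over $p$ would then give $\bigl(\sum_p r^{d-2}\bigr)^{1/2}\sim(\ep^{-d}\ep^{d(d-2)/(d-1)})^{1/2}$, which is too weak.

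\emph{The main obstacle} is to avoid this crude Cauchy--Schwarz. Instead I would integrate by parts once more: $w$ is built so that $\Delta w$ is small, since the profile $r^{d-1}|z|^{1-d}$ times a linear angular part is harmonic up to the $R_\ep^{-d}$ correction. Then
\[
\int_{A_p}\gd w\cdot\gd u^\ep=\int_{\pt A_p}w\,\pt_\nu u^\ep-\int_{A_p}u^\ep\Delta w .
\]
On $\pt B_p$ the Steklov condition gives $\pt_\nu u^\ep=\sigma^\ep u^\ep$, and the resulting term is bounded by $|a|\sigma^\ep\int_{\pt B_p}|u^\ep|\le C r^{(d-1)/2}\norm{u^\ep}_{L^2(\pt B_p)}$. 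Summing over $p$ with Cauchy--Schwarz gives $C(\sum_p r^{d-1})^{1/2}\cdot 1\sim 1$. That is still too big unless the angular structure is used: $\pt B_p$ is neutral for constants, so I would subtract the mean of $u^\ep$, where the oscillation of $u^\ep$ on $\pt B_p$ is controlled through $\gd u^\ep$ on $B_p$. Combined with the Dirichlet decay estimate \eqref{eq.Dirichletdecay} (applied to the harmonic function $u^\ep$ on $Q_p$) and the bounds $r_{\ep,p}\simeq \ep^{d/(d-1)}$, $R_\ep=\ep/8$ from \eqref{eq.radii.estimate}, the goal is to reach a total of $C\ep^{d/(d-1)}$. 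The term $\int_{A_p}u^\ep\Delta w$ is handled in the same way, using $|\Delta w|\lesssim |a| r^{d-1}R_\ep^{-d}$ together with metric errors of order $|z|$, and is of lower order.
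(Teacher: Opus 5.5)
Your freezing step matches the paper's. The problem is that the mechanism needed to gain the factor $r_{\varepsilon,p}$ is missing, so what you write does not improve on the naive bound $C\varepsilon^{d/(2(d-1))}$.

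\emph{The second integration by parts is circular.} Since $\Delta_g u^\varepsilon=0$ in $Q_p\setminus\overline B_p$ and $w=0$ on $\partial Q_p$, Green's first identity correctly gives $\sigma_k^\varepsilon\int_{\partial B_p}w\,u^\varepsilon\,\mathrm{d}A=\int_{Q_p\setminus B_p}\nabla w\cdot\nabla u^\varepsilon\,\mathrm{d}V$; your version drops $\sigma_k^\varepsilon$. Your ``second integration by parts'' then mixes the two forms of Green's identity. If the normal derivative again falls on $u^\varepsilon$ and you insert $\partial_\nu u^\varepsilon=\sigma_k^\varepsilon u^\varepsilon$, you only recover $\sigma_k^\varepsilon\int_{\partial B_p}w\,u^\varepsilon$, the quantity you started from.

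\emph{The missing idea} is to let the derivative fall on $w$. This is the core of the paper's proof. The dipole profile is a Dirichlet-to-Neumann eigenfunction on the inner sphere: $\partial_\nu^+\tilde w_\xi=\frac{A_\varepsilon}{r_{\varepsilon,p}}\tilde w_\xi$ on $\partial B_p$, with $A_\varepsilon=d-1+O(\varepsilon^{d/(d-1)})$, an eigenvalue far larger than $\sigma_k^\varepsilon$. Green's second identity together with the Steklov condition then gives
\[
\Bigl(\frac{A_\varepsilon}{r_{\varepsilon,p}}-\sigma_k^\varepsilon\Bigr)\int_{\partial B_p}u^\varepsilon\,\tilde w_\xi\,\mathrm{d}A
=\int_{\partial Q_p}\partial_\nu^-\tilde w_\xi\,u^\varepsilon\,\mathrm{d}A+\int_{Q_p\setminus B_p}\Delta_g\tilde w_\xi\,u^\varepsilon\,\mathrm{d}V .
\]
On the right-hand side:
\begin{itemize}
\item $|\partial_\nu\tilde w_\xi|=O(1)$ on $\partial Q_p$, and $\int_{\partial Q_p}\partial_\nu\tilde w_\xi\approx0$ by oddness;
\item $\Delta_g\tilde w_\xi=O(1)$ is a pure metric error.
\end{itemize}
So after subtracting the mean of $u^\varepsilon$ over $Q_p$, the right side is at most $C|Q_p|^{1/2}\|u^\varepsilon\|_{H^1(Q_p)}$, which sums over $p$ to $O(1)$. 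The prefactor $(A_\varepsilon/r_{\varepsilon,p}-\sigma_k^\varepsilon)^{-1}\approx r_{\varepsilon,p}/(d-1)$ then produces $C\varepsilon^{d/(d-1)}$.

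\emph{Your fallback does not close the gap.} Subtracting the mean on $\partial B_p$ and using Poincar\'e and trace on $B_p$ gives $\bigl|\int_{\partial B_p}(a\cdot\hat z)u^\varepsilon\bigr|\le Cr_{\varepsilon,p}^{d/2}\|\nabla u^\varepsilon\|_{L^2(B_p)}$. Summed over $p$, this is $C|M\setminus\Omega|^{1/2}\|\nabla u^\varepsilon\|_{L^2(M\setminus\Omega)}$, which is exactly the naive Cauchy--Schwarz bound on $J_2$. Everything therefore rests on
\[
\|\nabla u^\varepsilon\|^2_{L^2(B_p)}\lesssim (r_{\varepsilon,p}/R_\varepsilon)^d\|\nabla u^\varepsilon\|^2_{L^2(Q_p)},
\]
and \eqref{eq.Dirichletdecay} cannot be quoted for this. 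That estimate is an Almgren-type decay for $\mathcal{R}_\varepsilon$, which is harmonic on all of $Q_p$. The harmonic extension $u^\varepsilon$ is harmonic only in $B_p$ and in $Q_p\setminus\overline B_p$ separately; its normal derivative jumps by $\sigma_k^\varepsilon u^\varepsilon+\Lambda_\varepsilon(u^\varepsilon)$ across $\partial B_p$.

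Harmonicity in the annulus alone does not exclude a large dipole component $b\cdot z\,|z|^{-d}$ near $\partial B_p$. The desired bound is true (heuristically $\nabla u^\varepsilon\approx\frac{d}{d-1}\nabla u(p)$ inside the hole), but only because the Steklov condition acts on the dipole mode. Quantifying that is precisely what the $\tilde w_\xi$ identity above does.
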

To prove this result, we need the following auxiliary lemma.
\begin{lemma}
    \label{l.transferingbetween.geodesicchart}
    Let $B_r(q) \subset (M,g)$ be a geodesic ball of radius $r \ll r_0(M,g)$ the injective radius. Let $z \in B_r(0) \subset \R^d \cong T_q(M)$ be a geodesic coordinate of $B_r(q)$. Then for every $f\in L^1(B_r(q))$ we have (with abuse of notation $f(z):=f(\exp_q(z))$)
    \be
\label{eq.l.transferingbetween.geodesicchart.ball}
\lw \int_{B_r(q)} f \dV -  \int_{B_r(0)} f(z) \dd z\rw \le C r^2 \int_{B_r(q)} |f| \dV .
    \ee
    Similarly if $f \in L^1(\pt B_r(q))$ we have 
    \be
\label{eq.l.transferingbetween.geodesicchart.sphere}
\lw \int_{\pt B_r(q)} f \dA -  \int_{\pt B_r(0)} f(z) \dd \theta(z)\rw \le C r^2 \int_{\pt B_r(q)} |f| \dA ,
    \ee
    where $\theta$ is the Lebesgue measure on the round sphere $\pt B_r(0)\subset \R^d$. Here the constant $C=C(M,d,g)>0$.
\end{lemma}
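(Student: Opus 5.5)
The plan is to compare the Riemannian volume and area elements in normal coordinates with the Euclidean ones, and to show that they differ by a relative factor $1+O(r^2)$ uniformly on the ball.

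In the geodesic coordinate $z\in B_r(0)\subset T_qM\cong\R^d$ we have $\dV = \sqrt{|g|}(z)\dd z$. It is classical (see \cite{misner1973gravitation}*{Page 285}, already used above) that $g_{ij}(z)=\delta_{ij}+O(|z|^2)$, with the constant depending only on curvature bounds of $(M,g)$ and hence uniform in $q$ by compactness. Consequently
\[
\sqrt{|g|}(z)=1+O(|z|^2) \qquad \textup{for } |z|<r\ll r_0(M,g).
\]
Then
\[
\int_{B_r(q)} f\dV-\int_{B_r(0)} f(z)\dd z=\int_{B_r(0)} f(z)\bigl(\sqrt{|g|}(z)-1\bigr)\dd z ,
\]
whose absolute value is at most $Cr^2\int_{B_r(0)}|f|\dd z$. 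Since $\sqrt{|g|}\ge 1/2$ for $r$ small, this is in turn at most $2Cr^2\int_{B_r(q)}|f|\dV$, which proves \eqref{eq.l.transferingbetween.geodesicchart.ball}.

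For the sphere estimate, note that $\pt B_r(q)=\exp_q(\pt B_r(0))$. Write the area element of $\pt B_r(q)$ as $\sigma(z)\dd\theta(z)$, where $\sigma$ is the Jacobian of the restriction of $\exp_q$ to the round sphere. Equivalently, use polar coordinates $\dV=\sqrt{|g|}(t,\varphi)\,t^{d-1}\dd t\dd\varphi$, so that $\sigma = \sqrt{|g|}$ in polar normalization. By the Gauss lemma the radial direction is orthogonal to the sphere, so the induced metric on $\pt B_r(q)$ is the restriction of $g_{ij}$ to tangential directions. That restriction equals the round metric up to a factor $1+O(r^2)$, again by $g_{ij}=\delta_{ij}+O(|z|^2)$. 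Therefore $|\sigma(z)-1|\le Cr^2$ on $\pt B_r(0)$. The same subtraction argument, together with $\sigma\ge 1/2$, then gives \eqref{eq.l.transferingbetween.geodesicchart.sphere}.

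The only point needing care is the uniformity of the constant in $q$. This follows from the fact that the $O(|z|^2)$ in the normal-coordinate expansion is controlled by $\sup|\mathrm{Rm}|$ and its derivatives on the compact manifold $M$ for $|z|<r_0$. No other obstacle is expected.
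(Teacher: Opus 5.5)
Your proposal is correct and follows the same route as the paper. Both arguments use the normal-coordinate expansion $g_{ij}(z)=\delta_{ij}+O(|z|^2)$ to show that the volume and area densities relative to Lebesgue and round-sphere measure are $1+O(|z|^2)$ and $1+O(r^2)$, and then subtract. Your extra details are ones the paper leaves implicit: the lower bound $\sqrt{|g|}\ge 1/2$ to pass from $\int|f|\dd z$ to $\int|f|\dV$, the Gauss-lemma identification of the sphere density with $\sqrt{|g|}$, and uniformity in $q$ by compactness.
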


\begin{proof}
Recall the Taylor expansion of the Riemannian metric and Christoffel symbols in normal coordinates (See \cite{misner1973gravitation}*{Page 285})
\be\label{eq.expansionofmetric}
g_{ij}(z) = \delta_{ij} + O(|z|^2) \textup{ and }{\Gamma^k}_{ij} (z)=O(|z|).
\ee
This implies that the volume element
\[
\sigma(z):=\frac{\lb\exp_q^{-1}\rb_{\#}\lb\dV \rb}{\dd z} = 1 + O(|z|^2)
\]
and the area element
\[
\sigma_r(z):=\frac{\lb\exp_q^{-1}\rb_{\#}\lb\restr{\dA}{\pt B_r(q)}\rb}{\dd \theta} = 1+ O(r^2).
\]
These expansion formulas immediately prove the lemma.

\end{proof}

\begin{proof}[Proof of Lemma \ref{l.Optestimate.stektoNeum.J2}]

We write 
\be\label{eq.decomposeJ2}
J_2 = -  \int_{M \setminus \Omega} \gd u^\ep \cdot \gd u \dV = -\sum_{p\in S_\ep} \int_{B_p} \gd u^\ep \cdot \gd u \dA = : -\sum_{p\in S_\ep} J_{2,p}.
\ee
For each $p\in S_\ep$ we use a local geodesic coordinate system $z\in B_{r_{\ep,p}}(0)\subset\R^d \cong T_pM$, and then we can rewrite 
\[
J_{2,p}=\int_{B_p} \gd u \cdot \gd u^\ep \dV  = \int_{B_{r_{\ep,p}}(0)} g^{ij}\pt_{z_i} u ~ \pt_{z_j} u^\ep~ \sigma(z) \dd z,
\]
where $g^{ij}=(g^{-1})_{ij}$ is the inverse of the Riemannian metric $g$ and $\sigma(z) = \sqrt{|\det{g}(z)|}$ is the volume element. 
By \eqref{eq.expansionofmetric} we have
\[
g^{ij}(z) = \delta_{ij} + O(|z|^2) \textup{ and }\sigma(z) = 1 + O(|z|^2).
\]
This implies that if we denote 
\[
\Tilde{J}_{2,p} := \int_{B_{r_{\ep,p}}(0)} \gd_z u \cdot \gd_z u^\ep \dd z,
\]
where $\gd_z=(\pt_{z_i})_{i=1}^d$ is the gradient in the normal coordinates $z$, then by Lemma \ref{l.transferingbetween.geodesicchart}
\be\label{eq.flatexpansion.J2p}
\sum_{p\in S_\ep}|J_{2,p} - \Tilde{J}_{2,p}| \le  C \sum_{p\in S_\ep}r_{\ep,p}^2\norm{\gd u}_{L^2(B_p)}\norm{\gd u^\ep}_{L^2(B_p)} = o(\ep^2),
\ee
where $C=C(M,d,g,S)>0$. 
It then suffices to bound $\Tilde{J}_{2,p}$. To that end we denote the vector $\xi : = \gd_z u(0) (=\gd u(p))$  and then we can write
\[
\begin{split}
 \Tilde{J}_{2,p} & =\int_{B_{r_{\ep,p}}(0)} (\gd_z u-\xi) \cdot \gd_z u^\ep \dd z + \int_{B_{r_{\ep,p}}(0)} \xi \cdot\gd_z u^\ep \dd z\\
  &=\int_{B_{r_{\ep,p}}(0)} (\gd_z u-\xi) \cdot\gd_z u^\ep \dd z + \int_{\pt B_{r_{\ep,p}}(0)} u^\ep ~\xi\cdot \frac{z}{|z|}   \dd \theta(z)\\
  &=:\Tilde{J}_{2,p,1}+ \Tilde{J}_{2,p,2},
\end{split}
\]
where $\theta$ is the area measure on corresponding Euclidean spheres. Note that by the smoothness of $u$
\be\label{eq.boundJ2.p.1}
|\Tilde{J}_{2,p,1}| \le C r_{\ep,p} \norm{u}_{C^2(M)} |B_p|^{1/2} \norm{\gd u^\ep}_{L^2(B_p)}.
\ee
This term sums over $p\in S_\ep$ to a small term 
\be
\label{eq.boundJ2.p.1.sumoverp}
\sum_{p\in S_\ep} |\Tilde{J}_{2,p,1}| \le C \ep^{\frac{3}{2}\cdot\frac{d}{d-1}} \le C \omega_d(\ep)^{\frac{3}{2}}.
\ee
For the second term, we denote 
\be\label{eq.define.w.xi.tilde}
w_\xi(z):= \xi\cdot\lb\frac{z}{|z|}\rb\,\frac{r_{\ep,p}^{d-1}\bigl(R_\ep^{d} - |z|^{d}\bigr)}{|z|^{d-1}\bigl(R_\ep^{d} - r_{\ep,p}^{d}\bigr)} \textup{ and } \Tilde{w}_\xi (x) : = w_\xi (\exp_p^{-1}(x)).
\ee
Also by Lemma \ref{l.transferingbetween.geodesicchart}, we have
\be\label{eq.decompose.J2p2}
\lw\Tilde{J}_{2,p,2} - \int_{\pt B_p} u^\ep ~\Tilde{w}_\xi \dA \rw \le C r_{\ep,p}^2 A(\pt B_p)^{1/2} \norm{u^\ep}_{L^2(\pt B_p)},
\ee
the right hand side of which sums over $p\in S_\ep$ to a term of size $o(\ep^2)$.
Therefore, we only have to compute
\[
\int_{\pt B_p} u^\ep ~\Tilde{w}_\xi \dA = \frac{1}{\sigma_k^\ep}\int_{\pt B_p} \pt_\nu^+ u^\ep ~\Tilde{w}_\xi \dA =: T_0(p) = T_0.
\]
By integration by parts, we have
\be\label{eq.decompose.DtN}
T_0 =\frac{1}{\sigma_k^\ep}\int_{\pt B_p} \pt_\nu^+\Tilde{w}_\xi~ u^\ep  \dA - \frac{1}{\sigma_k^\ep}\int_{\pt Q_p} \pt_\nu^-\Tilde{w}_\xi~ u^\ep  \dA - \frac{1}{\sigma_k^\ep} \int_{Q_p\setminus B_p} \Delta_g \Tilde{w}_\xi ~ u^\ep \dV.
\ee
Note that by the explicit formula \eqref{eq.define.w.xi.tilde}, we have on $\pt B_p$
\be
\label{eq.boundary.Neumann.Diri.response.w.xi}
\pt_\nu^+\Tilde{w}_\xi = \frac{1}{r_{\ep,p}} \Tilde{w}_\xi \cdot \underbrace{\lb \frac{(d-1)R_\ep^{d}+r_{\ep,p}^d}{R_{\ep}^d-r_{\ep,p}^d}\rb}_{=: A_\ep}, \textup{ where } A_\ep = d-1 + O(\ep^{\frac{d}{d-1}}).
\ee
This leads to 
\be
\label{eq.decompose.DtN.2}
\begin{split}
    T_0 &= \lb \frac{A_\ep}{r_{\ep,p}} -\sigma_k^\ep \rb^{-1} \lmb\int_{\pt Q_p} \pt_\nu^-\Tilde{w}_\xi~ u^\ep  \dA +  \int_{Q_p\setminus B_p} \Delta_g \Tilde{w}_\xi ~ u^\ep \dV\rmb\\
&=:\lb \frac{A_\ep}{r_{\ep,p}} -\sigma_k^\ep \rb^{-1}\lmb T_1 + T_2\rmb.
\end{split}
\ee
We first write $T_1$ as 
\[
\begin{split}
    T_1 & = \int_{\pt Q_p} \pt_\nu^-\Tilde{w}_\xi~ (u^\ep - \avg{u^\ep}_{Q_p})  \dA + \avg{u^\ep}_{Q_p}\int_{\pt Q_p} \pt_\nu^-\Tilde{w}_\xi  \dA \\
    &= : T_{11} + T_{12}.
\end{split}
\]
Note that in the geodesic coordinate $z$ we have on $\pt Q_p$
\[
|\pt_\nu^-\Tilde{w}_\xi| = \lw \gd_z \Tilde{w}_\xi \cdot \frac{z}{|z|} \rw \le B_\ep \norm{u}_{C^1({M})} \le C
\]
where 
\[
B_\ep = \frac{  r_{\ep,p}^{d-1} \ d}{R_{\ep}^d-r_{\ep,p}^d} = O(1).
\]
This implies that by applying the Poincar\'{e} inequality and trace theorem on $Q_p$ 
\be\label{eq.T11}
\begin{split}
    |T_{11}| & \le C A(Q_p)^{1/2} \norm{u^\ep - \avg{u^\ep}_{Q_p}}_{L^2(\pt Q_p)}\\
    &\le C \ep^{1/2}A(Q_p)^{1/2} \norm{\gd u^\ep}_{L^2(Q_p)}\\
    &\le C |Q_p|^{1/2} \norm{\gd u^\ep}_{L^2(Q_p)},
\end{split}
\ee
which sums up over $p\in S_\ep$ to a term of order $O(1)$. For the term $T_{12}$, we have by Lemma \ref{l.transferingbetween.geodesicchart}
\[
\int_{\pt Q_p} \pt_\nu^-\Tilde{w}_\xi  \dA = \underbrace{-B_\ep\int_{\pt B_{R_\ep}(0)} \lb \xi \cdot \frac{z}{|z|}\rb \dd\theta(z)}_{=0} + O(R_\ep^2 A(\pt Q_p)) = O(\ep |Q_p|).
\]
This shows that 
\[
|T_{12}| \le C \ep |Q_p|^{1/2} \norm{u^\ep}_{L^2(Q_p)},
\]
which sums up over $p\in S_\ep$ to a term of order $O(\ep)$. Combining \eqref{eq.T11}, we obtain
\be
\label{eq.boundT1}
|T_1| \le |T_{11}| +|T_{12}| \le C  |Q_p|^{1/2} \norm{u^\ep}_{H^1(Q_p)}.
\ee
On the other hand, for $|z| \in (r_{\ep,p},R_\ep)$, if we denote $\Delta_z$ as the Euclidean Laplacian in the geodesic coordinate $z$, then by \eqref{eq.expansionofmetric}
\[
|(\Delta_g \Tilde{w}_\xi )(\exp_p^{-1}(z)) |=|(\Delta_g \Tilde{w}_\xi) (\exp_p^{-1}(z))- \Delta_z w_\xi(z)| \le C \lb |z|^2 |\gd _z^2 {w}_\xi(z)| + |z| |\gd _z {w}_\xi(z)| \rb \le C
\]
This implies that 
\be
\label{eq.boundT2}
|T_2| \le C |Q_p|^{1/2} \norm{u^\ep}_{L^2(Q_p)}.
\ee
Combining \eqref{eq.boundT1}, we obtain 
\be
\label{eq.boundT0}
|T_0| \le C r_{\ep,p} |Q_p|^{1/2} \norm{u^\ep}_{H^1(Q_p)} \le C \ep^{\frac{d}{d-1}}|Q_p|^{1/2} \norm{u^\ep}_{H^1(Q_p)}.
\ee
The proof of this lemma is finished by combining \eqref{eq.decomposeJ2}, \eqref{eq.flatexpansion.J2p}, \eqref{eq.boundJ2.p.1}, \eqref{eq.decompose.J2p2}, \eqref{eq.decompose.DtN.2} and \eqref{eq.boundT0}.

\end{proof}

\begin{proof}[Proof of Lemma \ref{l.Optestimate.stektoNeum}]
    The proof is done by combining \eqref{eq.atrickintegrate}, \eqref{eq.atrickintegrate.2}, Lemma \ref{l.Optestimate.stektoNeum.J1} and Lemma \ref{l.Optestimate.stektoNeum.J2}.
\end{proof}

\subsection{Convergence rates of eigenfunctions in \texorpdfstring{$H^1$}{H}}

In this subsection, we show the convergence rates of the Steklov eigenfunctions to the Laplace--Beltrami eigenfunctions as claimed in \eqref{eq.Optestimate.function} and \eqref{eq.Optestimate.function.proj} in Theorem \ref{t.quantitative.StekNeum}. For convenience we recall the measures as defined in \eqref{eq.defineAeps.and.mueps}
\[
\dA_\ep : = \restr{\dA}{\pt \Omega _\ep} \textup{ and } \dd\mu_\ep : = \beta\dV - \dA_\ep.
\]
We also recall the notations in Section \ref{subsection.Steklov.Neumann.definition}. For every $j\ge 0$ and small $\ep$, we denote $(\sigma_j^\ep, v_j^\ep)$ as the Steklov eigenpairs such that
\be
\label{eq.Steklov.normalize}
\int_{\pt\Omega} v_i^\ep v_{j}^\ep \dA = \delta_{ij},
\ee
and $(\lambda_j,u_j)$ as the Laplace--Beltrami eigenpairs such that 
\be
\label{eq.Neumann.normalize}
\int_{M} \beta u_iu_j \dV = \delta_{ij}.
\ee
Note that here $v_0^\ep$ and $u_0$ are all constants and $\lambda_0=\sigma_0^\ep = 0$.
We define $u_j^\ep$ as the harmonic extensions of $v_j^\ep$ to the whole manifold $M$ and we know that by \cite{girouard_large_2021}*{Lemma 3.4}, there is 
\be\label{eq.bound.harmonicextension}
\norm{u_j^\ep}_{H^1(M)} \le C \norm{v_j^\ep}_{H^1(\Omega)}\le C(M,d,g,S,j).
\ee
Now we recall that each $u_j^\ep,\ j\ge 0$ satisfies
\be
\label{eq.eqforUep.eigen.weakform}
\begin{split}
    \int_M \gd u_j^\ep\cdot \gd \eta \dV &= \sigma_j^\ep \int_M u_j^\ep \eta\dA_\ep + \int_M \Lambda_\ep(u_j^\ep) \eta \dA_\ep\\
&= \sigma_j^\ep \int_M u_j^\ep \eta \dA_\ep + \int_{M\setminus\Omega} \gd u_j^\ep\cdot \gd \eta \dV
\end{split}
\ee
and the functions $u_j$ satisfy
\be
\label{eq.eqforU.eigen.weakform}
\int_M \gd u_j \cdot \gd \eta \dV = \lambda \int_M \beta u_j \eta \dV 
\ee
where $\eta \in H^1(M)$ is arbitrary.

\noindent We are concerned about the $H^1$ distance between the eigenfunctions $u_j^\ep$ and $u_j$.

\begin{lemma}\label{l.H1error.function.no.proj}
Let $\lambda$ be an eigenvalue to \eqref{eq.Neumann} that admits $m\ge 1$ eigenfunctions $u_{k},\dots,u_{k+m-1}$ satisfying \eqref{eq.Neumann.normalize}, then there is a family of $m\times m$ orthogonal matrices $\Oa_\ep$ such that $\Oa_\ep \rta \Oa_0$ as $\ep\rta0$, and Steklov eigenfunctions (harmonically extended) $u_k^\ep,\dots,u_{k+m-1}^\ep$ satisfying \eqref{eq.Steklov.normalize}, such that for $\ep < \ep_0(M,d,g,S,\lambda_k)$
    \be
\label{eq.Optestimate.function.no.proj}
\max_{k\le j \le k+m-1}\norm{u_j - (\Oa_\ep)_{jl}u_l^\ep}_{L^2(\pt\Omega)} + \norm{u_j - (\Oa_\ep)_{jl} u_l^\ep}_{H^1(M)} \le C \sqrt{\omega_d(\ep)},
    \ee
    where we have used the Einstein summation convention.
\end{lemma}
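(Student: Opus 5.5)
The plan is to compare the two eigen-problems through a projection argument. The projection coefficients will be controlled by Remark \ref{r.sizeofprojectionforDifferentOrdereigenv}, and the energy of the remainder by the eigenvalue rate of Lemma \ref{l.Optestimate.stektoNeum}.

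Fix the cluster $\lambda=\lambda_k=\dots=\lambda_{k+m-1}$, and let $P$ be the $L^2(\pt\Omega)$-orthogonal projection onto $\textup{span}\{u_k^\ep,\dots,u_{k+m-1}^\ep\}$. For each $j$ in the cluster we expand on the Steklov side, $u_j|_{\pt\Omega}=\sum_l a_{jl}^\ep v_l^\ep$ with $a_{jl}^\ep=\int_{\pt\Omega}u_j u_l^\ep\dA$. For indices $l$ outside the cluster, Lemma \ref{l.Optestimate.stektoNeum} gives $|\sigma_l^\ep-\lambda|\ge c$ for small $\ep$, and then Remark \ref{r.sizeofprojectionforDifferentOrdereigenv} gives $|a_{jl}^\ep|\le C\omega_d(\ep)$. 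This bound is not summable over infinitely many $l$, so for high $l$ I would argue differently. The identity \eqref{eq.atrickintegrate.2} gives
\[
a_{jl}^\ep=\frac{J_1+J_2}{\sigma_l^\ep-\lambda},
\]
and the terms $J_1,J_2$ are linear functionals of $u_l^\ep$ bounded by $C\omega_d(\ep)\norm{u_l^\ep}_{H^1}$ or by $C\sqrt{\omega_d(\ep)}$ times energy. Squaring and summing against the weight $(\sigma_l^\ep-\lambda)^2$ then yields
\[
\sum_{l\notin\text{cluster}}|a_{jl}^\ep|^2\,\sigma_l^\ep\lesssim\omega_d(\ep).
\]
The cleaner route to the same conclusion is a duality argument. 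Let $r=u_j-Pu_j$ (harmonically extended). Test \eqref{eq.eqforUep.eigen.weakform} and \eqref{eq.eqforU.eigen.weakform} against $r$ and use $\mu_\ep$ together with $\Phi_\ep$ exactly as in Lemma \ref{l.Optestimate.stektoNeum.J1}. This gives
\[
\int_M|\gd r|^2-\lambda\int_{\pt\Omega}r^2\dA \le C\sqrt{\omega_d(\ep)}\,\norm{r}_{H^1}.
\]
The spectral gap, $\sigma_l^\ep\ge\lambda+c$ on the complement modulo lower modes with small coefficients, converts the left side into $c\norm{r}^2$. Hence $\norm{r}_{H^1(M)}+\norm{r}_{L^2(\pt\Omega)}\le C\sqrt{\omega_d(\ep)}$.

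Next, the $m\times m$ matrix $A_\ep=(a_{jl}^\ep)$ is almost orthogonal. We have $A_\ep A_\ep^T=\big(\int_{\pt\Omega}u_iu_j\dA\big)-O(\omega_d)$. By \eqref{eq.cellfunction} and Theorem \ref{t.summaryofPhi},
\[
\int_{\pt\Omega}u_iu_j\dA-\delta_{ij}=-\int_M u_iu_j\dd\mu_\ep=\int_M\gd\Phi_\ep\cdot\gd(u_iu_j)=O(\norm{\Phi_\ep}_{L^2})=O(\ep^{d/(d-1)}).
\]
Taking the polar decomposition $A_\ep=\Oa_\ep|A_\ep|$ gives $\norm{A_\ep-\Oa_\ep}\le C\omega_d(\ep)$. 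Since $\norm{u_l^\ep}_{H^1}\le C$ by \eqref{eq.bound.harmonicextension}, the replacement of $A_\ep$ by $\Oa_\ep$ costs only $O(\omega_d)$. Convergence $\Oa_\ep\to\Oa_0$ (after fixing the choice of basis as in Theorem \ref{t.qualitativeconverge}) follows from the weak convergence there combined with compactness of the orthogonal group. If needed, one rotates the basis $u^\ep_l$ by a continuous choice so that a limit exists.

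The main obstacle is the coercivity step. The energy identity for $r$ involves the boundary term $\int_{\pt\Omega}r^2$ rather than $\int\beta r^2$, and $r$ is not exactly Steklov-orthogonal to the low modes $l<k$. I would handle this by writing $r$ in the Steklov basis. The low modes ($l<k$) have coefficients $O(\omega_d)$ by the remark, and there are finitely many of them. For the high modes the Rayleigh quotient gives $\int|\gd r_{\rm high}|^2\ge\sigma_{k+m}^\ep\int_{\pt\Omega}r_{\rm high}^2$ with $\sigma^\ep_{k+m}\ge\lambda+c$. The error functional itself is then bounded via $\Phi_\ep$ and Lemma \ref{l.Optestimate.stektoNeum.J2}'s localization, with only $\norm{\gd\Phi_\ep}_{L^2}\le C\sqrt{\omega_d}$ needed, which is exactly the claimed rate.
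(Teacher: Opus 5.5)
Your argument is essentially the paper's: project $u_j$ onto the Steklov modes up to the cluster, control the finitely many lower-mode coefficients by Remark~\ref{r.sizeofprojectionforDifferentOrdereigenv}, get coercivity from $\sigma_{k+m}^\ep-\lambda\ge c$, and replace the nearly orthogonal cluster coefficient matrix by an orthogonal one. Along the way, your cruder bound through $\|\nabla\Phi_\ep\|_{L^2(M)}\le C\sqrt{\omega_d(\ep)}$ already suffices for the error functional, and your Parseval identity even gives $A_\ep A_\ep^T=I+O(\omega_d(\ep))$, where the paper settles for $O(\sqrt{\omega_d(\ep)})$.

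One correction: testing the weak forms with $r$ yields $\int_\Omega|\nabla r|^2-\lambda\int_{\pt\Omega}r^2\dA$ on the left, not $\int_M$. The term $\int_{M\setminus\Omega}|\nabla r|^2$ lands on the right and is not a priori $O(\sqrt{\omega_d(\ep)}\,\|r\|_{H^1(M)})$. You therefore need the step \eqref{eq.outsideOmegaGradientdifference} (harmonic extension of $u_j|_\Omega$ plus the uniform extension estimate) to recover the energy in the holes. Also, compactness of the orthogonal group only gives $\Oa_\ep\to\Oa_0$ along subsequences; the paper's proof does not establish full convergence either.
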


\begin{lemma}
    \label{l.H1error.eigenfunction}
    We fix $k\ge 1$ and denote $\lambda=\lambda_k$. We also denote $\Tilde{u}_k$ as the $L_\beta^2(M)$-projection of $u_k^\ep$ in the corresponding eigenspace of $\lambda$. Then there is a constant $C=C(M,d,g,S,\lambda)>0$ such that for $\ep<\ep_0(M,d,g,S,\lambda)$
    \be
\label{eq.H1error.eigenfunction}
\norm{u_k^\ep - \Tilde{u}_k}_{H^1(M)} + \norm{u_k^\ep - \Tilde{u}_k}_{L^2(\pt\Omega)}\le C \sqrt{\omega_d(\ep)}.
    \ee
\end{lemma}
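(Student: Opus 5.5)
We plan to prove Lemma \ref{l.H1error.eigenfunction} by an energy argument comparing the weak forms \eqref{eq.eqforUep.eigen.weakform} and \eqref{eq.eqforU.eigen.weakform}. The cell function $\Phi_\ep$ will turn the discrepancy $\mu_\ep$ into a gradient pairing, and a spectral gap will control the component orthogonal to the $\lambda$-eigenspace. Write $e:=u_k^\ep-\Tilde{u}_k$, where $\Tilde u_k=\sum_{j\in I}\big(\int_M\beta u_k^\ep u_j\dV\big)u_j$ and $I=\{k',\dots,k'+m-1\}$ indexes the eigenspace of $\lambda$. By construction $e$ is $L^2_\beta$-orthogonal to that eigenspace.

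\emph{Step 1: an equation for $e$.} Subtract \eqref{eq.eqforU.eigen.weakform} for $\Tilde u_k$ (eigenvalue $\lambda$) from \eqref{eq.eqforUep.eigen.weakform}, and use \eqref{eq.cellfunction} to replace $\int_M u_k^\ep\eta\dA_\ep$ by $\int_M\beta u_k^\ep\eta\dV+\int_M\gd\Phi_\ep\cdot\gd(u_k^\ep\eta)\dV$. This gives, for all $\eta\in H^1(M)$,
\[
\int_M\gd e\cdot\gd\eta\dV-\lambda\int_M\beta e\eta\dV=(\sigma_k^\ep-\lambda)\int_M\beta u_k^\ep\eta\dV+\sigma_k^\ep\int_M\gd\Phi_\ep\cdot\gd(u_k^\ep\eta)\dV+\int_{M\setminus\Omega}\gd u_k^\ep\cdot\gd\eta\dV .
\]

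\emph{Step 2: testing with $\eta=e$.} Expand $e=\sum_{j\notin I}c_ju_j$ in the $L^2_\beta$ basis. The left side equals $\sum_{j\notin I}(\lambda_j-\lambda)c_j^2$. It is indefinite, so we split off the finitely many modes $j<k'$ and treat them separately.

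\emph{Step 3: low modes.} For $j<k'$, Remark \ref{r.sizeofprojectionforDifferentOrdereigenv} applied with roles swapped, together with $\int_M\beta u_k^\ep u_j=\int u_k^\ep u_j\dA_\ep+\int u_k^\ep u_j\dd\mu_\ep$ and the $J_1$-type bound of Lemma \ref{l.Optestimate.stektoNeum.J1}, gives $|c_j|\le C\omega_d(\ep)$.

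\emph{Step 4: high modes.} Set $e_+=\sum_{j\ge k'+m}c_ju_j$ and test with $\eta=e_+$. Since $\lambda_{k'+m}-\lambda\ge\gamma>0$, the left side dominates $c\|e_+\|_{H^1}^2$. The three right-hand terms are bounded as follows:
\begin{itemize}
\item the first by Lemma \ref{l.Optestimate.stektoNeum} times $\|e_+\|_{L^2}$, giving $C\omega_d(\ep)\|e_+\|$;
\item the second by $\|\gd\Phi_\ep\|_{L^2}\le C\sqrt{\omega_d(\ep)}$ from Theorem \ref{t.summaryofPhi}, times $\|\gd(u_k^\ep e_+)\|_{L^2}$;
\item the third by $\|\gd u_k^\ep\|_{L^2(M\setminus\Omega)}\|\gd e_+\|_{L^2}$.
\end{itemize}

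\emph{Main obstacle.} The second term is the hard one: $u_k^\ep e_+$ is a product, and $u_k^\ep$ is not known to be bounded in $L^\infty$. The plan is to first integrate by parts as in \eqref{eq.boundJ1.convergerate}, moving all derivatives off $\Phi_\ep$. This leaves $\int_M\Phi_\ep(\cdots)$, controlled by $\|\Phi_\ep\|_{L^\infty}\le C\ep$, plus boundary terms controlled by $\|\Phi_\ep\|_{L^2(\pt\Omega)}$. Alternatively, one can write $e_+=u_k^\ep-(\text{smooth low and $\lambda$-modes})$, so that $\gd(u_k^\ep e_+)$ involves only $u_k^\ep$ quadratically or against smooth functions; the smooth pieces are then handled by $\|\Phi_\ep\|_{H^1}$, and the quadratic piece $\int\gd\Phi_\ep\cdot\gd(u_k^\ep)^2$ equals $-\int (u_k^\ep)^2\dd\mu_\ep$. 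Those boundary terms in turn need $\|u_k^\ep\|_{L^2(M\setminus\Omega)}$ and $\|\gd u_k^\ep\|_{L^2(M\setminus\Omega)}\le C\sqrt{\omega_d(\ep)}$. The latter follows from harmonicity of $u_k^\ep$ in each $B_p$ together with the Dirichlet-to-Neumann identity: $\int_{B_p}|\gd u_k^\ep|^2$ is bounded by $r_{\ep,p}$ times local $H^1$ mass via trace, as in Lemma \ref{l.Optestimate.stektoNeum.J2}.

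\emph{Conclusion.} Absorbing terms yields $\|e\|_{H^1(M)}\le C\sqrt{\omega_d(\ep)}$. For the boundary norm, write $\|e\|^2_{L^2(\pt\Omega)}=\int\beta e^2\dV-\int e^2\dd\mu_\ep$ and bound the last term via $\Phi_\ep$ as in Lemma \ref{l.Optestimate.stektoNeum.J1}, obtaining $C\sqrt{\omega_d}\|e\|_{H^1}^2+C\omega_d$.
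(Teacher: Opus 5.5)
\emph{There is a genuine gap in Step 4.} Your Steps 1--3 are sound: the identity for $e$, the orthogonal splitting, and $|c_j|\le C\omega_d(\ep)$ for the modes below $\lambda$ via Remark \ref{r.sizeofprojectionforDifferentOrdereigenv}. Step 4, however, does not close at the rate $\sqrt{\omega_d(\ep)}$.

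The second term equals $-\sigma_k^\ep\int_M u_k^\ep e_+\dd\mu_\ep$, a $\mu_\ep$-pairing of two rough functions. The argument of Lemma \ref{l.Optestimate.stektoNeum.J1} needs one factor to be a smooth eigenfunction, since it uses $\norm{u}_{C^1(M)}$ and $\Delta u=-\lambda\beta u$. The bounds you name leave additive errors that are too large:
\begin{itemize}
\item Estimating the smooth pieces by $\norm{\gd\Phi_\ep}_{L^2(M)}$ leaves an additive $C\sqrt{\omega_d(\ep)}$, hence only $\norm{e_+}_{H^1}\le C\omega_d(\ep)^{1/4}$. These pieces should instead be treated by Lemma \ref{l.Optestimate.stektoNeum.J1}, which gives $C\omega_d(\ep)$.
\item Integrating the quadratic piece by parts against $\norm{\Phi_\ep}_{L^\infty}\le C\ep$ leaves $C\ep\norm{\gd u_k^\ep}_{L^2}^2$, hence only $\norm{e_+}_{H^1}\le C\sqrt{\ep}$, and $\sqrt{\ep}\gg\sqrt{\omega_d(\ep)}$. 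This is the only sup bound available without \eqref{eq.centroid.voronoi}.
\end{itemize}
Under your normalization, $\int_M(u_k^\ep)^2\dd\mu_\ep=\int_M\beta(u_k^\ep)^2\dV-1$. So an $O(\omega_d(\ep))$ bound for it is a sharp norm comparison of the same depth as the lemma itself, not something you can obtain first.

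The missing idea is to split $u_k^\ep=s+e_+$ with $s$ smooth and \emph{absorb} $\int_M e_+^2\dd\mu_\ep$ into the coercive left side. That requires an inequality of the form $\bigl|\int_M f^2\dd\mu_\ep\bigr|\le C\sqrt{\omega_d(\ep)}\,\norm{f}_{H^1(M)}^2$ for all $f\in H^1(M)$. This is provable by cellwise Poincar\'e and trace estimates using $\mu_\ep(V_p^\ep)=0$, but it is not in the paper. Your final paragraph uses exactly this inequality, and ``as in Lemma \ref{l.Optestimate.stektoNeum.J1}'' does not give it.

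\emph{The third term has a second gap.} After the same split it contains $+\int_{M\setminus\Omega}|\gd e_+|^2\dV$. The Laplace--Beltrami coercivity $\norm{\gd e_+}_{L^2(M)}^2-\lambda\norm{e_+}_{L^2_\beta}^2$ can absorb this only if it is a small fraction of $\norm{\gd e_+}_{L^2(M)}^2$. The harmonic-extension bound gives only a non-small constant. So you genuinely need the a priori estimate $\norm{\gd u_k^\ep}_{L^2(M\setminus\Omega)}\le C\sqrt{\omega_d(\ep)}$. It is plausible, but it is unproved here:
\begin{itemize}
\item Lemma \ref{l.Optestimate.stektoNeum.J2} handles the linear functional $\int_{B_p}\xi\cdot\gd u^\ep$ with a frozen vector $\xi$, not the Dirichlet energy.
\item The decay \eqref{eq.Dirichletdecay} is for functions harmonic across $\pt B_p$, which $u_k^\ep$ is not.
\end{itemize}

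\emph{How the paper avoids both issues.} It runs the energy argument on the Steklov side (Lemma \ref{l.H1error.function.no.proj}). It tests with $u_i-\hat U_i$, where $\hat U_i=\sum_{\lambda_j\le\lambda}c^\ep_{ij}u^\ep_j$ is the $L^2(\pt\Omega)$-projection of the smooth $u_i$. Then:
\begin{itemize}
\item every $\mu_\ep$-pairing carries the smooth factor $u_i$, so Lemma \ref{l.Optestimate.stektoNeum.J1} applies;
\item the quadratic term is $\int_{\pt\Omega}(u_i-\hat U_i)^2\dA$, which the Steklov gap $\sigma^\ep_{k^*}>\lambda$ controls;
\item coercivity only involves $\int_\Omega|\gd\cdot|^2$, so the energy inside the holes is recovered afterwards through \eqref{eq.outsideOmegaGradientdifference}, where a non-small constant is harmless.
\end{itemize}
The lemma then follows by a one-line projection step, since the $L^2_\beta$-projection onto an eigenspace is also orthogonal for the energy inner product.
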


\begin{proof}

Define the $\lambda$-eigenfunction
\[
u_k^* : = \sum_{\lambda_j=\lambda} \lb\Oa_\ep^{-1}\rb_{kj} u_j.
\]
Since $\Oa_\ep$ is orthogonal, by Lemma \ref{l.H1error.function.no.proj}, we immediately obtain
\[
\norm{u_k^\ep - u_k^*}_{H^1(M)} + \norm{u_k^\ep - u_k^*}_{L^2(\pt\Omega)}\le C \sqrt{\omega_d(\ep)}.
\]
Because by definition $\Tilde{u}_k$ is the $L_\beta^2(M)$-projection of $u_k^\ep$ in the eigenspace of $\lambda$, we obtain 
\[
\norm{u_k^\ep - \Tilde{u}_k}_{H^1(M)} \le \norm{u_k^\ep - u_k^*}_{H^1(M)} \le C \sqrt{\omega_d(\ep)}.
\]
On the other hand, we have 
\[
\norm{u_k^\ep - \Tilde{u}_k}_{L^2(\pt\Omega)} \le \norm{u_k^\ep - u_k^*}_{L^2(\pt\Omega)} + \norm{\Tilde{u}_k - u_k^*}_{L^2(\pt\Omega)} \le  \norm{\Tilde{u}_k - u_k^*}_{L^2(\pt\Omega)} + C \sqrt{\omega_d(\ep)}.
\]
We claim that $\norm{\Tilde{u}_k - u_k^*}_{L^2(\pt\Omega)} \le C \sqrt{\omega_d(\ep)}$. To that end, we observe that 
\[
\begin{split}
    \int_{\pt\Omega} \lb\Tilde{u}_k - u_k^*\rb^2 \dA_\ep &= \int_{M} \beta \lb\Tilde{u}_k - u_k^*\rb^2 \dV - \int_{\pt\Omega} \lb\Tilde{u}_k - u_k^*\rb^2 \dd \mu_\ep\\
    &\le C \norm{\Tilde{u}_k - u_k^*}_{L^2(M)}^2 + C \norm{\Phi_\ep}_{L^2(M)} \\
    &\le C \norm{\Tilde{u}_k -u_k^\ep}_{L^2(M)}^2+ C \norm{u_k^* -u_k^\ep}_{L^2(M)}^2 + C \norm{\Phi_\ep}_{L^2(M)} \\
    &\le C \omega_d(\ep),
\end{split}
\]
which proves the claim and hence also the lemma.

\end{proof}

\begin{proof}[Proof of Lemma \ref{l.H1error.function.no.proj}]
    We denote for all $i,j\ge 0$ 
\be
\label{eq.boundaryL2.projection.coefficient}
c_{ij}^\ep : = \int_{\pt\Omega} u_i u_j^\ep \dA_\ep.
\ee
Note that $|c_{ij}^\ep| \le \norm{u_i}_{L^2(\pt\Omega)} \le C(M,d,g,S,\lambda_i)$. We define for each $i \ge 1$ such that $\lambda_i = \lambda$
\be
\label{eq.boundaryL2.projection}
\hat{U}_i : = \sum_{\lambda_j\le \lambda} c_{ij}^\ep u_j^\ep  \textup{ and } U_i^*:= \sum_{\lambda_j=\lambda} c_{ij}^\ep u_j^\ep.
\ee
Note that for each $j $ such that $\lambda_j<\lambda$, by Theorem \ref{t.qualitativeconverge}, there is a fixed lower gap for small $\ep$
\[
|\lambda - \sigma_j^\ep|= |\lambda_k - \sigma_j^\ep|\ge \frac{1}{2}|\lambda_k -\lambda_j|>0.
\]
Therefore, by Remark \ref{r.sizeofprojectionforDifferentOrdereigenv} there is for all $\lambda_i =\lambda$
\be
\label{eq.sizeofProjection}
\begin{split}
    \norm{\hat{U}_i-U_i^*}_{L^2(\pt\Omega)} + \norm{\hat{U}_i-U_i^*}_{H^1(M)} &\le C\sum_{\lambda_j<\lambda} \lw\int_{\pt\Omega} u_i  u_j ^\ep\dA_\ep\rw \\
    &\le C(M,d,g,S,\lambda) ~\omega_d(\ep).
\end{split}
\ee
Therefore
\[
\norm{u_i-U_i^*}_{L^2(\pt\Omega)} \le \norm{\hat{U}_i-U_i^*}_{L^2(\pt\Omega)} + \norm{\hat{U}_i-u_i}_{L^2(\pt\Omega)}  \le \norm{\hat{U}_i-u_i}_{L^2(\pt\Omega)} + C\omega_d(\ep).
\]
This leads us to apply \eqref{eq.eqforUep.eigen.weakform} and \eqref{eq.eqforU.eigen.weakform} by taking $\eta = u_i -  \hat{U}_i$ for the same $i$ as above
\be
\label{eq.alongequationfordifferen.eigen}
\begin{split}
    \int_M |\gd \lb u_i -  \hat{U}_i\rb|^2  \dV& = \lambda \int_M \beta u_i \lb u_i -  \hat{U}_i\rb \dV -\sum_{\lambda_j\le \lambda} c_{ij}^\ep \sigma_j^\ep \int_{\pt\Omega} u_j^\ep \lb u_i -  \hat{U}_i\rb \dA_\ep\\
    &\qquad- \int_{M\setminus\Omega} \gd \hat{U}_i \cdot \gd \lb u_i -  \hat{U}_i\rb\dV\\
    &=\lambda\int_{\pt\Omega} (u_i -\hat{U}_i)^2 \dA_\ep + \lambda\int_M u_i(u_i-\hat{U}_i) \dd\mu_\ep -\sum_{\lambda_j<\lambda} c_{ij}^\ep \sigma_j^\ep \int_{\pt\Omega} u_j^\ep (u_i-\hat{U}_i)\dA_\ep \\
    &\qquad- \sum_{\lambda_j=\lambda}c_{ij}^\ep(\sigma_j^\ep-\lambda)\int_{\pt\Omega}u_j^\ep (u_i-\hat{U}_i)\dA_\ep + \int_{M\setminus\Omega} |\gd \lb u_i -  \hat{U}_i\rb|^2  \dV\\
    &\qquad- \int_{M\setminus\Omega} \gd u_i\cdot \gd \lb u_i -  \hat{U}_i\rb \dV.
\end{split}
\ee
This shows that
\be\label{eq.alongequationfordifferen.eigen.2}
\begin{split}
    \int_{\Omega}|\gd \lb u_i -  \hat{U}_i\rb|^2  \dV -\lambda\int_{\pt\Omega} (u_i -\hat{U}_i)^2 \dA_\ep &=  \lambda\int_M u_i(u_i-\hat{U}_i) \dd\mu_\ep -\sum_{\lambda_j<\lambda} c_{ij}^\ep \sigma_j^\ep \int_{\pt\Omega} u_j^\ep (u_i-\hat{U}_i)\dA_\ep\\
    &\qquad- \sum_{\lambda_j=\lambda}c_{ij}^\ep(\sigma_j^\ep-\lambda)\int_{\pt\Omega}u_j^\ep (u_i-\hat{U}_i)\dA_\ep \\
    &\qquad- \int_{M\setminus\Omega} \gd u_i\cdot \gd \lb u_i -  \hat{U}_i\rb \dV\\
&=: A_1 + A_2+A_3+A_4.
\end{split}
\ee
By Theorem \ref{t.qualitativeconverge}, we know that, if we denote $k^*$ as the smallest index such that the Laplace--Beltrami eigenvalue $\lambda_{k^*}$ is bigger than $\lambda$, there is for all small $\ep$
\[
\sigma_{k^*}^\ep > \frac{\lambda+\lambda_{k^*}}{2}>\lambda.
\]
This shows that the left hand side of \eqref{eq.alongequationfordifferen.eigen.2} becomes
\be\label{eq.alongequationfordifferen.eigen.3}
(\sigma_{k^*}^\ep -\lambda)\int_{\pt\Omega} (u_i -\hat{U}_i)^2 \dA_\ep \le \int_{\Omega}|\gd \lb u_i -  \hat{U}_i\rb|^2  \dV -\lambda\int_{\pt\Omega} (u_i -\hat{U}_i)^2 \dA_\ep,
\ee
where $\sigma_{k^*}^\ep -\lambda > \frac{\lambda_{k^*}-\lambda}{2}>0$ is uniformly lower bounded, and hence we only have to bound $A_i$ with $ i=1,\dots,4$ in \eqref{eq.alongequationfordifferen.eigen.2}.
Now, observe that
\[
|A_1| \le C \lw\int_M \gd \Phi_\ep \cdot \gd (u_i \hat{U}_i) \dV\rw + C \lw\int_M \gd \Phi_\ep \cdot\gd (u_i^2) \dV\rw \le C\omega_d(\ep)
\]
by a similar argument as in \eqref{eq.boundJ1.convergerate}. By \eqref{eq.sizeofProjection}, we have
\[
|A_2|\le C \sum_{\lambda_j <\lambda} |c_{ij}^\ep| \le C \omega_d(\ep).
\]
By Lemma \ref{l.Optestimate.stektoNeum}, we obtain
\[
|A_3|\le C \omega_d(\ep).
\]
Last but not least, we have for any small $\delta>0$
\[
\begin{split}
    |A_4|& \le C_\delta |M\setminus\Omega| \norm{u_i}_{C^1(M)} + \delta \int_{M\setminus\Omega}|\gd \lb u_i -  \hat{U}_i\rb|^2  \dV\\
&\le \delta \int_{M\setminus\Omega}|\gd \lb u_i -  \hat{U}_i\rb|^2  \dV + C \omega_d(\ep).
\end{split}
\]
If we denote $\hat{u}_i$ as the harmonic extension of $\restr{u_i}{\Omega}$ to the whole manifold, we obtain
\be\label{eq.outsideOmegaGradientdifference}
\begin{split}
    \int_{M\setminus \Omega} |\gd \lb u_i -  \hat{U}_i\rb|^2 \dV &= \int_{M\setminus \Omega} |\gd \lb \hat{u}_i -  \hat{U}_i\rb|^2 \dV + \int_{M\setminus \Omega}|\gd \lb \hat{u}_i - u_i\rb|^2 \dV\\
&\le C \int_{\Omega}|\gd \lb {u}_i -  \hat{U}_i\rb|^2  \dV + C \ep^{\frac{d}{d-1}},
\end{split}
\ee
where in the last line we have used the estimate \cite{girouard_large_2021}*{Lemma 3.4}. By combining \eqref{eq.alongequationfordifferen.eigen.3} and the above estimates about $A_1,\dots,A_4$, we get
\[
\int_{\pt\Omega} (u_i -\hat{U}_i)^2 \dA_\ep \le C \omega_d(\ep)
\]
for $C=C(M,d,g,S,\lambda)$. Plugging this inequality into \eqref{eq.alongequationfordifferen.eigen.2} and combining \eqref{eq.outsideOmegaGradientdifference}, we obtain
\[
\int_{M}|\gd \lb {u}_i -  \hat{U}_i\rb|^2  \dV \le C \omega_d(\ep).
\]
Also note that for all $l\ge 1$
\[
\begin{split}
  \lw \int_M \beta u_l^\ep \dV \rw&= \lw\int_M  u_l^\ep (\beta\dV - \restr{\dA}{\pt\Omega})\rw\\
 &=\lw \int_M  \gd \Phi_\ep \cdot \gd u_l^\ep\dV\rw  \\
 &\le C \omega_d(\ep)
\end{split}
\]
by the same argument as in \eqref{eq.boundJ1.convergerate} with $u$ chosen as the trivial Laplace--Beltrami eigenfunction, i.e., the constant. This shows that by the Poincar\'{e} inequality
\[
\int_{M}|  {u}_i -  \hat{U}_i|^2  \dV\le C\int_{M}|\gd \lb {u}_i -  \hat{U}_i\rb|^2  \dV + C\lw\int_M \beta \hat{U}_i \dV\rw^2\le C \omega_d(\ep).
\]
By combining \eqref{eq.sizeofProjection} and the previous displays we obtain
\be
\label{eq.orthogonal.UUep}
\max_{\lambda_i=\lambda}\norm{u_i - U_i^*}_{L^2(\pt\Omega)}+\norm{u_i-U_i^*}_{H^1(M)} \le C \sqrt{\omega_d(\ep)}.
\ee
In particular, for $l,k$ such that $\lambda_l=\lambda_k=\lambda$, we have
\[
\begin{split}
    \sum_{\lambda_j=\lambda} c_{lj}^\ep c_{kj}^\ep &= \int_{\pt\Omega} U_l^* U_k^* \dA_\ep\\
    &=\int_{\pt\Omega} u_l u_k \dA_\ep + \int_{\pt \Omega} u_l (U_k^* - u_k) \dA_\ep\\
    &=\delta_{lk} - \int_{M} u_l u_k \dd\mu_\ep + \int_{\pt \Omega} u_l (U_k^* - u_k) \dA_\ep\\
    &=\delta_{lk} + O(\sqrt{\omega_d(\ep)}).
\end{split}
\]
This shows that the matrix $M_\ep=(c_{lk}^\ep)$ with $l,k$ satisfying $\lambda=\lambda_l=\lambda_k$ is $O(\sqrt{\omega_d(\ep)})$ distance away from an orthogonal matrix $Q_\ep$. Therefore, \eqref{eq.orthogonal.UUep} becomes
\be
\label{eq.orthogonal.UUep.2}
\max_{\lambda_i=\lambda}\norm{u_i - (Q_\ep)_{ik} u_k^\ep}_{L^2(\pt\Omega)}+\norm{u_i-(Q_\ep)_{ik} u_k^\ep}_{H^1(M)} \le C \sqrt{\omega_d(\ep)},
\ee
which proves the lemma.

\end{proof}

\section{Expansion of the Steklov eigenvalues}
\label{s.expansion}
We fix $\lambda =\lambda_k$ as the Laplace--Beltrami eigenvalue to \eqref{eq.Neumann.eigenpairs} such that 
\[
\lambda_{k-1}<\lambda=\lambda_{k}=\cdots = \lambda_{k+m-1}<\lambda_{k+m}.
\]
Correspondingly we denote $\sigma_j^\ep$ for $j=k,\dots, k+m-1$ as the Steklov eigenvalues in \eqref{eq.Steklov.eigenpairs}. In this section we investigate the expansion of these Steklov eigenvalues in $\ep$. 
As in \eqref{eq.normalize.Steklov.and.Neumann}, we assume the Laplace--Beltrami eigenfunctions $u_j$ and Steklov eigenfunctions $u_j^\ep$ satisfy
\[
\int_{\pt\Omega} v_l^\ep v_k^\ep \dA =\int_{\pt\Omega} u_l^\ep u_k^\ep \dA = \int_M \beta u_l u_k \dV = \delta_{lk}.
\]
We denote $\Tilde{u}_j^\ep$ as the $L_\beta^2(M)$-projections of $u_j^\ep$ in the eigenspace spanned by $u_j$ for $j=k,\dots,k+m-1$, and then define for $j=k,\dots, k+m-1$
\[
U_{\ep,j}:=\frac{u_j^\ep}{\sqrt{\int_M \beta |\Tilde{u}_j^\ep|^2 \dV}}\textup{ and }U_j:=\frac{\Tilde{u}_j^\ep}{\sqrt{\int_M \beta |\Tilde{u}_j^\ep|^2 \dV}}.
\]
By Theorem \ref{t.quantitative.StekNeum}, especially \eqref{eq.Optestimate.function.proj}, there is 
\be\label{eq.distance.eigenfunction.remarkversion}
\max_{\lambda_j = \lambda}~\norm{U_j-U_{\ep,j}}_{L^2(\pt\Omega)}+\norm{U_j-U_{\ep,j}}_{H^1(M)}\le C \sqrt{\omega_d(\ep)}.
\ee
We also have
\be\label{eq.eigenfunction.Unormalize}
\int_M \beta U_{\ep,j} U_j \dV = \int_M \beta| U_j|^2 \dV = 1 \textup{ and }\int_M \beta U_i U_j \dV = \delta_{ij} + O(\sqrt{\omega_d(\ep)}).
\ee
In the following for $i$ such that $\lambda_i=\lambda$ we seek an expansion of the following form
\be
\label{eq.expansionofEigenpairs}
\sigma_{i}^\ep = \lambda + \delta_{\ep,i} + \rho_{\ep,i} \textup{ and }U_{\ep,i} = U_i + Z_{\ep,i} + E_{\ep,i}\,,
\ee
defined as follows. For $i=k,\dots,k+m-1$, define
\[
\delta_{\ep,i}^j
:=
\lambda\int_M U_i u_j\dd\mu_\ep,
\qquad
j=k,\dots,k+m-1.
\]
We define $Z_{\ep,i}$ as the unique solution satisfying
\begin{equation}
\label{eq.defineZep.i}
\begin{aligned}
(\Delta+\lambda\beta)Z_{\ep,i}
&=
\lambda U_i\dd\mu_\ep
-
\sum_{j=k}^{k+m-1}
\delta_{\ep,i}^j\beta u_j\dV,
\\
\int_MZ_{\ep,i}u_\ell\beta\dV&=0,
\qquad
\ell=k,\dots,k+m-1.
\end{aligned}
\end{equation}
Note that because $u_k, \dots , u_{k+m-1}$ form an orthonormal basis for the eigenspace of $\lambda$, there is for $i,j\in \{k,\dots,k+m-1\}$
\be\label{eq.define.delta.i.ep.j}
\delta_{\ep,i}^j : = \lambda \int_M  U_i u_j \dd\mu_\ep \textup{ and }\delta_{\ep,i} =\sum_{j=k}^{k+m-1} \delta_{\ep,i}^j \int_M \beta U_i u_j \dV.
\ee
Note that for $\lambda_i=\lambda_j=\lambda$ we have the following bound 
\be
\label{eq.bound.delta.i.ep.j}
|\delta_{\ep,i}^j | =\lambda \lw \int_M \Phi_\ep \Delta(U_i u_j) \dV \rw \le C \norm{\Phi_\ep}_{L^1(M)} \le C \ep^2,
\ee
where the constant $C$ depends at most on $M,d,g,S$ and $\lambda$.

\begin{lemma}
    \label{l.expansion}
    Let $\sigma_i^\ep$ and $\lambda=\lambda_i$ be as above, then there is for $\ep<\ep_0(M,d,g,S,\lambda)$
    \be\label{eq.expansion.sigma}
\sigma_i^\ep = \lambda ~+~\underbrace{\lambda \int_M Z_{\ep,i} U_i \dd\mu_\ep}_{\omega_d(\ep)} ~-~\underbrace{\frac{d-2}{d-1}\int_{M\setminus\Omega} |\gd U_i|^2 \dV}_{\ep^{d/(d-1)}} ~+~ \underbrace{\lambda\int_M U_i^2 \dd\mu_\ep}_{\ep^2} ~+~O(\omega_d(\ep)^{3/2}).
\ee
\end{lemma}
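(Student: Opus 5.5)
We start from the identity \eqref{eq.atrickintegrate.2}, now with $u^\ep=U_{\ep,i}$ and $u=U_i$:
\[
(\sigma_i^\ep-\lambda)\int_{\pt\Omega}U_{\ep,i}U_i\dA=\lambda\int_M U_{\ep,i}U_i\dd\mu_\ep-\int_{M\setminus\Omega}\gd U_{\ep,i}\cdot\gd U_i\dV .
\]
We insert $U_{\ep,i}=U_i+Z_{\ep,i}+E_{\ep,i}$. The first term splits as $\lambda\int U_i^2\dd\mu_\ep+\lambda\int Z_{\ep,i}U_i\dd\mu_\ep+\lambda\int E_{\ep,i}U_i\dd\mu_\ep$. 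The normalization on the left is
\[
\int_{\pt\Omega}U_{\ep,i}U_i\dA=\int_M\beta U_{\ep,i}U_i\dV-\int_M U_{\ep,i}U_i\dd\mu_\ep=1+O(\omega_d(\ep)),
\]
using \eqref{eq.eigenfunction.Unormalize} and the $J_1$-type bound of Lemma \ref{l.Optestimate.stektoNeum.J1}. Since $\sigma_i^\ep-\lambda=O(\omega_d(\ep))$ by Theorem \ref{t.quantitative.StekNeum}, this normalization only changes the right-hand side by $O(\omega_d(\ep)^2)$. Hence it suffices to show three things:
\begin{itemize}
\item[(a)] $\lambda\int E_{\ep,i}U_i\dd\mu_\ep=O(\omega_d(\ep)^{3/2})$;
\item[(b)] $-\int_{M\setminus\Omega}\gd U_{\ep,i}\cdot\gd U_i\dV=-\frac{d-2}{d-1}\int_{M\setminus\Omega}|\gd U_i|^2\dV+O(\omega_d(\ep)^{3/2})$;
\item[(c)] $\lambda\int Z_{\ep,i}U_i\dd\mu_\ep=O(\omega_d(\ep))$, which fixes the order of the interaction term.
\end{itemize}

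For (c) we write the pairing against $\mu_\ep$ through $\Phi_\ep$ as $\int\gd\Phi_\ep\cdot\gd(Z_{\ep,i}U_i)$. By the equation \eqref{eq.defineZep.i}, tested in the weak form with the $\Phi_\ep$ representation, we get $\|Z_{\ep,i}\|_{H^1}\lesssim\|\gd\Phi_\ep\|_{L^2}\lesssim\sqrt{\omega_d(\ep)}$, where we invert $\Delta+\lambda\beta$ on the orthogonal complement of the eigenspace. This gives (c). For (a) we need $\|E_{\ep,i}\|_{H^1(M)}+\|E_{\ep,i}\|_{L^2(\pt\Omega)}\lesssim\omega_d(\ep)$. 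To obtain it we derive the equation for $E_{\ep,i}$ by combining \eqref{eq.eqforUep.eigen.weakform} with \eqref{eq.defineZep.i}:
\[
(\Delta+\lambda\beta)E_{\ep,i}=(\sigma_i^\ep-\lambda)U_{\ep,i}\dA_\ep+\lambda(Z_{\ep,i}+E_{\ep,i})\dd\mu_\ep\ \text{(sign conventions aside)}+\text{DtN term on }M\setminus\Omega+\sum_j\delta^j_{\ep,i}\beta u_j .
\]
We then run the coercivity argument of Lemma \ref{l.H1error.function.no.proj} on the complement of the eigenspace. Each source is $O(\omega_d(\ep))$ in $H^{-1}$ except the $M\setminus\Omega$ gradient term, which only gives $\sqrt{\omega_d}$. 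We therefore treat that term explicitly, by the same local harmonic-extension analysis as in (b), and subtract its leading profile; with $(\sigma-\lambda)=O(\omega_d)$ the rest is of higher order. Once the bound on $E_{\ep,i}$ is in hand, (a) follows from the bound $|\int f\dd\mu_\ep|\lesssim\omega_d(\ep)\,\|f\|$ of Lemma \ref{l.Optestimate.stektoNeum.J1} type, applied with $f=E_{\ep,i}U_i$. This requires some care, since only an $H^1$ bound on $E_{\ep,i}$ is available and Theorem \ref{t.summaryofPhi} gives $\|\gd\Phi_\ep\|_{L^2}\lesssim\sqrt{\omega_d}$, so the product is $\omega_d^{3/2}$ as needed.

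The main obstacle is (b). Here we refine the proof of Lemma \ref{l.Optestimate.stektoNeum.J2}: rather than bounding $T_0$, we compute it to leading order. In each $B_p$, in normal coordinates, $U_{\ep,i}$ is harmonic. On the annulus $Q_p\setminus B_p$ we write $U_{\ep,i}\approx U_i(p)+\xi\cdot z+a_p\,w_\xi$-type corrections, where $\xi=\gd U_i(p)$. The Steklov condition $\pt_\nu U_{\ep,i}=\sigma U_{\ep,i}$ on $\pt B_p$, combined with the dipole behaviour (a harmonic dipole outside, linear inside), forces the interior gradient to be reduced by a factor. Matching the exterior field $\xi\cdot z\,(1+c\,r^d/|z|^d)$ with the interior field $\xi'\cdot z$ across $\pt B_p$, with jump condition $\pt^+_\nu-\pt^-_\nu=O(\sigma r)$ negligible, gives $c=\frac{1}{d-1}$ and $\xi'=\frac{1}{d-1}\xi$ up to $O(r)$. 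Thus
\[
\int_{B_p}\gd U_{\ep,i}\cdot\gd U_i=\frac{1}{d-1}|\gd U_i(p)|^2|B_p|+\text{error},
\]
and so $-\int_{M\setminus\Omega}\gd U_{\ep,i}\cdot\gd U_i=-\int_{M\setminus\Omega}|\gd U_i|^2+\frac{1}{d-1}\int_{M\setminus\Omega}|\gd U_i|^2$. This matches the $\frac{d-2}{d-1}$ in the statement. To make this rigorous we use the identity \eqref{eq.decompose.DtN.2}: $T_0=(A_\ep/r-\sigma)^{-1}(T_1+T_2)$, with $A_\ep\approx d-1$. We expand $T_1$ to leading order, replacing $u^\ep$ on $\pt Q_p$ by $U_i$, which costs $O(\sqrt{\omega_d})$ in $H^1$ and hence a relative error $\omega_d^{1/2}$ after summing. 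The leading part of $T_1$ is $\int_{\pt Q_p}\pt_\nu\tilde w_\xi\,\xi\cdot z$, which gives the $\frac{1}{d-1}|\xi|^2|B_p|$ contribution. The error terms are controlled exactly as in \eqref{eq.T11}–\eqref{eq.boundT2}, now with the extra factor $\sqrt{\omega_d}$ from \eqref{eq.distance.eigenfunction.remarkversion}, giving the $O(\omega_d^{3/2})$ remainder.
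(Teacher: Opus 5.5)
Your reduction is equivalent to the paper's testing of the $E_{\ep,i}$-equation against $U_i$ in \eqref{eq.formula.rhoep}: you start from the identity \eqref{eq.atrickintegrate.2} with $u^\ep=U_{\ep,i}$, $u=U_i$, and split along $U_{\ep,i}=U_i+Z_{\ep,i}+E_{\ep,i}$. Your step (c) is Lemma \ref{l.H1.estimate.Zep.i}. The argument fails in (b), which carries the whole dipole term.

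Your exterior profile $\xi\cdot z\,(1+c\,r_{\ep,p}^d|z|^{-d})$ with $c=\frac1{d-1}$ is right: for the dipole mode the Steklov condition is Neumann to leading order, since $\sigma_i^\ep r_{\ep,p}\to0$. But its trace on $\pt B_p$ is $(1+c)\,\xi\cdot z$. So the harmonic extension has interior gradient $(1+c)\xi=\frac{d}{d-1}\xi$, not $c\,\xi$, and therefore $\int_{B_p}\gd U_{\ep,i}\cdot\gd U_i\dV\approx\frac{d}{d-1}\int_{B_p}|\gd U_i|^2\dV$.

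Your final step, $-\frac1{d-1}\int_{M\setminus\Omega}|\gd U_i|^2=-\int_{M\setminus\Omega}|\gd U_i|^2+\frac1{d-1}\int_{M\setminus\Omega}|\gd U_i|^2$, is also false. It is the only place the coefficient $\frac{d-2}{d-1}$ comes from.

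The rigorous route you sketch gives $\frac{d}{d-1}$ as well. In the flat model $\pt_\nu^-\Tilde{w}_\xi=\frac{d\,r_{\ep,p}^{d-1}}{R_\ep^d-r_{\ep,p}^d}\,\xi\cdot\frac{z}{|z|}$ on $\pt Q_p$. Hence $(A_\ep/r_{\ep,p})^{-1}\int_{\pt Q_p}\pt_\nu^-\Tilde{w}_\xi\,(\xi\cdot z)\dA\approx\frac{r_{\ep,p}^d}{d-1}|\xi|^2\Ha^{d-1}(S^{d-1})=\frac{d}{d-1}|\xi|^2|B_p|$. This matches Maxwell's dilute-limit conductivity $1-\frac{d}{d-1}\theta$ for insulating spherical holes. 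It is also consistent with min--max: the test function $U_i+Z_{\ep,i}$ already gives $-\int_{M\setminus\Omega}|\gd U_i|^2\dV$ with coefficient $1$. Relaxing to the exterior dipole lowers the energy by a further $\frac1{d-1}\int_{M\setminus\Omega}|\gd U_i|^2\dV$, so the coefficient should not be $\frac{d-2}{d-1}<1$.

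So (b) cannot be repaired to give the stated constant. Done correctly, your argument yields $-\frac{d}{d-1}\int_{M\setminus\Omega}|\gd U_i|^2\dV$ in \eqref{eq.expansion.sigma}. The paper's proof of Lemma \ref{l.DtN.fineestimate} reaches $\frac{d-2}{d-1}$ through a sign slip in \eqref{eq.T02.U}. With the paper's conventions for $\pt_\nu^\pm$, Green's second identity on $Q_p\setminus B_p$ reads
\[
\int_{\pt Q_p}\pt_\nu^-\Tilde{w}_\xi\,U\dA+\int_{Q_p\setminus B_p}\Delta\Tilde{w}_\xi\,U\dV=\int_{\pt B_p}\pt_\nu^+\Tilde{w}_\xi\,U\dA-\int_{\pt B_p}\Tilde{w}_\xi\,\pt_\nu^+U\dA+\int_{Q_p\setminus B_p}\Tilde{w}_\xi\,\Delta U\dV .
\]
You can check this exactly in flat space with $U=\xi\cdot z$: both sides equal $\frac{R_\ep^d\,r_{\ep,p}^{d-1}}{R_\ep^d-r_{\ep,p}^d}|\xi|^2\Ha^{d-1}(S^{d-1})$. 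The corrected sign turns $T_{0212}$ in \eqref{eq.boundT0212.U} into $+\frac1{d-1}\int_{B_p}|\gd U|^2\dV$.

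Separately, your (a) rests on $\norm{E_{\ep,i}}_{H^1(M)}\lesssim\omega_d(\ep)$, which is false. Inside each hole $\gd E_{\ep,i}\approx\frac1{d-1}\gd U_i(p)$, so $\norm{\gd E_{\ep,i}}_{L^2(M)}\gtrsim\ep^{\frac{d}{2(d-1)}}$. You would have to peel off these dipole profiles and bound their pairing with $\mu_\ep$ separately. The paper's Lemma \ref{l.boundEU.integral} needs no improved bound on $E_{\ep,i}$. It writes $\lambda\int_M E_{\ep,i}U_i\dd\mu_\ep=\lambda\int_M\Phi_\ep\Delta(E_{\ep,i}U_i)\dV$ and combines $\norm{E_{\ep,i}}_{H^1}\lesssim\sqrt{\omega_d(\ep)}$ with $\norm{\Phi_\ep}_{L^2}\lesssim\ep^{d/(d-1)}$, the $L^\infty$ bound on $\bar{\Phi}_\ep$, the bound on $\norm{\Ra_\ep}_{L^2(\pt\Omega)}$, and the jump equation of $U_{\ep,i}$ across $\pt\Omega$.
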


\begin{lemma}
    \label{l.H1.estimate.Zep.i}
    For some $C=C(M,d,g,S,\lambda)>0$ and all $\ep < \ep_0(M,d,g,S,\lambda)$, we have 
    \be
\label{eq.H1.estimate.Zep.i}
\max_{i=k,\dots, k+m-1}~\norm{Z_{\ep,i}}_{H^1(M)} \le C \sqrt{\omega_d(\ep)}.
    \ee
\end{lemma}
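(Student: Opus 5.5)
The natural approach is to test the equation \eqref{eq.defineZep.i} against $Z_{\ep,i}$ itself. This uses the cell function $\Phi_\ep$ to convert the singular right-hand side into a bounded functional on $H^1(M)$, and then exploits the invertibility of $\Delta+\lambda\beta$ on the $L^2_\beta$-orthogonal complement of the $\lambda$-eigenspace.

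\textbf{Step 1: the source is a small functional on $H^1$.} Write $F_\ep:=\lambda U_i\dd\mu_\ep-\sum_j\delta_{\ep,i}^j\beta u_j\dV$. For $\eta\in H^1(M)$, the weak form \eqref{eq.cellfunction} gives
\[
\lambda\int_M U_i\eta\dd\mu_\ep=-\lambda\int_M\gd\Phi_\ep\cdot\gd(U_i\eta)\dV .
\]
Since $U_i$ is a smooth eigenfunction (a normalized combination of the $u_j$ with uniformly bounded coefficients by \eqref{eq.eigenfunction.Unormalize}), we get
\[
\Bigl|\lambda\int_M U_i\eta\dd\mu_\ep\Bigr|\le C\norm{U_i}_{C^1(M)}\norm{\Phi_\ep}_{H^1(M)}\norm{\eta}_{H^1(M)}\le C\sqrt{\omega_d(\ep)}\,\norm{\eta}_{H^1(M)},
\]
using \eqref{eq.Phi.summary}. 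The correction terms satisfy $|\delta^j_{\ep,i}|\le C\ep^2$ by \eqref{eq.bound.delta.i.ep.j}, so they contribute at most $C\ep^2\norm{\eta}_{L^2}$. Hence $\norm{F_\ep}_{H^{-1}(M)}\le C\sqrt{\omega_d(\ep)}$. The subtraction of the $\delta^j_{\ep,i}$ terms is exactly what makes $F_\ep$ annihilate $u_\ell$ for $\ell=k,\dots,k+m-1$, since $u_j$ is $L^2_\beta$-orthonormal. This is the solvability condition that guarantees $Z_{\ep,i}$ exists.

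\textbf{Step 2: coercivity off the eigenspace.} Let $X$ denote the $L^2_\beta$-orthogonal complement of $\mathrm{span}\{u_k,\dots,u_{k+m-1}\}$ in $H^1(M)$. By \eqref{eq.defineZep.i}, $Z_{\ep,i}\in X$. I will establish the a priori estimate
\[
\norm{Z}_{H^1}\le C(\lambda,\text{gap})\,\norm{(\Delta+\lambda\beta)Z}_{H^{-1}}\qquad\text{for }Z\in X.
\]
To do so, expand $Z=\sum_{\lambda_j\neq\lambda}a_ju_j$. The spectral gap $\gamma:=\min(\lambda-\lambda_{k-1},\lambda_{k+m}-\lambda)>0$ gives $|\lambda-\lambda_j|\ge\gamma$ for every $j$ in the sum. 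Now test the equation with $Z^+-Z^-$, where $Z^-$ is the (finite) component of $Z$ along eigenvalues below $\lambda$ and $Z^+$ the component along eigenvalues above $\lambda$. This gives
\[
\sum_j|\lambda_j-\lambda|a_j^2=\bigl\langle-(\Delta+\lambda\beta)Z,\,Z^+-Z^-\bigr\rangle .
\]
The left side dominates $c\norm{Z}_{H^1}^2$, because $|\lambda_j-\lambda|\ge c(1+\lambda_j)$ when $\lambda_j\neq\lambda$. The right side is bounded by $\norm{F_\ep}_{H^{-1}}\norm{Z^+-Z^-}_{H^1}$, and $\norm{Z^+-Z^-}_{H^1}\le C\norm{Z}_{H^1}$. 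Together these give the estimate. Equivalently, one can invoke the spectral representation of $G_\lambda$ from Appendix~\ref{s.app}.

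\textbf{Step 3: conclude.} Combining Steps 1 and 2 gives $\norm{Z_{\ep,i}}_{H^1}\le C\sqrt{\omega_d(\ep)}$, uniformly in $i$. The main point needing care is the indefiniteness in Step 2: naive testing with $Z$ itself fails. The $Z^+-Z^-$ splitting avoids this, and the constant depends only on $\lambda$ and the gap, which is permitted. The other point to check is uniformity of $\norm{U_i}_{C^1}$. This follows because $U_i$ lies in the fixed finite-dimensional eigenspace with bounded $L^2_\beta$ norm, so elliptic regularity controls all its derivatives.
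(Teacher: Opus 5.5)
Your proof is correct and follows essentially the paper's route: the cell-function identity turns the $\dd\mu_\ep$ source into a functional bounded by $\norm{\gd\Phi_\ep}_{L^2(M)}\le C\sqrt{\omega_d(\ep)}$, and the spectral gap around $\lambda$ makes $\Delta+\lambda\beta$ invertible on the $L^2_\beta$-complement of the eigenspace. The only difference is cosmetic: the paper bounds the finitely many modes below $\lambda$ separately (their coefficients are $O(\ep^2)$ via $\norm{\Phi_\ep}_{L^1(M)}$) and uses coercivity only on the modes above $\lambda$, whereas your test function $Z^+-Z^-$ handles the indefiniteness in a single step.
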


\begin{proof}
Let $\{u_j\}_{j\geq0}$ be an orthonormal basis of
$L^2(M,\beta\dV)$ consisting of weighted Laplace--Beltrami
eigenfunctions. Since $Z_{\ep,i}$ is orthogonal to the
$\lambda$-eigenspace, it has the spectral representation
\[
Z_{\ep,i}
=
\sum_{\lambda_j\neq\lambda}
\frac{\lambda}{\lambda-\lambda_j}
\left(
\int_Mu_jU_i\dd\mu_\ep
\right)u_j.
\]
Note that by a similar argument to \eqref{eq.boundJ1.convergerate}, we know that for all $\lambda_j \le \lambda$ (or equivalently $j\le k+m-1$)
\be\label{eq.boundforlowfrequency.Zep}
\lw\int_M  u_j U_i \dd\mu_\ep\rw = \lw\int_M \Phi_\ep \Delta\lb u_j U_i \rb \dV\rw \le C \norm{\Phi_\ep}_{L^1(M)} \le C \ep^2,
\ee
by applying Theorem \ref{t.summaryofPhi} and Corollary \ref{cor.phibar.model.norms}. 
By testing $\displaystyle\Tilde{Z}_{\ep,i}:=\sum_{\lambda_j >\lambda} \lmb\int_M \beta Z_{\ep,i} u_j \dV \rmb u_j$ with \eqref{eq.defineZep.i}, there is by the Cauchy-Schwarz inequality and the Poincar\'{e} inequality on $(M,g)$, for all $\delta>0$
\be\label{eq.intermediate.estimate.Zep.i}
\int_M |\gd \Tilde{Z}_{\ep,i}|^2 - \lambda\beta|\Tilde{Z}_{\ep,i}|^2 \dV \le C \norm{\gd\Phi_\ep}_{L^2(M)} \norm{\Tilde{Z}_{\ep,i}}_{H^1(M)} \le C_\delta \norm{\gd\Phi_\ep}_{L^2(M)}^2 + \delta \int_M |\gd \Tilde{Z}_{\ep,i}|^2 \dV.
\ee
This shows that by choosing $\delta>0$ sufficiently small for some constant $C>0$
\[
\int_M |\Tilde{Z}_{\ep,i}|^2\dV \le C{\omega_d(\ep)},
\]
due to the fact that the lowest eigenvalue appearing in $\Tilde{Z}_{\ep,i}$ is strictly bounded away from below by $\lambda$. Combining \eqref{eq.boundforlowfrequency.Zep} we have
\be
\label{eq.L2boundforZep.i}
\norm{Z_{\ep,i}}_{L^2(M)} \le C \sqrt{\omega_d(\ep)}.
\ee
By applying $Z_{\ep,i}$ to \eqref{eq.defineZep.i} and a similar argument to \eqref{eq.intermediate.estimate.Zep.i} we obtain
\be
\label{eq.H1boundforZep.i}
\norm{\gd Z_{\ep,i}}_{L^2(M)} \le C \sqrt{\omega_d(\ep)}.
\ee
Note that the constant $C$ in this proof depends at most on $M,d,g,S$ and $\lambda$ whenever $\ep$ is sufficiently small.
\end{proof}

\begin{lemma}
    \label{l.boundEU.integral}
    Let $E_{\ep,i}$ and $U_{i}$ be defined as in \eqref{eq.expansionofEigenpairs}. Then there is a constant $C=C(M,d,g,S,\lambda)>0$ such that for all $d\ge 2$ and   $\ep<\ep_0(M,d,g,S,\lambda)$
    \be
\label{eq.boundEU.integral}
\lw\lambda\int_ME_{\ep,i} U_i \dd\mu_\ep\rw \le C \omega_d(\ep)^{3/2}.
    \ee
\end{lemma}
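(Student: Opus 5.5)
The plan is to convert the integral against $\mu_\ep$ into an energy pairing with the cell function, exactly as in \eqref{eq.boundJ1.convergerate}. Write
\[
\lambda\int_M E_{\ep,i}U_i\dd\mu_\ep = -\lambda\int_M \gd\Phi_\ep\cdot\gd(E_{\ep,i}U_i)\dV
\]
using \eqref{eq.cellfunction}, which is valid since $E_{\ep,i}U_i\in H^1(M)$. Here $E_{\ep,i}:=U_{\ep,i}-U_i-Z_{\ep,i}$ is harmonic in $M\setminus\Omega$, because $U_{\ep,i}$ is a harmonic extension, but $U_i$ and $Z_{\ep,i}$ are not.

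The basic size information comes from earlier results. By \eqref{eq.distance.eigenfunction.remarkversion} and Lemma \ref{l.H1.estimate.Zep.i}, $\norm{E_{\ep,i}}_{H^1(M)}\le C\sqrt{\omega_d(\ep)}$. By Theorem \ref{t.summaryofPhi}, $\norm{\gd\Phi_\ep}_{L^2}\le C\sqrt{\omega_d(\ep)}$. A crude Cauchy--Schwarz bound therefore only gives $O(\omega_d(\ep))$. To reach $\omega_d(\ep)^{3/2}$, one half-power must be gained from $\Phi_\ep$ via integration by parts, as in $J_{11}$--$J_{13}$.

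The integration by parts uses \eqref{eq.cellfunction.PDEform}. On $M\setminus\pt\Omega$ one has $\Delta\Phi_\ep=\beta$, together with the normal jump across $\pt\Omega$. Since $U_i$ is smooth, integrating by parts on $\Omega$ and on $M\setminus\Omega$ gives:
\begin{itemize}
\item interior terms $\int_M\Phi_\ep(\ldots)\dV$ involving $\gd E_{\ep,i}\cdot\gd U_i$, $E_{\ep,i}\Delta U_i$ and $\Delta E_{\ep,i}$;
\item boundary terms $\int_{\pt\Omega}\Phi_\ep\,\pt_\nu^\pm E_{\ep,i}\,U_i\dA$.
\end{itemize}

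The interior terms are estimated as follows. The term $\int\Phi_\ep(2\gd E\cdot\gd U_i-\lambda\beta EU_i)\dV$ is bounded by $\norm{\Phi_\ep}_{L^2}\norm{E}_{H^1}\le C\ep^{d/(d-1)}\sqrt{\omega_d(\ep)}\le C\omega_d(\ep)^{3/2}$. The terms with $\Delta E_{\ep,i}$ are handled through the equations: in $\Omega$, $\Delta E=-\Delta U_i-\Delta Z=\lambda\beta U_i-\Delta Z$, and $\Delta Z_{\ep,i}=-\lambda\beta Z+\lambda U_i\beta-\sum_j\delta^j_{\ep,i}\beta u_j$ away from $\pt\Omega$. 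The smooth parts then pair with $\Phi_\ep$ in $L^1$, giving $O(\ep^2)\cdot O(1)$. These smooth contributions cancel to leading order because $\Delta E$ receives $\lambda\beta U_i$ from $-\Delta U_i$ and $-\lambda\beta U_i$ from $-\Delta Z$. What remains is $\lambda\beta Z+\sum_j\delta^j\beta u_j$, whose pairing with $\Phi_\ep$ is $\le\norm{\Phi_\ep}_{L^2}\norm{Z}_{L^2}+C\ep^2\norm{\Phi_\ep}_{L^1}\le C\omega_d(\ep)^{3/2}$.

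The main obstacle is the singular part. The distributional Laplacians of $Z_{\ep,i}$ and of $U_{\ep,i}$ carry surface measures on $\pt\Omega$: the jump of $\pt_\nu U_{\ep,i}$ is $\sigma U_{\ep,i}+\Lambda_\ep U_{\ep,i}$, and $Z$ contributes $-\lambda U_i\dA_\ep$. The combination paired with $\Phi_\ep$ on $\pt\Omega$ is
\[
(\sigma_i^\ep U_{\ep,i}-\lambda U_i)+\Lambda_\ep(U_{\ep,i}).
\]
The first part is $O(\sqrt{\omega_d(\ep)})$ in $L^2(\pt\Omega)$, using Theorem \ref{t.quantitative.StekNeum} and $\sigma-\lambda=O(\omega_d)$. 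Pairing it with $\Phi_\ep$ gives at most $(\norm{\bar\Phi_\ep}_{L^\infty}+\norm{\Ra_\ep}_{L^2(\pt\Omega)})\sqrt{\omega_d}\le C\omega_d^{3/2}$.

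The Dirichlet-to-Neumann part needs more care. The plan is to treat it as $J_{12}$ was treated, using the decomposition $\Phi_\ep=\La_\ep+(\bar\Phi_\ep-\La_\ep)+\Ra_\ep$. The model part $\La_\ep$ is constant on each $\pt B_p$, so its contribution reduces to $\int_{B_p}\gd U_{\ep,i}\cdot\gd U_i$ times $\norm{\La_\ep}_\infty$. That is $O(\omega_d\cdot\ep^{d/(2(d-1))})$, which needs $\norm{\gd U_{\ep,i}}_{L^2(M\setminus\Omega)}=O(1)$ and $|M\setminus\Omega|^{1/2}$. The remaining parts are controlled by \eqref{eq.Dirichletdecay} and Lemma \ref{l.difference.modelandPhibar}.

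I expect the $d=2,3$ logarithmic and borderline exponents in these local hole estimates to be the delicate point. If the bound $\ep^{1+d/(2(d-1))}$ for the $\Ra_\ep$ piece falls short of $\omega_d^{3/2}$ when $d$ is large, the fallback is to use the $L^2(\pt\Omega)$ bound $\norm{\Ra_\ep}_{L^2(\pt\Omega)}\le C\ep^{d/(d-1)}$ against $\norm{\Lambda_\ep U_{\ep,i}-\Lambda_\ep\hat U_i}$. Here $\hat U_i$ denotes the harmonic extension of $\restr{U_i}{\Omega}$, with the correction controlled by \eqref{eq.outsideOmegaGradientdifference}.
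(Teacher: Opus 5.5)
Your reduction matches the paper's up to the Dirichlet-to-Neumann term. The following parts are all fine:
\begin{itemize}
\item splitting $\lambda\int_M\Phi_\ep\Delta(E_{\ep,i}U_i)$ into the $\gd E_{\ep,i}\cdot\gd U_i$, $E_{\ep,i}\Delta U_i$ and $\Delta E_{\ep,i}$ pieces;
\item cancelling the smooth bulk parts (you keep the $\lambda\beta Z_{\ep,i}$ contribution, which the paper's display for $L_3$ drops; it is harmless);
\item bounding $\int_{\pt\Omega}\Phi_\ep U_i(\sigma_i^\ep U_{\ep,i}-\lambda U_i)\dA$ through $\norm{\bar\Phi_\ep}_{L^\infty(M)}+\norm{\Ra_\ep}_{L^2(\pt\Omega)}$.
\end{itemize}

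The gap is the $\Ra_\ep$-part of $\int_{\pt\Omega}\Phi_\ep U_i\Lambda_\ep(U_{\ep,i})\dA$. Treating it as $J_{123}$ was treated gives only $C\ep^{1+\frac{d}{2(d-1)}}\norm{U_{\ep,i}}_{H^1(M)}\le C\ep^{1+\frac{d}{2(d-1)}}$. For $d\ge3$ this equals $\ep^{-\frac{1}{d-1}}\omega_d(\ep)^{3/2}$. For $d=2$ it is $\ep^2$ against $\omega_2(\ep)^{3/2}=\ep^3|\log\ep|^{3/2}$. So the estimate fails in \emph{every} dimension, and worst for $d=2$, not ``when $d$ is large''.

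Your fallback cannot repair this. The interior Dirichlet-to-Neumann map of $B_p$ acts on degree-$\ell$ spherical harmonics roughly as multiplication by $\ell/r_{\ep,p}$. Hence $\norm{\Lambda_\ep(U_{\ep,i}-U_i)}_{L^2(\pt\Omega)}$ is not controlled by the $H^1(M)$ and $L^2(\pt\Omega)$ closeness in \eqref{eq.distance.eigenfunction.remarkversion}. Also, \eqref{eq.outsideOmegaGradientdifference} is an $H^1$ estimate in the holes, not an $L^2(\pt\Omega)$ bound on normal derivatives. Even on dipole modes alone you would lose a factor $r_{\ep,p}^{-1}\sim\ep^{-\frac{d}{d-1}}$, which exactly cancels the gain from $\norm{\Ra_\ep}_{L^2(\pt\Omega)}\le C\ep^{\frac{d}{d-1}}$.

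The missing idea is to compare $U_{\ep,i}$ with the smooth $U_i$ inside the holes. The paper writes $-\Lambda_\ep(U_{\ep,i})=\pt_\nu^-(U_{\ep,i}-U_i)+\pt_\nu^-U_i$.
\begin{itemize}
\item In the first piece, every term of the $\La_\ep$, $\bar\Phi_\ep-\La_\ep$, $\Ra_\ep$ decomposition carries a factor $\norm{U_{\ep,i}-U_i}_{H^1(M)}\le C\sqrt{\omega_d(\ep)}$. So the crude bound $C(\ep^{1+\frac{d}{2(d-1)}}+\ep^2)\le C\omega_d(\ep)$ now suffices. The extra bulk term from $\Delta U_i\neq0$ in the holes is at most $C\norm{\Phi_\ep}_{L^\infty(M)}|M\setminus\Omega|\le C\ep^{1+\frac{d}{d-1}}\le C\omega_d(\ep)^{3/2}$.
\item In the second piece $U_i$ is smooth, so integrating by parts in each $B_p$ turns the $\Ra_\ep$-part into $\tfrac12\int_{M\setminus\Omega}\bigl(\Ra_\ep\Delta(U_i^2)+\gd\Ra_\ep\cdot\gd(U_i^2)\bigr)\dV$ up to sign. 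This is at most $C\ep|M\setminus\Omega|+C\ep^{1+\frac{d}{2(d-1)}}|M\setminus\Omega|^{1/2}\le C\omega_d(\ep)^{3/2}$.
\end{itemize}

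Equivalently, you may keep your direct route and only replace $\norm{\gd U_{\ep,i}}_{L^2(M\setminus\Omega)}=O(1)$ by
\[
\norm{\gd U_{\ep,i}}_{L^2(M\setminus\Omega)}\le\norm{\gd(U_{\ep,i}-U_i)}_{L^2(M)}+\norm{\gd U_i}_{L^\infty(M)}|M\setminus\Omega|^{1/2}\le C\sqrt{\omega_d(\ep)},
\]
using $|M\setminus\Omega|\le C\ep^{\frac{d}{d-1}}\le C\omega_d(\ep)$. The $J_{123}$-type estimate then becomes $C\omega_d(\ep)\sqrt{\omega_d(\ep)}$, which is what you need.
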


\begin{proof}
    We simply write $U_{\ep}=U_{\ep,i}$, $E_\ep = E_{\ep,i}$, $Z_{\ep,i}=Z_\ep$, $U = U_i$ and $\sigma_\ep = \sigma_i^\ep$. To bound the term $L:= \lambda\int_M E_\ep U \dd\mu_\ep$, we observe that 
\be\label{eq.boundL.expansion}
\begin{split}
  L & = \lambda\int_M \Phi_\ep  \Delta (E_\ep U) \dV\\
   &=\lambda\int_M \Phi_\ep  (\Delta E_\ep U + 2\gd E_\ep \cdot \gd U + E_\ep \Delta U) \dV\\
   &=\lambda\int_M \Phi_\ep  \Delta (E_\ep U) \dV\\
   &=\lambda\int_M \Phi_\ep(E_\ep \Delta U) \dV+ 2\lambda\int_M \Phi_\ep(\gd E_\ep \cdot \gd U)\dV +\lambda\int_M \Phi_\ep  (\Delta E_\ep U) \dV \\
   &=: L_1 + L_2 + L_3.
\end{split}
\ee
We first observe that by Theorem \ref{t.summaryofPhi}, Lemma \ref{l.H1error.eigenfunction} and Lemma \ref{l.H1.estimate.Zep.i}
\[
|L_1| \le C \norm{\Phi_\ep}_{L^2(M)}\norm{E_\ep}_{L^2(M)}\le C\ep^{\frac{d}{d-1}}\lmb \norm{U-U_\ep}_{L^2(M)}+\norm{Z_{\ep}}_{L^2(M)}\rmb\le C \ep^{\frac{d}{d-1}}\omega_d(\ep)^{1/2}.
\]
By a similar argument
\[
|L_2|\le C\ep^{\frac{d}{d-1}} \lmb \norm{U-U_\ep}_{H^1(M)}+\norm{Z_{\ep,i}}_{H^1(M)}\rmb\le  C \ep^{\frac{d}{d-1}}\omega_d(\ep)^{1/2}.
\]
To bound $L_3$ we note that
\be\label{eq.boundL3.notdone}
\begin{split}
    L_3&=\int_M  U\Phi_\ep \Delta(U_\ep-Z_{\ep}-U) \dV\\
    &=\int_M \Phi_\ep\lmb -U(\sigma_\ep U_\ep +\Lambda_\ep(U_\ep))\dA_\ep -\lambda U^2\dd\mu_\ep + \sum_{\lambda_j=\lambda}\delta_{\ep,i}^j \beta U u_j \dV +\lambda\beta U^2 \dV\rmb\\
    &=(\lambda-\sigma_\ep)\int_M \Phi_\ep U U_\ep \dA_\ep  + \lambda \int_M \Phi_\ep U(U-U_\ep) \dA_\ep\\
    &\qquad-\int_M \Phi_\ep U \Lambda_\ep(U_\ep) \dA_\ep +\sum_{\lambda_j=\lambda}\delta_{\ep,i}^j\int_M \Phi_\ep \beta U u_j \dV\\
    &=: L_{31}+L_{32} +L_{33}+L_{34}.
\end{split}
\ee
By Lemma \ref{l.Optestimate.stektoNeum} and a similar argument as in \eqref{eq.boundJ11.convergerate}
\[
|L_{31}|\le C\omega_d(\ep) \lb \norm{\bar{\Phi}_\ep}_{L^\infty(M)} + \norm{\Ra_\ep}_{L^2(\pt \Omega)} \rb \norm{U}_{L^\infty(M)} \norm{U_\ep}_{L^2(\pt\Omega)}\le C\omega_d(\ep)^2=o(\ep^2).
\]
We also have by Lemma \ref{l.H1error.eigenfunction} and a similar argument as above 
\[
|L_{32}| \le C\lmb\norm{\bar{\Phi}_\ep}_{L^\infty(M)}+  \norm{\Ra_\ep}_{L^2(\pt\Omega)}\rmb \norm{U-U_\ep}_{L^2(\pt\Omega)}\le {C\omega_d(\ep)^{3/2}}.
\]
To bound $L_{33}$, we first write
\[
L_{33}= -\int_M \Phi_\ep U \Lambda_\ep(U_\ep) \dA_\ep = \int_M \Phi_\ep U \pt_\nu^-(U_\ep-U) \dA_\ep +\int_M \Phi_\ep U \pt_\nu^-(U) \dA_\ep=: L_{331} + L_{332}.
\]
Then we decompose
\[
\Phi_\ep = \La_\ep + (\bar{\Phi}_\ep-\La_\ep) + \Ra_\ep
\]
as we did in \eqref{eq.boundJ12.convergerate}, and obtain by the discussions in \eqref{eq.boundJ12.convergerate.done} with $u^\ep, u$ being replaced by $U_\ep, U$ respectively. Specifically, we can write
\be
\label{eq.boundL331.convergerate}
\begin{split}
 L_{331} &= \int_M \Phi_\ep U \pt_\nu^-(U_\ep-U) \dA_\ep\\
 &=\lambda_k\int_{\pt\Omega}  \La_\ep  U \pt_\nu^-(U_\ep-U) \dA  + \lambda_k\int_{\pt\Omega}  (\bar{\Phi}_\ep - \La_\ep) U \pt_\nu^-(U_\ep-U) \dA  + \lambda_k \int_{\pt\Omega} \Ra_\ep  U \pt_\nu^-(U_\ep-U)\dA\\
  &=\lambda_k \sum_{p\in S_\ep}\lb\La_\ep(p) + \frac{\beta(p)}{2d}r_{\ep,p}^2\rb  \int_{\pt B_p}  U \pt_\nu^-(U_\ep-U) \dA - \lambda_k\int_{M\setminus\Omega} \gd \lmb (\bar{\Phi}_\ep - \La_\ep) U \rmb\cdot \gd (U_\ep-U) \dV \\
  &\qquad -\lambda_k \int_{M\setminus\Omega} \gd (\Ra_\ep U) \cdot \gd (U_\ep - U)\dV - \lambda_k^2 \int_{M\setminus\Omega} (\Ra_\ep + \bar{\Phi}_\ep - \La_\ep)U^2 \dV\\
  &=:L_{3311}+L_{3312}+L_{3313}+L_{3314}.
\end{split}
\ee
Note that by a similar argument between the displays \eqref{eq.boundJ12.convergerate} and \eqref{eq.boundJ12.convergerate.done}, we know that 
\[
|L_{3311}+L_{3312}+L_{3313}|\le C \ep^{1+\frac{d}{2(d-1)}}\norm{U_\ep - U}_{H^1(M)} \le C \omega_d(\ep)^{3/2}.
\]
We also have by Lemma \ref{l.difference.modelandPhibar} and Lemma \ref{l.sharperL2estimateforRaep}
\[
|L_{3314}|\le C \lmb\norm{\Ra_\ep}_{L^2(M)}|M\setminus\Omega|^{1/2} + \norm{\bar{\Phi}_\ep-\La_\ep}_{L^\infty(M)}|M\setminus
\Omega|\rmb=o(\ep^2).
\]
Combining these estimates we have
\[
|L_{331}| \le  C \omega_d(\ep)^{3/2}.
\]
On the other hand, observe that
\[
L_{332} = \int_M \La_\ep U \pt_\nu^-(U) \dA_\ep + \int_M (\bar{\Phi}_\ep-\La_\ep) U  \pt_\nu^-(U) \dA_\ep +\int_M \Ra_\ep U  \pt_\nu^-(U) \dA_\ep=: L_{3321}+L_{3322}+L_{3323}
\]
Note that $\La_\ep$ is constant on each $\pt B_p$ and hence
\[
|L_{3321}|=\frac{1}{2}\lw \sum_{p\in S_\ep} \lb\La_\ep(p) + \frac{\beta(p)}{2d}r_{\ep,p}^2\rb\int_{\pt B_p}  \pt_\nu^-(U^2) \dA\rw \le C\omega_d(\ep)|M\setminus\Omega|=o(\ep^2).
\]
By Lemma \ref{l.difference.modelandPhibar}, we have
\[
|L_{3322}|\le C\lw\int_{M\setminus\Omega} (\bar{\Phi}_\ep-\La_\ep) \Delta(U^2)\dV +\int_{M\setminus\Omega} \gd(\bar{\Phi}_\ep-\La_\ep) \cdot\gd(U^2)\dV \rw\le C \ep^2|M\setminus\Omega|=o(\ep^2).
\]
By applying \eqref{eq.Dirichletdecay} and the bound $\norm{\Ra_\ep}_{L^\infty(M)}\le C\ep$ in Theorem \ref{t.summaryofPhi}
\[
\begin{split}
    |L_{3323}|&\le C\lw \int_{M\setminus\Omega} \Ra_\ep \Delta(U^2)\dV +\int_{M\setminus\Omega} \gd\Ra_\ep \cdot\gd(U^2)\dV \rw\\[5pt]
    &\le C(\ep|M\setminus\Omega|+C\ep^{\frac{d}{2(d-1)}+1}|M\setminus\Omega|^{1/2}) \\[5pt]
    &\le C \omega_d(\ep)^{3/2}.
\end{split}
\]
Combining all the estimates above on $L_{331}$, $L_{332}$ and in particular $L_{332i}, \ i= 1,2,3$, we obtain 
\be
\label{eq.boundL33}
|L_{33}|\le C \omega_d(\ep)^{3/2}.
\ee

The term $L_{34}$ in \eqref{eq.boundL3.notdone} is bounded by
\[
|L_{34}|\le C\max_{\lambda_j =\lambda} \delta_{\ep,i}^j ~ \norm{\Phi_\ep}_{L^1(M)} \le C \ep^4,
\]
where we have used \eqref{eq.bound.delta.i.ep.j}.

Combining all the above estimates we obtain an estimate for $L$ as decomposed in \eqref{eq.boundL.expansion}
\be
\label{eq.estimateL}
|L|\le |L_1|+|L_2|+|L_3| \le C \omega_d(\ep)^{3/2}.
\ee
This finishes the proof.

\end{proof}

\begin{lemma}
    \label{l.DtN.fineestimate}
    Let $U_i$ and $U_{\ep,i}$ be defined as in \eqref{eq.expansionofEigenpairs}. Then there is a constant $C=C(M,d,g,S,\lambda)>0$ such that for all $d\ge 2$ and $\ep<\ep_0(M,d,g,S,\lambda)$
    \be
\label{eq.DtN.fineestimate}
\lw\int_{M\setminus \Omega} \gd U_i \cdot \gd U_{\ep,i} \dV - \frac{d-2}{d-1}\int_{M\setminus\Omega} |\gd U_i|^2 \dV \rw \le C \omega_d(\ep)^{3/2}.
    \ee
\end{lemma}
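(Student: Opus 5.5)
The plan is to redo the proof of Lemma \ref{l.Optestimate.stektoNeum.J2} with $u=U_i$, $u^\ep=U_{\ep,i}$, this time keeping the leading term instead of only bounding it. As in \eqref{eq.decomposeJ2}--\eqref{eq.decompose.J2p2}, we pass to geodesic coordinates around each $p\in S_\ep$ and set $\xi_p=\gd U_i(p)$. Up to errors summing to $O(\omega_d(\ep)^{3/2})$, this reduces the left-hand integral to $\sum_p \int_{\pt B_p} U_{\ep,i}\,\Tilde w_{\xi_p}\dA$ (see \eqref{eq.boundJ2.p.1.sumoverp}). Likewise $\int_{M\setminus\Omega}|\gd U_i|^2\dV=\sum_p |\xi_p|^2|B_p|+O(\omega_d(\ep)^{3/2})$. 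So it suffices to show
\[
T_0(p)=\int_{\pt B_p}U_{\ep,i}\,\Tilde w_{\xi_p}\dA=\tfrac{d-2}{d-1}|\xi_p|^2|B_p|+e_p,\qquad \textstyle\sum_p|e_p|\le C\omega_d(\ep)^{3/2}.
\]

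For this we use the identity \eqref{eq.decompose.DtN.2}, $T_0=(A_\ep/r_{\ep,p}-\sigma_i^\ep)^{-1}(T_1+T_2)$. Since $A_\ep=d-1+O(\ep^{d/(d-1)})$ and $\sigma_i^\ep$ is bounded, the prefactor equals $\frac{r_{\ep,p}}{d-1}(1+O(r_{\ep,p}))$. The estimate now needs the exact leading part of $T_1+T_2$, not merely a bound.

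\textbf{Main term.} In $T_1=\int_{\pt Q_p}\pt_\nu^-\Tilde w_\xi\,U_{\ep,i}\dA$ we replace $U_{\ep,i}$ by $U_i$. On $\pt Q_p$, $\pt_\nu^-\Tilde w_\xi=-B_\ep\,\xi\cdot z/|z|+O(\ep)$, and a first-order Taylor expansion gives $U_i=U_i(p)+\xi\cdot z+O(\ep^2)$. The constant drops out by symmetry, leaving $-B_\ep|\xi|^2 R_\ep|\pt B_{R_\ep}|/d$. This is of order $r_{\ep,p}^{d-1}|\xi|^2$, so multiplying by $r_{\ep,p}/(d-1)$ gives a multiple of $|\xi|^2|B_p|$. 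The term $T_2$ contributes in the same way: $\Delta_g\Tilde w_\xi$ is $O(1)$ but nonradial at first order in the metric expansion \eqref{eq.expansionofmetric}, so after pairing with $U_i$ it gives only $O(\ep)$ corrections, which are negligible. Collecting the exact constants, including the contribution of $\Tilde J_{2,p,1}$, which must now be computed with $U_{\ep,i}$ inside $B_p$ rather than merely bounded, should produce the factor $\frac{d-2}{d-1}$. I would double-check this constant with the explicit dipole model: the harmonic function $c+\xi\cdot z(1+a|z|^{-d})$ in the annulus with vanishing normal dipole flux on $\pt B_p$.

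\textbf{Errors and main obstacle.} The hard step is bounding the replacement error $\int_{\pt Q_p}\pt_\nu^-\Tilde w_\xi\,(U_{\ep,i}-U_i)\dA$ (and its analogue in $T_2$) by something that sums to $O(\omega_d(\ep)^{3/2})$. The crude trace and Poincar\'e bound \eqref{eq.T11} gives $|Q_p|^{1/2}\norm{U_{\ep,i}-U_i}_{H^1(Q_p)}$. After multiplying by $r_{\ep,p}$ and summing with Cauchy--Schwarz, \eqref{eq.distance.eigenfunction.remarkversion} then gives $C\ep^{d/(d-1)}\sqrt{\omega_d(\ep)}\le C\omega_d(\ep)^{3/2}$, which is enough. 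The same bound handles the mean-value part, using $|\int_{\pt Q_p}\pt_\nu^-\Tilde w_\xi|=O(\ep|Q_p|)$ as in the proof of Lemma \ref{l.Optestimate.stektoNeum.J2}. Finally, the Taylor remainders of $U_i$ are $O(\ep^2)$ on $Q_p$, so after the factor $r_{\ep,p}$ they are $o(\omega_d(\ep)^{3/2})$ per unit volume, completing the estimate.
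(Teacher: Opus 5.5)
Your reduction to $\sum_p T_0(p)$, your use of \eqref{eq.decompose.DtN.2}, and your bound on the error from replacing $U_{\ep,i}$ by $U_i$ are all fine; the last matches the paper's $T_{01}$ estimate. The gap is the final step, where you assert that collecting the exact constants should give $\frac{d-2}{d-1}$. The computation you set up gives a different constant, and nothing in your argument can change it.

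\textbf{What your computation actually gives.} On $\pt Q_p$ one has $\pt_\nu^-\Tilde{w}_\xi=B_\ep\,\xi\cdot z/|z|$ up to $O(\ep^2)$. Pairing this with $U_i(p)+\xi\cdot z$ gives $T_1\approx B_\ep|\xi|^2|B_{R_\ep}|\approx d\,|\xi|^2|B_p|/r_{\ep,p}$. Multiplying by $(A_\ep/r_{\ep,p}-\sigma_i^\ep)^{-1}\approx r_{\ep,p}/(d-1)$ yields $T_0(p)=\frac{d}{d-1}|\xi|^2|B_p|+e_p$.

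\textbf{Why no other term can shift it.} The term $T_2$ is negligible, as you say. The term $\Tilde{J}_{2,p,1}$ cannot rescue the constant either: it is bounded by $C r_{\ep,p}|B_p|^{1/2}\norm{\gd U_{\ep,i}}_{L^2(B_p)}$ whichever function sits in the second slot. It is therefore $O(\omega_d(\ep)^{3/2})$ after summation, exactly as you used in your first paragraph.

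\textbf{The dipole check confirms $\frac{d}{d-1}$.} The function $\xi\cdot z\,(1+\frac{r_{\ep,p}^d}{d-1}|z|^{-d})$ has zero normal derivative on $\pt B_p$ and trace $\frac{d}{d-1}\xi\cdot z$ there. Its harmonic extension therefore has gradient $\frac{d}{d-1}\xi$ in $B_p$. For $d=2$ this is the classical factor $2$ for potential flow past a disc.

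\textbf{Why a coefficient below $1$ is impossible.} In the local model, $\int_{B_p}\xi\cdot\gd U_{\ep,i}\dV$ equals the energy drop $\int_{Q_p}|\xi|^2\dV-\int_{Q_p\setminus B_p}|\gd U_{\ep,i}|^2\dV$. This drop is at least $|\xi|^2|B_p|$, because the Neumann solution minimizes Dirichlet energy on the annulus among functions with the given trace on $\pt Q_p$, and $U_i$ is a competitor. So the coefficient is at least $1$, which rules out $\frac{d-2}{d-1}$.

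\textbf{Where the paper's constant comes from.} The paper obtains $\frac{d-2}{d-1}$ by integrating $T_{02}$ by parts back onto $\pt B_p$ in \eqref{eq.T02.U}. However, Green's second identity on $Q_p\setminus B_p$ produces the antisymmetric combination $\int_{\pt B_p}(U\,\pt_\nu\Tilde{w}_\xi-\Tilde{w}_\xi\,\pt_\nu U)\dA$, with $\nu$ pointing into $B_p$ (the convention used in \eqref{eq.boundT0212.U}). So the term $\int_{\pt B_p}\Tilde{w}_\xi\,\pt_\nu^+U\dA$ should carry a minus sign. With that sign, $T_{0211}$ and $T_{0212}$ combine to $1+\frac{1}{d-1}=\frac{d}{d-1}$, in agreement with your outer-sphere evaluation.

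A flat check with $d=3$, $U=z_3$ and $|\xi|=1$:
\begin{itemize}
\item the bracket defining $T_{02}$ in \eqref{eq.decompose.DtN.U} is $\approx 4\pi r_{\ep,p}^2$;
\item the two boundary terms in \eqref{eq.T02.U} are $\approx\frac{8\pi}{3}r_{\ep,p}^2$ and $-\frac{4\pi}{3}r_{\ep,p}^2$;
\item only their difference matches the bracket.
\end{itemize}

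\textbf{Consequence.} Given \eqref{eq.distance.eigenfunction.remarkversion}, your route proves the estimate with $\frac{d}{d-1}$ in place of $\frac{d-2}{d-1}$. Since $\int_{M\setminus\Omega}|\gd U_i|^2\dV$ is generically of order $\ep^{d/(d-1)}$, which is much larger than $\omega_d(\ep)^{3/2}$, the inequality cannot hold as stated. The failing step in your proposal is therefore not a bookkeeping issue that more care could fix.
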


\begin{proof}

Similar to the proof in Lemma \ref{l.boundEU.integral}, we simply write $U_\ep = U_{\ep,i}$ and $U=U_i$. 

By the same arguments between the displays \eqref{eq.flatexpansion.J2p} and \eqref{eq.decompose.J2p2} with $u, u^\ep$ being replaced by $U,U_\ep$, we have
\be
\label{eq.flatwrite.UiUepi}
\lw\int_{M\setminus \Omega} \gd U\cdot \gd U_{\ep} \dV -\sum_{p\in S_\ep} T_0(p) \rw \le C \omega_d(\ep)^{3/2},
\ee
where for each fixed $p\in S_\ep$
\be
\label{eq.defineT0.secondary}
T_0(p) = \int_{\pt B_p} U_\ep \Tilde{w}_\xi \dA,
\ee
where $\Tilde{w}_\xi$ is defined in \eqref{eq.define.w.xi.tilde} with $\xi = \gd U (p)$.

By \eqref{eq.decompose.DtN.2}, we have
\be
\label{eq.decompose.DtN.U}
\begin{split}
    T_0 &= \lb \frac{A_\ep}{r_{\ep,p}} -\sigma_i^\ep \rb^{-1} \lmb \int_{\pt Q_p} \pt_\nu^{-} \Tilde{w}_\xi U_\ep \dA + \int_{Q_p\setminus B_p} \Delta \Tilde{w}_\xi U_\ep \dV \rmb\\[5pt]
&= \lb \frac{A_\ep}{r_{\ep,p}} -\sigma_i^\ep \rb^{-1} \lmb \int_{\pt Q_p} \pt_\nu^{-} \Tilde{w}_\xi( U_\ep-U) \dA + \int_{Q_p\setminus B_p} \Delta \Tilde{w}_\xi (U_\ep -U)\dV \rmb\\
&\qquad +  \lb \frac{A_\ep}{r_{\ep,p}} -\sigma_i^\ep \rb^{-1} \lmb \int_{\pt Q_p} \pt_\nu^{-} \Tilde{w}_\xi U \dA + \int_{Q_p\setminus B_p} \Delta \Tilde{w}_\xi U \dV \rmb\\[5pt]
&=: T_{01} + T_{02}.
\end{split}
\ee

By the same arguments between \eqref{eq.decompose.DtN.2} and \eqref{eq.boundT0}, we have by also applying Lemma \ref{l.H1error.eigenfunction}
\be
\label{eq.boundT01.U}
\sum_{p\in S_\ep}|T_{01}(p)| \le C \sum_{p\in S_\ep} \ep^{\frac{d}{d-1}} |Q_p|^{1/2} \norm{U_\ep -U}_{H^1(Q_p)} \le C \ep^{\frac{d}{d-1}} \norm{U_\ep-U}_{H^1(M)} \le C \omega_d(\ep)^{3/2}.
\ee

It suffices to compute $T_{02}$. By integration by parts, we have 
\be
\label{eq.T02.U}
\begin{split}
    T_{02} & = \lb \frac{A_\ep}{r_{\ep,p}} -\sigma_i^\ep \rb^{-1} \lmb \int_{\pt B_p} \pt_\nu^+ \Tilde{w}_\xi U \dA + \int_{\pt B_p} \Tilde{w}_\xi \pt_\nu^+ U \dA + \int_{Q_p \setminus B_p} \Tilde{w}_\xi \Delta U \dV\rmb\\
    &=\lb \frac{A_\ep}{r_{\ep,p}} -\sigma_i^\ep \rb^{-1} \lmb \int_{\pt B_p} \pt_\nu^+ \Tilde{w}_\xi U \dA + \int_{\pt B_p} \Tilde{w}_\xi \pt_\nu^+ U \dA\rmb + \lambda \lb \frac{A_\ep}{r_{\ep,p}} -\sigma_i^\ep \rb^{-1}\int_{Q_p \setminus B_p} \Tilde{w}_\xi \beta U \dV\\[5pt]
    &=: T_{021}+T_{022}.
\end{split}
\ee
By Lemma \ref{l.transferingbetween.geodesicchart}, if we use a geodesic coordinate $z\in \R^d \cong T_pM$ (and abuse the notation $f(z)=f(\exp_p(z))$ for any function $f$ in a neighborhood of $p\in M$), then 
\[
 \lw\int_{Q_p \setminus B_p} \Tilde{w}_\xi \beta U \dV - \int_{B_{R_\ep}(0)\setminus B_{r_{\ep,p}}(0)}  {w}_\xi(z) \beta(z) U(z) \dV \rw \le C R_\ep^2 |Q_p|,
\]
which sums over $p\in S_\ep$ to a term of order $O(\ep^2)$. On the other hand, by the symmetry as shown in the definition of $w_\xi$ in \eqref{eq.define.w.xi.tilde}
\[
\begin{split}
 \lw \int_{B_{R_\ep}(0)\setminus B_{r_{\ep,p}}(0)}  {w}_\xi(z) \beta(z) U(z) \dV \rw&=\lw\int_{B_{R_\ep}(0)\setminus B_{r_{\ep,p}}(0)}  {w}_\xi(z) (\beta(z) U(z)-\beta(0)U(0)) \dV \rw\\
  &\le C r_{\ep,p}^{d+1},
\end{split}
\]
which sums up over $p\in S_\ep$ to a term of order $O(\ep^{2d/(d-1)})=o(\ep^2)$. Combining the above two displays we obtain
\be
\label{eq.boundT022.U}
\sum_{p\in S_\ep}|T_{022}(p)| \le C \ep^{2+d/(d-1)} = o(\ep^2).
\ee

On the other hand, by \eqref{eq.boundary.Neumann.Diri.response.w.xi} we have 
\be \label{eq.boundT021.U}
\begin{split}
    T_{021} & = \lb \frac{A_\ep}{r_{\ep,p}} -\sigma_i^\ep \rb^{-1} \lmb \frac{A_\ep}{r_{\ep,p}} \int_{\pt B_p}  \Tilde{w}_\xi U \dA + \int_{\pt B_p} \Tilde{w}_\xi \pt_\nu^+ U \dA\rmb \\[5pt]
    &= \lb \frac{A_\ep}{r_{\ep,p}} -\sigma_i^\ep \rb^{-1} \frac{A_\ep}{r_{\ep,p}}\int_{\pt B_p}  \Tilde{w}_\xi U \dA + \lb \frac{A_\ep}{r_{\ep,p}} -\sigma_i^\ep \rb^{-1} \int_{\pt B_p} \Tilde{w}_\xi \pt_\nu^+ U \dA \\
    &= \int_{\pt B_p}  \Tilde{w}_\xi U \dA + \frac{r_{\ep,p}}{d-1} \int_{\pt B_p} \Tilde{w}_\xi \pt_\nu^+ U \dA + O\lb\ep^{d/(d-1)} \lb \lw\int_{\pt B_p}  \Tilde{w}_\xi U \dA\rw + \frac{r_{\ep,p}}{d-1}\lw \int_{\pt B_p} \Tilde{w}_\xi \pt_\nu^+ U \dA\rw \rb\rb\\
    &=: T_{0211} + T_{0212} + O\lb\ep^{d/(d-1)}\lb |T_{0211}| +|T_{0212}| \rb\rb.
\end{split}
\ee
We recall Lemma \ref{l.transferingbetween.geodesicchart} with a local geodesic coordinate $z\in \R^d \cong T_pM$ (abusing the notation $f(z)=f(\exp_p(z))$) and that $\xi = \gd_z U(0) (=\gd U(p))$
\be\label{eq.boundT0211.U}
\begin{split}
    T_{0211} &= \int_{\pt B_p}  \Tilde{w}_\xi U \dA \\
    &= \int_{\pt B_{r_{\ep,p}}(0)} \xi \cdot \frac{z}{|z|} ~ U(z) \dd \theta(z) + O(\ep^{2d/(d-1)}A(\pt B_p))\\
    &=\int_{B_{r_{\ep,p}}(0)} \xi \cdot \gd_z U \dd z  + O(\ep^{2d/(d-1)}A(\pt B_p))\\
    &=\int_{B_{r_{\ep,p}}(0)} |\gd_z U|^2 \dd z  + O(\ep^{d/(d-1)}|B_p|)\\
    &=\int_{B_p} |\gd U|^2 \dV + O(\ep^{d/(d-1)}|B_p|).
\end{split}
\ee
Similarly, we have
\be\label{eq.boundT0212.U}
\begin{split}
    T_{0212} & = \frac{r_{\ep,p}}{d-1} \int_{\pt B_p} \Tilde{w}_\xi \pt_\nu^+ U \dA\\
    &= \frac{1}{d-1} \int_{\pt B_{r_{\ep,p}}(0)} (\xi\cdot z)~ \pt_\nu^+ U \dd \theta(z) + O(\ep^{3d/(d-1)}A(\pt B_p))\\
    &=-\frac{1}{d-1} \int_{B_{r_{\ep,p}}(0)} \xi \cdot \gd_z U(z) \dd z - \frac{1}{d-1}\int_{B_{r_{\ep,p}}(0)} (\xi \cdot z) \Delta_z U(z)\dd z + O(\ep^{3d/(d-1)}A(\pt B_p))\\
    &=-\frac{1}{d-1} \int_{B_{r_{\ep,p}}(0)} \xi \cdot \gd_z U(z) \dd z + O(\ep^{d/(d-1)}|B_p|)\\
    &=-\frac{1}{d-1} \int_{B_p} |\gd U|^2 \dV + O(\ep^{d/(d-1)}|B_p|).
\end{split}
\ee
The proof of this lemma is now finished by combining the displays between \eqref{eq.flatwrite.UiUepi} and \eqref{eq.boundT0212.U}.

\end{proof}

\begin{proof}[Proof of Lemma \ref{l.expansion}]
We denote 
\[
W_{\ep,i} : = U_i + Z_{\ep,i},
\]
and then by \eqref{eq.defineZep.i} we have
\be
\label{eq.equation.Wep}
(\Delta + \lambda\beta)W_{\ep,i} = \lambda U_i \dd\mu_\ep -\beta \sum_{j=k}^{k+m-1} \delta_{\ep,i}^j u_j. 
\ee
On the other hand the Steklov eigenfunctions $U_{\ep,i}$ satisfy in the distributional sense
\be
\label{eq.equation.Uep}
(\Delta + \lambda\beta)U_{\ep,i} = \lambda \beta U_{\ep,i} -\lb \sigma_i^\ep U_{\ep,i} + \Lambda_\ep(U_{\ep,i})\rb\dA_\ep,
\ee
where $\Lambda_\ep$ is the Dirichlet-to-Neumann map on each $\pt B_p$ and in particular $\Lambda_\ep(U_{\ep,i})=-\pt_\nu^- U_{\ep,i}$. We can also write
\be
\label{eq.equation.Uep.otherform}
(\Delta + \lambda\beta) U_{\ep,i} = (\lambda-\sigma_i^\ep) \beta U_{\ep,i} + \sigma_i^\ep U_{\ep,i} \dd\mu_\ep - \Lambda_\ep(U_{\ep,i})\dA_\ep.
\ee

By subtracting \eqref{eq.equation.Wep} in \eqref{eq.equation.Uep.otherform}, we obtain
\be
\label{eq.equation.Eep}
\begin{split}
  (\Delta + \lambda\beta) E_{\ep,i} &= (\lambda-\sigma_i^\ep) \beta U_{\ep,i} + \sigma_i^\ep U_{\ep,i} \dd\mu_\ep - \Lambda_\ep(U_{\ep,i})\dA_\ep \\
  &\qquad -\lambda U_i \dd\mu_\ep +\beta \sum_{j=k}^{k+m-1} \delta_{\ep,i}^j u_j\\
  &=-\rho_{\ep,i} \beta U_{\ep,i} + \lambda (U_{\ep,i} -U_i) \dd\mu_\ep- \Lambda_\ep(U_{\ep,i})\dA_\ep \\
  &\qquad+(\sigma_i^\ep-\lambda) U_{\ep,i} \dd\mu_\ep - \delta_{\ep,i} \beta U_{\ep,i}+\beta \sum_{j=k}^{k+m-1} \delta_{\ep,i}^j u_j\\
 &= -\rho_{\ep,i} \beta U_{\ep,i} + \lambda Z_{\ep,i} \dd\mu_\ep + \lambda E_{\ep,i} \dd\mu_\ep- \Lambda_\ep(U_{\ep,i})\dA_\ep \\
 &\qquad+(\sigma_i^\ep-\lambda) U_{\ep,i} \dd\mu_\ep- \delta_{\ep,i} \beta U_{\ep,i}+\beta \sum_{j=k}^{k+m-1} \delta_{\ep,i}^j u_j.
\end{split}
\ee

By testing the above equation with the Laplace--Beltrami eigenfunction $U_{i}$, we obtain by \eqref{eq.eigenfunction.Unormalize}
\be
\label{eq.formula.rhoep}
\begin{split}
    \rho_{\ep,i} &= \lambda\int_M Z_{\ep,i} U_i \dd\mu_\ep + \lambda\int_M E_{\ep,i} U _ i\dd\mu_\ep - \int_M U_i \Lambda_\ep(U_{\ep,i}) \dA_\ep \\
    &\qquad+ \underbrace{(\sigma_i^\ep-\lambda)\int_M U_{\ep,i} U_i \dd\mu_\ep}_{J_a} \ +  \underbrace{ \sum_{j=k}^{k+m-1} \delta_{\ep,i}^j \int_M \beta U_i u_j \dV- \delta_{\ep,i} }_{J_b}.
\end{split}
\ee
Note that by \eqref{eq.define.delta.i.ep.j}, $J_b = 0$. By a similar argument to \eqref{eq.boundJ1.convergerate}, we have
\[
\lw \int_M U_{\ep,i} U_i \dd\mu_\ep \rw \le C \omega_d(\ep).
\]
This shows that by applying Lemma \ref{l.Optestimate.stektoNeum}, $|J_a| \le C \omega_d(\ep)^2 = o(\ep^2).$

The proof is then complete by combining \eqref{eq.formula.rhoep} with Lemma \ref{l.boundEU.integral} and Lemma \ref{l.DtN.fineestimate}. Note that the term 
\[
|\delta_{\ep,i}| =\lambda\lw \int_M \Phi_\ep \Delta(U_i^2) \dV\rw \le C \norm{\Phi_\ep}_{L^1(M)} \le C \ep^2.
\]

\end{proof}

In order to complete the proof of Theorem~\ref{t.interaction energy} it remains to simplify the next-order error terms. For simplicity, we will do this in the case that~$\lambda$ is a simple eigenvalue; the generalization to a multiple eigenvalue is straightforward. 
We carry out these simplifications in a sequence of lemmas. Fix a simple eigenpair~$(\lambda,U)$ satisfying~$(\Delta +\lambda \beta)U = 0$ on~$M,$ and suppose that~$\lambda=\lambda_k.$ We first introduce a Green function~$G_\lambda$ corresponding to the operator~$(\Delta + \lambda \beta),$ whose uniqueness follows from requiring orthogonality to~$U$ with respect to the~$(\cdot,\cdot)_\beta$ inner-product.  
\smallskip\noindent 
Towards defining this Green function, for some large~$N$, set 
\begin{equation*}
G_{\lambda,N}(x,y)
:=
\sum_{\substack{0\leq j\leq N\\j\neq k}}
\frac{u_j(x)u_j(y)}{\lambda-\lambda_j}\,.
\end{equation*}
\begin{lemma}
\label{l.spectral.construction.reduced.Green}
The sequence $\{G_{\lambda,N}\}_{N\in \N}$ converges in $\mathcal D'(M\times M)$ to a
symmetric distribution $G_\lambda$ denoted by 
\begin{equation}
\label{e.Green-1}
G_\lambda(x,y)
:=
\sum_{j\neq k}
\frac{u_j(x)u_j(y)}{\lambda-\lambda_j}\,,
\end{equation}
which is defined as: for every $\varphi,\psi\in C^\infty(M)$,
\begin{equation}
\label{e.Green.kernel}
\iint_{M\times M}
G_\lambda(x,y)\varphi(x)\psi(y)
\beta(x)\beta(y)\dV_x\dV_y
=
\sum_{j\neq k}
\frac{
\langle\varphi,u_j\rangle_\beta
\langle\psi,u_j\rangle_\beta
}{
\lambda-\lambda_j
},
\end{equation}
where
\[
\langle f,h\rangle_\beta
:=
\int_M fh\,\beta\dV.
\]
The series on the right-hand side of~\eqref{e.Green.kernel} converges absolutely. Furthermore, for every fixed $y\in M$,
\begin{equation}
    \label{e.GlambdaHs}
    G_\lambda(\,\cdot\,,y)\in H^s(M)
\qquad
\text{for every }s<2-\frac d2,
\end{equation}
and
\begin{equation}
\label{e.Green.kernel.orthogonality}
\int_M G_\lambda(x,y)U(x)\beta(x)\dV_x=0\,.
\end{equation}
\end{lemma}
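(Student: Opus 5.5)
The plan is to treat $G_{\lambda,N}$ as the kernel of the finite-rank operator
\[
T_N f := \sum_{0\le j\le N,\,j\neq k}\frac{\langle f,u_j\rangle_\beta}{\lambda-\lambda_j}\,u_j
\]
on $L^2_\beta(M)$, and to control it through Weyl's law $\lambda_j\simeq j^{2/d}$ for the weighted problem \eqref{eq.Neumann}. This is available because $\beta$ is smooth and bounded above and below by \eqref{eq.uniformbound.weights}. Since $\lambda=\lambda_k$ is simple, $\inf_{j\neq k}|\lambda-\lambda_j|=:\gamma>0$, and $|\lambda-\lambda_j|\ge c\,(1+\lambda_j)$ for all $j\neq k$, with $c$ depending on $\lambda,\gamma$.

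First, I will prove absolute convergence in \eqref{e.Green.kernel}. For $\varphi\in C^\infty(M)$, integrating by parts $m$ times in \eqref{eq.Neumann} gives
\[
\langle\varphi,u_j\rangle_\beta=\lambda_j^{-m}\int_M u_j\,(-\Delta)\bigl(\beta^{-1}(-\Delta)\cdots\bigr)\varphi\,\dV .
\]
Hence $|\langle\varphi,u_j\rangle_\beta|\le C_m\lambda_j^{-m}$ for $j\ge1$, since $\|u_j\|_{L^2}\le C$ under the normalization. By Cauchy--Schwarz the terms of \eqref{e.Green.kernel} are bounded by $C\lambda_j^{-2m-1}$, which is summable by Weyl. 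The same estimate gives convergence of $\iint G_{\lambda,N}\,\Theta$ for $\Theta\in C^\infty(M\times M)$. To see this, expand $\Theta$ in the tensor basis $u_i\otimes u_j$: the coefficients decay faster than any polynomial in $(1+\lambda_i)(1+\lambda_j)$, and only diagonal terms survive. This yields convergence in $\mathcal D'(M\times M)$, and the limit is determined by \eqref{e.Green.kernel} (tested against $\varphi\beta\otimes\psi\beta$, which suffices because $\beta>0$ and tensor products are dense). Symmetry holds because each $G_{\lambda,N}$ is symmetric.

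Second, for the regularity \eqref{e.GlambdaHs}, I fix $y$ and write $G_\lambda(\cdot,y)=\sum_{j\neq k}(\lambda-\lambda_j)^{-1}u_j(y)\,u_j$. The $H^s$ norm, defined via the weighted spectral calculus (equivalent to the usual one on a closed manifold), is
\[
\sum_{j\neq k}(1+\lambda_j)^{s}\,\frac{u_j(y)^2}{(\lambda-\lambda_j)^2}\le C\sum_j (1+\lambda_j)^{s-2}u_j(y)^2 .
\]
This is where I expect the main difficulty, since the pointwise values $u_j(y)$ are not uniformly bounded. I will avoid pointwise bounds and use the local Weyl law (the spectral function asymptotics of Hörmander), $\sum_{\lambda_j\le\Lambda}u_j(y)^2\le C\Lambda^{d/2}$ uniformly in $y$. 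A dyadic summation in $\Lambda$ then bounds the sum by $\sum_\ell 2^{\ell(s-2+d/2)}$, which is finite exactly when $s<2-\frac d2$. If I want to avoid the local Weyl law, an alternative is the identity $\sum_j(1+\lambda_j)^{-t}u_j(y)^2=\mathcal K_t(y,y)$, where $\mathcal K_t$ is the kernel of $(1+L)^{-t}$ with $L=-\beta^{-1}\Delta$. For $t>d/2$ this kernel is continuous, which again gives the threshold $s<2-d/2$.

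Finally, \eqref{e.Green.kernel.orthogonality} is immediate: $\langle u_j,U\rangle_\beta=0$ for all $j\neq k$ by orthonormality, and the $H^s$ convergence of the series lets me integrate term by term against the smooth function $U\beta$, giving pairing with $H^{-s}$.
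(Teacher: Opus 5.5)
Your proposal is correct. Its skeleton matches the paper's: partial sums as kernels of $T_N$, a spectral $H^s$ norm, reduction of \eqref{e.GlambdaHs} to $\sum_j(1+\lambda_j)^{s-2}u_j(y)^2<\infty$, and orthogonality by term-by-term pairing. Where you differ is in using heavier tools at the two quantitative steps.

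For absolute convergence, the paper needs neither smoothness of $\varphi,\psi$ nor Weyl's law. The gap $\gamma_\lambda=\min_{j\neq k}|\lambda-\lambda_j|>0$ together with Cauchy--Schwarz and Bessel gives the bound $\gamma_\lambda^{-1}\|\varphi\|_{L^2_\beta}\|\psi\|_{L^2_\beta}$ for all $L^2_\beta$ data. The paper then reads off $\mathcal D'(M\times M)$ convergence from $\|T-T_N\|_{L^2_\beta\to L^2_\beta}\to0$ via the Schwartz kernel theorem. Your expansion of a general $\Theta\in C^\infty(M\times M)$ in the basis $u_i\otimes u_j$ costs more, but it makes that last step explicit instead of cited.

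For \eqref{e.GlambdaHs}, the paper uses only Sobolev embedding. Point evaluation is bounded on $H^{d/2+\eta}(M)$, so $\beta^{-1}\delta_y\in H^{-d/2-\eta}(M)$, which is exactly $\sum_j(1+\lambda_j)^{-d/2-\eta}u_j(y)^2<\infty$. Your fallback through the diagonal of the kernel of $(1+L)^{-t}$ is the same fact, since that diagonal value equals $\|\beta^{-1}\delta_y\|_{H^{-t}}^2$.

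Your primary route through H\"ormander's local Weyl law is a far deeper theorem than the lemma requires. What it buys is sharpness of the threshold $s<2-\frac d2$, provided you use its two-sided form $\sum_{\lambda_j\le\Lambda}u_j(y)^2\sim c(y)\Lambda^{d/2}$ with $c(y)>0$, not only the upper bound. The lemma does not assert this sharpness, but your word ``exactly'' tacitly needs it.
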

\begin{proof}
    See Appendix~\ref{s.app}
\end{proof}

\begin{lemma}
\label{l.PDE.reduced.Green}
The Green function~$G_\lambda$ constructed in
Lemma~\ref{l.spectral.construction.reduced.Green} is the unique
distributional solution to satisfying: for every $\varphi\in C^\infty(M)$ and every
$y\in M$,
\begin{equation}
\label{e.Glambda_PDE_wkform}
\int_M
G_\lambda(x,y)
\bigl(\Delta\varphi(x)+\lambda\beta(x)\varphi(x)\bigr)\dV_x
=
\varphi(y)
-
U(y)\int_M\varphi U\beta\dV\,,
\end{equation}
that is orthogonal to~$U$ in the~$\beta-$weighted inner product: 
\[
\int_MG_\lambda(x,y)U(x)\beta(x)\dV_x=0.
\] 
Moreover,
\[
G_\lambda\in C^\infty\bigl((M\times M)\setminus\Delta_M\bigr),
\qquad
\Delta_M:=\{(x,x):x\in M\},
\]
and, near the diagonal,
\[
|G_\lambda(x,y)|
\leq
\begin{cases}
C\bigl(1+|\log d_g(x,y)|\bigr),&d=2,\\[2mm]
C\bigl(1+d_g(x,y)^{2-d}\bigr),&d\geq3.
\end{cases}
\]
In particular,
\[
G_\lambda\in L^1(M\times M).
\]
\end{lemma}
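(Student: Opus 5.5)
We need to prove Lemma \ref{l.PDE.reduced.Green}: $G_\lambda$ is the unique distributional solution of \eqref{e.Glambda_PDE_wkform} orthogonal to $U$, it is smooth off the diagonal, and it obeys the stated singularity bounds. The plan is to compare $G_\lambda$ with a standard Green function whose behaviour is already known, and to put all the regularity work into the difference.

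\emph{Step 1: the weak equation.} Fix $\varphi\in C^\infty(M)$ and expand in the $L^2_\beta$-orthonormal eigenbasis $\{u_j\}$. Since $\Delta u_j=-\lambda_j\beta u_j$, the coefficients satisfy
\[
\langle \beta^{-1}(\Delta\varphi+\lambda\beta\varphi),u_j\rangle_\beta=(\lambda-\lambda_j)\langle\varphi,u_j\rangle_\beta .
\]
Pair this with \eqref{e.Green.kernel}, taking $\psi$ to be an approximate identity at $y$, or equivalently working at the level of $G_{\lambda,N}(\cdot,y)$ and letting $N\to\infty$. The factors $\lambda-\lambda_j$ cancel, and we obtain
\[
\sum_{j\ne k}\langle\varphi,u_j\rangle_\beta\, u_j(y)=\varphi(y)-U(y)\langle\varphi,U\rangle_\beta .
\]
This series converges uniformly because $\varphi$ is smooth: its coefficients decay rapidly and $\|u_j\|_\infty$ grows only polynomially by Weyl's law and Sobolev embedding. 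This gives \eqref{e.Glambda_PDE_wkform}; orthogonality is \eqref{e.Green.kernel.orthogonality}.

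\emph{Step 2: uniqueness.} If two solutions exist, their difference $D(\cdot,y)$ satisfies $(\Delta+\lambda\beta)D=0$ in distributions. By elliptic regularity $D$ is smooth, so it lies in the $\lambda$-eigenspace, i.e.\ $D=cU$. Orthogonality to $U$ then forces $c=0$.

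\emph{Step 3: singularity structure and smoothness off the diagonal.} Let $G_x$ be the Green function \eqref{eq.Greensfunction}, which satisfies the bounds \eqref{eq.bound.green} and is smooth off the diagonal. Set $H(\cdot,y):=G_\lambda(\cdot,y)-G_y$. Then, distributionally,
\[
(\Delta+\lambda\beta)H=-\beta U(y)U+\frac{\beta}{\int_M\beta\dV}-\lambda\beta G_y .
\]
The right-hand side lies in $L^\gamma$ for every $\gamma<d/(d-2)$ (every $\gamma<\infty$ if $d=2$). Since $H(\cdot,y)\in H^s$ by \eqref{e.GlambdaHs}, we may write $H=h+cU$ with $h\perp U$ and apply the Fredholm alternative. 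Elliptic $W^{2,\gamma}$ estimates then put $h$ in $W^{2,\gamma}$, uniformly in $y$.

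Bootstrap this: $G_y$ is one order better in integrability, so the right-hand side improves, and after finitely many iterations $H(\cdot,y)$ is bounded, indeed continuous, uniformly in $y$. The constant $c$ is bounded uniformly as well, since $c=\langle H,U\rangle_\beta=-\langle G_y,U\rangle_\beta$ by orthogonality, and this is bounded because $G_y\in L^1$ uniformly. Hence $|G_\lambda|\le|G_y|+C$, which yields the stated logarithmic and $d_g^{2-d}$ bounds and therefore $G_\lambda\in L^1(M\times M)$.

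For smoothness away from $\Delta_M$, note that by symmetry $G_\lambda$ satisfies the same equation in $y$. Away from the diagonal, the operator $(\Delta_x+\lambda\beta)+(\Delta_y+\lambda\beta)$ applied to $G_\lambda$ produces a smooth right-hand side. Hypoellipticity of this elliptic operator on $M\times M$ then gives $C^\infty$ smoothness there.

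The main obstacle is the bootstrap in Step 3. One must check that the iteration closes in finitely many steps with constants uniform in $y$, and that the projection onto $U$ is controlled by the $L^1$ bound on $G_y$. The remaining steps are routine.
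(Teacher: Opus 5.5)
Your Steps 1 and 2 are the paper's argument: cancel the factors $\lambda-\lambda_j$ and use completeness of $\{u_j\}$, then get uniqueness from simplicity of $\lambda$ and orthogonality to $U$. You justify the convergence more carefully than the paper does. Your off-diagonal smoothness argument, via the elliptic operator $(\Delta_x+\lambda\beta(x))+(\Delta_y+\lambda\beta(y))$ on $M\times M$, is correct. It even yields joint smoothness, which the paper's one-line appeal to elliptic regularity leaves implicit. For the near-diagonal bound the paper only cites standard estimates for $\Delta+\lambda\beta$, so the issue is in the argument you supply there.

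\textbf{The gap: Step 3 fails for $d\ge 4$.}

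\emph{Why the bootstrap cannot work.} In the equation for $H=G_\lambda(\cdot,y)-G_y$ as you wrote it, the right-hand side $-\beta U(y)U+\beta/\int_M\beta\,dV-\lambda\beta G_y$ does not contain $H$. So there is nothing to bootstrap. One application of the $W^{2,\gamma}$ estimate with $\gamma<d/(d-2)$ is all you get. Once $d\ge4$ we have $d/(d-2)\le d/2$, so $L^\infty$ is never reached.

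\emph{$H$ really is unbounded.} Write $G_y\approx -c_d\,d_g(x,y)^{2-d}$. The source $-\lambda\beta G_y$ forces $H\approx -\frac{\lambda\beta(y)c_d}{2(d-4)}\,d_g(x,y)^{4-d}$ for $d\ge5$, and a multiple of $\log d_g(x,y)$ for $d=4$. This has the same sign as $G_y$. As a check, take $\Delta+k^2$ on $\mathbb{R}^5$: its fundamental solution is a multiple of $r^{-3}(1-ikr)e^{ikr}=r^{-3}+\tfrac{k^2}{2}r^{-1}+O(1)$. Hence, for $d\ge4$, both of these claims are false:
\begin{itemize}
\item ``$H(\cdot,y)$ is bounded uniformly in $y$'';
\item ``$|G_\lambda|\le|G_y|+C$''.
\end{itemize}
For $d=2,3$ your single step is fine, since then $d/(d-2)>d/2$.

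\textbf{How to repair it.} The lemma survives because $d_g^{4-d}\le d_g^{2-d}$ near the diagonal. But you must subtract more of the parametrix rather than iterate on a fixed source.
\begin{itemize}
\item Set $K_0=G_y$ and $K_{j+1}(x)=-\lambda\int_M G_x(z)\beta(z)K_j(z)\,dV_z$.
\item Use the composition bound $\int_M d_g(x,z)^{2-d}d_g(z,y)^{a-d}\,dV_z\le C\bigl(1+d_g(x,y)^{2+a-d}\bigr)$, with a logarithm when $2+a=d$. This gives $|K_j|\le C\bigl(1+d_g^{2+2j-d}\bigr)$ uniformly in $y$.
\item Check that $(\Delta+\lambda\beta)\bigl(G_\lambda(\cdot,y)-\sum_{j<m}K_j\bigr)$ equals $-\lambda\beta K_{m-1}$ plus a uniformly bounded smooth term.
\item For $m>(d-2)/2$ this source lies in $L^\gamma$ for some $\gamma>d/2$.
\item Your Fredholm/$W^{2,\gamma}$ step, with the same control of the $U$-component, then makes the remainder uniformly bounded.
\end{itemize}
This yields $|G_\lambda|\le C\bigl(1+d_g^{2-d}\bigr)$, and hence $G_\lambda\in L^1(M\times M)$.
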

\begin{proof}
    See Appendix~\ref{s.app}
\end{proof}
We are ready to simplify the term~$\int_M Z_\e U \dd\mu_\e$ from the expansion in Theorem~\ref{t.interaction energy}. 
\begin{corollary}[Representation formula for~$Z_\ep$]
\label{c.Zepsilon}
We have the representation formula
\begin{equation}
\int_M Z_\ep U\dd\mu_\ep
=
\lambda
\iint_{(M\times M)\setminus\Delta_M}
G_\lambda(x,y)U(x)U(y)
\dd\mu_\ep(x)\dd\mu_\ep(y)\,. 
\end{equation}
In particular, the double integral is absolutely convergent.
\end{corollary}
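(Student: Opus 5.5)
The plan is to represent $Z_\ep$ through $G_\lambda$, pair the representation against $U\dd\mu_\ep$, and justify every exchange of integrals by absolute integrability of the kernel against $|\mu_\ep|\otimes|\mu_\ep|$. In the simple case, \eqref{eq.defineZep.i} reads $(\Delta+\lambda\beta)Z_\ep = \lambda U\dd\mu_\ep - \delta_\ep\beta U\dV$ with $\int_M Z_\ep U\beta\dV=0$. Since $G_\lambda$ inverts $\Delta+\lambda\beta$ on the $\beta$-orthogonal complement of $U$, the natural candidate is
\[
Z_\ep(x)=\lambda\int_M G_\lambda(x,y)U(y)\dd\mu_\ep(y).
\]
The term $-\delta_\ep\beta U$ drops out, because $G_\lambda$ annihilates the $U$-direction: $\int_M G_\lambda(x,y)U(y)\beta(y)\dV_y=0$ by symmetry and \eqref{e.Green.kernel.orthogonality}.

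The first step is to check this spectrally. By \eqref{e.Green.kernel} the right-hand side has $u_j$-coefficient $\frac{\lambda}{\lambda-\lambda_j}\int_M u_jU\dd\mu_\ep$ for $j\ne k$, and coefficient zero for $j=k$. This matches exactly the spectral representation of $Z_\ep$ written in the proof of Lemma \ref{l.H1.estimate.Zep.i}. To make it rigorous I would test against $\varphi\in C^\infty(M)$. Applying \eqref{e.Glambda_PDE_wkform} in $x$ and integrating in $y$ against $\lambda U\dd\mu_\ep$ gives
\[
\int\!\!\int G_\lambda(x,y)\,(\Delta+\lambda\beta)\varphi(x)\dV_x\,\lambda U(y)\dd\mu_\ep(y)=\lambda\int_M \varphi U\dd\mu_\ep-\delta_\ep\int_M\varphi U\beta\dV.
\]
This is the weak form of the equation for $Z_\ep$. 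The exchange is legitimate by Fubini, since $|\mu_\ep|$ is finite and, by Lemma \ref{l.PDE.reduced.Green}, $\sup_y\int_M|G_\lambda(x,y)|\dV_x<\infty$. Uniqueness, which holds because $\Delta+\lambda\beta$ is injective on $U^\perp$, then identifies the candidate with $Z_\ep$.

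The second step is to pair with $U\dd\mu_\ep$. Since $Z_\ep\in H^1(M)$ has a trace on $\pt\Omega$, $\int_M Z_\ep U\dd\mu_\ep$ is meaningful, and it formally equals the double integral. The main obstacle is absolute convergence of $\iint |G_\lambda(x,y)|\,|U(x)U(y)|\dd|\mu_\ep|(x)\dd|\mu_\ep|(y)$, together with the statement that the diagonal is null. The bulk–bulk and bulk–surface parts are fine by the kernel bound $|G_\lambda|\lesssim d_g^{2-d}$ (or $1+|\log d_g|$ when $d=2$), which is integrable in volume and along hypersurfaces. The only delicate part is the surface–surface term on a single sphere $\pt B_p$ of dimension $d-1$. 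There
\[
\int_{\pt B_p}\int_{\pt B_p} d_g(x,y)^{2-d}\dA\dA\lesssim r^{d-1}\int_0^{r}s^{2-d}s^{d-2}\dd s<\infty,
\]
with the analogous logarithmic estimate in $d=2$. The diagonal $\Delta_M\cap(\pt B_p)^2$ has $\dA\otimes\dA$-measure zero, and $\mu_\ep$ carries no atoms. Summing over the finitely many $p$, and noting that pairs at distance $\gtrsim r_{\ep,p}$ are uniformly bounded, gives the absolute convergence asserted in the statement.

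Finally, I would pass from smooth test functions to the trace pairing. Mollify $\mu_\ep$ to $\mu_\ep^\eta$, for which both sides agree by the first step and Fubini. Then let $\eta\to0$: the left side converges by $H^1$-continuity of the trace, and the right side converges by dominated convergence using the integrable majorant from the second step. This limit passage is the step I expect to need the most care.
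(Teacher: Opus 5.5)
Your proposal is correct and follows essentially the same route as the paper: define $\widetilde Z_\ep=\lambda\int_M G_\lambda(\cdot,y)U(y)\dd\mu_\ep(y)$, check via the weak Green-function identity and the orthogonality of $G_\lambda$ to $U$ that it solves the corrector problem, identify it with $Z_\ep$ by uniqueness, and obtain absolute convergence from the near-diagonal kernel bound integrated over volume and over the spheres $\pt B_p$. Your Fubini justification and your identification of the trace of $Z_\ep$ on $\pt\Omega$ (which the paper passes over silently) are sound refinements; the mollification limit can be replaced by the observation that $\widetilde Z_\ep$ is continuous (a single-layer plus a volume potential with bounded densities, by the same kernel bound), so the trace of $Z_\ep\in H^1(M)$ is simply its restriction to $\pt\Omega$.
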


\begin{proof}
Define
\[
\widetilde Z_\ep(x)
:=
\lambda
\int_MG_\lambda(x,y)U(y)\dd\mu_\ep(y).
\]
Using the distributional equation for $G_\lambda$, we obtain
\begin{align*}
(\Delta+\lambda\beta)\widetilde Z_\ep
&=
\lambda
\int_M
\left(
\delta_y-\beta(\cdot)U(\cdot)U(y)
\right)
U(y)\dd\mu_\ep(y)
\\
&=
\lambda U\dd\mu_\ep
-
\lambda\beta U
\left(
\int_MU^2\dd\mu_\ep
\right)dV
\\
&=
\lambda U\dd\mu_\ep
-
\delta_\ep\beta U\dV.
\end{align*}
Moreover,
\[
\int_M\widetilde Z_\ep U\beta\dV=0
\]
by the normalization of $G_\lambda$. Therefore we infer
\[
Z_\ep=\widetilde Z_\ep\,.
\]
Integrating the representation of $Z_\ep$ against
$U(x)\dd\mu_\ep(x)$ gives
\[
\int_MZ_\ep U\dd\mu_\ep
=
\lambda
\iint_{(M\times M)\setminus\Delta_M}
G_\lambda(x,y)U(x)U(y)\dd\mu_\ep(x)\dd\mu_\ep(y).
\]
It remains to verify absolute convergence. Since
\[
\dd|\mu_\ep|
\leq
\beta\dV+\dA_\ep
\]
and $U$ is bounded, it is enough to check the singular part of
$G_\lambda$. For $d\geq3$, the kernel is bounded by a multiple of
$d_g(x,y)^{2-d}$ near the diagonal. This singularity is locally
integrable against $\dV\otimes \dV$ and $\dV\otimes \dA_\ep$, and on the
smooth hypersurface $\partial\Omega_\ep$ one has
\[
\int_{\partial\Omega_\ep\cap B_r(x)}
d_g(x,y)^{2-d}\dA_y
\leq
C\int_0^r
t^{2-d}t^{d-2}\dd t
=
Cr\,,
\]
in~$d\geq 3$, and
\[
\int_0^r|\log t|\dd t<C r|\log r|\,,
\]
in~$d=2.$ 
Thus the double integral is absolutely convergent.
\end{proof}
\smallskip \noindent 
Finally, we are ready to prove:
\begin{proof}[Proof of Theorem~\ref{t.interaction energy}]
    The proof of the theorem directly follows by combining Lemma~\ref{l.expansion} with the representation formula from Corollary~\ref{c.Zepsilon}.
\end{proof}

\appendix
\section{Construction and properties of the a Green function}
\label{s.app}
\begin{proof}[Proof of Lemma~\ref{l.spectral.construction.reduced.Green}]
Let
\[
\mathcal A:=-\beta^{-1}\Delta
\]
acting on $L^2(M,\beta\dV)$, and set
\[
\gamma_\lambda
:=
\min_{j\neq k}|\lambda-\lambda_j|>0.
\]
For $N\geq k$, define
\[
T_Nf
:=
\sum_{\substack{0\leq j\leq N\\j\neq k}}
\frac{\langle f,u_j\rangle_\beta}
{\lambda-\lambda_j}\,u_j.
\]
The spectral theorem defines a bounded operator
\[
Tf
:=
\sum_{j\neq k}
\frac{\langle f,u_j\rangle_\beta}
{\lambda-\lambda_j}\,u_j
\]
on $L^2(M,\beta\dV)$, with
\[
\|T\|_{L^2_\beta\to L^2_\beta}
\leq\gamma_\lambda^{-1}.
\]
Moreover,
\[
\|T-T_N\|_{L^2_\beta\to L^2_\beta}
=
\sup_{\substack{j>N\\j\neq k}}
\frac1{|\lambda-\lambda_j|}
\longrightarrow0,
\]
because $\lambda_j\to\infty$.
Thus, by the Schwartz
kernel theorem, their kernels converge in $\mathcal D'(M\times M)$.
The kernel of $T_N$ is
\[
G_{\lambda,N}(x,y)
=
\sum_{\substack{0\leq j\leq N\\j\neq k}}
\frac{u_j(x)u_j(y)}{\lambda-\lambda_j},
\]
and we denote the limiting kernel by $G_\lambda$. For $\varphi,\psi\in C^\infty(M)$, Parseval's identity, Cauchy-Schwarz and the
definition of $\gamma_\lambda$ give
\begin{align*}
\sum_{j\neq k}
\frac{
|\langle\varphi,u_j\rangle_\beta|
|\langle\psi,u_j\rangle_\beta|
}{
|\lambda-\lambda_j|
}
&\leq
\frac1{\gamma_\lambda}
\sum_{j\neq k}
|\langle\varphi,u_j\rangle_\beta|
|\langle\psi,u_j\rangle_\beta|
\\
&\leq
\frac1{\gamma_\lambda}
\|\varphi\|_{L^2_\beta}
\|\psi\|_{L^2_\beta}.
\end{align*}
Thus the series
\[
\sum_{j\neq k}
\frac{
\langle\varphi,u_j\rangle_\beta
\langle\psi,u_j\rangle_\beta
}{
\lambda-\lambda_j
}
\]
converges absolutely and represents the action of the kernel
$G_\lambda$.

\smallskip \noindent We next establish~\eqref{e.GlambdaHs}. Introduce the
weighted Dirac distribution
\[
\delta_y^\beta:=\beta^{-1}\delta_y,
\]
characterized by
\[
\langle\delta_y^\beta,\varphi\rangle_\beta=\varphi(y).
\]
Its spectral coefficients are
\[
\langle\delta_y^\beta,u_j\rangle_\beta=u_j(y),
\]
and hence
\[
\delta_y^\beta
=
\sum_{j=0}^\infty u_j(y)u_j
\]
in $\mathcal D'(M)$.
For $s\in\mathbb R$, define the spectral Sobolev norm
\[
\|v\|_{H^s}^2
:=
\sum_{j=0}^\infty
(1+\lambda_j)^s
|\langle v,u_j\rangle_\beta|^2.
\]
For every $\eta>0$, point evaluation is continuous on
$H^{d/2+\eta}(M)$, by Sobolev embedding. By duality,
\[
\delta_y^\beta\in H^{-d/2-\eta}(M),
\]
and therefore
\begin{equation}
\label{e.plancherel-delta}
\sum_{j=0}^\infty
(1+\lambda_j)^{-d/2-\eta}
|u_j(y)|^2
<\infty.
\end{equation}
\smallskip \noindent On the other hand,
\[
G_{\lambda,N}(\cdot,y)
=
\sum_{\substack{0\leq j\leq N\\j\neq k}}
\frac{u_j(y)}{\lambda-\lambda_j}u_j.
\]
There exists $C_\lambda>0$ such that
\[
\frac1{|\lambda-\lambda_j|^2}
\leq
C_\lambda(1+\lambda_j)^{-2},
\qquad j\neq k.
\]
Consequently, for $N'>N$,
\[
\begin{aligned}
\|G_{\lambda,N'}(\cdot,y)
-G_{\lambda,N}(\cdot,y)\|_{H^s}^2
&\leq
C_\lambda
\sum_{\substack{N<j\leq N'\\j\neq k}}
(1+\lambda_j)^{s-2}|u_j(y)|^2.
\end{aligned}
\]
If $s<2-d/2$, choose $\eta>0$ such that
\[
s-2<-\frac d2-\eta.
\]
The right-hand side then tends to zero by
\eqref{e.plancherel-delta}. Hence
\[
G_{\lambda,N}(\cdot,y)
\longrightarrow G_\lambda(\cdot,y)
\quad\text{in }H^s(M)
\]
for every $s<2-d/2$.

The symmetry~$G_\lambda(x,y)=G_\lambda(y,x)$
\[
\int_MG_\lambda(x,y)U(x)\beta(x)\dV_x=0\,,
\]
are both immediate. 
\end{proof}

\begin{proof}[Proof of Lemma~\ref{l.PDE.reduced.Green}]
Fix $y\in M$ and let $\varphi\in C^\infty(M)$. Using the spectral
definition of $G_\lambda$, we compute
\begin{equation*}
\int_M
G_\lambda(x,y)
\bigl(\Delta\varphi(x)+\lambda\beta(x)\varphi(x)\bigr)\dV_x
~=~
\sum_{j\neq k}
\frac{u_j(y)}{\lambda-\lambda_j}
\int_M
u_j(x)
\bigl(\Delta\varphi(x)+\lambda\beta(x)\varphi(x)\bigr)\dV_x\,. 
\end{equation*}
Self-adjointness of the Laplace--Beltrami operator together with the PDE satisfied by each eigenpair~$(\lambda_j,u_j)$ yields 
\begin{equation*}
\int_M u_j\Delta\varphi\dV=
\int_M\varphi\Delta u_j\dV
=
-\lambda_j\int_M\varphi u_j\beta\dV.
\end{equation*}
Therefore,
\[
\int_M
u_j(\Delta\varphi+\lambda\beta\varphi)\dV
=
(\lambda-\lambda_j)
\int_M\varphi u_j\beta\dV\,,
\]
and so, continuing, we find 
\begin{equation*}
    \int_M G_\lambda(x,y) \bigl( \Delta \phi(x) + \lambda \beta(x) \phi(x) \bigr) dV_x = \sum_{j\neq k} u_j(y)\int_M\varphi u_j\beta\dV\,. 
\end{equation*}
As the eigenfunctions~$\{u_j\}_j$ form a complete orthonormal basis of
$L^2(M,\beta\dV)$, and so by the computations in the preceding lemma, we find 
\[
\sum_j
u_j(y)\int_M\varphi u_j\beta\dV
=
\varphi(y).
\]
Rearranging, we obtain
\[
\sum_{j\neq k}
u_j(y)\int_M\varphi u_j\beta\dV
=
\varphi(y)
-
U(y)\int_M\varphi U\beta\dV.
\]
This proves~\eqref{e.Glambda_PDE_wkform}. 

\smallskip \noindent We turn to prove uniqueness recalling the standard argument. If~$\widetilde G_\lambda$ is another
kernel satisfying \eqref{e.Glambda_PDE_wkform}, and also orthogonality with~$U,$ then for fixed $y$, the difference
\[
H(\,\cdot\,,y)
:=
G_\lambda(\,\cdot\,,y)
-
\widetilde G_\lambda(\,\cdot\,,y)
\]
satisfies
\[
(\Delta+\lambda\beta)H=0\,;
\]
simplicity of~$\lambda$ together with the orthogonality of~$G_\lambda$ with~$U$ by construction implies~$H=0.$ 

\smallskip \noindent Finally, away from $x=y$, the right-hand side of
\eqref{e.Glambda_PDE_wkform} is smooth, and so elliptic regularity therefore
implies
\[
G_\lambda\in
C^\infty\bigl((M\times M)\setminus\Delta_M\bigr).
\]
The stated near-diagonal estimates follow from the standard estimates for the operator $\Delta+\lambda\beta$, whose principal
part is the Laplace--Beltrami operator. The proof of the theorem is complete. 
\end{proof}

\bibliographystyle{plainnat}
\bibliography{ref}

\end{document}